\documentclass[a4paper,10pt]{amsart}
\usepackage[utf8]{inputenc}
\usepackage[american]{babel}
\usepackage{header}

\usetikzlibrary{backgrounds, calc, patterns}
\usepackage{enumitem}
\usepackage{fixltx2e}
\usepackage{caption}

\newcommand{\cells}{\!\diamond}

\newcommand{\cyl}{\mathsf{Cyl}}
\newcommand{\ffjc}{Fibered Farrell--Jones Conjecture}
\newcommand{\ffjcw}{Fibered Farrell--Jones Conjecture with wreath products}
\newcommand{\fic}{Fibered Isomorphism Conjecture}
\newcommand{\ficw}{Fibered Isomorphism Conjecture with wreath products}
\newcommand{\finprod}{\sideset{}{^{\mathrm{fin}}}\prod}
\newcommand{\gen}[1]{\langle #1 \rangle}
\newcommand{\halftimes}{\leftthreetimes}

\newcommand{\rep}{\cR ep}
\newcommand{\skel}[2]{\mathrm{sk}_{#2}#1}

\DeclareMathOperator{\coker}{coker}

\DeclareMathOperator{\inv}{inv}
\DeclareMathOperator{\map}{map}
\DeclareMathOperator{\inc}{inc}
\DeclareMathOperator{\swan}{Sw}
\DeclareMathOperator{\swana}{Sw^A}
\DeclareMathOperator{\swindle}{sw}
\DeclareMathOperator{\tel}{Tel}
\DeclareMathOperator{\transfer}{tr}
\DeclareMathOperator{\vcd}{vcd}

\title[Farrell--Hsiang method for algebraic K-theory of spaces]{On the Farrell--Jones Conjecture for algebraic K-theory of spaces: the Farrell--Hsiang method}

\author{Mark Ullmann}
\address{FU Berlin, Institut f\"ur Mathematik\\
	Arnimallee 7 \\ 14195 Berlin \\ Germany}
\email{mark.ullmann@math.fu-berlin.de}

\author{Christoph Winges}
\address{Hausdorff Research Institute for Mathematics (HIM)\\
	Poppelsdorfer Allee 45 \\ 53115 Bonn \\ Germany}
\email{christoph.winges@wwu.de}

\subjclass[2010]{Primary 19D10; Secondary 18F25, 57Q10}
\keywords{algebraic $K$-theory of spaces, Farrell--Jones Conjecture, poly-$\ZZ$-groups}

\begin{document}

\begin{abstract}
 We prove the Farrell--Jones Conjecture for algebraic $K$-theory of spaces for virtually poly-$\ZZ$-groups.
 For this, we transfer the ``Farrell--Hsiang method'' from the linear case to categories of equivariant, controlled retractive spaces.
\end{abstract}

\maketitle

\section{Introduction}

The classification of high-dimensional manifolds and the understanding of their automorphism groups is a long-standing question in algebraic topology. The classification of high-dimensional manifolds turned out to be intimately related to the algebraic $K$- and $L$-theory of group rings \cite{Wall-Surgery}, while the understanding of automorphism groups has a deep connection to pseudo-isotopy theory and Waldhausen's algebraic $K$-theory of spaces \cite{WJR2013, WW-Autos3}.

Since the 1970s, a lot of progress was made to calculate the algebraic $K$- and $L$-theory of group rings. This culminated in what is now called the \emph{Farrell--Jones Conjecture}, first stated in \cite{FJ1993}. For algebraic $K$-theory, it predicts that the algebraic $K$-theory of $R[G]$ can be computed from the algebraic $K$-theory of a certain set of subgroups by a homological recipe. The Farrell--Jones conjectures for algebraic $K$- and $L$-theory of group rings have been a highly active research area over the last 20 years. Though many cases of the conjectures are known by now, they remain open in general.

In the language of \cite{DL1998}, the conjecture takes the following form: Let $\FF$ be a homotopy invariant functor from spaces to spectra. Let $X$ be a connected CW-complex with fundamental group $G$. Then we obtain an induced functor $\FF_X$ from the orbit category $Or(G)$ to the category of spectra which sends $G/H$ to $\FF(\tilde{X} \times_G G/H)$. By the methods of \cite{DL1998}, $\FF_X$ gives rise to a $G$-homology theory $H_*^G(-; \FF_X)$. The \emph{Farrell--Jones Conjecture} for $\FF_X$ predicts that the \emph{assembly map}
\begin{equation}\label{eq:fjc}
 H_n^G(E_{\cV\cC yc} G; \FF_X) \to H_n^G(G/G;\FF_X) \cong \pi_n\FF(X),
\end{equation}
which is induced by the projection map $E_{\cV\cC yc} G \to G/G$ from the classifying space for virtually cyclic subgroups to a point, is an isomorphism for all $n \in \ZZ$. If we choose $\FF$ to be non-connective algebraic $K$-theory $\KK^{-\infty}(\ZZ[\pi_1(-)])$ or Ranicki's ultimate lower $L$-theory $\LL^{-\infty}(\ZZ[\pi_1(-)])$, we obtain the $K$- respectively $L$-theoretic Farrell--Jones conjecture for group rings.

In this article, we consider the case that $\FF(-) = \Aa^{-\infty}(-)$, a non-connective delooping of Waldhausen's algebraic $K$-theory of spaces. We obtain the following result:

\begin{theorem}\label{intro_poly-z-groups}
 Let $X$ be a connected CW-complex with fundamental group $G$. If $G$ is a virtually poly-$\ZZ$-group, then the assembly map
 \begin{equation*}
  H_n^G(E_{\cV\cC yc} G; \Aa^{-\infty}_X) \to \pi_n\Aa^{-\infty}(X)
 \end{equation*}
 is an isomorphism for all $n \in \ZZ$.
\end{theorem}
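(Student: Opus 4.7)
The plan is to follow the template developed by Bartels--Farrell--Lück for the linear Farrell--Jones Conjecture and adapt it to the $A$-theoretic setting. The argument proceeds by induction on the Hirsch length of the virtually poly-$\ZZ$-group $G$, working throughout with the corresponding \ffjcw; this stronger variant enjoys the stability under subgroups, finite index overgroups and short exact sequences needed to drive the induction, and implies the plain conjecture once established.

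First, one develops the category-theoretic framework of equivariant, controlled retractive spaces that replaces the role of controlled modules over a group ring in the linear proof, and re-expresses the assembly map \eqref{eq:fjc} for $\Aa^{-\infty}_X$ as a comparison between obstruction categories with and without control constraints imposed by a family of subgroups. With this setup in place, the standard formal arguments establishing a transitivity principle and the equivalence of the fibered, wreath and plain versions of the conjecture should transfer to this setting. The induction on Hirsch length then reduces the theorem to the \ffjcw\ for two classes of building blocks: virtually cyclic groups, where the claim is tautological since they already lie in $\cV\cC yc$, and irreducible crystallographic groups acting by isometries on some Euclidean space.

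The main work is to run the Farrell--Hsiang method for such a crystallographic group $G$ inside this category. For every $\epsilon > 0$ one needs to produce a surjection $\alpha\colon G \twoheadrightarrow F$ onto a finite group such that, for each hyperelementary subgroup $\bar{H} \le F$ with preimage $H := \alpha^{-1}(\bar{H})$, there is a transfer from $H$-equivariantly controlled retractive spaces to $G$-equivariantly controlled ones whose total contribution, summed over $\bar{H}$ by an $A$-theoretic Swan homomorphism $\swana$ and Dress's induction theorem, realizes the identity on $A$-theory up to $\epsilon$-controlled homotopies. The existence of such control estimates then forces vanishing of the relevant obstruction.

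The hardest step is the construction of these transfers: unlike in the linear case, where they come directly from finite-dimensional representations, here they must be realized geometrically from finite covers of flat model manifolds, and one must verify that they are compatible with the retractive structure, the equivariance, and the restriction/induction along $\alpha$. Lifting the representation-theoretic combinatorics behind Dress induction to a statement about Waldhausen categories of equivariant retractive spaces, and then propagating the resulting estimates through the comparison with the assembly map, is the principal technical obstacle the argument must overcome.
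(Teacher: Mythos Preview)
Your overall strategy matches the paper's, but two of the key technical steps are misidentified.

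First, the transfer in $A$-theory does \emph{not} require geometric input from finite covers of flat manifolds. The paper's construction (Section~\ref{sec_swangroup}) is in fact closely parallel to the linear one: the $A$-theoretic Swan ring $\swana(G)$ is $K_0$ of the category of finite pointed $G$-CW-complexes with non-equivariant homotopy equivalences, and it acts on the controlled categories via a relative smash product $\wedge_W$ (Proposition~\ref{prop_atheory-swanamodule}). The transfer functors of Section~\ref{sec_transfer} are then assembled purely formally from this pairing together with induction and restriction along $\pi\colon G\twoheadrightarrow F$; no manifold geometry enters.

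Second, and more seriously, the family of subgroups is wrong. Hyperelementary induction is what drives the linear Swan group, but for $\swana$ the only induction theorem the paper establishes (Theorem~\ref{thm_inductionswana}) expresses $1_G$ as a sum of classes induced from \emph{Dress} subgroups. The proof goes via Oliver's characterisation of those finite groups that act without global fixed point on a finite contractible complex, and this singles out precisely the non-Dress groups; the argument does not descend to the hyperelementary family. Consequently the geometric hypothesis one must verify is the Dress--Farrell--Hsiang condition of Definition~\ref{def_dfhgroup}, and the paper relies on \cite{W-TransferReducibilityOfFHGroups} for that verification in the relevant cases (certain crystallographic groups and irreducible special affine groups---the latter step is also missing from your outline of the inductive reduction, which follows \cite[Section~5]{BFL2014} rather than terminating at crystallographic groups).
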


In addition to the algebraic $K$- and $L$-theory of group rings, Farrell and Jones also stated conjecture \eqref{eq:fjc} for pseudo-isotopy. They went on to prove the pseudo-isotopy version of the conjecture for spaces whose fundamental group is a cocompact lattice in an almost connected Lie group, assuming that it holds for spaces whose fundamental group is virtually poly-$\ZZ$. However, the announced proof of this special case was never published, cf.~\cite[Remark 7.1]{BFL2014}.
Since the isomorphism conjecture in $A$--theory is in fact equivalent to the (topological, PL and smooth) pseudo-isotopy version, cf.~\cite[Lemma~3.3]{ELPUW}, the present article closes this gap in the published literature.

We also give a description of the $A$-theory of spaces with finite fundamental group, which is similar to Lemma~4.1 of \cite{BL2007}. Call a finite group $D$ a \emph{Dress group} if there are primes $p$ and $q$ and a normal series $P \unlhd C \unlhd D$ such that $P$ is a $p$-group, $C/P$ is cyclic and $D/C$ is a $q$-group.

\begin{theorem}\label{intro_finitegroups}
 Let $X$ be a connected CW-complex with finite fundamental group $G$. Let $\cD$ denote the family of Dress subgroups of $G$. Then the assembly map
 \begin{equation*}
  H_n^G(E_{\cD} G; \Aa^{-\infty}_X) \to \pi_n\Aa^{-\infty}(X)
 \end{equation*}
 is an isomorphism for all $n \in \ZZ$.
\end{theorem}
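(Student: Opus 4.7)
The plan is to mimic the argument for linear $K$-theory in \cite[Lemma~4.1]{BL2007}, which rests on Dress's induction theorem applied to an action of the Swan ring on $K$-theory. For $A$-theory I would set up an analogous Swan action on $\Aa^{-\infty}(X)$ for $X$ with finite $\pi_1(X) = G$, and then run the same induction argument on the orbit category.

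First, I would construct a pairing of the classical Swan ring $\swan(G;\ZZ)$ with $\pi_*\Aa^{-\infty}(X)$, refined to a pairing with the equivariant homology $H^G_*(-;\Aa^{-\infty}_X)$. A $\ZZ[G]$-module $P$ that is $\ZZ$-free of finite rank corresponds to a finite cover of $X$ (or, equivariantly, of $\tilde X \times_G G/H$), and ``tensoring with $P$'' should be implemented geometrically by a transfer on the category of equivariant, controlled retractive spaces developed in the paper. One has to check that this transfer is additive on short exact sequences of such modules, so that it factors through $\swan(G;\ZZ)$, that it is multiplicative in $P$, so that one obtains a ring action, and that it commutes with the induction structure on $H^G_*(-;\Aa^{-\infty}_X)$ in the sense of a double coset formula.

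Next, by Dress's induction theorem the class of the trivial module in $\swan(G;\ZZ)$ satisfies $[\ZZ] = \sum_i \mathrm{ind}^G_{D_i}(x_i)$ for some $D_i \in \cD$ and $x_i \in \swan(D_i;\ZZ)$. Acting on $H^G_*(G/G;\Aa^{-\infty}_X) \cong \pi_*\Aa^{-\infty}(X)$, the left-hand side is the identity while each summand on the right, by the compatibility with induction, factors as a composition through $H^G_*(G/D_i;\Aa^{-\infty}_X)$. Since every orbit $G/D_i$ appears in (a model of) $E_\cD G$, this exhibits a section of the assembly map on homotopy groups, proving surjectivity; the same argument applied to the cofibre of the assembly map (which is still an $H^G_*(-;\Aa^{-\infty}_X)$-type theory on which the Swan ring acts by naturality) yields injectivity, and the theorem follows.

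The main obstacle is the construction of the Swan action itself. In the linear case one simply tensors chain complexes over $\ZZ[G]$ with a Swan module; in the $A$-theoretic, equivariant, controlled setting one has to build a genuine geometric transfer on retractive spaces, verify its additivity on Swan exact sequences, and check its compatibility with restriction, induction and the orbit-category functoriality defining $\Aa^{-\infty}_X$. I expect the controlled framework already in place for the Farrell--Hsiang method in this paper to do most of the bookkeeping, but turning the pointwise transfer into a natural transformation of $Or(G)$-spectra that is multiplicative in the Swan variable is where the real work lies.
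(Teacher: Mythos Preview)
Your overall strategy is the right one, and it is close in spirit to what the paper does, but there is a real gap in your proposed mechanism for the Swan action. You write that ``a $\ZZ[G]$-module $P$ that is $\ZZ$-free of finite rank corresponds to a finite cover of $X$'', and this is what your transfer is supposed to implement. That correspondence, however, only holds for \emph{permutation} modules; a general $\ZZ$-free $\ZZ[G]$-module has no such cover-theoretic interpretation, so there is no evident way to make the classical Swan ring $\swan(G;\ZZ)$ act on categories of retractive spaces. Dress induction writes $[\ZZ] = \sum_i \ind^G_{D_i}(x_i)$ with $x_i \in \swan(D_i;\ZZ)$ arbitrary, and neither the $x_i$ nor their inductions need be permutation modules, so the pairing you need does not come for free from your description.

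The paper fixes this by replacing $\swan(G;\ZZ)$ with an \emph{$A$-theoretic Swan group} $\swana(G) := K_0(\rep(G),h)$, where $\rep(G)$ is the category of finite pointed $G$-CW-complexes with \emph{non-equivariant} homotopy equivalences as weak equivalences (Definition~\ref{def_swangroup}). This acts on $\cR^G(W,\fZ)$ by the genuinely geometric operation $D \wedge_W Y$ (Proposition~\ref{prop_atheory-swanamodule}), and the induction statement you need---that $1 \in \swana(G)$ lies in the image of induction from Dress subgroups---is proved as Theorem~\ref{thm_inductionswana} using Oliver's theorem on fixed-point-free actions on finite contractible complexes, not classical Dress induction. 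As a by-product one even obtains a permutation-module refinement of Swan's theorem in $\swan(G;\ZZ)$ (Corollary~\ref{cor:variant-swan-induction}), which is what your covering-space picture would require but which is not available a priori.

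Finally, the paper does not run your direct argument on the homology theory. Instead it observes that a finite group $G$ is trivially a Dress--Farrell--Hsiang group with respect to $\cD$ (take $\pi = \id_G$ and let each $\phi_D$ be the constant map to a point) and then invokes the general Theorem~\ref{thm_fhmethod}. The $\swana$-action, Frobenius reciprocity, and the induction theorem enter that proof through the construction of the transfer functor on the obstruction category (Section~\ref{sec_transfer}); the finite-group case then drops out as a special case rather than via a separate naturality argument for the Swan action on $H^G_*(-;\Aa^{-\infty}_W)$.
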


Theorems \ref{intro_poly-z-groups} and \ref{intro_finitegroups} are not the first results of this type. For the algebraic $K$-theory and $L$-theory of group rings, the last years have seen dramatic progress on the Farrell--Jones Conjecture. To name some important results, the conjecture has been shown to hold for word-hyperbolic groups \cite{BLR2008}, $\mathrm{CAT(0)}$-groups \cite{BL2012a, Wegner2012}, lattices in almost connected Lie groups \cite{BFL2014, KLR-FJCForLatticesInLieGroups}, subgroups of $GL_n(\ZZ)$ \cite{BLRR2014} and $GL_n(\QQ)$ as well as $GL_n(F(t))$ for any finite field $F$ \cite{Rueping2016}, solvable groups \cite{W-FJCSolvableGroups}, and mapping class groups \cite{BB-MCG}.

The proofs of these results make heavy use of a set of ideas known as ``controlled algebra'', which go back to work of Connell--Hollingsworth \cite{CH1969} and Quinn \cite{Quinn1979}. It was shown in \cite{BFJR2004} that the methods of controlled algebra can be used to produce explicit models for the (equivariant) assembly map $H_n^G(E_{VCyc} G; \KK^{-\infty}_R) \to K_n(R[G])$. Precursors of this model appeared for example in \cite{PW-homology} and \cite{ACFP1994}. All recent proofs of the Farrell--Jones Conjecture use this setup, and rely on at least one of two sufficient criteria to prove the conjecture: the notions of \emph{transfer reducibility} and \emph{being a Farrell--Hsiang group} (cf.~\cite{Bartels-OnProofs}).

For the algebraic $K$-theory of spaces, a.k.a.~$A$-theory, Vogell used the ideas of controlled algebra in the setting of retractive spaces to describe an $A$-theory assembly map \cite{Vogell1990, CPV1998}.
These models were recast by Weiss in \cite{Weiss2002} to repair some problems with the original approach.

In this article, we promote Weiss' categories of controlled retractive spaces to the equivariant setting (even though our notions of weak equivalence are closer to those of Carlsson--Pedersen--Vogell). We give a self-contained discussion of the categories of equivariant, controlled retractive spaces. We prove a number of theorems modelled after those of \cite{BFJR2004}, and produce a model for the equivariant $A$-theory assembly map. One particular feature of our treatment lies in the fact that we can re-use a considerable amount of results from \cite{BFJR2004} and subsequent work.

One obtains a category (of controlled retractive spaces) whose $K$-theory vanishes if and only if the assembly map for $G$ is an isomorphism. In the linear case, proofs of the Farrell--Jones Conjecture proceed by using the notions of transfer reducibility or Farrell--Hsiang group to show that the $K$-theory of a similar ``obstruction category'' is trivial.
We adapt ``Farrell--Hsiang method'' to our setting:

\begin{definition}\label{intro_dfhgroup}
 Let $G$ be a group and $S$ be a finite, symmetric generating set of $G$. Let $\cF$ be a family of subgroups of $G$.
 
 Call $(G,S)$ a \emph{Dress--Farrell--Hsiang group with respect to $\cF$} if there exists $N \in \NN$ such that for every $\epsilon > 0$ there is an epimorphism $\pi \colon G \twoheadrightarrow F$ onto a finite group $F$
 such that the following holds: For every Dress group $D \leq F$, there are a $\overline{D} := \pi^{-1}(D)$-simplicial complex $E_D$ of dimension at most $N$ whose isotropy groups lie in $\cF$, and a $\overline{D}$-equivariant map $\phi_D \colon G \to E_D$ such that $d^{\ell^1}(\phi_D(g),\phi_D(g')) \leq \epsilon$ whenever $g^{-1}g' \in S$.
\end{definition}

\begin{theorem}\label{intro_fhmethod}
 Let $X$ be a connected CW-complex with fundamental group $G$. Let $\mathcal{F}$ be a family of subgroups of $G$. If $G$ is a Dress--Farrell--Hsiang group with respect to $\mathcal{F}$, then the assembly map
 \begin{equation*}
  H_n^G(E_{\mathcal{F}} G; \mathbb{A}^{-\infty}_X) \to \pi_n\Aa^{-\infty}(X)
 \end{equation*}
 is an isomorphism for all $n \in \mathbb{Z}$.
\end{theorem}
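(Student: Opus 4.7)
The plan is to adapt the Farrell--Hsiang strategy of \cite{BLRR2014} from controlled modules to the categories of equivariant controlled retractive spaces set up earlier in the paper. The first move is the standard controlled $K$-theory reformulation: one produces an obstruction category $\mathcal{O}^G$ of $G$-equivariant controlled retractive spaces on $E_{\mathcal{F}} G \times [1,\infty)$ whose non-connective $K$-theory sits in a cofibre sequence identifying $K_{*+1}(\mathcal{O}^G)$ with the cofibre of the assembly map of Theorem~\ref{intro_fhmethod}. It therefore suffices to show $K_*(\mathcal{O}^G) = 0$. Any class in $K_*(\mathcal{O}^G)$ has a representative supported within a band of propagation $S^k$ in $G$ and of size $R$ in the control direction; these bounds determine an $\epsilon > 0$ to feed into the Dress--Farrell--Hsiang hypothesis.

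After choosing $\epsilon$ small enough and invoking the hypothesis to obtain $\pi \colon G \twoheadrightarrow F$, the argument has two ingredients. First, \emph{Dress induction in $A$-theory}: Theorem~\ref{intro_finitegroups} applied to the finite group $F$, combined with a Swan-group description of the assembly map in the spirit of \cite[Lemma~4.1]{BL2007}, allows one to write the identity class on $BF$ as a sum of classes induced from the Dress subgroups $D \leq F$; pulling back along $B\pi$ decomposes our representative in $\mathcal{O}^G$ as a corresponding sum of $\bar D$-equivariant pieces (with $\bar D = \pi^{-1}(D)$). Second, the \emph{transfer along} $\phi_D$: for each Dress $D$, the $\bar D$-equivariant map $\phi_D \colon G \to E_D$ should induce a transfer functor from $\bar D$-equivariant controlled retractive spaces over $G$ to those over $E_D$, and the $\epsilon$-displacement condition together with $\dim E_D \leq N$ should guarantee that the transferred object has control at most $kN\epsilon$ in the $E_D$-direction. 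Because the isotropy of $E_D$ lies in $\mathcal{F}$, the transferred class is in the image of the $\bar D$-assembly map for $\mathcal{F}$ and therefore vanishes in the corresponding $\bar D$-obstruction category. Summing over $D$ via the Dress decomposition recovers the original class, now represented by an object of arbitrarily small control, which is zero by a squeezing/Eilenberg swindle argument.

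The main obstacle is the construction and analysis of the $A$-theoretic transfer along $\phi_D$. In the linear setting this transfer is the familiar ``chain complex indexed by simplices'' construction obtained by tensoring with the cellular chain complex of $E_D$; in the $A$-theoretic setting one must produce the analogous functor on retractive spaces while preserving cofibrations, weak equivalences, equivariance, and quantitative control estimates in two independent metric directions (the $G$-propagation and the control parameter in $E_{\mathcal{F}} G \times [1,\infty)$). Engineering the transfer so that its contribution to control genuinely scales linearly with $\epsilon$ and $N$, and so that the subsequent squeezing input applies at the chosen parameters, is where most of the technical work should lie; the Dress-theoretic decomposition and the final swindle then follow the by-now standard Farrell--Hsiang template.
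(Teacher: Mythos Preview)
Your overall strategy---reduce to vanishing of the $K$-theory of an obstruction category and kill classes via a transfer plus a contracting step---is correct and matches the paper's template. But you have misidentified what the transfer is and what role the maps $\phi_D$ play, and this conflation is a genuine gap.

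In the paper (following \cite{BL2012b}), the transfer and the use of $\phi_D$ are \emph{separate} steps with different content. The transfer $\transfer$ is built purely from the action of the $A$-theoretic Swan group $\swana(F)$ on the obstruction category via the smash product $\wedge_W$: one writes $1_F = \sum_D [\ind_D^F D_D]$ in $\swana(F)$ (this is Theorem~\ref{thm_inductionswana}, proved directly from Oliver's fixed-point theorem---citing Theorem~\ref{intro_finitegroups} here would be circular, since that theorem is a \emph{consequence} of the present one). Smashing with $\res_\pi \ind_D^F D_D$ and recording the coset $G/\overline{D}$ in the control space gives a lift to a category controlled over $\coprod_D G/\overline{D} \times G$, with $P_j \circ \transfer = \id$ on $K$-theory. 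No $\phi_D$ enters. The maps $\phi_D$ are then used only to induce a change-of-control-space functor $((\phi_k)_k)_*$ from the category over $(T_k \times G)_k$ to the category over $(E_k \times G)_k$; the $\epsilon$-condition guarantees this preserves \emph{uniform} metric control (Lemma~\ref{lem_contractingmaps}). Everything remains $G$-equivariant; there is no passage to $\overline{D}$-equivariant obstruction categories.

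Your description of the transfer as ``tensoring with the cellular chain complex of $E_D$'' is the wrong analogy: that is the transfer for transfer-reducible groups (\cite{BLR2008}), not the Farrell--Hsiang transfer. Finally, the endgame is not ``small control implies zero'' directly: one needs the Squeezing Theorem~\ref{thm_squeezing}, which shows that the $K$-theory of the sequence category over $(E_k \times G)_k$ agrees with that of the eventually-trivial sequences, and then a diagram chase (\cite[Section~4]{BL2012b}) shows every class factors through an eventually-trivial sequence, hence vanishes under each $P_j$.
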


Theorem \ref{intro_finitegroups} follows immediately from this result. Theorem \ref{intro_poly-z-groups} is deduced in Section \ref{sec_applications} following the strategy of \cite{BFL2014}, using previous results from \cite{W-TransferReducibilityOfFHGroups} that all relevant instances of Farrell--Hsiang groups are actually Dress--Farrell--Hsiang.

Using the framework we develop here, the proof of the Farrell--Jones Conjecture for transfer reducible groups can also be adapted to the $A$--theory setting, see \cite{ELPUW}.

\subsection{Structure of the article}
Let us outline the structure of this article.

In the first half of this article, we set up the technical background for our constructions.
In Section~\ref{sec_controlledcws}, we define the notion of a coarse structure and
explain how a coarse structure $\fZ$ gives rise to the notion of a
\emph{controlled space} relative to a base space $W$.
In Section~\ref{sec_controlledretractivespaces}, we use these notions to construct
the category of \emph{controlled retractive spaces} $\cR(W,\fZ)$ relative to a base
space $W$.  Every subspace $A$ of $\fZ$ gives rise to a class of weak
equivalences $h^A$ on $\cR(W,\fZ)$; if $A$ is empty, this gives a notion of
homotopy equivalence.  We show that the category $\cR(W,\fZ)$ together with
the weak equivalences $h^A$ is a Waldhausen category which has a Cylinder
Functor and satisfies the Saturation Axiom and the Cylinder Axiom.  As usual,
we need some finiteness condition to make algebraic $K$-theory nontrivial, so
we define subcategories of finite, homotopy finite and finitely
dominated objects. In fact, we work $G$-equivariantly and obtain in particular a Waldhausen category
of finite, $G$-equivariant controlled retractive spaces $\cR^G_f(W, \fZ)$.

In Section~\ref{sec_comparisonthms}, we compare the different finiteness
conditions and show that the resulting (connective) algebraic $K$-theory
differs at most in degree $0$.  We show that we have a version of
Waldhausen's Fibration Theorem which applies in our situation,
even though the $h^A$-equivalences do not satisfy the Extension Axiom.
We call this the Modified Fibration Theorem and prove it as
Proposition~\ref{prop_modifiedfibrationtheorem}. (Such a statement was already used in \cite{Weiss2002}.)
We use this to construct homotopy fiber sequences which compare homotopy equivalences and 
$h^A$-equivalences and show an excision result, the ``coarse Mayer--Vietoris Theorem'' \ref{thm_connectivemayervietoris}.
These results still have a certain ``defect'' in degree $0$, which will be corrected in the next section.
The section concludes with a criterion for the vanishing of the algebraic $K$-theory of the categories $\cR^G_f(W, \fZ)$.

In Section~\ref{sec_nonconnectiveA}, we define a delooping of the algebraic $K$-theory space $K(\cR^G_f(W,\fZ))$
to obtain the \emph{non-connective algebraic $K$-theory spectrum} $\KK^{-\infty}(\cR^G_f(W,\fZ))$.
We establish non-connective versions of the homotopy fiber sequences and ``coarse Mayer--Vietoris Theorem'' from the previous section.
In particular, this repairs the ``defect'' in degree $0$ of the connective case.

The second half of the article discusses the Farrell--Jones Conjecture for $A$-theory.
Section \ref{sec_assembly} constructs a model for the assembly map.
As in the linear case, there exists for any $G$-CW-complex $X$ a coarse structure $\JJ(X)$ which,
together with a certain class of weak equivalences $h^{\infty}$, makes $\KK^{-\infty}(\cR^G_f(W, \JJ(-)), h^{\infty})$
into a $G$-homology theory. If $W$ is a free $G$-CW-complex, we identify its coefficients with
$\Aa^{-\infty}(H \backslash W)$. Here, $\Aa^{-\infty}(V)$ is a non-connective delooping of Waldhausen's algebraic
$K$-theory of spaces $A(V)$, which we define using the results of Section~\ref{sec_nonconnectiveA}.
Applying the $G$-homology theory to the map $E_\cF G \rightarrow G/G$ gives the assembly map.
We conclude with a criterion when this assembly map is a weak equivalence.

In Section~\ref{sec_outline}, we recall the \fic{} for $A$-theory. We define the notion of a
Dress-Farrell-Hsiang group with respect to a family $\cF$.
Theorem~\ref{thm_fhmethod} states that the \fic{} is true for this class of
groups. Imitating \cite{BL2012b}, we show how the theorem follows once we know
Corollary \ref{cor:uniformtransfer:deloop} and Theorem \ref{thm_squeezing}.
Theorem \ref{intro_fhmethod} is a special case of Theorem \ref{thm_fhmethod}.

In Section~\ref{sec_swangroup}, we introduce the \emph{$A$-theoretic Swan group}
and show that it acts on the $K$-theory of the categories $\cR^G_f(W,\JJ(X))$.
We prove an analog of Swan's Induction Theorem as Theorem~\ref{thm_inductionswana}.
This is used to construct a ``transfer map'' in Section~\ref{sec_transfer}.
Section~\ref{sec_squeezing} contains a proof of the ``Squeezing Theorem'' \ref{thm_squeezing}.

Section~\ref{sec_applications} is devoted to applications. 
We prove Theorem~\ref{intro_finitegroups} and proceed to show Theorem \ref{intro_poly-z-groups} following the strategy of \cite{BFL2014}.
We state the ``\ffjcw{} in $A$-theory'', establish the usual inheritance properties and generalize Theorem \ref{thm_fhmethod}
to cover this case as well. We conclude with the proof that virtually poly-$\ZZ$-groups satisfy the \ffjcw{} in $A$-theory.

\subsection*{Acknowledgments}
This work was supported by the DFG: The first author was supported by project C1 of CRC 647 in Berlin; the second author was supported by project B5 of CRC 878 in M\"unster.
The results of Section \ref{sec_swangroup} are based on parts of the second author's PhD thesis written under the supervision of Arthur Bartels.
The second author would like to thank Arthur Bartels and Michael Weiss for helpful discussions.
Finally, we would like to thank Malte Pieper for comments on a draft of this article.

\setcounter{tocdepth}{1}
\tableofcontents

\section{Controlled equivariant CW-complexes}\label{sec_controlledcws}
Throughout this article, $G$ will denote a discrete group and $W$ will denote a $G$-space.

\begin{definition}\label{def_coarsestructure}
 Let $Z$ be a $G$-space which is Hausdorff.
 A set of \emph{morphism control conditions} $\fC$ is a collection of $G$-invariant subsets of $Z \times Z$
 with the following properties:
 \begin{enumerate}[label={(C{\arabic*})}]
  \item \label{cont:item:diag} Every $C \in \fC$ contains the diagonal $\Delta(Z) := \{ (z,z) \mid z \in Z \}$.
  \item \label{cont:item:symmetric} Every $C \in \fC$ is symmetric.
  \item \label{cont:item:union} For all $C, C' \in \fC$ there is some $C'' \in \fC$ such that $C \cup C' \subset C''$.
  \item \label{cont:item:composition} For all $C, C' \in \fC$ there is some $C'' \in \fC$ such that $C' \circ C \subset C''$,
   where the composition $C' \circ C$ is defined as
   \begin{equation*}
    C' \circ C := \{ (z'',z) \mid \exists z' \colon (z',z) \in C, (z'',z') \in C' \}.
   \end{equation*}
 \end{enumerate}
 A set of \emph{object support conditions} $\fS$ is a collection of $G$-invariant subsets of $Z$
 with the following properties:
 \begin{enumerate}[label={(S\arabic*})]
   \item \label{support:item:union} For all $S, S' \in \fS$ there is some $S'' \in \fS$ such that $S \cup S' \subset S''$.
 \end{enumerate}
 The triple $\fZ = (Z,\fC,\fS)$ is called a \emph{coarse structure}.
\end{definition}

Note that conditions~\ref{cont:item:diag} and \ref{cont:item:composition} imply condition~\ref{cont:item:union}.
\begin{example}[{see \cite[Sections 2.3.2 \& 2.3.3]{BFJR2004} and \cite[Section 3.2]{BLR2008}}]\label{example:control-structures}\
 \begin{enumerate}
  \item Let $Z$ be a $G$-space. The \emph{trivial object support condition} is $\fS_{triv}(Z) = \{ Z \}$.
   The \emph{trivial morphism control condition} is given by $\fC_{triv}(Z) := \{ Z \times Z \}$.
   Together, these form the \emph{trivial coarse structure} $\fT(Z)$.
  \item Let $X$ be a $G$-space. The \emph{$G$-compact support condition} is defined to be
   \begin{equation*}
    \fC_{G\text{-cpt}}(X) := \{ K \subset X \mid K \text{ is $G$-compact.} \}.
   \end{equation*}
  \item Let $M$ be a metric space with isometric $G$-action; metrics are allowed to map to the extended real line $\RR \cup \{ \infty \}$.
   The \emph{bounded morphism control condition} is defined to be
   \begin{equation*}
    \begin{split}
     \fC_{bdd}(M) := \{ B \in \cP(M \times M) \mid &\text{ There is some $R > 0$ such that} \\
      &d(m_1,m_2) \leq R \text{ for all } (m_1,m_2) \in B. \}.
    \end{split}
   \end{equation*}
   Together with the trivial object support condition on $M$, we obtain the \emph{bounded coarse structure}
   \begin{equation*}
    \fB(M) := (M, \fC_{bdd}(M), \fS_{triv}(M)).
   \end{equation*}
  \item Let $X$ be a $G$-space. The \emph{$G$-continuous control condition} $ \fC_{G\text{cc}}(X)$ is the set of all subsets $C \subset (X \times [1,\infty[) \times (X \times [1,\infty[)$ which satisfy the following:
   \begin{enumerate}
    \item For every $x \in X$ and every $G_x$-invariant open neighbourhood $U$ of $(x,\infty)$ in $X \times [1,\infty]$,
     there exists a $G_x$-invariant open neighbourhood $V \subset U$ of $(x,\infty)$ in $X \times [1,\infty]$
     such that $((X \times [1,\infty[) \setminus U) \times V) \cap C = \varnothing$.
    \item Let $p_{[1,\infty[} \colon X \times [1,\infty[ \to [1,\infty[$ be the projection map
     and equip $[1,\infty[$ with the Euclidean metric. Then the set $(p_{[1,\infty[} \times p_{[1,\infty[})(C)$
     is a member of $\fC_{bdd}([1,\infty[)$.
    \item $C$ is symmetric, $G$-invariant and contains the diagonal.
   \end{enumerate}
   \item If $\fC_1$, $\fC_2$ are sets of control conditions on the same space $Z$, then
    \begin{equation*}
     \fC_1 \Cap \fC_2 := \{ C_1 \cap C_2 \mid C_1 \in \fC_1, C_2 \in \fC_2 \}
    \end{equation*}
    is again a set of control conditions. We refer to this construction as ``pointwise intersection''.
  \end{enumerate}
    There are further constructions which allow us to produce new coarse structures out of these, see \cite[2.3.1]{BFJR2004}. We will introduce these on the way as we need them, see for example Definitions \ref{def:pullback-coarse-structure}, \ref{def:delooping-coarse-structure} and \ref{def_continuouscontrol}.
\end{example} 

Let $Y$ be a $G$-CW-complex relative to $W$. The \emph{structural inclusion} of the relative $G$-CW-complex
$(Y,W)$ will usually be denoted by $s \colon W \to Y$. If we speak about a ``cell'' of $Y$, this will always mean
a (non-equivariant) open, relative cell. The closure of a cell $e$ will be denoted by $\overline{e}$, and $\partial e$ will always be
the boundary of the cell $e$, i.e., the image of any attaching map for $e$.
Let $\cells_k Y$ denote the set of $k$-cells of $Y$. Set $\cells Y := \bigcup_k \cells_k Y$.
If $e \in \cells Y$ is a cell in $Y$, we define $\gen{e} \subset Y$
to be the smallest non-equivariant subcomplex of $Y$ (relative $W$) which contains $e$.
For a subgroup $H \leq G$, let $\gen{e}_H \subset Y$ denote the smallest $H$-CW-subcomplex of $Y$ which contains $e$.  Similary, we define $\gen{S}$, $\gen{S}_H$ for any subset $S \subseteq Y$.

A non-equivariant version of the following definition was already considered in \cite{Weiss2002}.

\begin{definition}\label{def_controlledmap}
 Let $\fZ = (Z,\fC,\fS)$ be a coarse structure. Let $Y$ be a $G$-CW-complex relative to $W$.
 A \emph{control map} for $Y$ is an equivariant function $\kappa \colon \cells Y \to Z$.
 
 A $G$-CW-complex $Y$ relative $W$ together with a control map $\kappa$ is called a \emph{labelled $G$-CW-complex relative $W$}.
 
 Let $(Y_1,\kappa_1)$ and $(Y_2,\kappa_2)$ be labelled $G$-CW-complexes relative $W$. A \emph{$\fZ$-controlled map}
 $f \colon Y_1 \to Y_2$ is an equivariant, cellular map (relative $W$) such that for all $k \in \NN$
 there is some $C \in \fC$ for which
 \begin{equation*}
  (\kappa_2 \times \kappa_1)( \{ (e_2,e_1) \mid e_1 \in \cells_k Y_1, e_2 \in \cells Y_2, \gen{f(e_1)} \cap e_2 \neq \varnothing \} ) \subset C
 \end{equation*}
 holds.
 
 A \emph{$\fZ$-controlled $G$-CW-complex relative $W$} is a labelled $G$-CW-complex $(Y,\kappa)$ such that
 the identity map on $Y$ is a $\fZ$-controlled map and for all $k \in \NN$ there is some $S \in \fS$ such that
 \begin{equation*}
  \kappa(\cells_k Y) \subset S.
 \end{equation*}
 We abbreviate the terminology to \emph{controlled map} and \emph{controlled $G$-CW-complex} if the coarse structure $\fZ$ is understood.
\end{definition}

\begin{remark}
 The $\fZ$-control condition for a labelled $G$-CW-complex $(Y,\kappa)$ is a statement about attaching maps.
 Since $\fC$ is closed unter composition and taking finite unions, the control condition in Definition \ref{def_controlledmap} is equivalent to requiring that
 for each $k$, there is some $C_k \in \fC$ such that for every $k$-cell $e$ and every cell $e'$ intersecting
 the closed cell $\overline{e}$ non-trivially, we have $(\kappa(e'),\kappa(e)) \in C_k$.
 
 Moreover, if $C_k$ witnesses $\fZ$-controlledness for a given complex $Y$, we may assume that $C_k \subset C_{k+1}$,
 and the same holds for the support conditions.
 In particular, if $Y$ is finite-dimensional, there are a single support condition $S$ and a single control condition $C$
 witnessing that $Y$ is $\fZ$-controlled.
\end{remark}

Let $(Y,\kappa)$ be a labelled $G$-CW-complex relative $W$. Define the \emph{relative cylinder} $Y \halftimes [0,1]$ by the pushout
\begin{equation*}
 \commsquare{W \times [0,1]}{W}{Y \times [0,1]}{Y \halftimes [0,1]}{}{s \times \id_{[0,1]}}{}{}
\end{equation*}
The projection map $p \colon Y \halftimes [0,1] \to Y$ induces a function $\cells p \colon \cells(Y \halftimes [0,1]) \to \cells Y$,
so $\kappa \circ \cells p$ is a control map for $Y \halftimes [0,1]$.  This turns $Y \halftimes [0,1]$ into a labelled $G$-CW complex relative to $W$.
If $(Y,\kappa)$ is $\fZ$-controlled, then $(Y \halftimes [0,1], \kappa \circ \cells p)$ is also $\fZ$-controlled.

\begin{definition}\label{def_controlledhomotopy}
 Let $(Y_1,\kappa_1)$ and $(Y_2,\kappa_2)$ be labelled $G$-CW-complexes relative $W$.
 A \emph{$\fZ$-controlled homotopy} is a $\fZ$-controlled map $H \colon Y_1 \halftimes [0,1] \to Y_2$.
 Two maps $f_0, f_1 \colon Y_1 \to Y_2$ are \emph{$\fZ$-controlled homotopic}, $f_0 \simeq_\fZ f_1$, if there is a $\fZ$-controlled homotopy
 whose restriction to $Y_1 \times \{ 0 \}$ and $Y_1 \times \{ 1 \}$ equals $f_0$ and $f_1$, respectively.
 
 A $\fZ$-controlled map $f \colon Y_1 \to Y_2$ is a \emph{$\fZ$-controlled homotopy equivalence}
 if there is a $\fZ$-controlled map $\overline{f} \colon Y_2 \to Y_1$ such that $\overline{f}f \simeq_\fZ \id_{Y_1}$
 and $f\overline{f} \simeq_{\fZ} \id_{Y_2}$.
\end{definition}

Suppose $(Y,\kappa)$ is a labelled $G$-CW-complex relative to $W$, and that $B \subset Y$ is a $G$-invariant subcomplex of $Y$ which contains $W$.
Then $B$ naturally becomes a labelled $G$-CW-complex relative to $W$ by restricting $\kappa$ to $B$. If we do not say anything different,
we will always think about subcomplexes as labelled $G$-CW-complexes this way.

\begin{proposition}[{$\fZ$-controlled homotopy extension property \cite[Section 1.A.6]{WW1998}}]\label{prop_controlledhep}
 Let $(Y,\kappa)$ be a $\fZ$-controlled $G$-CW-complex relative $W$, and let $B \subset Y$ be a $G$-invariant subcomplex.
 Let $Y_1$ be a $\fZ$-controlled $G$-CW-complex relative $W$, and suppose that $h \colon Y \times \{ 0 \} \cup B \halftimes [0,1] \to Y_1$
 is a $\fZ$-controlled map.
 
 Then there is a $\fZ$-controlled map $Y \halftimes [0,1] \to Y_1$ extending $h$.
\end{proposition}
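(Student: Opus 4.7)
My plan is to imitate the classical inductive construction of the homotopy extension for CW pairs and to verify that the resulting extension is $\fZ$-controlled. The key ingredient from the classical argument is a cellular retraction $\rho_n \colon D^n \times [0,1] \to D^n \times \{0\} \cup S^{n-1} \times [0,1]$. With this in hand, I would extend $h$ inductively over the relative skeleta of $(Y,B)$: for each $G$-orbit of cells of $Y \setminus B$, pick a representative $e$ of dimension $n$, define the extension on $\overline{e} \halftimes [0,1]$ to be the composition $h' \circ \rho_n$, where $h'$ denotes the extension already constructed on $\overline{e} \halftimes \{0\} \cup \partial e \halftimes [0,1]$, and propagate by $G$-equivariance to the rest of the orbit. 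This is well-defined because the stabilizer of an open cell of a $G$-CW-complex acts trivially on that cell and $B$ is $G$-invariant; the resulting map $H$ is automatically equivariant, cellular relative to $W$, and restricts to $h$.

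The crucial feature enabling $\fZ$-control is that $\rho_n$ takes $D^n \times [0,1]$ into $D^n \times \{0\} \cup S^{n-1} \times [0,1]$. Hence, for each cell $e$ of $Y \setminus B$, every cell $e'$ of $Y_1$ meeting $\gen{H(e \times \{1\})}$ or $\gen{H(e \times (0,1))}$ already meets $\gen{h'(\tilde{e})}$ for some cell $\tilde{e}$ of $\overline{e} \halftimes \{0\} \cup \partial e \halftimes [0,1]$, i.e., for a cell of $Y \halftimes [0,1]$ whose projection to $Y$ is either $e$ itself or a proper face of $e$. Given control witnesses $C_k^h \in \fC$ for $h$ and $C_k^Y \in \fC$ for the labelled complex $Y$ (so that $(\kappa(e''),\kappa(e)) \in C_k^Y$ whenever $e$ is a $k$-cell of $Y$ and $e''$ is a cell meeting $\overline{e}$), one verifies by induction on $k$ that the pairs $(\kappa_1(e'),\kappa(e))$ arising from $k$-cells of $Y \halftimes [0,1]$ under $H$ lie in a finite union of compositions of the $C_j^h$'s with $C_k^Y$. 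Closure of $\fC$ under finite unions and composition, conditions~\ref{cont:item:union} and~\ref{cont:item:composition}, assembles these into a single $C_k \in \fC$.

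The main obstacle I anticipate is the combinatorial bookkeeping in the control step: one must distinguish the three kinds of $k$-cells of $Y \halftimes [0,1]$, namely $e \times \{0\}$ and $e \times \{1\}$ for $k$-cells $e$ of $Y$, and $e \times (0,1)$ for $(k-1)$-cells $e$ of $Y$, and track how control conditions propagate from cells of $\partial e$ to $e$ via $C_k^Y$. A minor technical point is that $h' \circ \rho_n$ may not be cellular on the nose for an arbitrary choice of $\rho_n$; this can be addressed either by subdividing $D^n \times [0,1]$ so that $\rho_n$ becomes cellular, or by an inductive controlled cellular-approximation argument confined to $\overline{e} \halftimes [0,1]$, which does not enlarge the image. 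Once this is arranged, the closure properties of the coarse structure carry the induction through.
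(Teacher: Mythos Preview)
Your approach is essentially the same as the paper's: both use the standard cell-by-cell retraction $\rho_n \colon D^n \times [0,1] \to D^n \times \{0\} \cup S^{n-1} \times [0,1]$ and verify control via the closure properties of $\fC$. The only organizational difference is that the paper first assembles the $\rho_n$'s into a single $\fZ$-controlled retraction $r \colon Y \halftimes [0,1] \to Y \times \{0\} \cup B \halftimes [0,1]$ (by stacking the skeletal deformation retractions) and then sets $H := h \circ r$; this makes the control bookkeeping you worry about almost trivial, since a composite of $\fZ$-controlled maps is $\fZ$-controlled, and the control of $r$ itself follows directly from the fact that each cell is moved at most into its own attaching sphere. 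Your inductive extension with $h$ built in produces the same map, but unwinding it requires the slightly heavier tracking of compositions $C_j^h \circ C_k^Y$ that you sketch.
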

\begin{proof}
 The proof follows the usual pattern. Subject to a choice of deformation retraction of $D^n \times [0,1]$ to $D^n \times \{ 0 \} \cup \partial D^n \times [0,1]$,  we can define a $G$-equivariant deformation retraction of $(\skel{Y}{n} \cup B) \halftimes [0,1]$ onto $\skel{Y}{n} \times \{ 0 \} \cup B \halftimes [0,1]$ by composing with the characteristic maps of equivariant $n$-cells. The resulting deformation retraction can be chosen to be constant on all points which do not lie on an $n$-cell of $Y$ which is not in $B$. It is $\fZ$-controlled because points on a given cell are moved at most into the image of the attaching sphere of the same cell (and attaching maps are controlled).
 
 We obtain a deformation retraction of $Y \halftimes [0,1]$ onto $Y \times \{ 0 \} \cup B \halftimes [0,1]$ by stacking the homotopies defined in the first step. This produces another $\fZ$-controlled homotopy since, for each $n$, all but finitely many of the stacked homotopies are constant on the $n$-skeleton. The endpoint of this homotopy is a retraction $r \colon Y \halftimes [0,1] \to Y \times \{ 0 \} \cup B \halftimes [0,1]$, so we may define an extension of $h$ by $H := h \circ r$.
\end{proof}

\section{Categories of controlled retractive spaces}\label{sec_controlledretractivespaces}
The primary objective of the following discussion is to form a Waldhausen category of controlled $G$-CW-complexes relative $W$. This will enable us to study the controlled $A$-theory of $W$ in the sequel. Since the terminology can be considered standard by now, we will freely use the notions of category with cofibrations \cite[page 320]{Waldhausen1985}, Waldhausen category \cite[page 326]{Waldhausen1985} (where it is called ``category with cofibrations and weak equivalences'') and cylinder functor \cite[page 348]{Waldhausen1985}. The Saturation Axiom and Extension Axiom \cite[page 327]{Waldhausen1985} as well as the Cylinder Axiom \cite[page 349]{Waldhausen1985} will play a role.

\begin{definition}\label{def_controlledretractivespaces}
 Let $\fZ = (Z,\fC,\fS)$ be a coarse structure. The \emph{category $\cR^G(W,\fZ)$ of $\fZ$-controlled retractive spaces over $W$}
 is the category whose objects are $\fZ$-controlled, free $G$-CW-complexes $Y$ relative to $W$
 which come equipped with an equivariant retraction $r \colon Y \to W$ (i.e., $r \circ s = \id_W$, where $s$ denotes the structural inclusion $W \to Y$).
 Morphisms in this category are $\fZ$-controlled maps over and under $W$.
\end{definition}

We write $(Y, s_Y, r_Y)$ or $Y \leftrightarrows W$ for objects of $\cR^G(W,\fZ)$, if we want to emphasize the section and retraction or the base space.

\begin{remark}
 We would like to emphasize a few points about Definition \ref{def_controlledretractivespaces} which may be easy to overlook.
 
 By definition, the morphisms in $\cR^G(W,\fZ)$ are all \emph{cellular} maps, and we will never consider maps which are not cellular. This is important for inductive arguments, and also provides us with mapping cylinders.

 Requiring the relative $G$-CW-complexes $(Y,W)$ to have a retraction $Y \rightarrow W$ and morphisms to respect this retraction provides $\cR^G(W, \fZ)$ with a basepoint.
 However, the homotopy equivalences we define later are inherited from the category of relative $G$-CW-complexes. This means that some arguments will lead us to consider maps which do not have to respect the retraction.
 We will use the word \emph{morphism} if a map respects the retractions, and speak about \emph{maps} if the retractions do not need to be preserved.
\end{remark}

\begin{definition}[Finiteness conditions]\label{def_finiteness}
 Let $(Y \leftrightarrows W,\kappa)$ be a $\fZ$-controlled retractive space over $W$.
 
 We call $Y$ \emph{finite} if it is finite-dimensional, the image of $Y \setminus W$ under the retraction meets the orbits of only finitely many path components of $W$, and for all $z \in Z$ there is some open neighbourhood $U$ of $z$ such that $\kappa^{-1}(U)$ is finite.
 
 An object $Y$ is \emph{homotopy finite} if there is a finite object $F$ and a morphism $F \to Y$ which is a $\fZ$-controlled homotopy equivalence.
 
 We call $Y$ \emph{finitely dominated} if there are a finite $\fZ$-controlled $G$-CW-complex $D$ relative $W$, a $\fZ$-controlled morphism $p \colon D \to Y$ and a $\fZ$-controlled map $i \colon Y \to D$ such that $pi \simeq_\fZ \id_Y$ as $\fZ$-controlled maps.
 
 Let us denote the full subcategories of finite/homotopy finite/finitely dominated $\fZ$-controlled retractive spaces by $\cR^G_f(W,\fZ)$, $\cR^G_{hf}(W,\fZ)$ and $\cR^G_{fd}(W,\fZ)$.
\end{definition}

\begin{remark}
 Note that the cells of finite objects can only be labelled with points whose isotropy group is finite. In fact, all control spaces we consider are free.
\end{remark}

\subsection{$\cR^G(W,\fZ)$ as a Waldhausen category}

Observe that $\cR^G(W,\fZ)$ is canonically pointed by the zero object $* = (W \rightleftarrows W, \varnothing)$, and that $*$ is finite. Let $co\cR^G(W,\fZ)$ be the subcategory of all morphisms which are isomorphic to the inclusion of a $G$-invariant subcomplex. We call such morphisms \emph{cofibrations} and denote them by ``$\rightarrowtail$''. Since isomorphisms are controlled, the controlled homotopy extension property (CHEP) holds with respect to cofibrations as a consequence of Proposition \ref{prop_controlledhep}. As observed in \cite[Section 1.A.6]{WW1998}, the CHEP is key to showing that $\cR^G(W,\fZ)$ is a Waldhausen category. In the remainder of this section, we elaborate on this remark, and also introduce a more general notion of weak equivalences in $\cR^G(W,\fZ)$, which is inspired by \cite{CPV1998}.

\begin{lemma}\label{lem_controlledpushouts}
 The subcategory $co\cR^G(W,\fZ)$ is a subcategory of cofibrations for $\cR^G(W,\fZ)$.
 If in a diagram $Y_2 \leftarrow Y_0 \rightarrowtail Y_1$ all three objects are finite, then so is the pushout $Y_1 \cup_{Y_0} Y_2$.
\end{lemma}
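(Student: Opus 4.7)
The plan is to verify the three axioms for a Waldhausen subcategory of cofibrations and then to dispatch the finiteness assertion. Two of the axioms are essentially immediate: isomorphisms are cofibrations by definition of $co\cR^G(W,\fZ)$, and the structural inclusion $s\colon W \to Y$ exhibits $* \rightarrowtail Y$ as the inclusion of a $G$-invariant subcomplex (its set of cells in $Y\setminus W$ being empty, so that no control condition needs to be verified). The substantive work lies in constructing pushouts along cofibrations and showing that the opposite leg is again a cofibration.

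For a diagram $Y_2 \leftarrow Y_0 \rightarrowtail Y_1$, I would, up to isomorphism, assume that $Y_0$ is a $G$-invariant subcomplex of $Y_1$ with control map $\kappa_0 = \kappa_1|_{Y_0}$. Form the pushout $P := Y_1 \cup_{Y_0} Y_2$ as a $G$-CW-complex relative to $W$; its cells are naturally partitioned into the cells of $Y_2$ together with the cells of $Y_1 \setminus Y_0$. The retraction $r_P\colon P \to W$ is furnished by the universal property applied to $r_1$ and $r_2$. Define a control map $\kappa_P$ by $\kappa_P(e) := \kappa_2(e)$ for cells of $Y_2$ and $\kappa_P(e) := \kappa_1(e)$ for cells of $Y_1 \setminus Y_0$. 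The canonical morphism $Y_2 \hookrightarrow P$ is then the inclusion of a subcomplex, so once $P$ is known to be $\fZ$-controlled it is a cofibration; the pushout universal property in $\cR^G(W,\fZ)$ follows readily from the universal property of $P$ as an underlying $G$-space.

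The main obstacle is verifying the $\fZ$-control of $P$, as this is where the attaching maps of cells in $Y_1 \setminus Y_0$ that land in $Y_0$ interact with the morphism $f\colon Y_0 \to Y_2$ used to form the pushout. For each dimension $k$, I would fix conditions $C_k^{Y_1}, C_k^{Y_2} \in \fC$ witnessing control of $Y_1$ and $Y_2$, and a condition $C^f \in \fC$ bounding $f$. For a $k$-cell $e \in Y_1 \setminus Y_0$ whose attaching sphere meets a cell $e'$ of $Y_1$, the bound $(\kappa_1(e'),\kappa_1(e)) \in C_k^{Y_1}$ transfers directly when $e' \in Y_1 \setminus Y_0$; when $e' \in Y_0$, its image in $P$ is $f(e')$, and for every cell $e''$ of $Y_2$ meeting $\gen{f(e')}$ one has $(\kappa_2(e''),\kappa_0(e')) \in C^f$. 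Composing gives $(\kappa_2(e''),\kappa_1(e)) \in C^f \circ C_k^{Y_1}$, which lies in some $C_k \in \fC$ by closure of $\fC$ under composition and finite unions; taking the union with $C_k^{Y_1}$ and $C_k^{Y_2}$ yields a single condition witnessing dimension-$k$ control. An analogous argument using axiom \ref{support:item:union} produces a member of $\fS$ containing $\kappa_P(\cells_k P)$.

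For the finiteness claim, each of the three conditions of Definition~\ref{def_finiteness} is inherited by $P$ from $Y_1$ and $Y_2$: one has $\dim P = \max(\dim Y_1, \dim Y_2) < \infty$; the image of $P \setminus W$ under $r_P$ is contained in the union of the images of $Y_1 \setminus W$ and $Y_2 \setminus W$ under $r_1$ and $r_2$, which already meet only finitely many orbits of path components of $W$; and for $z \in Z$ the intersection of neighbourhoods witnessing the local finiteness of $\kappa_1$ and $\kappa_2$ at $z$ has finite $\kappa_P$-preimage, since that preimage is the disjoint union of the corresponding preimages in $Y_2$ and in $Y_1 \setminus Y_0$.
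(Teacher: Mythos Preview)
Your proof is correct and follows essentially the same approach as the paper's proof: reduce to the case of a subcomplex inclusion, form the pushout of relative $G$-CW-complexes, equip it with the piecewise control map, and invoke controlledness of $Y_0 \to Y_2$ to verify the result is $\fZ$-controlled. The paper's version is much terser---it asserts that the pushout is controlled ``because the map $Y_0 \to Y_2$ is controlled'' and that finiteness is preserved, without spelling out the composition of control conditions or the three finiteness checks---so your write-up simply fills in details the paper leaves to the reader.
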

\begin{proof}
 The unique morphism $* \to Y$ (given by the structural inclusion) is clearly in $co\cR^G(W,\fZ)$, and the same holds true for any isomorphism.  This shows Waldhausen's first two axioms.  We are left to show that cofibrations admit cobase changes.  Clearly isomorphisms do, so we can restrict ourselves to inclusions of $G$-invariant subcomplexes.
 
 Let a diagram of the form $Y_2 \leftarrow Y_0 \rightarrowtail Y_1$ be given, where $Y_0$ is a subcomplex of $Y_1$.
 The pushout $Y := Y_1 \cup_{Y_0} Y_2$ exists in the category of $G$-CW-complexes
 relative $W$, and the resulting map $Y_2 \to Y$ is the inclusion of a subcomplex.
 By the universal property of the pushout, we obtain a structural retraction $Y \to W$.
 
 We observe that $\cells Y = \cells Y_2 \sqcup (\cells Y_1 \setminus \cells Y_0)$.
 This allows us to define a control map $\kappa \colon \cells Y \to Z$ by setting
 \begin{equation*}
  \kappa(e) := \begin{cases}
                \kappa_2(e) & e \in \cells Y_2, \\
                \kappa_1(e) & e \in \cells Y_1 \setminus Y_0.
               \end{cases}
 \end{equation*}
 Then $(Y,\kappa)$ is an object in $\cR^G(W,\fZ)$ because the map $Y_0 \to Y_2$ is controlled.
 If $Y_1$ and $Y_2$ are finite, then so is $Y$.
\end{proof}

Setting $co\cR^G_f(W,\fZ) := \cR^G_f(W,\fZ) \cap co\cR^G(W,\fZ)$, Lemma \ref{lem_controlledpushouts}
shows that both $\cR^G(W,\fZ)$ and $\cR^G_f(W,\fZ)$ are categories with cofibrations.

The pushout of homotopy finite or finitely dominated objects is also homotopy finite respectively finitely dominated,
and both $\cR^G_{hf}(W,\fZ)$ and $\cR^G_{fd}(W,\fZ)$ are therefore also categories with cofibrations.
However, the proof requires us to know more about the Waldhausen category structure of $\cR^G(W,\fZ)$.
It will be given in Lemma \ref{lem_finitelydominatedpushouts}.

\begin{definition}[Cofinal subcomplexes]\label{def:thickenings:cofinal:subcomplexes}
 Let $A \subset Z$ be a $G$-invariant subspace. A \emph{$\fZ$-thickening} $A$ is a set of the form
 \begin{equation*}
  A^C := \{ z \in Z \mid (z,a) \in C, a \in A \}
 \end{equation*}
 for some $C \in \fC$.
 
 Let additionally $(Y,\kappa)$ be a labelled $G$-CW-complex relative $W$. A subcomplex $Y' \subset Y$ is called \emph{cofinal away from $A$} if for every $k \in \NN$ there is
 some $\fZ$-thickening $A^C$ of $A$ such that $\kappa^{-1}(Z \setminus A^C) \cap \cells_k Y \subset \cells_k Y'$.
\end{definition}

In the following discussion, we tacitly assume the next lemma.

\begin{lemma}\label{lemma:cofinal-subcomplexes}
 Let $Y, Y_1, Y_2$ be labelled $G$-CW-complexes.
 \begin{enumerate}
  \item If $Y' \subset Y$ and $Y'' \subset Y$ are cofinal subcomplexes away from $A$, then so is $Y' \cap Y''$.
  \item If $Y'' \subset Y' \subset Y$ are inclusions of subcomplexes, $Y' \subset Y$ is cofinal away from $A$
   and $Y'' \subset Y'$ is cofinal away from $A$, then $Y'' \subset Y$ is cofinal away from $A$.
  \item Let $f \colon (Y_1,\kappa_1) \to (Y_2,\kappa_2)$ be a controlled map, and let $Y_2' \subset Y_2$
   be cofinal away from $A$. Let $f^*Y_2'$ be the largest subcomplex of $Y_1$ whose image under $f$ is contained in
   $Y_2'$. Then $f^*Y_2' \subset Y_1$ is cofinal away from $A$.
  \item \label{item:subcomplex-cofinal-subcomplex}
   Suppose that $Y$ is $\fZ$-controlled. Let $B \subset Y$ be a subcomplex, $B' \subset B$ a cofinal subcomplex.
   Then there is a cofinal subcomplex $Y' \subset Y$ with $Y' \cap B = B'$.
 \end{enumerate}
\end{lemma}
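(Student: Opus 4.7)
The plan for all four parts rests on two principles: a smaller element $C \subseteq C'$ of $\fC$ yields a smaller thickening $A^C \subseteq A^{C'}$, and axioms \ref{cont:item:union} and \ref{cont:item:composition} let me combine or compose witnessing conditions at will.

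For (1) and (2), the argument should be immediate. I would fix $k$, take the witnessing control conditions provided by the hypotheses, and combine them into a single $C \in \fC$ via \ref{cont:item:union}. In (1), a $k$-cell labelled outside the combined thickening lies in both $Y'$ and $Y''$, hence in $Y' \cap Y''$. In (2), it first lies in $Y'$ and then, as a $k$-cell of $Y'$ still labelled outside the common thickening, also in $Y''$.

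For (3), the key observation is that cellularity of $f$ gives $\gen{f(e_1)} \subseteq \skel{Y_2}{k}$ for any $k$-cell $e_1$ of $Y_1$, so only cells of $Y_2$ of dimension at most $k$ can meet $\gen{f(e_1)}$. I would combine cofinality witnesses for $Y_2'$ in dimensions $0, \dots, k$ into a single $C_Y \in \fC$, pair this with the witness $C_f \in \fC$ of $\fZ$-controlledness of $f$ at dimension $k$, and apply \ref{cont:item:composition} to pick $D \in \fC$ with $C_Y \circ C_f \subseteq D$. A contrapositive argument using symmetry of $C_f$ then shows that $\kappa_1(e_1) \notin A^D$ forces every cell of $Y_2$ meeting $\gen{f(e_1)}$ to have label outside $A^{C_Y}$ and hence to lie in $Y_2'$. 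Applying the same reasoning in lower dimensions, propagating the label bound from $e_1$ to the cells of $\overline{e_1}$ via the implicit $\fZ$-control of $Y_1$, one concludes $f(\overline{e_1}) \subseteq Y_2'$ and hence $e_1 \in f^*Y_2'$.

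For (4), which I expect to be the main obstacle, I would construct $Y'$ skeleton by skeleton, starting from $\skel{Y'}{-1} := W$. At stage $k$ I take as the $k$-cells of $Y'$ precisely the $k$-cells of $B'$ together with those $k$-cells of $Y \setminus B$ whose attaching map lands in the previously constructed $\skel{Y'}{k-1}$. This makes $\skel{Y'}{k}$ a subcomplex and immediately yields $\skel{Y'}{k} \cap \skel{B}{k} = \skel{B'}{k}$, so the intersection condition is automatic. To establish cofinality, I would inductively pick witnesses $D_k \in \fC$: at stage $k$ I arrange $D_k$ to contain both the cofinality witness $C_k$ of $B' \subseteq B$ at dimension $k$ and a composition $E_{k-1} \circ C$, where $E_{k-1} \in \fC$ dominates all witnesses $C_j, D_j$ for $j < k$ (via iterated \ref{cont:item:union}) and $C \in \fC$ witnesses $\fZ$-controlledness of $Y$ at dimension $k$. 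A contrapositive argument then shows that a $k$-cell $e$ of $Y \setminus B$ with $\kappa(e) \notin A^{D_k}$ has every boundary cell $e'$ labelled outside $A^{E_{k-1}}$, so by the inductive cofinality hypothesis $e'$ lies in $\skel{Y'}{k-1}$, whence the attaching map of $e$ lands in $\skel{Y'}{k-1}$ and $e$ gets admitted into $Y'$. The only delicate point is the bookkeeping of these thickenings, but since only finitely many dimensions below $k$ are involved at each stage, axiom \ref{cont:item:union} suffices.
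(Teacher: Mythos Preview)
The paper does not supply a proof of this lemma; it only states it and remarks that it will be tacitly assumed. Your outline is sound and would serve as the omitted argument. Parts (1), (2), and (4) are correct as you describe them; in particular your skeleton-by-skeleton construction in (4) is exactly the right idea, and the $\fZ$-controlledness hypothesis on $Y$ is precisely what makes the inductive cofinality bookkeeping go through.

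One remark on part (3): your final step, ``propagating the label bound from $e_1$ to the cells of $\overline{e_1}$ via the implicit $\fZ$-control of $Y_1$'', appeals to a hypothesis that is not present---the lemma only assumes $Y_1$ is \emph{labelled}, not $\fZ$-controlled. Fortunately this step is unnecessary. Since $f$ is continuous and $\gen{f(e_1)}$ is a closed subcomplex of $Y_2$ containing $f(e_1)$, one has
\[
 f(\overline{e_1}) = f(\overline{e_1}) \subseteq \overline{f(e_1)} \subseteq \gen{f(e_1)}.
\]
Hence $e_1 \in f^*Y_2'$ is equivalent to $\gen{f(e_1)} \subseteq Y_2'$, and your argument with $C_Y$ and $C_f$ already establishes this directly: every cell $e_2$ of $\gen{f(e_1)}$ has dimension at most $k$, and the control condition on $f$ at dimension $k$ bounds $(\kappa_2(e_2),\kappa_1(e_1))$, so $\kappa_1(e_1) \notin A^D$ with $D \supseteq C_f \circ C_Y$ forces $\kappa_2(e_2) \notin A^{C_Y}$ and hence $e_2 \in Y_2'$. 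No separate treatment of $\partial e_1$ is needed. (The paper's own proof of the closely related Lemma~\ref{lemma:restricting-controlled-homotopies} does use a control condition on the source, so your instinct matches the paper's style; but for part (3) as stated the continuity observation is cleaner and avoids the missing hypothesis.)
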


\begin{definition}[Partially defined maps]\label{def_partiallydefinedmaps}
 Let $(Y_1,\kappa_1)$, $(Y_2,\kappa_2)$ and $(Y_3,\kappa_3)$ be labelled $G$-CW-complexes.
 A \emph{partially defined $\fZ$-controlled map (away from $A$)} $Y_1 \to^A Y_2$ is a pair $(Y_1',f_1)$ where $Y_1' \subset Y_1$
 is cofinal away from $A$ and $f_1 \colon Y_1' \to Y_2$ is a controlled map.
 
 For two partially defined controlled maps $(Y_1',f_1) \colon Y_1 \to^A Y_2$ and $(Y_2',f_2) \colon Y_2 \to^A Y_3$,
 their \emph{composition} $(Y_2',f_2) \circ^A (Y_1',f_1)$ is the partially defined controlled map
 $(f_1^*Y_2',f_2 \circ f_1|_{f_1^*Y_2'}) \colon Y_1 \rightarrow^A Y_3$.
\end{definition}

Composition of partially defined maps is well-defined and associative.

\begin{definition}
 Let $(Y_1',f_0), (Y_1'',f_1) \colon Y_1 \to^A Y_2$ be partially defined controlled maps. Then $(Y_1',f_0)$ and $(Y_1'',f_1)$ are \emph{controlled homotopic away from $A$}, $(Y_1',f_0) \simeq^A (Y_1'',f_1)$, if there is a cofinal subcomplex $Y_1''' \subset Y_1' \cap Y_1''$ and a controlled homotopy $H \colon Y_1''' \halftimes [0,1] \to Y_2$ from $f_0\big|_{Y_1'''}$ to $f_1\big|_{Y_1'''}$.
\end{definition}

\begin{lemma}\label{lemma:restricting-controlled-homotopies}
 Let $H \colon Y_1 \halftimes [0,1] \to Y_2$ be a controlled homotopy, and suppose that $Y_2' \subset Y_2$ is cofinal away from $A \subset Z$.
 
 Then there is a cofinal subcomplex $Y_1' \subset Y_1$ away from $A$ such that $H$ restricts to a controlled homotopy $Y_1' \halftimes [0,1] \to Y_2'$.
\end{lemma}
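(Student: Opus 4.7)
The plan is to pull back $Y_2'$ along $H$ to obtain a cofinal subcomplex of the relative cylinder $Y_1 \halftimes [0,1]$, and then descend to a cofinal subcomplex of $Y_1$ by keeping only those cells of $Y_1$ whose entire ``column'' in the cylinder lies in the pullback.

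First I would apply Lemma~\ref{lemma:cofinal-subcomplexes}(3) to the controlled map $H \colon Y_1 \halftimes [0,1] \to Y_2$ and the cofinal subcomplex $Y_2' \subset Y_2$: the subcomplex $Z := H^*Y_2' \subset Y_1 \halftimes [0,1]$ is cofinal away from $A$. Next, define $Y_1' \subset Y_1$ to be the set of those cells $e$ of $Y_1$ such that all cells of $Y_1 \halftimes [0,1]$ lying above $e$ under the projection $p \colon Y_1 \halftimes [0,1] \to Y_1$, namely $e \times \{0\}$ and $e \times \{1\}$ (of dimension $\dim e$) together with $e \times (0,1)$ (of dimension $\dim e + 1$), are contained in $Z$. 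Because $Z$ is a subcomplex and the cylinder-cells above $\overline{e}$ are exactly those above the cells of $\overline{e}$, this prescription defines a $G$-invariant subcomplex $Y_1' \subset Y_1$. By construction every cell of $Y_1' \halftimes [0,1]$ lies in $Z = H^*Y_2'$, so $H$ restricts to a controlled map $Y_1' \halftimes [0,1] \to Y_2'$, which is the required restricted homotopy.

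It remains to verify that $Y_1'$ is cofinal away from $A$. Recall that the control map on $Y_1 \halftimes [0,1]$ is $\kappa_1 \circ \cells p$, so all three cylinder-cells above $e$ carry the label $\kappa_1(e)$. Fix $k \in \NN$. Applying cofinality of $Z$ in dimensions $k$ and $k+1$ yields morphism control conditions $C_k, C_{k+1} \in \fC$ with the defining property, and by \ref{cont:item:union} choose $C \in \fC$ with $C_k \cup C_{k+1} \subset C$. For any $k$-cell $e$ of $Y_1$ with $\kappa_1(e) \notin A^C$, we then have $\kappa_1(e) \notin A^{C_k}$ and $\kappa_1(e) \notin A^{C_{k+1}}$, whence all three cylinder-cells above $e$ lie in $Z$, so $e \in Y_1'$.

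The only mildly delicate point, which is also the main source of care needed in the proof, is the bookkeeping for the cell structure of the relative cylinder: a single $k$-cell of $Y_1$ contributes cells in both dimensions $k$ and $k+1$ to $Y_1 \halftimes [0,1]$, so the cofinality hypothesis on $Z$ must be invoked at two consecutive dimensions and the resulting control conditions combined via axiom~\ref{cont:item:union}.
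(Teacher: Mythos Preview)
Your proof is correct and constructs the same subcomplex $Y_1'$ as the paper (namely the cells $e$ with $H(\langle e\rangle \halftimes [0,1]) \subset Y_2'$), but you organize the cofinality argument more economically: instead of the paper's direct skeletal induction tracking control conditions for $Y_1$, $H$, and $Y_2'$ simultaneously, you first invoke Lemma~\ref{lemma:cofinal-subcomplexes}(3) to get cofinality of $H^*Y_2'$ in the cylinder and then descend using the simple observation that each cell of $Y_1$ contributes cylinder cells in only two consecutive dimensions. This is a legitimate and slightly cleaner packaging of the same idea.
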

\begin{proof}
 We construct $Y_1'$ by induction over the skeleta. Assume we have constructed $\skel{Y_1'}{n-1} \subset Y_1$ such that $\skel{Y_1'}{n-1} \subset \skel{Y_1}{n-1}$ is cofinal and such that $H(\skel{Y_1'}{n-1} \halftimes [0,1]) \subset Y_2'$. Define
 \begin{equation*}
  I_n := \{ e \in \cells_n Y_1 \mid \partial e \subset \skel{Y_1'}{n-1}, H( \gen{e} \halftimes [0,1] ) \subset Y_2' \}.
 \end{equation*}
 Then $\skel{Y_1'}{n} := \skel{Y_1'}{n-1} \cup \bigcup_{e \in I_n} \gen{e}$ is a $G$-invariant subcomplex such that
 \begin{equation*}
  H(\skel{Y_1'}{n} \halftimes [0,1]) \subset Y_2'.
 \end{equation*}
 So we only have to show that $\skel{Y_1'}{n} \subset \skel{Y_1}{n}$ is cofinal. There are control conditions $C_1$, $C_1'$, $C$ and $C_2'$ with the following properties:
 \begin{enumerate}
  \item For all $e, e' \in \cells \skel{Y_1}{n}$ with $e' \subset \gen{e}$ we have $(\kappa_1(e'),\kappa_1(e)) \in C_1$.
  \item $\kappa_1^{-1}(Z \setminus A^{C_1'} ) \cap \cells \skel{Y_1}{n-1} \subset \cells \skel{Y_1'}{n-1}$.
  \item For all $e \in \cells_n Y_1$ and all $e' \in \cells \gen{ H(\gen{e} \halftimes [0,1]) }$, we have $(\kappa_2(e'),\kappa_1(e)) \in C$.
  \item $\kappa_2^{-1}(Z \setminus A^{C_2'} ) \cap \cells \skel{Y_2}{n+1} \subset \cells \skel{Y_2'}{n+1}$.
 \end{enumerate}
 Suppose $e \in \cells_n Y_1$ such that $e \notin I_n$. If $\partial e \nsubseteq \skel{Y_1'}{n-1}$, then $\kappa_1(e) \in A^{C_1' \circ C_1}$. If $H( \gen{e} \halftimes [0,1] ) \nsubseteq Y_2'$, then $\kappa_1(e) \in A^{C_2' \circ C}$. Hence, $\kappa_1( \cells_n Y_1 \setminus I_n) \subset A^{C_1' \circ C_1 \cup C_2' \circ C}$. This proves that $\skel{Y_1'}{n} \subset \skel{Y_1}{n}$ is cofinal away from $A$.
 
 Defining $Y_1' := \bigcup_n \skel{Y_1'}{n}$ finishes the proof.
\end{proof}

\begin{lemma}
 Let $(Y_1^0,f_0), (Y_1^1,f_1) \colon Y_1 \to^A Y_2$ be partially defined controlled maps such that $(Y_1^0,f_0) \simeq^A (Y_1^1,f_1)$.
 \begin{enumerate}
  \item For every partially defined controlled map $(Y_0',\alpha) \colon Y_0 \to^A Y_1$ we have $(Y_1^0,f_0) \circ^A (Y_0',\alpha) \simeq^A (Y_1^1,f_1) \circ^A (Y_0',\alpha)$
  \item For every partially defined controlled map $(Y_2',\beta) \colon Y_2 \to^A Y_3$ we have $(Y_2',\beta) \circ^A (Y_1^0,f_0) \simeq ^A (Y_2',\beta) \circ^A (Y_1^1,f_1)$
 \end{enumerate}
\end{lemma}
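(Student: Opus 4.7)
The plan is to handle both parts by transporting the given controlled homotopy through the new partially defined map, after restricting to suitable cofinal subcomplexes. By assumption there is a cofinal subcomplex $Y_1''' \subset Y_1^0 \cap Y_1^1$ away from $A$ (cofinal as a subcomplex of $Y_1$) together with a controlled homotopy $H \colon Y_1''' \halftimes [0,1] \to Y_2$ from $f_0|_{Y_1'''}$ to $f_1|_{Y_1'''}$.

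For part~(1), I would pull $Y_1'''$ back along $\alpha$. By Lemma~\ref{lemma:cofinal-subcomplexes}(3), $\alpha^{*}Y_1''' \subset Y_0'$ is cofinal away from $A$, and combining with the cofinality of $Y_0' \subset Y_0$ via Lemma~\ref{lemma:cofinal-subcomplexes}(2) shows that $\alpha^{*}Y_1''' \subset Y_0$ is cofinal away from $A$. Since $Y_1''' \subset Y_1^0 \cap Y_1^1$, the pullback satisfies $\alpha^{*}Y_1''' \subset \alpha^{*}Y_1^0 \cap \alpha^{*}Y_1^1$, so it lies in the intersection of the domains of definition of the two compositions. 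The composite
\begin{equation*}
 H \circ \bigl(\alpha|_{\alpha^{*}Y_1'''} \halftimes \id_{[0,1]}\bigr) \colon \alpha^{*}Y_1''' \halftimes [0,1] \to Y_2
\end{equation*}
is then a controlled homotopy whose restrictions at the endpoints agree with $f_i \circ \alpha$, restricted to $\alpha^{*}Y_1'''$. This gives the required homotopy witnessing $(Y_1^0,f_0) \circ^A (Y_0',\alpha) \simeq^A (Y_1^1,f_1) \circ^A (Y_0',\alpha)$.

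For part~(2), the key technical tool is Lemma~\ref{lemma:restricting-controlled-homotopies}: applied to $H$ and the cofinal subcomplex $Y_2' \subset Y_2$, it produces a cofinal subcomplex $Y_1'' \subset Y_1'''$ away from $A$ such that $H$ restricts to a controlled homotopy $H' \colon Y_1'' \halftimes [0,1] \to Y_2'$. By transitivity, $Y_1''$ is cofinal in $Y_1$ away from $A$. Since the endpoints of $H'$ are $f_0|_{Y_1''}$ and $f_1|_{Y_1''}$, both of which land in $Y_2'$, we have $Y_1'' \subset f_0^{*}Y_2' \cap f_1^{*}Y_2'$. The composition $\beta \circ H' \colon Y_1'' \halftimes [0,1] \to Y_3$ is a controlled homotopy from $\beta \circ f_0|_{Y_1''}$ to $\beta \circ f_1|_{Y_1''}$, providing the required witness.

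The only genuinely non-formal step is the invocation of Lemma~\ref{lemma:restricting-controlled-homotopies} in part~(2); everything else reduces to the pullback and transitivity properties of cofinality established in Lemma~\ref{lemma:cofinal-subcomplexes}, so the main obstacle is simply bookkeeping — tracking that each candidate subcomplex is simultaneously contained in the domains of the two compositions we wish to compare and remains cofinal in the ambient complex.
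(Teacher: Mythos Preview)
Your proof is correct and follows essentially the same approach as the paper: for part~(2) you invoke the lemma on restricting controlled homotopies to a cofinal subcomplex and then post-compose with $\beta$, exactly as the paper does, while for part~(1) you spell out the pullback argument that the paper merely summarizes as ``similar, but easier.''
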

\begin{proof}
 Consider the second claim. Since $(Y_1^0,f_0) \simeq^A (Y_1^1,f_1)$, there are a cofinal subcomplex $Y_1' \subset Y_1$ and a controlled homotopy $H \colon Y_1' \halftimes [0,1] \to Y_2$ from $f_0|_{Y_1'}$ to $f_1|_{Y_1'}$. Consider $Y_2' \subset Y_2$. By Lemma \ref{lemma:restricting-controlled-homotopies}, there is a cofinal subcomplex $Y_1'' \subset Y_1'$ such that $H$ restricts to a homotopy $H' \colon Y_1'' \halftimes [0,1] \to Y_2'$. Then $\beta \circ H'$ is the desired homotopy. The other claim is similar, but easier.
\end{proof}

\begin{definition}[Homotopy equivalences away from $A$] \label{def_controlledhomotopyequivalences}
 A controlled map $f \colon Y_1 \to Y_2$ between controlled $G$-CW-complexes relative $W$ is a \emph{controlled homotopy equivalence away from $A$} if there is a partially defined controlled map $(Y_2',\overline{f}) \colon Y_2 \to^A Y_1$ such that $f \circ^A (Y_2',\overline{f}) \simeq^A \id_{Y_2}$ and $(Y_2',\overline{f}) \circ^A f \simeq^A \id_{Y_1}$.

 Such maps are called \emph{$h^A$-equivalences}. If $A = \emptyset$, we abbreviate this to \emph{$h$-equivalences}.
 We denote by $h^A\cR^G(W,\fZ)$ the collection of all morphisms in $\cR^G(W,\fZ)$ which are controlled homotopy equivalences away from $A$.
\end{definition}

\begin{remark}
 Note that maps in $h^A\cR^G(W,\fZ)$ are \emph{morphisms}, hence required to respect the retractions, while in general partially defined maps and partially defined homotopy equivalences do not need to respect the retractions.  This means that homotopy inverses of morphisms in $h^A\cR^G(W,\fZ)$ do not need to lie in $h^A\cR^G(W,\fZ)$. Cf.~Section 2.1 of~\cite{Waldhausen1985}, where weak equivalences are defined in a similar way.
  
 The following results will be proven using \emph{maps}.  We obtain results about $h^A\cR^G(W,\fZ)$ because it is the intersection of the $h^A$-equivalences with the morphisms.
 
 Note also that $h$-equivalences are $h^A$-equivalences for any choice of $A$.
\end{remark}

The collection $h^A\cR^G(W,\fZ)$ is closed under composition of morphisms, and identity morphisms are controlled homotopy equivalences away from $A$, hence $h^A\cR_G(W,\fZ)$ is a subcategory of $\cR^G(W,\fZ)$.
Moreover, this subcategory satisfies the Saturation Axiom, i.e., whenever $f_1$ and $f_2$ are composable morphisms, and two out of $f_1$, $f_2$ and $f_2f_1$ are $h^A$-equivalences, so is the third.

We also need to discuss the cylinder functor on $\cR^G_f(W,\fZ)$ before we are ready to continue.
Let $f \colon Y_1 \to Y_2$ be a controlled map of controlled $G$-CW-complexes relative $W$.
Then we define $\cyl(f)$ by the pushout
\begin{equation*}
 \commsquare{Y_1 \times \{1\} = Y_1}{Y_2}{Y_1 \halftimes [0,1]}{\cyl(f)}{f}{}{}{}
\end{equation*}
of $G$-CW-complexes relative $W$. We choose the canonical cofibration $Y_2 \rightarrowtail \cyl(f)$ as the back inclusion of the cylinder, and let $Y_1 = Y_1 \times \{0\} \rightarrowtail Y_1\ \halftimes [0,1] \to \cyl(f)$ be the front inclusion.  The back projection $\cyl(f) \to Y_2$ is induced by $\id_{Y_2}$ and $f$ via the  projection $Y_1 \halftimes [0,1] \rightarrow Y_1$ and the universal property of the pushout.  If $f$ is a morphism in $\cR^G(W,\fZ)$, we can equip $\cyl(f)$ with the induced structural retraction to obtain a retractive space $\cyl_W(f)$.  Then the above diagram becomes a pushout in $\cR^G(W,\fZ)$, and the front inclusion, back inclusion and back projection are morphisms in $\cR^G(W,\fZ)$.

If we use the construction of the pushout given in the proof of Lemma \ref{lem_controlledpushouts} and the fact that $Y_1 \rightarrowtail Y_1 \halftimes [0,1]$ is the inclusion of a subcomplex,
it is clear that this defines a functor from the category of arrows in $\cR^G(W,\fZ)$ to the category of diagrams of the shape
\begin{equation}\label{eq_cylinderdiagram}
 \begin{tikzpicture}
  \matrix (m) [matrix of math nodes, column sep=2em, row sep=2em, text depth=.5em, text height=1em]
  {Y_1 & \cyl_W(f) & Y_2 \\ & Y_2 & \\};
  \path[->]
   (m-1-1) edge node[below left]{$f$} (m-2-2)
   (m-1-2) edge (m-2-2)
   (m-1-3) edge node[below right]{$=$}(m-2-2);
  \path[>->]
   (m-1-1) edge (m-1-2)
   (m-1-3) edge (m-1-2);
 \end{tikzpicture}
\end{equation}
in $\cR^G(W,\fZ)$. Observe also, that the back projection is a controlled homotopy equivalence: The usual deformation retraction of $\cyl(f)$ onto $Y_2$ is a controlled homotopy. We can choose $\cyl_W(* \rightarrow A) = A$, which is needed for the following lemma.

\begin{lemma}\label{lemma:cylinder-functor}
  $\cyl_W(-)$ gives a \emph{cylinder functor} on $\cR^G(W,\fZ)$ which satisfies the Cylinder Axiom with respect to $h$-equivalences.
\end{lemma}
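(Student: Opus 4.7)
The plan is to verify Waldhausen's axioms (Cyl~1) and (Cyl~2) for the cylinder functor and then to check the Cylinder Axiom for $h$-equivalences. Functoriality of $f \mapsto \cyl_W(f)$ from the arrow category of $\cR^G(W,\fZ)$ into diagrams of shape \eqref{eq_cylinderdiagram} is already recorded in the paragraph preceding the lemma; the relations $p \circ j_1 = f$ and $p \circ j_2 = \id_{Y_2}$ are immediate from the pushout, and the normalization $\cyl_W(* \to A) = A$ holds because $* \halftimes [0,1] = W$ collapses the defining pushout.

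For the cofibration part of (Cyl~1), I will check that the induced map $Y_1 \cup_W Y_2 \rightarrowtail \cyl_W(f)$ is a cofibration. The images of $j_1$ and $j_2$ are $G$-invariant subcomplexes of $\cyl_W(f)$ whose intersection is precisely $W$, so their union is a $G$-invariant subcomplex, and the inclusion is therefore a cofibration by Lemma~\ref{lem_controlledpushouts}. The same inspection of the induced cell structure shows that $\cyl_W(-)$ carries cofibrations in the arrow category to cofibrations, since, given a morphism of arrows $f \to f'$ whose vertical components $Y_i \rightarrowtail Y_i'$ are cofibrations, the map $\cyl_W(f) \cup_{Y_1 \cup_W Y_2} (Y_1' \cup_W Y_2') \to \cyl_W(f')$ is visibly the inclusion of a $G$-invariant subcomplex built from the new cells of $Y_1'$ and $Y_2'$.

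For the Cylinder Axiom, it remains to construct a $\fZ$-controlled homotopy $H \colon \cyl_W(f) \halftimes [0,1] \to \cyl_W(f)$ from $j_2 \circ p$ to $\id_{\cyl_W(f)}$; the other composition is already equal to $\id_{Y_2}$. The geometric candidate is the linear deformation of the tube $Y_1 \halftimes [0,1]$ along the interval direction towards $Y_1 \times \{1\}$, extended by the identity on $Y_2$. This continuous map is not cellular, so I would replace it by a cellular model, either via a fixed simplicial subdivision of $[0,1]^2$ or by induction over the skeleta of $\cyl_W(f)$ using the controlled homotopy extension property (Proposition~\ref{prop_controlledhep}), starting with the constant homotopy on the subcomplex $Y_2 \subset \cyl_W(f)$. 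The key point for control is that, whichever construction is chosen, each cell in the tube $\gen{e} \halftimes [0,1]$ is moved only within $\gen{e} \halftimes [0,1] \cup \gen{f(e)}$; the labels encountered during the homotopy are therefore $C$-related to $\kappa_1(e)$ for a single $C \in \fC$ witnessing $\fZ$-control of $\cyl_W(f)$. Using that $\fC$ is closed under composition and taking finite unions then yields the desired $\fZ$-control of $H$.

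The main obstacle is this cellular approximation: turning the evident straight-line homotopy into a genuine cellular map in $\cR^G(W,\fZ)$ while preserving the tube-by-tube confinement that makes it $\fZ$-controlled. The inductive construction via the controlled HEP is tailor-made to perform this, so I do not expect any conceptual difficulty beyond what is already handled in Proposition~\ref{prop_controlledhep}.
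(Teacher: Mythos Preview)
Your proposal is correct and follows the same approach as the paper, which treats this lemma as essentially immediate from the discussion preceding it (the paper simply notes that ``the usual deformation retraction of $\cyl(f)$ onto $Y_2$ is a controlled homotopy'' and gives no further argument). Your verification of the cylinder functor axioms is more thorough than what the paper provides.

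One remark: the ``main obstacle'' you flag is not actually present. The explicit straight-line homotopy $H(y_1,s,t) = (y_1,\, 1 - t(1-s))$ on the tube (extended by the constant homotopy on $Y_2$) is already cellular with respect to the product CW-structure on $\cyl_W(f) \halftimes [0,1]$. A cell $e \times c_1 \times c_2$ with $c_1, c_2 \in \{\{0\},\{1\},(0,1)\}$ is sent either into $\gen{f(e)} \subset Y_2$ or into $e \times (0,1)$, and in every case the image lies in the skeleton of dimension $\dim e + \dim c_1 + \dim c_2$. So no cellular approximation or appeal to the CHEP is needed; the paper's terse assertion is justified as stated. Your proposed workarounds via subdivision or the controlled HEP would of course also succeed, but they are unnecessary here.
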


We are heading towards the following Proposition.

\begin{proposition}[Gluing Lemma] \label{prop:gluing-lemma}
  Assume we have the following commutative diagram in $\cR^G(W,\fZ)$:
  \begin{equation}\label{diag:gluing-lemma-pushout}
  \begin{tikzpicture}
   \matrix (m) [matrix of math nodes, column sep=2em, row sep=2em, text depth=.5em, text height=1em]
    {X_2 & X_0 & X_1 \\ Y_2 & Y_0 & Y_1 \\};
   \path[>->]
     (m-1-2) edge node[above] {$x_1$} (m-1-3)
     (m-2-2) edge node[above] {$y_1$} (m-2-3);
   \path[->] 
    (m-1-2) edge node[above] {$x_2$} (m-1-1)
    (m-2-2) edge node[above] {$y_2$} (m-2-1)
    (m-1-1) edge node[left]{$\sim^A$} node[right]{$f_2$} (m-2-1)
    (m-1-2) edge node[left]{$\sim^A$} node[right]{$f_0$} (m-2-2)
    (m-1-3) edge node[left]{$\sim^A$} node[right]{$f_1$} (m-2-3);
   \end{tikzpicture}
 \end{equation}
 Assume $x_1$, $y_1$ are cofibrations and the $f_i$ are $h^A$-equivalences.

 Then the induced map on the pushouts $f \colon X_2 \cup_{X_0} X_1 \rightarrow Y_2 \cup_{Y_0} Y_1$ is an $h^A$-equivalence.
\end{proposition}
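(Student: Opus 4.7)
The idea is to build a partial $h^A$-inverse for $f$ by choosing partial $h^A$-inverses of the $f_i$ and using the controlled homotopy extension property (Proposition~\ref{prop_controlledhep}) to rectify them so that they commute strictly with the pushout structure maps. Since cofibrations in $\cR^G(W,\fZ)$ are by definition isomorphic to inclusions of $G$-invariant subcomplexes, we may assume $x_1$ and $y_1$ are themselves such inclusions.

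Choose partial $h^A$-inverses $g_i \colon Y_i \to^A X_i$ for $i = 0,1,2$, together with partial controlled homotopies witnessing $f_i g_i \simeq^A \id_{Y_i}$ and $g_i f_i \simeq^A \id_{X_i}$. Using commutativity of \eqref{diag:gluing-lemma-pushout} together with $f_0 g_0 \simeq^A \id_{Y_0}$, we compute
\[ f_1 g_1 y_1 \simeq^A y_1 \simeq^A y_1 f_0 g_0 = f_1 x_1 g_0, \]
and composing on the left with $g_1$ and twice invoking $g_1 f_1 \simeq^A \id_{X_1}$ yields $g_1 y_1 \simeq^A x_1 g_0$ as partial maps $Y_0 \to^A X_1$; analogously $g_2 y_2 \simeq^A x_2 g_0$. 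Shrink the domain of $g_1$ to a cofinal subcomplex $Y_1'' \subset Y_1$ whose intersection with $Y_0$ contains the domain of the above homotopy (possible by Lemma~\ref{lemma:cofinal-subcomplexes}). Applying Proposition~\ref{prop_controlledhep} to the subcomplex pair $Y_1'' \cap Y_0 \subset Y_1''$ extends the controlled homotopy on $Y_1'' \cap Y_0$ from $g_1|_{Y_1''\cap Y_0}$ to $x_1 g_0|_{Y_1''\cap Y_0}$, together with the map $g_1$ on $Y_1'' \times \{0\}$, to a controlled homotopy on $Y_1'' \halftimes [0,1]$ starting at $g_1$; its endpoint $g_1'$ satisfies $g_1' \simeq^A g_1$ and $g_1' y_1 = x_1 g_0$ on $Y_1'' \cap Y_0$. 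Construct $g_2'$ analogously. By the universal property of the pushout, $g_0$, $g_1'$, $g_2'$ glue to a partial controlled map $g \colon Y_2 \cup_{Y_0} Y_1 \to^A X_2 \cup_{X_0} X_1$.

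It remains to verify $f g \simeq^A \id$ and $g f \simeq^A \id$. We already have homotopies $f_i g_i' \simeq^A \id_{Y_i}$, but they need not agree on the overlaps $y_1(Y_0)$ and $y_2(Y_0)$. A second round of CHEP rectifies this: fix the homotopy for $f_0 g_0$, and apply Proposition~\ref{prop_controlledhep} along $y_1 \colon Y_0 \hookrightarrow Y_1$ and $y_2 \colon Y_0 \hookrightarrow Y_2$ to replace the homotopies for $f_1 g_1'$ and $f_2 g_2'$ by ones restricting to the chosen homotopy on $Y_0$. These then glue across the pushout to yield a homotopy $f g \simeq^A \id$. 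The homotopy $g f \simeq^A \id$ is constructed identically, with the roles of $X_i$ and $Y_i$ reversed. The principal obstacle throughout is bookkeeping: every intersection and every CHEP extension must preserve cofinality (which follows from Lemma~\ref{lemma:cofinal-subcomplexes}) and $\fZ$-control (which is preserved by Proposition~\ref{prop_controlledhep}), so no genuinely new phenomena arise beyond this administrative overhead.
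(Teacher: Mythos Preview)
Your argument has a genuine gap: you treat $y_2 \colon Y_0 \to Y_2$ as if it were a cofibration, but the statement only assumes that $x_1$ and $y_1$ are cofibrations. The construction of $g_1'$ legitimately uses the CHEP for the subcomplex pair $Y_1'' \cap Y_0 \subset Y_1''$, which exists because $y_1$ is (isomorphic to) a subcomplex inclusion. When you write ``Construct $g_2'$ analogously'' and later ``apply Proposition~\ref{prop_controlledhep} along \ldots\ $y_2 \colon Y_0 \hookrightarrow Y_2$'', you are invoking the CHEP for a pair that need not exist: $y_2$ is an arbitrary morphism, so $y_2(Y_0)$ is not in general a $G$-invariant subcomplex of $Y_2$, and there is no homotopy extension property available to push the homotopy $g_2 y_2 \simeq^A x_2 g_0$ from $Y_0$ forward to a homotopy of $g_2$ on (a cofinal subcomplex of) $Y_2$.

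The paper deals with exactly this asymmetry. It first proves your argument essentially as written, but under the \emph{additional} hypothesis that $x_2$ and $y_2$ are cofibrations (Lemma~\ref{lemma:gluing-lemma-special-case}); there the rectification of both $g_1$ and $g_2$ via CHEP and Lemma~\ref{lemma:hA-equivalence-of-pairs} goes through, with some care about cofinal subcomplexes of the pushout (cf.~the Remark following Lemma~\ref{lemma:hA-equivalence-of-pairs} and the use of Lemma~\ref{lemma:cofinal-subcomplexes}\eqref{item:subcomplex-cofinal-subcomplex}). The general case is then reduced to this special case by factoring $x_2$ and $y_2$ through their mapping cylinders, so that the left-hand maps become cofibrations followed by $h$-equivalences; one then combines the special case with Lemma~\ref{lemma:pushout-along-cofibration} (pushout along a cofibration of an $h$-equivalence is an $h$-equivalence) and Saturation. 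Your sketch is therefore the core of the argument, but it is missing the mapping-cylinder reduction that makes the ``analogous'' step for $g_2'$ valid.
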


The uncontrolled version of Proposition \ref{prop:gluing-lemma} is well-known. Our proof will follow the strategy pursued in \cite[pages 33--59]{Kamps-Porter;abstract-homotopy}, who give a detailed argument which relies only on the homotopy extension property.

For the purpose of the proof, we introduce the following notation: If $f$ and $g$ are partially defined maps $X \to^A Y$ whose restrictions to some cofinal subcomplex of $X$ are equal, we write $f =^A g$.

For the proof of the Gluing Lemma, we need the following auxiliary results.
 
\begin{lemma} \label{lemma:left-ha-inverse}
  Let $j_i \colon B \rightarrowtail Y_i$, $i=1,2$ be cofibrations.  Let $f \colon Y_1 \rightarrow Y_2$ be an $h^A$-equivalence which satisfies $f j_1 = j_2$. 
  
  Then there is a partially defined map $(Y_2',g) \colon Y_2 \rightarrow^A Y_1$ with $(Y_2',g) \circ^A j_2 =^A j_1$ and a homotopy $H \colon (Y_2',g) \circ^A f \simeq^A \id_{Y_1}$ away from $A$ with $H \circ^A (j_1 \times [0,1]) =^A j_1 \times [0,1]$.

  Furthermore, $f \circ^A (Y_2',g)$ is also homotopic to the inclusion via a homotopy under $B$, i.e., $f$ is an ``$h^A$-equivalence under $B$''.
\end{lemma}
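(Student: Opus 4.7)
The plan proceeds in two stages, both driven by the controlled homotopy extension property (Proposition~\ref{prop_controlledhep}). First, rectify the given $h^A$-inverse so that $g \circ^A j_2 =^A j_1$ holds strictly on a cofinal subcomplex, rather than merely up to homotopy. Second, following the classical argument of Kamps--Porter \cite[pp.~33--59]{Kamps-Porter;abstract-homotopy}, produce homotopies $gf \simeq^A \id_{Y_1}$ and $fg \simeq^A \id_{Y_2}$ which are constant on $j_1(B)$ and $j_2(B)$, respectively. Relative to the uncontrolled case, the extra work is to align cofinal subcomplexes using Lemmas~\ref{lemma:cofinal-subcomplexes} and~\ref{lemma:restricting-controlled-homotopies} before each invocation of Proposition~\ref{prop_controlledhep}.

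For the rectification, pick a partially defined $h^A$-inverse $(Y_2',g_0)$ for $f$ together with a homotopy $K_0 \colon (Y_2',g_0) \circ^A f \simeq^A \id_{Y_1}$. Since $fj_1 = j_2$, restricting $K_0$ to $j_1(B) \halftimes [0,1]$ yields a controlled homotopy from $g_0 \circ j_2$ to $j_1$ defined on some cofinal subcomplex $B' \subseteq B$. Passing to a common refinement via Lemma~\ref{lemma:cofinal-subcomplexes}\ref{item:subcomplex-cofinal-subcomplex}, one obtains a cofinal $Y_2'' \subseteq Y_2'$ with $Y_2'' \cap j_2(B) = j_2(B')$. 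Then $g_0|_{Y_2''}$ together with the restricted homotopy defines a $\fZ$-controlled map on $Y_2'' \times \{0\} \cup j_2(B') \halftimes [0,1]$, which Proposition~\ref{prop_controlledhep} extends to a $\fZ$-controlled homotopy $G \colon Y_2'' \halftimes [0,1] \to Y_1$ with $G(-,0) = g_0|_{Y_2''}$. Its endpoint $g := G(-,1)$ satisfies $g \circ^A j_2 =^A j_1$, and $g \simeq^A g_0$ via $G$, so $g$ remains an $h^A$-inverse for $f$.

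For the second stage, observe that after Step~1 one has $gf|_{j_1(B)} =^A j_1$ strictly and, symmetrically, $fg|_{j_2(B)} =^A j_2$. Pick homotopies $K \colon gf \simeq^A \id_{Y_1}$ and $L \colon fg \simeq^A \id_{Y_2}$; their restrictions to $j_1(B) \halftimes [0,1]$ and $j_2(B) \halftimes [0,1]$ are self-homotopies of $j_1$ and $j_2$, not a priori constant. Following Kamps--Porter, assemble a ``homotopy of homotopies'' $\Phi \colon (Y_1 \halftimes [0,1]) \halftimes [0,1] \to Y_1$ with $\Phi(-,-,0) = K$, $\Phi(y,0,t) = gf(y)$, $\Phi(y,1,t) = y$, and whose restriction to $(j_1(B) \halftimes [0,1]) \halftimes [0,1]$ deforms $K|_{j_1(B)}$ into the constant self-homotopy at $j_1$. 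The map $\Phi$ is built by successive applications of Proposition~\ref{prop_controlledhep}: first on the cylinder over $j_1(B)$, where $K|_{j_1(B)}$ is trivialised with the aid of $L$ and the strict identities on $B$, and then extended across all of $(Y_1 \halftimes [0,1]) \halftimes [0,1]$. The homotopy $H := \Phi(-,-,1)$ is then the required homotopy under $B$. An entirely parallel construction on the $Y_2$-side yields a homotopy $fg \simeq^A \id_{Y_2}$ constant on $j_2(B)$, showing that $f$ is an $h^A$-equivalence under $B$.

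The principal obstacle is the construction of $\Phi$ on the cylinder over $j_1(B)$: topologically this amounts to trivialising a potentially nontrivial self-homotopy of $j_1$ in a mapping space, which Kamps--Porter resolve by a dovetailed use of $K$, $L$, and the rectified strict equalities. Transplanting the argument to our controlled partially-defined setting requires realigning the domains of all partial maps involved on common cofinal subcomplexes (iterated use of Lemma~\ref{lemma:restricting-controlled-homotopies}) before each appeal to Proposition~\ref{prop_controlledhep}, and one has to verify that the iterated cylinder $(Y_1 \halftimes [0,1]) \halftimes [0,1]$ inherits a $\fZ$-control structure compatible with that of $Y_1$. This is essentially notational bookkeeping, but needs to be tracked systematically because compositions of partial maps are only well-defined after pulling back cofinal subcomplexes.
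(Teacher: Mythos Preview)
Your overall strategy matches the paper's: both defer to the classical argument (the paper cites May \S6.5, you cite Kamps--Porter) and note that the only new ingredient is bookkeeping for cofinal subcomplexes. Your Stage~1 rectification is correct and is exactly the standard first step.

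There is, however, a genuine gap in your Stage~2 description. You fix $g$ after Stage~1 and attempt to trivialise the self-homotopy $K|_{j_1(B)}$ directly via a homotopy-of-homotopies $\Phi$ satisfying $\Phi(y,0,t)=gf(y)$ and $\Phi(y,1,t)=y$. For this $\Phi$ to exist on $(j_1(B)\halftimes[0,1])\halftimes[0,1]$, the loop $K|_{j_1(B)}$ must be nullhomotopic rel endpoints in $\map(B,Y_1)$, and for an \emph{arbitrary} $K$ this fails: take $B=\ast$, $Y_1=Y_2=S^1$, $f=g=\id$, and let $K$ be a full rotation. Invoking $L$ does not help here, since $fK|_B$ and $L|_B\circ f$ are two a priori unrelated paths $fgf|_B\simeq f|_B$. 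The classical argument you cite does \emph{not} proceed this way: it applies a sublemma which, given $e\colon Y_1\to Y_1$ with $ej_1=j_1$ and $e\simeq\id$ freely, produces $e'$ with $e'j_1=j_1$ and $e'e\simeq\id$ \emph{rel} $B$; one then replaces $g$ by $e'g$. The ``furthermore'' clause is obtained by a second, asymmetric application of this sublemma to $f(e'g)$, not by a parallel construction.

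That said, your approach can be salvaged for the first direction: if you take $K$ to be the \emph{specific} homotopy $\overline{Gf}\ast K_0$ arising from your rectification, then $K|_B=\overline{K_0|_B}\ast K_0|_B$ is a path followed by its reverse, hence nullhomotopic, and your $\Phi$ exists. The second direction still requires the sublemma.
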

\begin{proof}
 This is very similar to the standard proofs in the uncontrolled case (e.g.~\cite[\S 6.5]{May1999}). In our situation, one needs to take into account that maps and homotopies are only defined on cofinal subcomplexes.
\end{proof}

\begin{lemma}[left inverses for $h^A$-equivalences, relative case] \label{lemma:hA-equivalence-of-pairs}
  Assume we have the following diagram:
  \begin{equation}\label{diag:square-cofibrations}
    \begin{tikzpicture}
      \matrix (m) [matrix of math nodes, column sep=2em, row sep=2em, text depth=.5em, text height=1em]
      {B_1 & B_2\\
      Y_1  & Y_2 \\};
      \path[>->]
      (m-1-1) edge node[left] {$i_1$} (m-2-1)
      (m-1-2) edge node[left] {$i_2$} (m-2-2);
      \path[->]
      (m-1-1) edge node[above] {$b$} (m-1-2)
      (m-2-1) edge node[above]{$f$} (m-2-2);
    \end{tikzpicture}
  \end{equation}
  Assume that $b$ is an $h^A$-equivalence with inverse $(B_2',b') \colon B_2 \rightarrow^A B_1$ and homotopy $H_{B_1}\colon (B_2',b') \circ^A b \simeq^A \id_{B_1}$.  (We do not need to specify the other homotopy.)

  If $f$ is an $h^A$-equivalence, then there is a partially defined map $(Y_2',f') \colon Y_2 \rightarrow^A Y_1$ and a homotopy $H_{Y_1} \colon (Y_2',f') \circ^A f \simeq^A \id_{Y_1}$ such that $i_1 \circ^A (B_2',b') =^A (Y_2',f') \circ^A i_2$ and $H_{Y_1} \circ^A (i_1 \times [0,1]) =^A i_1 \circ^A H_{B_1}$. (``$f$ has a left $h^A$-inverse relative to $B_i$.'')
\end{lemma}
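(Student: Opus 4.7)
The plan is to adapt the standard proof of the corresponding uncontrolled relative homotopy-inverse lemma (see, e.g., \cite[\S 6.5]{May1999}), using the $\fZ$-controlled homotopy extension property (Proposition~\ref{prop_controlledhep}) throughout and tracking partially defined maps via Lemmas~\ref{lemma:cofinal-subcomplexes} and \ref{lemma:restricting-controlled-homotopies}.

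First I produce the partially defined map $f'$. Pick any $h^A$-inverse $(Y_2'', g)$ of $f$ with homotopy $K_1 \colon g \circ^A f \simeq^A \id_{Y_1}$ and, using that $b$ is an $h^A$-equivalence, an auxiliary $H_{B_2} \colon b \circ^A b' \simeq^A \id_{B_2}$. Because $f \circ i_1 = i_2 \circ b$, concatenating the reverse of $g \circ^A i_2 \circ^A H_{B_2}$ with $K_1 \circ^A (i_1 \circ^A b' \times \id)$ yields a controlled homotopy $L \colon g \circ^A i_2 \simeq^A i_1 \circ^A b'$ on a cofinal subcomplex of $B_2 \halftimes [0,1]$. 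Proposition~\ref{prop_controlledhep} applied to the cofibration $i_2$ extends $g$ on $Y_2 \times \{0\}$ and $L$ on $B_2 \halftimes [0,1]$ to a controlled homotopy $M \colon Y_2 \halftimes [0,1] \to^A Y_1$; setting $f' := M|_{Y_2 \times \{1\}}$ produces a partially defined controlled map satisfying $f' \circ^A i_2 =^A i_1 \circ^A b'$.

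Next I produce the homotopy $H_{Y_1}$ with the prescribed restriction. Apply Proposition~\ref{prop_controlledhep} to the cofibration $i_1$, starting from $f' \circ^A f$ on $Y_1 \times \{0\}$ and $i_1 \circ^A H_{B_1}$ on $B_1 \halftimes [0,1]$; this yields a controlled homotopy $H \colon Y_1 \halftimes [0,1] \to^A Y_1$ whose endpoint $\psi := H|_{Y_1 \times \{1\}}$ satisfies $\psi \circ^A i_1 =^A i_1$ but need not equal $\id_{Y_1}$. Transporting $K_1$ along $M$ produces a provisional homotopy $\widetilde H \colon f' \circ^A f \simeq^A \id_{Y_1}$; concatenating $\overline H$ with $\widetilde H$ exhibits $\psi \simeq^A \id_{Y_1}$, so $\psi$ is an $h^A$-equivalence restricting to $i_1$ on $B_1$. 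Applying Lemma~\ref{lemma:left-ha-inverse} to $\psi$ with cofibration $i_1$ on both sides produces a partially defined $h^A$-inverse $\overline{\psi}$ with $\overline{\psi} \circ^A i_1 =^A i_1$ and a homotopy $J \colon \overline{\psi} \circ^A \psi \simeq^A \id_{Y_1}$ constant on $B_1$. Replacing $f'$ by $\overline{\psi} \circ^A f'$ preserves the normalization $\overline{\psi} \circ^A f' \circ^A i_2 =^A i_1 \circ^A b'$, and concatenating $\overline{\psi} \circ^A H$ with $J$ is a controlled homotopy from $\overline{\psi} \circ^A f' \circ^A f$ to $\id_{Y_1}$ whose restriction along $i_1$ agrees with $i_1 \circ^A H_{B_1}$ followed by a constant at $i_1$. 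A final reparametrization in the time direction, obtained through one further application of Proposition~\ref{prop_controlledhep} in the second $[0,1]$-factor (while leaving the endpoints at $t \in \{0,1\}$ undisturbed), identifies this $B_1$-restriction with $i_1 \circ^A H_{B_1}$ exactly on a suitable cofinal subcomplex, yielding the desired $H_{Y_1}$.

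The main obstacle lies in the final step: arranging that the endpoint of $H_{Y_1}$ at $t = 1$ equals $\id_{Y_1}$ \emph{and} that the restriction to $B_1$ equals $i_1 \circ^A H_{B_1}$ on the nose, not merely up to reparametrization or homotopy of homotopies. The first issue is resolved by invoking Lemma~\ref{lemma:left-ha-inverse}, whose ``under $B$'' conclusion is essential. The second is a reparametrization issue absorbed by the final application of the controlled HEP, which leverages the partially defined setup to tolerate the reparametrization on a cofinal subcomplex; the remaining work beyond the uncontrolled argument is the bookkeeping of cofinal subcomplexes, handled by Lemmas~\ref{lemma:cofinal-subcomplexes} and \ref{lemma:restricting-controlled-homotopies}.
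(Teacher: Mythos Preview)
Your proposal is correct and follows essentially the same route as the paper's proof. Both arguments adjust an arbitrary $h^A$-inverse of $f$ via the CHEP along $i_2$ so that it restricts to $i_1 \circ^A b'$, extend $i_1 \circ^A H_{B_1}$ over $Y_1$ via the CHEP along $i_1$ to obtain an endpoint map fixing $B_1$, invoke Lemma~\ref{lemma:left-ha-inverse} to invert that endpoint under $B_1$, and then repair the reparametrization (concatenation with a constant homotopy on $B_1$) by one more application of the CHEP in a second interval direction. The only cosmetic difference is that you make the intermediate homotopy $\widetilde H$ (showing $\psi \simeq^A \id_{Y_1}$) explicit, whereas the paper leaves this implicit in noting that $l$ is a composition of $h^A$-equivalences.
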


\begin{proof}
  Cf.~\cite[I.7.3]{Kamps-Porter;abstract-homotopy}. For the purpose of this proof, we omit the domains of partially defined maps from the notation.
  Let $g$ be an $h^A$-inverse for $f$.  The map $g \circ^A i_2$ is homotopic away from $A$ to $g \circ^A i_2 \circ^A b \circ^A b'$ and hence to $i_1 \circ^A b'$.  As $i_2$ is a cofibration, $g$ is homotopic to a map $g'\colon Y_2 \rightarrow^A Y_1$ such that $g' \circ^A i_2 =^A i_1\circ^A b'$.  Now $i_1 \circ^A H_{B_1}$ is a homotopy away from $A$ from $i_1 \circ^A b' \circ^A b =^A g' \circ^A f \circ^A i_1$ to $i_1$.  As $i_1$ is a cofibration, homotopy extension gives a homotopy $K$, extending $H_{B_1}$, from $g' \circ^A f$ to a map $l$.

  Then $l \circ^A i_1 =^A i_1$, hence Lemma~\ref{lemma:left-ha-inverse} provides a left $h^A$-inverse $l'$ of $l$ under $B_1$.  Define $f' := l' \circ^A g'$.  Then, as a composition of $h^A$-equivalences, $f'$ is itself an $h^A$-equivalence, and $f' \circ^A i_2 =^A i_1$.

  We have homotopies $f' \circ^A f =^A l'\circ^A g' \circ^A f \simeq^A_{K} l' \circ^A l \simeq^A \id$. Restricting along $i_1$, this is the concatenation of the homotopy $H_{B_1}$ and the constant homotopy.  There is a cofinal subcomplex $B_1' \subset B_1$ such that we get a map $B_1' \halftimes [0,1] \halftimes [0,1] \rightarrow^A Y_1$ by projecting to the first two factors and then applying $H_{B_1}$.  The homotopies above extend this to a map $Y_1' \halftimes [0,1] \halftimes 0 \cup Y_1 \halftimes \{0,1\} \halftimes [0,1] \rightarrow^A Y_1$, defined on some cofinal subcomplex $Y_1' \subset Y_1$.  We may assume that $B_1' \subset Y_1'$.  The CHEP~\ref{prop_controlledhep} then gives the homotopy $H_{Y_1}$.
\end{proof}

\begin{remark}
  We cannot make special assumptions about the cofinal subcomplex on which $f'$ is defined.  In particular, it could happen that $Y_2' \cap B_2 \not\subseteq B_2'$.  We need to take care of this situation in the proof of Lemma~\ref{lemma:gluing-lemma-special-case} below.
\end{remark}

\begin{lemma}[Gluing Lemma, special case]\label{lemma:gluing-lemma-special-case}
  Assume in~\eqref{diag:gluing-lemma-pushout} additionally that $x_2$ and $y_2$ are cofibrations.  Then the conclusion of the Proposition holds, i.e., the map $f$ on the pushout is an $h^A$-equivalence.
\end{lemma}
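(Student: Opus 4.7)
The plan is to follow the classical Gluing Lemma strategy: assemble a partially defined $h^A$-inverse of $f$ by gluing compatible partially defined inverses of $f_0, f_1, f_2$, and similarly assemble the contracting homotopies. First, fix a partially defined $h^A$-inverse $g_0 \colon Y_0 \to^A X_0$ of $f_0$ together with a homotopy $H_0 \colon g_0 \circ^A f_0 \simeq^A \id_{X_0}$. The extra hypothesis that $x_2$ and $y_2$ are cofibrations is precisely what allows us to apply Lemma~\ref{lemma:hA-equivalence-of-pairs} to the squares formed by $(x_i, y_i, f_0, f_i)$ for both $i=1$ and $i=2$. Feeding in $g_0$ and $H_0$, we obtain partially defined $h^A$-inverses $g_i \colon Y_i \to^A X_i$ of $f_i$ together with homotopies $H_i \colon g_i \circ^A f_i \simeq^A \id_{X_i}$ satisfying the compatibilities $g_i \circ^A y_i =^A x_i \circ^A g_0$ and $H_i \circ^A (x_i \halftimes \id_{[0,1]}) =^A x_i \circ^A H_0$.

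Next, glue $g_1$ and $g_2$ along $y_1, y_2$ to produce a partially defined map $g \colon Y_2 \cup_{Y_0} Y_1 \to^A X_2 \cup_{X_0} X_1$. The main technical point is to align the three a priori unrelated cofinal subcomplexes on which $g_0, g_1, g_2$ are defined, together with the further cofinal subcomplexes on which the above equalities actually hold. Using the closure properties of cofinal subcomplexes from Lemma~\ref{lemma:cofinal-subcomplexes} --- in particular item~\ref{item:subcomplex-cofinal-subcomplex}, which promotes a cofinal subcomplex of $Y_0$ to cofinal subcomplexes of $Y_1, Y_2$ with matching preimages under $y_1, y_2$ --- we shrink the various domains so that $g_1$ and $g_2$ literally agree with $x_i \circ g_0$ on the image of a common cofinal subcomplex of $Y_0$. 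The pushout of the shrunken subcomplexes is still cofinal in $Y_2 \cup_{Y_0} Y_1$, and gluing produces the desired $g$. The same bookkeeping one dimension higher glues $H_1, H_2$ into a homotopy $g \circ^A f \simeq^A \id_{X_2 \cup_{X_0} X_1}$.

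This shows that $g$ is a partially defined left $h^A$-inverse of $f$. To upgrade this to an $h^A$-equivalence we additionally need $f \circ^A g \simeq^A \id$. Since each $f_i$ is already an $h^A$-equivalence, any left inverse of $f_i$ is itself an $h^A$-equivalence by the standard two-sided-inverse argument, so each $g_i$, and hence $g$, is an $h^A$-equivalence. Applying the entire construction once more with the roles of $f$ and $g$ interchanged yields a partially defined left $h^A$-inverse $h$ of $g$, and the usual manipulation $f \simeq^A h \circ^A g \circ^A f \simeq^A h$ gives $f \circ^A g \simeq^A h \circ^A g \simeq^A \id$, so $f$ is indeed an $h^A$-equivalence. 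The main obstacle throughout is exactly the cofinal-subcomplex bookkeeping emphasized in the Remark preceding the statement: the partially defined pieces produced by successive applications of Lemma~\ref{lemma:hA-equivalence-of-pairs} are not a priori defined on matching cofinal subcomplexes, and essentially all nontrivial work goes into using Proposition~\ref{prop_controlledhep} (the CHEP) together with Lemma~\ref{lemma:cofinal-subcomplexes} to align them cleanly.
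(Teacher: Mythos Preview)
Your proof is correct and follows essentially the same route as the paper: apply Lemma~\ref{lemma:hA-equivalence-of-pairs} on both sides to produce compatible left inverses $g_i$ and homotopies $H_i$, align the cofinal subcomplexes via Lemma~\ref{lemma:cofinal-subcomplexes}\ref{item:subcomplex-cofinal-subcomplex} so they glue to a left $h^A$-inverse $g$ of $f$, and then rerun the construction with the $g_i$ in place of the $f_i$ to obtain the other homotopy. The paper's proof is essentially identical, only slightly terser in the final step (it omits the explicit $f \simeq^A h$ manipulation you spelled out).
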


\begin{proof}
  We can assume $x_i$, $y_i$ are cellular inclusions, because they are so up to isomorphism.

  Pick an $h^A$-inverse $(Y_0',g_0)$ of $f_0$ and a homotopy $H_0 \colon (Y_0',g_0) \circ^A f_0 \simeq^A \id_{X_0}$. By Lemma~\ref{lemma:hA-equivalence-of-pairs}, there are $h^A$-left inverses $(Y_i',g_i)$ of $f_i$ and homotopies $H_i \colon (Y_i',g_i) \circ^A f_i \simeq^A \id_{X_i}$, such that $(Y_i',g_i) \circ^A y_i =^A x_i \circ^A (Y_0',g_0)$ and $H_i \circ^A (x_i \times [0,1]) =^A y_i \circ^A H_0$, $i=1,2$.

  Choose a cofinal subcomplex $Y_0''$ of $Y_0$ such that the following diagram commutes:
  \begin{equation*}
  \begin{tikzpicture}
   \matrix (m) [matrix of math nodes, column sep=2em, row sep=2em, text depth=.5em, text height=1em]
    { Y'_2 & Y''_0 & Y'_1 \\ X_2 & X_0 & X_1 \\};
   \path[>->]
     (m-1-2) edge node[above] {$y_1$} (m-1-3)
     (m-2-2) edge node[above] {$x_1$} (m-2-3)
    (m-1-2) edge node[above] {$y_2$} (m-1-1)
    (m-2-2) edge node[above] {$x_2$} (m-2-1);
   \path[->] 
    (m-1-1) edge node[left]{$\sim^A$} node[right]{$g_2$} (m-2-1)
    (m-1-2) edge node[left]{$\sim^A$} node[right]{$g_0$} (m-2-2)
    (m-1-3) edge node[left]{$\sim^A$} node[right]{$g_1$} (m-2-3);
   \end{tikzpicture}
 \end{equation*}
 However, $Y_1' \cup_{Y_0''} Y_2'$ does not need to be a subcomplex of $Y_1 \cup_{Y_0} Y_2$, as the $Y_0''$ provided by Lemma~\ref{lemma:hA-equivalence-of-pairs} could be too small.  But by part~\eqref{item:subcomplex-cofinal-subcomplex} of Lemma~\ref{lemma:cofinal-subcomplexes} we can restrict further to cofinal subcomplexes $Y_i''$, $i=1,2$, such that $Y_i'' \cap Y_0 = Y_0''$.  Then $Y'' := Y_1'' \cup_{Y_0''} Y_2''$ is canonically isomorphic to the cofinal subcomplex $Y_2''\cup Y_1''$ of $Y_1 \cup_{Y_0} Y_2$.  Thus we get a partially defined map $(Y'', g) \colon Y_1 \cup_{Y_0} Y_2 \rightarrow^A X_1 \cup_{X_0} X_2$.  
 
 By the same argument, we get a partially defined homotopy from $(Y'',g) \circ^A f$ to the inclusion.

 Repeating the argument with $g_i$ instead of $f_i$, we get a partially defined map $l \colon X_1 \cup_{X_0} X_2 \rightarrow^A Y_1 \cup_{Y_0} Y_2$ with $l \circ^A g \simeq^A \id_{Y_2 \cup_{Y_0} Y_1}$.  It follows that $f \circ^A g \simeq^A \id_{Y_2 \cup_{Y_0} Y_1}$, hence $f$ is an $h^A$-equivalence.
\end{proof}

\begin{lemma}
  \label{lemma:pushout-along-cofibration}
  Assume that~\eqref{diag:square-cofibrations} is a pushout square and that $b$ is an $h$-equivalence.  Then $f$ is an $h$-equivalence.
\end{lemma}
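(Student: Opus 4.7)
The plan is to factor $f$ as a composition of two $h$-equivalences using the mapping cylinder $\cyl_W(b)$ as intermediate, applying Lemma~\ref{lemma:gluing-lemma-special-case} for one factor and a direct CHEP construction for the other. Factor $b = p \circ j_0$, where $j_0 \colon B_1 \rightarrowtail \cyl_W(b)$ is the front inclusion (a cofibration) and $p \colon \cyl_W(b) \to B_2$ is the back projection. By Lemma~\ref{lemma:cylinder-functor} the Cylinder Axiom makes $p$ an $h$-equivalence, and the identity $p j_0 = b$ together with Saturation and the hypothesis $b \in h$ forces $j_0$ to be an $h$-equivalence as well. Form $M := Y_1 \cup_{B_1} \cyl_W(b)$ along the cofibrations $i_1$ and $j_0$; Lemma~\ref{lem_controlledpushouts} then gives $M \in \cR^G(W,\fZ)$ together with canonical cofibrations $\alpha \colon Y_1 \rightarrowtail M$ and $\eta \colon \cyl_W(b) \rightarrowtail M$, and the universal property factors $f$ as $Y_1 \xrightarrow{\alpha} M \xrightarrow{\beta} Y_2$, where $\beta$ is induced by $\id_{Y_1}$ and $p$.

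For the first factor I would apply Lemma~\ref{lemma:gluing-lemma-special-case} to the morphism of spans
\[ (Y_1 \xleftarrow{i_1} B_1 \xrightarrow{\id} B_1) \longrightarrow (Y_1 \xleftarrow{i_1} B_1 \xrightarrow{j_0} \cyl_W(b)) \]
whose vertical components are the $h$-equivalences $\id_{Y_1}, \id_{B_1}, j_0$ and whose horizontal arrows are all cofibrations. The induced map on pushouts is precisely $\alpha$, which is therefore an $h$-equivalence.

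For the second factor the gluing lemma is unavailable because $b$ itself is not assumed to be a cofibration, so I would construct a homotopy inverse $\gamma \colon Y_2 \to M$ by hand. Set $\gamma|_{B_2} := \eta \circ j_1$, where $j_1 \colon B_2 \rightarrowtail \cyl_W(b)$ is the back inclusion. The cylinder parametrization $L(x,s) := \eta(x,s)$ is a controlled homotopy $B_1 \halftimes [0,1] \to M$ from $\eta j_0 = \alpha i_1$ to $\eta j_1 b = \gamma|_{B_2} \circ b$. Extend $L$ via the CHEP (Proposition~\ref{prop_controlledhep}) along the cofibration $i_1$ to a controlled homotopy $\tilde L \colon Y_1 \halftimes [0,1] \to M$ with $\tilde L_0 = \alpha$, and set $\gamma|_{Y_1} := \tilde L_1$. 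Compatibility on $B_1$ holds by construction, so $\gamma$ is a well-defined morphism in $\cR^G(W,\fZ)$. To check $\beta\gamma \simeq \id_{Y_2}$, observe that $\beta \circ L$ is constant in $s$ because $p(x,s) = b(x)$, so $\beta \circ \tilde L$ is a homotopy from $f = \beta\alpha$ to $\beta \circ \gamma|_{Y_1}$ which is constant on the image of $B_1$; combined with $\beta\gamma|_{B_2} = p j_1 = \id_{B_2}$ and the fact that $\id_{Y_2}|_{Y_1} = f$, reversing produces the desired homotopy. For $\gamma\beta \simeq \id_M$, I would combine the reversed homotopy $s \mapsto \tilde L_{1-s}$ on $\alpha(Y_1)$ with the cylinder deformation $((x,t),s) \mapsto \eta(x, 1 - s + st)$ on $\eta(\cyl_W(b))$ (constant on $\eta(j_1(B_2))$); both restrict on $B_1$ to $s \mapsto \eta(x, 1-s)$, so they glue to a single homotopy on $M$. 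Saturation then yields $f = \beta\alpha \in h\cR^G(W,\fZ)$.

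The main obstacle will be the bookkeeping in the second step: verifying that the two homotopies used to build $\gamma\beta \simeq \id_M$ really do agree on the shared $B_1$-part of $M$, and that the CHEP extension $\tilde L$ is jointly controlled with the intrinsic cylinder deformation on $\eta(\cyl_W(b))$. The necessary control is inherited from Proposition~\ref{prop_controlledhep} and the naturality of the cylinder functor provided by Lemma~\ref{lemma:cylinder-functor}.
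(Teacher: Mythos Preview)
Your approach is essentially the paper's: both factor $f$ through the double mapping cylinder $M = Y_1 \cup_{B_1} \cyl_W(b)$ and prove each factor is an $h$-equivalence, with the paper outsourcing to Kamps--Porter the explicit CHEP construction you carry out for $\beta$ (and invoking the pushout-stability of acyclic cofibrations where you instead apply Lemma~\ref{lemma:gluing-lemma-special-case}). One harmless slip: your $\gamma$ need not be a \emph{morphism} in $\cR^G(W,\fZ)$, since the CHEP extension $\tilde L_1$ has no reason to respect the retraction, but only a controlled map is required of a homotopy inverse, so your argument goes through unchanged.
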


\begin{proof}
  We can factor $b$ into $B_1 \rightarrowtail \cyl (b) \rightarrow B_2$, and by Saturation both maps are $h$-equivalences.  Taking the pushout along the first map, we obtain the following diagram:
  \begin{equation*}
    \begin{tikzpicture}
      \matrix (m) [matrix of math nodes, column sep=2em, row sep=2em, text depth=.5em, text height=1em]
      {B_1 & \cyl(b) & B_2\\
      Y_1  & M_{b,i_1} & Y_2 \\};
      \path[>->]
      (m-1-1) edge node[left] {$i_1$} (m-2-1)
      (m-1-2) edge node[left] {$i'$}  (m-2-2)
      (m-1-3) edge node[left] {$i_2$} (m-2-3)
      (m-1-1) edge node[above] {$b'$} (m-1-2)
      (m-2-1) edge node[above] {$j$} (m-2-2)
      ;
      \path[->]
      (m-1-2) edge node[above] {$$} (m-1-3)
      (m-2-1) edge[bend right] node[below]{$f$} (m-2-3)
      (m-2-2) edge node[above]{$p$} (m-2-3);
    \end{tikzpicture}
  \end{equation*}
  Here $M_{b,i_1}$ is the double mapping cylinder.  One now shows that $j$ is an $h$-equivalence using that  $b'$ is a cofibration and an $h$-equivalence, and that $p$ is an $h$-equivalence because $i_1$ is a cofibration.  The usual proofs of these facts apply almost verbatim.  We refer to~\cite[Proposition I.7.4]{Kamps-Porter;abstract-homotopy} for the details.
\end{proof}

\begin{proof}[Proof of Proposition~\ref{prop:gluing-lemma}]
  Cf.~also~\cite[7.1]{Kamps-Porter;abstract-homotopy}.
  Using the mapping cylinder we can factor the diagram~\eqref{diag:gluing-lemma-pushout} as follows:
  \begin{equation*}
  \begin{tikzpicture}
   \matrix (m) [matrix of math nodes, column sep=3em, row sep=3em, text depth=.5em, text height=1em]
    {X_2 & X' & X_0 & X_1 \\ 
     Y_2 & Y' & Y_0 & Y_1 \\};
   \path[>->]
     (m-1-3) edge node[above] {$x_1$} (m-1-4)
     (m-2-3) edge node[above] {$y_1$} (m-2-4)
     (m-1-3) edge node[below] {$x_2'$} (m-1-2)
     (m-2-3) edge node[above] {$y_2'$} (m-2-2);
   \path[->] 
    (m-1-3) edge[bend right] node[above] {$x_2$} (m-1-1)
    (m-2-3) edge[bend left]  node[below] {$y_2$} (m-2-1)
    (m-1-2) edge node[above] {$\sim$} node[below] {$x_3$} (m-1-1)
    (m-2-2) edge node[below] {$\sim$} node[above] {$y_3$} (m-2-1)
    (m-1-1) edge node[left]{$\sim^A$} node[right]{$f_2$} (m-2-1)
    (m-1-2) edge node[left]{$\sim^A$} node[right]{$f'$}  (m-2-2)
    (m-1-3) edge node[left]{$\sim^A$} node[right]{$f_0$} (m-2-3)
    (m-1-4) edge node[left]{$\sim^A$} node[right]{$f_1$} (m-2-4);
   \end{tikzpicture}
 \end{equation*}
 The maps $x_3$ and $y_3$ are $h$-equivalences by Lemma \ref{lemma:cylinder-functor}, so $f'$ is an $h^A$-equivalence by Saturation.  The right part of the diagram consisting of $x'_2, y'_2, x_1, y_1$ satisfies the assumptions of Lemma~\ref{lemma:gluing-lemma-special-case}.  Therefore, the induced map $f'' \colon X' \cup_{X_0} X_1 \rightarrow Y' \cup_{Y_0} Y_1$ is an $h^A$-equivalence. Abbreviate $X'' := X' \cup_{X_0} X_1$, $Y'' := Y' \cup_{Y_0} Y_1$. We get induced cofibrations $x_4 \colon X' \rightarrowtail X''$, $y_4 \colon Y' \rightarrowtail Y''$.

 We obtain the cube
  \begin{equation*}
  \begin{tikzpicture}
   \matrix (m) [matrix of math nodes, column sep=2em, row sep=2em, text depth=.5em, text height=1em]
    {    &  X' && X'' \\
     X_2 && X_2 \cup_{X_0} X_1 & \\
         &  Y' && Y'' \\
     Y_2 && Y_2 \cup_{Y_0} Y_1 & \\};
      \path[>->]
     (m-1-2) edge node[above] {$x_4$} (m-1-4)
     (m-3-2) edge node[above] {$$} (m-3-4);
   \path[->] 
    (m-1-2) edge node[pos=.6,below right]{$f'$}  (m-3-2)
    (m-1-4) edge node[left]{$\sim^A$} node[right]{$f''$} (m-3-4);
      \path[->] 
    (m-1-2) edge node[below right] {$x_3$} node[above left] {$\sim$} (m-2-1)
    (m-3-2) edge node[below right] {$y_3$} node[above left] {$\sim$} (m-4-1)
    (m-1-4) edge node[below right] {$x_5$} node[above left] {} (m-2-3)
    (m-3-4) edge node[below right] {$y_5$} node[above left] {} (m-4-3);
      \path[>->]
     (m-2-1) edge[-,line width=6pt,draw=white] (m-2-3)
     (m-2-1) edge node[pos=0.6,above right] {$$} (m-2-3)
     (m-4-1) edge node[pos=0.6,above] {$y_4$} (m-4-3);
   \path[->]
    (m-2-1) edge node[left]{$$} node[right]{$f_2$} (m-4-1)
    (m-2-3) edge[-,line width=6pt,draw=white] (m-4-3)
    (m-2-3) edge node[pos=.3,left]{$$} node[pos=.3,right]{$f$} (m-4-3);
   \end{tikzpicture}
 \end{equation*}
 where the top and bottom are pushout squares.  By Lemma~\ref{lemma:pushout-along-cofibration}, the maps $x_5$, $y_5$ are $h$-equivalences.  By Saturation, $f$ is an $h^A$-equivalence, which proves the proposition.
\end{proof}

\begin{lemma}\label{lem_finitelydominatedpushouts}
 Let $Y_2 \leftarrow Y_0 \rightarrowtail Y_1$ be a diagram of homotopy finite or finitely dominated objects.
 Then the pushout $Y_1 \cup_{Y_0} Y_2$ in $\cR^G(W,\fZ)$ is also homotopy finite/finitely dominated.
\end{lemma}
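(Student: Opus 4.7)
The plan is to reduce both finiteness conditions to the Gluing Lemma (Proposition~\ref{prop:gluing-lemma}) by constructing a finite (resp.\ finitely dominated) diagram of the same shape as $Y_2 \leftarrow Y_0 \rightarrowtail Y_1$ and mapping to it in the appropriate sense. The principal technical tools are the mapping cylinder functor (Lemma~\ref{lemma:cylinder-functor}) and the controlled homotopy extension property (Proposition~\ref{prop_controlledhep}).

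For the homotopy finite case, pick a finite $F_0$ with an $h$-equivalence $\alpha_0 \colon F_0 \to Y_0$ and, for each $i \in \{1,2\}$, a finite $F_i'$ with an $h$-equivalence $\beta_i \colon F_i' \to Y_i$. Because partially defined maps are total when $A = \varnothing$, $\beta_i$ has an honest controlled homotopy inverse $\gamma_i$, which a CHEP-argument along $W \rightarrowtail Y_i$ allows us to adjust to be compatible with the structural sections; the composite $\tilde{\alpha}_i := \gamma_i j_i \alpha_0$ can then be arranged to be a morphism $F_0 \to F_i'$ satisfying $\beta_i \tilde{\alpha}_i \simeq_\fZ j_i \alpha_0$. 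Setting $F_i := \cyl_W(\tilde{\alpha}_i)$ yields a finite object with a cofibration $F_0 \rightarrowtail F_i$ (front inclusion), while the back projection followed by $\beta_i$ provides a morphism $F_i \to Y_i$ that is an $h$-equivalence by Saturation and the Cylinder Axiom. A final application of CHEP along the cofibration $F_0 \rightarrowtail F_i$ rectifies the homotopy between the restriction $F_0 \to F_i \to Y_i$ and $j_i \alpha_0$ to a strict equality, yielding a commutative square. Proposition~\ref{prop:gluing-lemma} with $A = \varnothing$ then delivers an $h$-equivalence $F_1 \cup_{F_0} F_2 \to Y_1 \cup_{Y_0} Y_2$, whose source is finite by Lemma~\ref{lem_controlledpushouts}.

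The finitely dominated case follows the same outline with dominations $(D_i, p_i, q_i, h_i \colon p_i q_i \simeq_\fZ \id_{Y_i})$ in place of $h$-equivalences: lift $j_i$ to $\tilde{j}_i := q_i j_i p_0 \colon D_0 \to D_i$, pass to $\hat{D}_i := \cyl_W(\tilde{j}_i)$ (still finite) in order to turn $\tilde{j}_i$ into a cofibration, and put $D := \hat{D}_1 \cup_{D_0} \hat{D}_2$, which is finite by Lemma~\ref{lem_controlledpushouts}. A CHEP-rectification produces the dominating morphism $p \colon D \to Y_1 \cup_{Y_0} Y_2$. The main obstacle in this case is assembling the retract $q \colon Y_1 \cup_{Y_0} Y_2 \to D$, since the $q_i$ are only controlled \emph{maps} and the natural candidates $Y_i \to \hat{D}_i \to D$ need not agree on $Y_0$; the remedy is to exploit the cylinder homotopy inside $\hat{D}_i$ together with $p_0 q_0 \simeq_\fZ \id_{Y_0}$ to show that the two restrictions to $Y_0$ share a common controlled-homotopy class in $D$, and then to apply CHEP along the cofibration $Y_0 \rightarrowtail Y_1$ to pass from homotopy to equality. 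Combining the homotopies $h_i$ by the same technique furnishes $pq \simeq_\fZ \id_{Y_1 \cup_{Y_0} Y_2}$, completing the proof.
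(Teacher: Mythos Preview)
Your homotopy finite argument is essentially what the paper means by ``a formal consequence of the Gluing Lemma'', spelled out. One correction: adjusting $\gamma_i$ via CHEP ``along $W \rightarrowtail Y_i$'' does nothing, since all maps are already rel~$W$; the obstruction is compatibility with \emph{retractions}, not sections. The clean fix is to apply the mapping cylinder argument (Lemma~\ref{lem_mappingcylinderargument}) directly to the data $F_0 \xrightarrow{j_i\alpha_0} Y_i \xleftarrow{\beta_i} F_i'$: it hands you a finite object $Q_i$, a cofibration $F_0 \rightarrowtail Q_i$, and an $h$-equivalence $Q_i \to Y_i$, all as morphisms and with the square already strictly commutative, so no further rectification is needed.

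For the finitely dominated case the paper takes a genuinely different route. Rather than assembling a domination of the pushout by hand, it first proves the characterization
\[
Y \text{ is finitely dominated} \quad\Longleftrightarrow\quad Y \text{ is a retract in }\cR^G(W,\fZ)\text{ of a homotopy finite object},
\]
again using the mapping cylinder argument for the forward direction. The claim then reduces to a formal statement about retracts and pushouts: the span $(Y_i,j_i)$ is a retract, in the diagram category, of a span of homotopy finite objects with legs $s_ij_iq_0$, and pushout (being functorial) preserves retracts; the homotopy finite case shows the target pushout is homotopy finite. The paper defers the residual bookkeeping to \cite{arxiv:ullmann;controlled}. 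This bypasses exactly the difficulties you flag---gluing the non-morphism maps $q_i$ across the pushout and reconciling the two homotopies $h_i$---which your sketch acknowledges but does not fully discharge. Your direct construction can very likely be completed, but the paper's characterization packages the argument more conceptually and yields a statement of independent use.
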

\begin{proof}
  For homotopy finite objects, this is a formal consequence of the Gluing Lemma~\ref{prop:gluing-lemma} for $h$-equivalences.
 For the second claim, we show that the following two statements are equivalent:
 \begin{enumerate}
  \item $Y \in \cR^G(W,\fZ)$ is finitely dominated. 
  \item $Y \in \cR^G(W,\fZ)$ is a retract of a homotopy finite object.
 \end{enumerate}
 Suppose $(Y,s_Y, r_Y)$ is finitely dominated, i.e., there are a finite object  $(D, s_D, r_D)$, a morphism $p \colon D \to Y$, a controlled map $i  \colon Y \to D$, and a homotopy $h \colon pi \simeq \id_Y$.
 These data give a map $f \colon \cyl(i) \to Y$ whose composition with the front inclusion is $\id_Y$ and whose composition with the back inclusion is $p$.
 Then $r := r_Y \circ f$ is a retraction which makes $\cyl(i)$ into a retractive space over $W$ and both $f$ and the front inclusion $Y \rightarrowtail \cyl(i)$ are morphisms. By construction, $Y$ is a retract of $\cyl(i)$. The back inclusion $D \rightarrowtail \cyl(i)$ is an $h$-equivalence, hence $\cyl(i)$ is a homotopy finite object.

 Conversely, assume that there is a homotopy finite object $F$ as well as morphisms $s \colon Y \to F$
 and $q \colon F \to Y$ such that $qs = \id_Y$. Since $F$ is homotopy finite, there is a finite object
 $D$ and a morphism $e \colon D \to F$ which is an $h$-equivalence. Let $\overline{e} \colon F \to D$
 be an inverse controlled map. Then $i := \overline{e}s \colon Y \to D$ is a controlled map
 to a finite object, and $p := qe \colon D \to Y$ is a morphism. Moreover, we have $pi = qe\overline{e}s \simeq qs = \id_Y$
 by assumption, so $Y$ is finitely dominated.
 
 With the characterization of finitely dominated objects as retracts of homotopy finite objects at our disposal,
 it is a formal consequence of the first part of the lemma and the universal property of the pushout
 that pushouts of finitely dominated objects are finitely dominated.
 
 Sections 7.3 and 7.4 of~\cite{arxiv:ullmann;controlled} spell out the formal
 arguments we left out here.
\end{proof}

\begin{corollary}
 For any $G$-invariant subset $A \subset Z$, the categories $\cR^G(W,\fZ)$, $\cR^G_f(W,\fZ)$, $\cR^G_{hf}(W,\fZ)$ and $\cR^G_{fd}(W,\fZ)$ are
 Waldhausen categories with respect to $h^A\cR^G(W,\fZ)$.  The Saturation Axiom holds for these categories.

 There is a cylinder functor on $\cR^G(W,\fZ)$ which restricts to a cylinder functor
 on the subcategories of finite, homotopy finite and finitely dominated objects;
 the $h^A$-equivalences satisfy the Cylinder Axiom.
\end{corollary}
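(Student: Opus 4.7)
The corollary is essentially a bookkeeping statement that assembles the results built up over the preceding pages. I plan to verify the axioms one at a time, in each case pointing to the lemma that does the work, and then explain why everything descends from $\cR^G(W,\fZ)$ to the three finiteness subcategories.

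First I would check that each of the four categories is a category with cofibrations. The zero object is $* = W$, which is finite, hence lies in each subcategory. The subcategory of cofibrations is defined as the maps isomorphic to inclusions of $G$-invariant subcomplexes; Waldhausen's axioms (C1) and (C2) are immediate, and the cobase change axiom together with the formula for the pushout is exactly Lemma~\ref{lem_controlledpushouts}. That same lemma shows the pushout of finite objects along a cofibration is again finite, and Lemma~\ref{lem_finitelydominatedpushouts} gives the analogous statement for homotopy finite and finitely dominated objects.

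Next I would verify the axioms for the weak equivalences $h^A\cR^G(W,\fZ)$. That $h^A\cR^G(W,\fZ)$ is a subcategory containing all isomorphisms was noted immediately after Definition~\ref{def_controlledhomotopyequivalences}; the Saturation Axiom was also observed there, as a direct consequence of the definition via partially defined maps. The Gluing Axiom is precisely Proposition~\ref{prop:gluing-lemma}. Because an $h^A$-equivalence between finite (respectively, homotopy finite, finitely dominated) objects is in particular a morphism of such objects, restricting the weak equivalences to any of the finiteness subcategories produces a subcategory of weak equivalences satisfying the same axioms; the Gluing Axiom restricts because pushouts of finite, homotopy finite or finitely dominated objects are again of the same type by Lemmas~\ref{lem_controlledpushouts} and~\ref{lem_finitelydominatedpushouts}.

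For the cylinder functor, the assignment $f \mapsto \cyl_W(f)$ together with the diagram \eqref{eq_cylinderdiagram} was constructed before Lemma~\ref{lemma:cylinder-functor} and shown there to define a cylinder functor on $\cR^G(W,\fZ)$. The explicit description of $\cyl_W(f)$ as a pushout and the fact that the front inclusion $Y_1 \rightarrowtail Y_1\halftimes[0,1]$ is the inclusion of a subcomplex show that the cells of $\cyl_W(f)$ are controlled by those of $Y_1$ and $Y_2$; hence $\cyl_W(f)$ is finite (respectively, homotopy finite, finitely dominated) whenever $Y_1$ and $Y_2$ are, which shows the cylinder functor restricts to the subcategories. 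Finally, the Cylinder Axiom asks that the back projection $\cyl_W(f) \to Y_2$ be a weak equivalence; the canonical retraction of $Y_1\halftimes[0,1]$ onto $Y_1\times\{1\}$ descends to an $h$-equivalence $\cyl_W(f) \to Y_2$, and since every $h$-equivalence is an $h^A$-equivalence by the final sentence of the remark after Definition~\ref{def_controlledhomotopyequivalences}, the Cylinder Axiom holds with respect to $h^A\cR^G(W,\fZ)$. I do not foresee any genuine obstacle: the only mildly delicate point is the compatibility of the subcategory structures with cofibrations and the cylinder, and both reduce to the pushout computation of Lemma~\ref{lem_controlledpushouts} applied under the stability statements of Lemma~\ref{lem_finitelydominatedpushouts}.
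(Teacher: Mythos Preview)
Your proposal is correct and follows essentially the same route as the paper: both arguments simply assemble Lemmas~\ref{lem_controlledpushouts} and~\ref{lem_finitelydominatedpushouts} for the cofibration structure, the Gluing Lemma~\ref{prop:gluing-lemma} for the weak equivalence axioms, the discussion preceding it for Saturation, and Lemma~\ref{lemma:cylinder-functor} together with the observation that $h$-equivalences are $h^A$-equivalences for the cylinder functor and Cylinder Axiom. Your write-up is more explicit about why the cylinder restricts to the finiteness subcategories, but the content is the same.
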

\begin{proof}
 We only need to summarize what we already know. Lemmas \ref{lem_controlledpushouts} and \ref{lem_finitelydominatedpushouts} state
 that the cofibrations indeed form a subcategory of cofibrations. The collection of $h^A$-equivalences defines a subcategory of
 weak equivalences by the Gluing Lemma \ref{prop:gluing-lemma}. Saturation and the cylinder functor have been discussed right before
 the statement of the Gluing Lemma. Since every $h$-equivalence is an $h^A$-equivalence, the Cylinder Axiom is obvious.
\end{proof}

\subsection{Functoriality}

Let us turn to the question in which sense the categories $\cR^G(W,\fZ)$ are functorial with respect to the space $W$ and the coarse structure $\fZ$.  Changing $G$ will be discussed in Section~\ref{sec_assembly}.

If $f \colon W_1 \to W_2$ is a $G$-equivariant (continuous) map, pushout along $f$ and the structural inclusion of a given object defines an exact functor
\begin{equation*}
 \cR^G(f,\fZ) \colon \cR^G(W_1,\fZ) \to \cR^G(W_2,\fZ).
\end{equation*}

For changing the coarse structure, we need to define a notion of morphism (cf.~\cite[Section 3.3]{BFJR2004}).

\begin{definition}\label{def:morphism-coarse-structure}
 Let $\fZ_1 = (Z_1, \fC_1, \fS_1)$, $\fZ_2 = (Z_2, \fC_2, \fS_2)$ be two coarse structures.
 A \emph{morphism of coarse structures} $z \colon \fZ_1 \to \fZ_2$ is a $G$-equivariant map of sets
 $z \colon Z_1 \to Z_2$ satisfying the following properties:
 \begin{enumerate}
  \item\label{def:morphism-coarse-structure:1} For every $S_1 \in \fS_1$, there is some $S_2 \in \fS_2$ such that $z(S_1) \subset S_2$.
  \item\label{def:morphism-coarse-structure:2} For every $S \in \fS_1$ and $C_1 \in \fC_1$, there is some $C_2 \in \fC_2$ such that $(z \times z)((S \times S) \cap C_1) \subset C_2$.
  \item\label{def:morphism-coarse-structure:3} For every $S \in \fS_1$ and all subsets $A \subset S$ which are locally finite in $Z_1$, the set $z(A)$ is locally finite in $Z_2$ and for all $x \in z(A)$, the set $z^{-1}(x) \cap A$ is finite.
 \end{enumerate}
\end{definition}

Note that $z$ does not need to be continuous, but the topology of $Z_1$ and $Z_2$ is used in the third condition. Morphisms of coarse spaces induce morphisms of controlled categories:

\begin{proposition}\label{prop:functoriality:Z}
 The categories $\cR^G(W,\fZ)$, $\cR^G_f(W,\fZ)$, $\cR^G_{hf}(W,\fZ)$ and $\cR^G_{fd}(W,\fZ)$ are functorial in $\fZ$, i.e., they define functors
 from the category of coarse structures and their morphisms to the category of Waldhausen categories.
 
 The canonical inclusion functors yield natural transformations
 \begin{equation*}
  \cR^G_f(W,-) \to \cR^G_{hf}(W,-) \to \cR^G_{fd}(W,-) \to \cR^G(W,-).
 \end{equation*}
\end{proposition}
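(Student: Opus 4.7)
The plan is to define the induced functor $z_* \colon \cR^G(W,\fZ_1) \to \cR^G(W,\fZ_2)$ by keeping the underlying $G$-CW-complex and post-composing the control map with $z$. On objects, set $z_*(Y,\kappa) := (Y, z \circ \kappa)$; on morphisms, $z_*(f) := f$. The underlying retractive $G$-CW-structure, the cofibration structure and homotopies are therefore all left unchanged by $z_*$, and the proof reduces to a series of verifications that this assignment lands in the target category and preserves the relevant finiteness conditions.

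First I would check well-definedness on objects. Given a $\fZ_1$-controlled labelled complex $(Y,\kappa)$, choose support conditions $S_k \in \fS_1$ and control conditions $C_k \in \fC_1$ witnessing $\fZ_1$-controlledness of $(Y,\kappa)$. Property~\eqref{def:morphism-coarse-structure:1} of Definition~\ref{def:morphism-coarse-structure} produces $S_k' \in \fS_2$ with $z(S_k) \subset S_k'$, giving the required support condition for $z \circ \kappa$. For any pair of cells witnessing the morphism control condition, both labels lie in $S_k$, so property~\eqref{def:morphism-coarse-structure:2} yields $C_k' \in \fC_2$ with $(z \times z)((S_k \times S_k) \cap C_k) \subset C_k'$, which witnesses $\fZ_2$-controlledness of $(Y, z \circ \kappa)$. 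The same reasoning, applied to the definition of a $\fZ$-controlled map, shows that $z_*$ sends $\fZ_1$-controlled morphisms to $\fZ_2$-controlled morphisms.

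The main obstacle is showing that $z_*$ preserves finiteness of objects; this is precisely the purpose of property~\eqref{def:morphism-coarse-structure:3} of Definition~\ref{def:morphism-coarse-structure}. Finite-dimensionality and the retraction condition depend only on the underlying complex, so only the local-finiteness condition on $\kappa$ needs inspection. Given a finite $(Y,\kappa)$, the set $A := \kappa(\cells Y)$ is locally finite in $Z_1$ and contained in some $S \in \fS_1$. Property~\eqref{def:morphism-coarse-structure:3} then implies that $z(A) \subset Z_2$ is locally finite and that $z^{-1}(x) \cap A$ is finite for each $x \in z(A)$. For $z_2 \in Z_2$, choose a neighbourhood $U_2$ of $z_2$ with $U_2 \cap z(A)$ finite; then
\begin{equation*}
 (z \circ \kappa)^{-1}(U_2) = \bigcup_{x \in U_2 \cap z(A)} \kappa^{-1}\bigl(z^{-1}(x) \cap A\bigr),
\end{equation*}
which is a finite union of finite sets (finiteness of the individual preimages following from local finiteness of $\kappa$ applied pointwise). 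This gives the required finiteness for $z \circ \kappa$.

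The remaining points are formalities. Homotopy finite and finitely dominated objects are defined purely in terms of finite objects, controlled maps and controlled homotopies between them, and all three ingredients are preserved by $z_*$; thus $z_*$ restricts to the three finiteness subcategories. Cofibrations and $h$-equivalences are likewise preserved, since both notions are defined on the level of the underlying $G$-CW-complex and the associated notions of controlled maps and homotopies. Functoriality in $z$ is then immediate: identity morphisms of coarse structures act as the identity functor, and $(z_2 z_1)_* = (z_2)_* (z_1)_*$ follows from associativity of composition on the level of control maps. Finally, since $z_*$ does not touch the underlying controlled retractive space, the canonical inclusions $\cR^G_f(W,-) \to \cR^G_{hf}(W,-) \to \cR^G_{fd}(W,-) \to \cR^G(W,-)$ commute strictly with $z_*$, yielding the claimed natural transformations.
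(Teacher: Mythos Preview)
Your proof is correct and follows the same approach as the paper: define the induced functor by post-composing the control map with $z$ and acting as the identity on underlying morphisms, then verify well-definedness via properties~\eqref{def:morphism-coarse-structure:1} and~\eqref{def:morphism-coarse-structure:2} of Definition~\ref{def:morphism-coarse-structure} and preservation of finiteness via property~\eqref{def:morphism-coarse-structure:3}. The paper's version is terser, merely asserting that ``Condition~\ref{def:morphism-coarse-structure:3} \dots\ ensures that this construction preserves finiteness'' where you spell out the local-finiteness check in detail, but the argument is identical.
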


See also Remark~\ref{rem_settheoreticcaveat} for some set-theoretical issues.

\begin{proof}
 Let $z \colon \fZ_1 \to \fZ_2$ be a morphism of coarse structures. Define the induced functor
 \begin{equation*}
  \cR^G(W,z) \colon \cR^G(W,\fZ_1) \to \cR^G(W,\fZ_2)
 \end{equation*}
 by mapping an object $(Y, \kappa)$ to $(Y,z \circ \kappa)$ and by the identity on morphisms.  We only have to show that this is well-defined. Let $(Y,\kappa) \in \cR^G(W,\fZ_1)$.  For every $k \in \NN$, there is some $S_1 \in \fS_1$ such that $\kappa(\cells_k Y) \subset S_1$.  Since $z$ is a morphism of coarse structures, we can find some $S_2 \in \fS_2$ such that $z(\kappa(\cells_k Y)) \subset z(S_1) \subset S_2$. The verification that controlled maps are sent to controlled maps is similar. Condition \ref{def:morphism-coarse-structure:3} of Definition \ref{def:morphism-coarse-structure} ensures that this construction preserves finiteness. Hence, homotopy finite and finitely dominated objects are also preserved.
\end{proof}

\begin{example}\label{def_restrictedcoarsestructure}
 One of the most frequent examples of a morphism of coarse structures is the following.
 Let $\fZ = (Z,\fC,\fS)$ be a coarse structure, and suppose that $A \subset Z$ is a $G$-invariant subspace.
 Denote by $\fZ \cap A$ the coarse structure $(A,\fC \Cap \{A \times A\}, \fS \Cap \{ A \})$, where $\Cap$ denotes pointwise intersection.
 
 If $A$ is closed in $Z$, the inclusion map of $A$ into $Z$ defines a morphism $\fZ \cap A \to \fZ$ of coarse structures. Here closedness is required to preserve local finiteness of subsets.
\end{example}

\begin{remark}[Set-theoretical smallness requirements] \label{rem_settheoreticcaveat}
 In the following, we will discuss the algebraic $K$-theory of the categories $\cR^G_f(W,\fZ)$ and $\cR^G_{fd}(W,\fZ)$.  As always, one faces certain set-theoretic difficulties in making sense of the $K$-theory of these categories, cf.\ \cite[Remark on page 379]{Waldhausen1985}. Possible solutions include the use of a change-of-universe functor to make the categories at hand small, or to choose small models to replace these categories.  For example, we may redefine $\cR^G(W,\fZ)$ so that the underlying set of every retractive space is a subset of $W \times \lambda$, where $\lambda$ is a sufficiently large cardinal.
  
 The algebraic $K$-theory of $\cR^G_f(W,\fZ)$ does not depend, up to homotopy, on the set-theoretic model we choose, as long as $\lambda$ is large enough compared to $\fZ$. Proposition \ref{prop:functoriality:Z} then only asserts functoriality on some small, but arbitrarily large subcategory of the category of all coarse structures. To avoid further complications, we ignore these matters from now on.
\end{remark}

\section{Comparison theorems and vanishing theorems}\label{sec_comparisonthms}
In addition to the notions used in the previous section, we now have the opportunity to use all three fundamental results of Waldhausen $K$-theory: the Additivity Theorem \cite[Theorem 1.4.2]{Waldhausen1985}, the Fibration Theorem \cite[Theorem 1.6.4]{Waldhausen1985} and the Approximation Theorem \cite[Theorem 1.6.7]{Waldhausen1985}.

\subsection{Comparing finiteness conditions}
We discuss to which extent the $K$-theory spaces arising from the various finiteness conditions differ.
The answer is given in Proposition \ref{prop_cofinality}, but the proof requires two preparatory lemmas.

\begin{lemma}[Mapping cylinder argument]\label{lem_mappingcylinderargument}
  Let $f\colon Y \rightarrow Y'$ and $g \colon Y'' \rightarrow Y'$ be morphisms in $\cR^G(W, \fZ)$. Suppose that $g$ is a retraction up to homotopy, i.e., that there exists a map $\overline{g} \colon Y' \to Y''$ such that $g \overline{g}$ is controlled homotopic to the identity map. Then there is an object $Q$ in $\cR^G(W, \fZ)$ which fits into the following commutative diagram in $\cR^G(W, \fZ)$
 \begin{equation}\label{diag:cylinder-triangle}
  \begin{tikzpicture}
   \matrix (m) [matrix of math nodes, column sep=3em, row sep=3em, text depth=.5em, text height=1em]
   {Y & Y' \\
   Q  & Y'' \\};
   \path[>->]
    (m-1-1) edge node[left] {$i_{Y}$} (m-2-1)
    (m-2-2) edge node[pos=0.4, above] {$i_{Y''}$} node[below] {$\sim$} (m-2-1);
   \path[->]
    (m-1-1) edge node[above]{$f$} (m-1-2)
    (m-2-2) edge node[right]{$g$} (m-1-2)
    (m-2-1) edge node[above left]{$q$} (m-1-2);
  \end{tikzpicture}
 \end{equation}
 in which $i_Y$ and $i_{Y''}$ are cofibrations. The underlying controlled $G$-CW-complex of $Q$ can be chosen to be $\cyl(\overline{g}f)$.
  
 In particular, $q$ is an $h$-equivalence if and only if $g$ is one.
\end{lemma}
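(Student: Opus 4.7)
The plan is to take $Q := \cyl(\overline{g}f)$, the mapping cylinder of the controlled map $\overline{g}f \colon Y \to Y''$. Since controlled maps are required to be relative to $W$, both $f$ and $\overline{g}$ satisfy $fs_Y = s_{Y'}$ and $\overline{g}s_{Y'} = s_{Y''}$, so $(\overline{g}f)s_Y = s_{Y''}$ and $\cyl(\overline{g}f)$ is a $\fZ$-controlled $G$-CW-complex relative to $W$ in the standard way. The cofibrations $i_Y$ and $i_{Y''}$ are the front and back inclusions of the cylinder; they respect the common structural inclusion $s_Q := i_Y s_Y = i_{Y''} s_{Y''}$, the two being equal because the collapsed $W$-fibre of $Y \halftimes [0,1]$ is glued to $\overline{g}f(s_Y(W)) = s_{Y''}(W) \subset Y''$. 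The back inclusion $i_{Y''}$ is a $\fZ$-controlled homotopy equivalence via the standard deformation retraction of $\cyl(\overline{g}f)$ onto $Y''$, just as in Lemma~\ref{lemma:cylinder-functor}.

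The morphism $q$ is constructed from the universal property of the pushout $Q = Y'' \cup_{Y \times \{1\}} (Y \halftimes [0,1])$. Let $H \colon Y' \halftimes [0,1] \to Y'$ be a controlled homotopy from $\id_{Y'}$ to $g\overline{g}$. The key observation here is that the relative cylinder $Y' \halftimes [0,1]$ collapses $W \times [0,1]$ to $W$, so $H$ is automatically constant on the $W$-fibre (equal to $s_{Y'}$ there). Using $fs_Y = s_{Y'}$, the formula $\widetilde{H}(y,t) := H(f(y),t)$ therefore descends to a well-defined controlled map $\widetilde{H} \colon Y \halftimes [0,1] \to Y'$. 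Define $q$ to be $g$ on $Y''$ and $\widetilde{H}$ on the cylinder factor; these agree at $Y \times \{1\}$ because both send $y$ to $g\overline{g}f(y) = H(f(y),1)$. Equip $Q$ with the retraction $r_Q := r_{Y'} \circ q$; this turns $q$ into a morphism in $\cR^G(W,\fZ)$, and $r_Q s_Q = r_{Y'} f s_Y = r_{Y'} s_{Y'} = \id_W$ since $f$ is a morphism.

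Commutativity of the diagram is then immediate: $q i_Y = \widetilde{H}(-,0) = H(f(-),0) = f$, and $q i_{Y''} = g$ by construction. The final ``iff'' statement follows at once from the Saturation Axiom applied to the factorization $g = q \circ i_{Y''}$, together with the fact established above that $i_{Y''}$ is always a $\fZ$-controlled homotopy equivalence.

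The main obstacle, though mild, is to verify that all the intermediate constructions descend from the products $Y \times [0,1]$ and $Y' \times [0,1]$ to their relative versions $Y \halftimes [0,1]$ and $Y' \halftimes [0,1]$. This is where the fact that controlled maps are relative to $W$ plays a critical role: it guarantees both that $\overline{g}f$ matches the retractive structures on $W$ so that $\cyl(\overline{g}f)$ is a single retractive space (not two with mismatched $W$-fibres), and that $H$ is trivial on the $W$-fibre so that $\widetilde{H}$ is well-defined. Without these compatibilities, one would have to introduce corrective homotopies to align the various copies of $W$ inside $\cyl(\overline{g}f)$.
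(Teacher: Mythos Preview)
Your proof is correct and follows essentially the same approach as the paper's: define $Q := \cyl(\overline{g}f)$, build $q$ from the homotopy $g\overline{g} \simeq \id_{Y'}$ precomposed with $f$ on the cylinder part and $g$ on the back end, and equip $Q$ with the retraction $r_{Y'} \circ q$. Your version spells out in more detail why the construction descends to the relative cylinder and why $i_Y$, $i_{Y''}$ respect the structural inclusion, while the paper compresses this to the single observation that since $q$ restricts to the morphisms $f$ and $g$ at the two ends, the front and back inclusions automatically respect the retractions.
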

\begin{proof}
  Denote the retractions of $Y, Y', Y''$ by $r, r', r''$.
  Note that $\overline{g}$ does not need to respect the retraction. Define $Q := \cyl(\overline{g}f)$, and let $i_Y$ and $i_{Y''}$ be the front and back inclusion. Since $g \overline{g} f \simeq f$, any choice of homotopy $g \overline{g} \simeq \id_{Y''}$ induces a map $q \colon Q \to Y'$ which restricts to $f$ and $g$ on the front and back of the cylinder. We can turn $q$ into a morphism of retractive spaces by defining a retraction on $Q$ via $r_Q := r' \circ q$. Since $q$ restricts to $f$ and $g$ on the two ends of the cylinder and both of these maps are morphisms of retractive spaces, $i_Y$ and $i_{Y''}$ also respect the retractions. This proves the existence of the commutative diagram (\ref{diag:cylinder-triangle}).  
\end{proof}

The following lemma reflects the fact that something close to a Puppe sequence exists in any Waldhausen category $\cC$ with a cylinder functor. Even though the Extension Axiom does not hold in $\cR^G(W,\fZ)$, cf.~\cite[1.2]{Waldhausen1985}, it follows that the axiom does hold up to suspension. Recall that the suspension of an object $A \in \cC$ is defined to be
\begin{equation*}
 \Sigma A := \cyl(A \to *)/A,
\end{equation*}
and that this extends to an exact endofunctor on $\cC$ \cite[page 349]{Waldhausen1985}.

\begin{lemma}\label{lem_puppesequence}
 Let $\cC$ be a Waldhausen category which possesses a cylinder functor such that the Cylinder Axiom and the Saturation Axiom hold.
 Consider a morphism between exact sequences
 \begin{equation}
   \begin{tikzpicture}
   \matrix (m) [matrix of math nodes, column sep=2em, row sep=2em, text depth=.5em, text height=1em]
     {A & B & C \\ A' & B' & C' \\};
   \path[>->]
     (m-1-1) edge node[above]{$\alpha$} (m-1-2)
     (m-2-1) edge node[above]{$\alpha'$} (m-2-2);
   \path[->>] 
     (m-1-2) edge node[above]{$\beta$} (m-1-3)
     (m-2-2) edge node[above]{$\beta'$} (m-2-3);
   \path[->]
     (m-1-1) edge node[left]{$a$} node[right]{$\sim$} (m-2-1)
     (m-1-2) edge node[left]{$b$} (m-2-2)
     (m-1-3) edge node[left]{$c$} node[right]{$\sim$} (m-2-3);
   \end{tikzpicture}
 \end{equation}
 in which $a$ and $c$ are weak equivalences. Then $\Sigma b$ is a weak equivalence.
\end{lemma}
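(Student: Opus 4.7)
The plan is to construct a pushout model $\tilde\Sigma B$ for $\Sigma B$ whose building blocks admit $h^A$-equivalences to their primed counterparts (via the functoriality in $a$ and $c$ and the preservation of weak equivalences by cones), so that $\Sigma b$ emerges from a single application of the Gluing Lemma~\ref{prop:gluing-lemma}. The naive strategy of extracting a cofiber sequence $C(a)\to C(b)\to C(c)$ of mapping cones yields contractible outer terms, but without the Extension Axiom one cannot conclude $C(b)\simeq*$; the point is to sidestep $b$ by exploiting the contractibility of cones.

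First I would show, by iterated applications of the Gluing Lemma to the pushouts defining $CX:=\cyl(X\to*)$ and $\Sigma X:=CX/X$, that both the cone and the suspension functor preserve $h^A$-equivalences. Along the way one uses that $f\halftimes\id_{[0,1]}$ is an $h^A$-equivalence whenever $f$ is, which is immediate from Saturation and the Cylinder Axiom applied to the projection of the cylinder. In particular, $\Sigma a$ is an $h^A$-equivalence.

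Next, introduce the thickenings $\tilde C:=B\cup_A CA$ and $\tilde\Sigma A:=\tilde C\cup_B CB$ (and the analogous primed versions). The Gluing Lemma applied to the pushout diagrams $CA\leftarrow A\rightarrowtail B$ and $*\leftarrow A\rightarrowtail B$ (with vertical maps $CA\to*$, $\id_A$, $\id_B$) produces the cone-collapse $h^A$-equivalence $\tilde C\xrightarrow{\sim}B\cup_A *=C$; the same argument applied to $CB\leftarrow B\rightarrowtail\tilde C$ versus $*\leftarrow B\rightarrowtail\tilde C$ gives $\tilde\Sigma A\xrightarrow{\sim}\tilde C/B=\Sigma A$. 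Since $c$ and $\Sigma a$ are $h^A$-equivalences and the collapse equivalences are strictly natural, Saturation implies that the induced maps $\tilde c\colon\tilde C\to\tilde C'$ and $\widetilde{\Sigma a}\colon\tilde\Sigma A\to\tilde\Sigma A'$ are $h^A$-equivalences, and hence so is $C\tilde c$.

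Finally, set $\tilde\Sigma B:=\tilde\Sigma A\cup_{\tilde C}C\tilde C$, the pushout along the cofibrations $\tilde C\rightarrowtail\tilde\Sigma A$ (whose cofiber is $CB/B=\Sigma B$) and $\tilde C\rightarrowtail C\tilde C$. Collapsing $C\tilde C$ via the Gluing Lemma yields an $h^A$-equivalence $\tilde\Sigma B\xrightarrow{\sim}\tilde\Sigma A/\tilde C=\Sigma B$, compatibly with the primed version. One more application of the Gluing Lemma to
\begin{equation*}
 C\tilde C\leftarrow\tilde C\rightarrowtail\tilde\Sigma A \quad\text{and}\quad C\tilde C'\leftarrow\tilde C'\rightarrowtail\tilde\Sigma A'
\end{equation*}
with the $h^A$-equivalences $C\tilde c$, $\tilde c$, $\widetilde{\Sigma a}$ as verticals shows that the induced map $\tilde\Sigma B\to\tilde\Sigma B'$ is an $h^A$-equivalence. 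Tracing through the identifications, this induced map restricts on $\tilde\Sigma A/\tilde C=CB/B$ to the map induced by $Cb/b=\Sigma b$, so the square relating $\tilde\Sigma B\to\tilde\Sigma B'$ to $\Sigma b$ via the cone collapses commutes strictly. Saturation then forces $\Sigma b$ to be an $h^A$-equivalence. The hard part is identifying the right model $\tilde\Sigma B$ that routes all the data through $a$, $c$ and cones only; once this is set up, the final step is just a naturality check on pushouts of cone collapses.
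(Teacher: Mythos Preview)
Your argument is correct and follows essentially the same Puppe-sequence strategy as the paper: both proofs build the mapping cone $C(\alpha)=B\cup_A CA$ (your $\tilde C$), iterate to obtain a model of $\Sigma A$ (your $\tilde\Sigma A$ is exactly the paper's $C(j)$), identify these with $\Sigma A$ and $\Sigma B$ via cone collapses, and finish by combining the Gluing Lemma with Saturation. The paper organises this as a five-row ladder and shows that the induced map $t\colon T\to T'$ on the quotient $T=C(j)/C(\alpha)$ is a weak equivalence by applying Gluing to $*\leftarrow C(\alpha)\rightarrowtail C(j)$; you instead thicken once more to $\tilde\Sigma B=\tilde\Sigma A\cup_{\tilde C}C\tilde C$ and apply Gluing to $C\tilde C\leftarrow\tilde C\rightarrowtail\tilde\Sigma A$, which is a harmless repackaging of the same step.

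One cosmetic point: the lemma is stated for an arbitrary Waldhausen category $\cC$ with a cylinder functor, so the references to $h^A$-equivalences, to Proposition~\ref{prop:gluing-lemma}, and to the explicit cylinder $-\halftimes[0,1]$ are out of place. In the abstract setting the Gluing Lemma is part of the axioms for weak equivalences, and the fact that $C$ and $\Sigma$ preserve weak equivalences follows from Saturation together with the Cylinder Axiom (the back projection $CX\to *$ is a weak equivalence), without appealing to $f\halftimes\id_{[0,1]}$. With these small adjustments your proof goes through verbatim in the generality claimed.
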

\begin{proof}
 Repeated use of the cylinder functor gives rise to the commutative diagram on page \pageref{diag:puppe}.
 \begin{figure}[t]
   \begin{tikzpicture}
   \matrix (m) [matrix of math nodes, column sep=1.5em, row sep=1em, text depth=.5em, text height=1em]
   {  & A' &              & B'            &           & C'         &      &       &   &    \\
    A &    & B            &               & C         &            &      &       &   &    \\
      & A' &              & \cyl(\alpha') &           & C(\alpha') &      &       &   &    \\
    A &    & \cyl(\alpha) &               & C(\alpha) &            &      &       &   &    \\
      &    &              & B'            &           & C(\alpha') &      & S'    &   &    \\
      &    & B            &               & C(\alpha) &            & S    &       &   &    \\
      &    &              & B'            &           & \cyl(j')   &      & C(j') &   &    \\
      &    & B            &               & \cyl(j)   &            & C(j) &       &   &    \\
      &    &              &               &           & C(\alpha') &      & C(j') &   & T' \\
      &    &              &               & C(\alpha) &            & C(j) &       & T &    \\};
      \path[->]
    (m-3-4) edge (m-1-4)
    (m-3-6) edge (m-1-6)
    (m-7-6) edge (m-5-6)
    (m-7-8) edge (m-5-8);
   \path
    (m-3-2) edge[double, double distance=1.5pt] (m-1-2)
    (m-5-6) edge[double, double distance=1.5pt] (m-3-6)
     (m-7-4) edge[double, double distance=1.5pt] (m-5-4)
     (m-9-8) edge[double, double distance=1.5pt] (m-7-8);
   \path[>->]
    (m-1-2) edge node[above]{$\alpha'$} (m-1-4)
    (m-3-2) edge (m-3-4)
    (m-5-4) edge (m-3-4) edge node[above,shift={(-.5,0)}]{$j'$} (m-5-6)
    (m-7-4) edge (m-7-6)
    (m-9-6) edge (m-7-6) edge (m-9-8);
   \path[->>]
    (m-1-4) edge node[above]{$\beta'$} (m-1-6)
    (m-3-4) edge (m-3-6)
    (m-5-6) edge (m-5-8)
    (m-7-6) edge (m-7-8)
    (m-9-8) edge (m-9-10);
      \path[->]
    (m-2-1) edge node[above left]{$a$} (m-1-2)
    (m-2-3) edge node[above left]{$b$} (m-1-4)
    (m-2-5) edge node[above left]{$c$} (m-1-6)
    (m-4-1) edge node[above left]{$a$} (m-3-2)
    (m-4-3) edge[-,line width=6pt,draw=white] (m-2-3) edge (m-2-3) edge (m-3-4)
    (m-4-5) edge[-,line width=6pt,draw=white] (m-2-5) edge (m-2-5) edge (m-3-6)
    (m-6-3) edge node[above left]{$b$} (m-5-4)
    (m-6-5) edge (m-5-6)
    (m-6-7) edge node[above left]{$s$} (m-5-8)
    (m-8-3) edge node[above left]{$b$} (m-7-4)
    (m-8-5) edge (m-6-5) edge (m-7-6)
    (m-8-7) edge[-,line width=6pt,draw=white] (m-6-7) edge (m-6-7) edge (m-7-8)
    (m-10-5) edge (m-8-5) edge (m-9-6)
    (m-10-7) edge (m-9-8)
    (m-10-9) edge node[above left]{$t$} (m-9-10);
   \path
    (m-4-1) edge[-,line width=6pt,draw=white] (m-2-1) edge[double, double distance=1.5pt] (m-2-1)
    (m-6-5) edge[-,line width=6pt,draw=white] (m-4-5) edge[double, double distance=1.5pt] (m-4-5)
    (m-8-3) edge[-,line width=6pt,draw=white] (m-6-3) edge[double, double distance=1.5pt] (m-6-3)
    (m-10-7) edge[-,line width=6pt,draw=white] (m-8-7) edge[double, double distance=1.5pt] (m-8-7);
   \path[>->]
    (m-2-1) edge[-,line width=6pt,draw=white] (m-2-3) edge node[above, shift={(.25,0)}]{$\alpha$} (m-2-3)
    (m-4-1) edge (m-4-3)
    (m-6-3) edge (m-4-3) edge[-,line width=6pt,draw=white] (m-6-5) edge node[above,shift={(.5,0)}]{$j$} (m-6-5)
    (m-8-3) edge (m-8-5)
    (m-10-5) edge (m-8-5) edge (m-10-7);
   \path[->>]
    (m-2-3) edge [-,line width=6pt,draw=white] (m-2-5) edge node[above,shift={(.5,0)}]{$\beta$} (m-2-5)
    (m-4-3) edge[-,line width=6pt,draw=white] (m-4-5) edge (m-4-5)
    (m-6-5) edge[-,line width=6pt,draw=white] (m-6-7) edge (m-6-7)
    (m-8-5) edge[-,line width=6pt,draw=white] (m-8-7) edge (m-8-7)
    (m-10-7) edge (m-10-9);
  \end{tikzpicture}
  {\caption*{The ``Puppe sequence''}}\label{diag:puppe}
 \end{figure}
 The Cylinder and Saturation Axioms imply that all vertical arrows in this diagram are weak equivalences.
 Moreover, we have the following commutative square in the category of arrows of $\cC$:
 \begin{equation*}
   \commsquare{(A \xrightarrow{\alpha} B)}{(A' \xrightarrow{\alpha'} B')}{(A \to *)}{(A' \to *)}{(a,b)}{(\id,*)}{(\id,*)}{(a,*)}
 \end{equation*}
 Applying the cylinder functor to this square, and taking quotients with respect to the front and back inclusions
 of the cylinders, we obtain a commutative square
 \begin{equation}\label{eq:extension-axiom:idenfication-sigma-a}
   \commsquare{S = \cyl(\alpha)/(A \vee B)}{\cyl(\alpha')/(A' \vee B') = S'}{\Sigma A}{\Sigma A'}{s}{\sim}{\sim}{\Sigma a}
 \end{equation}
 in which the vertical arrows are weak equivalences. Since we assumed $a$ to be a weak equivalence, $\Sigma a$ is one by the Gluing Lemma.
 It follows that $s$, and therefore also the induced (nameless) morphism $C(j) \to C(j')$ is a weak equivalence.
 Note that the (also nameless) morphism $C(\alpha) \to C(\alpha')$ is also a weak equivalence because $c$ is
 a weak equivalence. Hence, $t$ is a weak equivalence by the Gluing Lemma. Just like $s$, the morphism
 $t$ sits in a square like \eqref{eq:extension-axiom:idenfication-sigma-a} together with the induced morphism $\Sigma b \colon \Sigma B \to \Sigma B'$.
 Hence, $\Sigma b$ is a weak equivalence.
\end{proof}

We also need the following cofinality theorem {\cite[Theorem~1.6]{Vogell1990}}, which Vogell attributes to Thomason:
 \begin{theorem}[Vogell cofinality]\label{thm_vogellcofinality}
  Let $\cC$ be a Waldhausen category which has a cylinder functor such that the Cylinder Axiom holds.
  Let $\cD \subset \cC$ be a full Waldhausen subcategory of $\cC$. Assume that
  \begin{enumerate}
    \item \label{item:vogell-cofinality-1:weakly-cofinal}
    $\cD \subset \cC$ is \emph{weakly cofinal} in the sense that for all $C \in \cC$
	there exist $C' \in \cC$ and $k \in \NN$ such that $\Sigma^kC \vee C'$ is isomorphic to an object in $\cD$.
   \item $\cD$ is \emph{saturated in $\cC$}, i.e., any object weakly equivalent (via some zig-zag) to an object in $\cD$ lies in $\cD$.
  \end{enumerate}
  Then there is a homotopy fiber sequence
  \begin{equation*}
   wS_\bullet\cD \to wS_\bullet\cC \to N_\bullet \coker(K_0\cD \to K_0 \cC).
  \end{equation*}
 \end{theorem}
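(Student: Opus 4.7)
My plan is to factor the inclusion $\cD \subseteq \cC$ through an intermediate Waldhausen subcategory that captures the subgroup $\operatorname{im}(K_0\cD \to K_0\cC)$ precisely. Let $\cC_0 \subseteq \cC$ denote the full subcategory on those objects $C$ with $[C] \in \operatorname{im}(K_0\cD \to K_0\cC)$. Since $K_0$-classes are additive along cofibre sequences, $\cC_0$ is closed under cofibrations and their cofibres, and it is preserved by the cylinder functor; it thus inherits the structure of a Waldhausen category satisfying the Cylinder and Saturation Axioms, with $\cD \subseteq \cC_0 \subseteq \cC$ and $K_0\cC_0 = \operatorname{im}(K_0\cD \to K_0\cC)$.

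Next I would establish a homotopy equivalence $wS_\bullet\cD \xrightarrow{\sim} wS_\bullet\cC_0$. For any $C \in \cC_0$, weak cofinality gives $C' \in \cC$ and $k \in \NN$ with $\Sigma^k C \vee C'$ isomorphic to some $D \in \cD$; using $[\Sigma A] = -[A]$ in $K_0$ we compute $[C'] = [D] - (-1)^k[C] \in K_0\cC_0$, so the weak cofinality hypothesis holds internally to $\cC_0$ with the auxiliary summand $C'$ itself chosen in $\cC_0$. Since $K_0\cD \to K_0\cC_0$ is surjective by construction, a classical Waldhausen-type cofinality argument then shows $wS_\bullet\cD \to wS_\bullet\cC_0$ is a homotopy equivalence. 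A self-contained derivation combines the Approximation Theorem with Lemma~\ref{lem_puppesequence}: the Puppe sequence effectively inverts suspension on $K$-theory, so the auxiliary summand forced on us by weak cofinality becomes innocuous after sufficiently many suspensions, while saturation lets one replace weak equivalences in $\cC_0$ by zig-zags through $\cD$.

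Finally, I would identify $wS_\bullet\cC_0 \hookrightarrow wS_\bullet\cC \to N_\bullet\coker(K_0\cD \to K_0\cC)$ as a homotopy fibre sequence. Sending a filtered object $C_0 \rightarrowtail \dots \rightarrowtail C_n$ in $S_n\cC$ to the residue class of $[C_n]$ modulo $\operatorname{im}(K_0\cD)$ defines a simplicial map $wS_\bullet\cC \to N_\bullet\coker(K_0\cD \to K_0\cC)$, and on every simplicial level the preimage of the basepoint is precisely the union of connected components constituting $wS_\bullet\cC_0$. Since $N_\bullet\coker(K_0\cD \to K_0\cC)$ is the nerve of a discrete abelian group, it is an Eilenberg--MacLane space of type $K(\coker(K_0\cD \to K_0\cC),1)$, and identifying the fibres of principal fibrations over such spaces yields the desired homotopy fibre sequence after geometric realisation. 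Combining this with the equivalence of the previous step produces the asserted sequence.

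The main obstacle is the middle step: weak cofinality is genuinely weaker than the hypothesis of classical cofinality, and absorbing the auxiliary summand $C'$ requires both the saturation of $\cD$ and the suspension trick encoded by Lemma~\ref{lem_puppesequence} in order to reduce the assertion to the tautological statement that $K_0\cD \to K_0\cC_0$ is surjective.
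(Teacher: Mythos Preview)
The paper does not prove this theorem from scratch; it cites Vogell~\cite[Theorem~1.6]{Vogell1990} for the special case $k=0$ (i.e., genuine cofinality: for every $C$ there is $C'$ with $C\vee C'\in\cD$) and then explains in a remark how the general statement follows. The reduction is different from yours: the paper filters $\cC$ by the subcategories
\[
  \cC_k := \{\,C\in\cC \mid \exists\,C'\in\cC\colon \Sigma^kC\vee C'\in\cD\,\},
\]
so that $\cD$ is \emph{classically} cofinal in $\cC_0$ by definition, and the $k=0$ theorem yields the fibre sequence $wS_\bullet\cD\to wS_\bullet\cC_0\to N_\bullet\coker(K_0\cD\to K_0\cC_0)$. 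Since $\Sigma$ restricts to an exact functor $\cC_{k+1}\to\cC_k$ and induces an equivalence on $K$-theory, each inclusion $\cC_k\subset\cC_{k+1}$ does too, whence $wS_\bullet\cC\simeq\hocolim_k wS_\bullet\cC_k\simeq wS_\bullet\cC_0$.

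Your intermediate category is \emph{larger}: you take $\cC_0$ to be the objects whose $K_0$-class lies in $\operatorname{im}(K_0\cD\to K_0\cC)$. This makes your third step (the fibre sequence for $\cC_0\subset\cC$) plausible, but it breaks the second step. You correctly observe that weak cofinality holds for $\cD\subset\cC_0$ with the auxiliary object $C'$ in $\cC_0$, and that $K_0\cD\to K_0\cC_0$ is surjective; but what you have is still only \emph{weak} cofinality, with the suspension $\Sigma^k$ present. Invoking ``a classical Waldhausen-type cofinality argument'' at this point is circular: the classical theorems (Waldhausen~1.5.9, Thomason, Staffeldt) all require $C\vee C'\in\cD$ without suspension, which you have not established for your $\cC_0$. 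Your gesture towards the Approximation Theorem and Lemma~\ref{lem_puppesequence} does not resolve this; the clean way to ``invert suspension'' is precisely the paper's filtration by the $\cC_k$, which isolates the place where classical cofinality holds tautologically and then climbs up one suspension at a time.

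Your third step is also under-specified: the map to $N_\bullet\coker$ must send $(C_0\rightarrowtail\cdots\rightarrowtail C_n)$ to the tuple of subquotient classes $([C_i/C_{i-1}])_i$ rather than to $[C_n]$, and showing that the resulting square is homotopy cartesian is itself essentially the content of Thomason's cofinality theorem (one typically uses Quillen's Theorem~B or an explicit covering-space argument on $\abs{wS_\bullet\cC}$), not a formal consequence of the target being a $K(A,1)$.
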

 \begin{remark}
   There is a slight difference between Theorem~\ref{thm_vogellcofinality} and Theorem~1.6 of~\cite{Vogell1990}. We have omitted condition (iii) in Vogell's formulation of the theorem since it is implied by the requirement that $\cD$ is a Waldhausen subcategory.

   Let us also remark why Vogell's cofinality theorem holds true:
   As written, Vogell seems to prove only the cofinality theorem suggested by
   Thomason in \cite[Exercise~1.10.2]{TT1990} since, in the last three lines
   of his proof, he chooses ``$C_0'$ such that $C_0 \vee C'_0$ is in $\cD$''.
   This proves Theorem~\ref{thm_vogellcofinality} under the additional
   assumption that $k=0$ in
   condition~\eqref{item:vogell-cofinality-1:weakly-cofinal}.
   In fact, the more general statement follows:
   
   For $k \in \NN$, set $\cC_k := \{ C \in \cC \mid \exists~C' \in \cC \colon
   \Sigma^k C \vee C' \in \cD \}$. Then $\cC = \bigcup_{k \in \NN} \cC_k$,
   each $\cC_k$ is a Waldhausen subcategory of $\cC$ (one needs to check that
   each $\cC_k$ is closed under pushouts), and $\cD$ is cofinal in $\cC_0$.
   Hence, we can apply the case $k=0$ of the cofinality
   theorem~\ref{thm_vogellcofinality} to conclude that there is a homotopy
   fiber sequence $wS_\bullet\cD \to wS_\bullet\cC_0 \to
   N_\bullet\coker(K_0\cD \to K_0\cC_0)$. Observe that the suspension functor
   induces a functor $\Sigma \colon \cC_{k+1} \to \cC_k$; since the suspension functor induces an equivalence on algebraic $K$-theory \cite[Proposition~1.6.2]{Waldhausen1985}, we conclude that the inclusion functor $\cC_k \subset \cC_{k+1}$ does so, too. Therefore, $wS_\bullet\cC \simeq \hocolim_k wS_\bullet\cC_k \simeq wS_\bullet\cC_0$, and Theorem~\ref{thm_vogellcofinality}  follows.
 \end{remark}
 
\begin{proposition}\label{prop_cofinality}
  \ 
 \begin{enumerate}
  \item The natural inclusion of the finite into the homotopy finite objects induces a weak equivalence
   \begin{equation*}
    hS_\bullet\cR^G_f(W,\fZ) \xrightarrow{\sim} hS_\bullet\cR^G_{hf}(W,\fZ),
   \end{equation*}
   hence a weak equivalence on algebraic $K$-theory spaces.
  \item The inclusion $\cR_f^G(W,\fZ) \subset \cR^G_{fd}(W,\fZ)$ induces an isomorphism on $K_i$ for $i \geq 1$ and an injection on $K_0$, where we take $K$-theory with respect to the $h$-equivalences.
 \end{enumerate}
\end{proposition}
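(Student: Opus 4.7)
For part (1), I would apply Waldhausen's Approximation Theorem to the inclusion $\cR^G_f(W,\fZ) \hookrightarrow \cR^G_{hf}(W,\fZ)$. The first approximation property is automatic because both categories share the same cofibrations and $h$-equivalences. For the second, given a morphism $f\colon A \to B$ with $A$ finite and $B$ homotopy finite, choose a finite object $D$ together with an $h$-equivalence $g\colon D \to B$ and a controlled homotopy inverse $\overline{g}\colon B \to D$. Applying Lemma~\ref{lem_mappingcylinderargument} with $Y = A$, $Y' = B$ and $Y'' = D$ produces $Q := \cyl(\overline{g}f)$ together with a cofibration $i_A\colon A \rightarrowtail Q$ and an $h$-equivalence $q\colon Q \to B$ satisfying $q \circ i_A = f$. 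Being a pushout of finite objects along a cofibration between finite objects, $Q$ is finite by Lemma~\ref{lem_controlledpushouts}, so the factorization lies in $\cR^G_f$ and the Approximation Theorem produces the desired weak equivalence on $hS_\bullet$.

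For part (2), by part (1) it suffices to analyse the inclusion $\cR^G_{hf}(W,\fZ) \hookrightarrow \cR^G_{fd}(W,\fZ)$, and I would apply Vogell's cofinality theorem (Theorem~\ref{thm_vogellcofinality}). Saturation is immediate, since a zig-zag of $h$-equivalences between a finitely dominated object and a homotopy finite one makes the former homotopy finite. For weak cofinality I invoke the characterization proved inside Lemma~\ref{lem_finitelydominatedpushouts}: every $Y \in \cR^G_{fd}$ is a genuine retract of a homotopy finite $F$, with morphisms $s\colon Y \rightarrowtail F$ and $q\colon F \to Y$ satisfying $q \circ s = \id_Y$. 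The resulting split cofibre sequence $Y \rightarrowtail F \twoheadrightarrow F/Y$ then yields, after one suspension, an $h$-equivalence
\begin{equation*}
 \Sigma F \xrightarrow{\sim} \Sigma Y \vee \Sigma(F/Y),
\end{equation*}
constructed by composing the pinch map on the co-H-space $\Sigma F$ with $\Sigma q \vee \Sigma \pi$. Since $F/Y$ is itself finitely dominated (as a cofibre of finitely dominated objects, by Lemma~\ref{lem_finitelydominatedpushouts}), this verifies weak cofinality with $k = 1$ and complementary object $Y' := \Sigma(F/Y)$.

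Feeding both inputs into Theorem~\ref{thm_vogellcofinality} produces a homotopy fibre sequence whose third term realises to the classifying space of the discrete abelian group $\coker(K_0\cR^G_{hf}(W,\fZ) \to K_0\cR^G_{fd}(W,\fZ))$; this space has homotopy concentrated in degree one, so the associated long exact sequence degenerates to isomorphisms $K_i\cR^G_{hf}(W,\fZ) \cong K_i\cR^G_{fd}(W,\fZ)$ for every $i \geq 1$ together with an injection $K_0\cR^G_{hf}(W,\fZ) \hookrightarrow K_0\cR^G_{fd}(W,\fZ)$, as required. The principal obstacle is realising the splitting $\Sigma F \simeq \Sigma Y \vee \Sigma(F/Y)$ inside the controlled category: the pinch map itself only refines the cell structure of $\Sigma F$ and so is automatically $\fZ$-controlled, but constructing a controlled homotopy inverse requires careful bookkeeping in the suspension coordinate together with the closure properties of $\fC$ and $\fS$ from Definition~\ref{def_coarsestructure}.
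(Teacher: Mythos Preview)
Your argument for part~(1) is correct and matches the paper's proof exactly.

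For part~(2) you have the right overall strategy (Vogell cofinality applied to $\cR^G_{hf} \subset \cR^G_{fd}$), but there are two issues.

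First, saturation is not quite ``immediate''. If $b \colon Y_2 \to Y_1$ is an $h$-equivalence with $Y_1$ homotopy finite, you want a morphism from a finite object to $Y_2$. The obvious candidate $\overline{b} \circ a$ (with $a \colon F \to Y_1$ an $h$-equivalence from a finite $F$) need not respect retractions, since homotopy inverses in this category are only \emph{maps}. The paper fixes this by redefining the retraction on $F$ to be $r_{Y_2} \circ \overline{b} \circ a$; the resulting object is a different finite object, and now the map to $Y_2$ is a genuine morphism.

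Second, and more seriously, your weak cofinality argument is incomplete precisely where you flag the ``principal obstacle''. You assert that the pinch map $\Sigma F \to \Sigma Y \vee \Sigma(F/Y)$ is an $h$-equivalence, but you do not prove it, and the vague appeal to bookkeeping in the suspension coordinate is not a proof. In the controlled category there is no Whitehead-type theorem available to upgrade a homology isomorphism, and the Extension Axiom fails, so this step genuinely requires work. The paper avoids the problem rather than solving it: instead of a split cofibration it works directly with the domination data $i \colon \tilde{Y} \to D$, sets up a commutative map of exact sequences from $S\tilde{Y} \rightarrowtail S\cyl(i) \twoheadrightarrow SC(i)$ to the split sequence $\Sigma Y \rightarrowtail \Sigma Y \vee \Sigma C(i) \twoheadrightarrow \Sigma C(i)$, and then invokes Lemma~\ref{lem_puppesequence}. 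That lemma only gives that the \emph{suspension} of the middle map is an $h$-equivalence, so one obtains $\Sigma^2 Y \vee \Sigma^2 C(i)$ homotopy finite, i.e.\ weak cofinality with $k = 2$ rather than $k = 1$. This is still sufficient for Theorem~\ref{thm_vogellcofinality}, and it sidesteps exactly the controlled-splitting problem you were unable to resolve.
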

\begin{proof}
 To prove the first part, we appeal to Waldhausen's Approximation Theorem. Both categories satisfy the Saturation Axiom and have a Cylinder Functor which satisfies the Cylinder Axiom.  The first part of the Approximation Property is clear, we check the second part.
  
 Let $F$ be a finite object, let $Y$ be homotopy finite, and let $f \colon F \to Y$ be a morphism.   We have to construct a finite object $F'$, and further a morphism $F \rightarrow F'$ as well as an $h$-equivalence $F' \xrightarrow{\sim} Y$, such that their composition is $f$.
  
 As $Y$ is homotopy finite, there is a finite object $F_0$ and an $h$-equivalence $e \colon F_0 \xrightarrow{\sim} Y$.
 The Approximation Property now follows from the mapping cylinder argument~\ref{lem_mappingcylinderargument}.

 We turn to the proof of the second part of the proposition, which uses Vogell's cofinality theorem \ref{thm_vogellcofinality}.
 
 Since the inclusion of $\cR^G_f(W,\fZ)$ into $\cR^G_{fd}(W,\fZ)$ factors via $\cR^G_{hf}(W,\fZ)$,
 we need only consider the inclusion of the latter category. We show that Vogell's cofinality theorem applies in this situation.
 
 We show first that $\cR^G_{hf}(W,\fZ)$ is saturated in $\cR^G_{fd}(W,\fZ)$.  
 Let $Y_1$ be homotopy finite.  There is a finite object $(F, s_F, r_F)$ and an $h$-equivalence $a\colon F \rightarrow Y_1$.  Let $b\colon Y_2 \rightarrow Y_1$ be an $h$-equivalence.  There is an inverse map $\overline{b}\colon Y_1 \rightarrow Y_2$, but the composition $\overline{b}a$ does not have to respect the retractions.  Define a retraction $r' := r_{Y_2} \overline{b} a$, then $(F, s_F, r') \rightarrow (Y_2, s_{Y_2}, r_{Y_2})$ is an $h$-equivalence and $(F, s_F, r')$ is a different object, but still finite.  Hence $Y_2$ is homotopy finite.  The case $b \colon Y_1 \rightarrow Y_2$ is obvious and the general case follows by induction.

 So we are left with showing weak cofinality. Let $Y \in \cR^G_{fd}(W,\fZ)$ be arbitrary. Then we can find
 a finite object $D$ as well as a morphism $d \colon D \to Y$ and a map $i \colon Y \to D$ such that $d \circ i$
 is controlled homotopic to the identity map. Define an object $\tilde{Y}$ which is the same controlled
 $G$-CW-complex as $Y$, but equipped with a new retraction which turns $i \colon \tilde{Y} \to D$ into a morphism.
 Note that the composition $d \circ i \colon \tilde{Y} \to Y$ is an $h$-equivalence.
 
 Let $C\tilde{Y}$ denote the cone $\cyl(\id_{\tilde{Y}}) / \tilde{Y}$,
 let $S\tilde{Y}$ denote the object $C\tilde{Y} \cup_{\tilde{Y}} C\tilde{Y}$, and define $S\cyl(i)$ and $SC(i)$ analogously.
 There is a canonical $h$-equivalence $S\tilde{Y} \to C\tilde{Y} \cup_{\tilde{Y}} * \cong \Sigma\tilde{Y}$.
 Moreover, we have a morphism $S\cyl(i) \to \Sigma\cyl(i) \vee \Sigma\cyl(i)$ given by the quotient map with respect to the canonical
 cofibration $\cyl(i) \hookrightarrow S\cyl(i)$. These objects fit into a commutative diagram
 \begin{equation*}
  \begin{tikzpicture}
   \matrix (m) [matrix of math nodes, column sep=2em, row sep=2em, text depth=.5em, text height=1em]
     {S\tilde{Y} & S\cyl(i) & SC(i) \\ \Sigma Y & \Sigma Y \vee \Sigma C(i) & \Sigma C(i) \\};
   \path[>->]
     (m-1-1) edge (m-1-2)
     (m-2-1) edge (m-2-2);
   \path[->>] 
     (m-1-2) edge (m-1-3)
     (m-2-2) edge (m-2-3);
   \path[->]
     (m-1-1) edge (m-2-1)
     (m-1-2) edge (m-2-2)
     (m-1-3) edge (m-2-3);
   \end{tikzpicture}
 \end{equation*}
 in which the upper row comes from the cofiber sequence and the lower row is the split cofiber sequence. The vertical arrows are given as follows: The left vertical morphism is the composition
 $S\tilde{Y} \xrightarrow{\sim} \Sigma\tilde{Y} \xrightarrow{\Sigma di} \Sigma Y$, and the right vertical arrow
 is the canonical morphism $SC(i) \to * \cup_{C(i)} C(C(i)) \cong \Sigma C(i)$, i.e., the  collapse of the other half of $SC(i)$. For the middle vertical morphism, we take the composition
 $S\cyl(i) \to \Sigma\cyl(i) \vee \Sigma\cyl(i) \xrightarrow{\Sigma d' \vee \Sigma q} \Sigma Y \vee \Sigma C(i)$,
 where $d'$ is the composition of the back projection $\cyl(i) \to D$ with $d$, and $q$ is the projection $\cyl(i) \to C(i)$.
 Since both the left and the right vertical arrows are $h$-equivalences, it follows from Lemma \ref{lem_puppesequence}
 that the induced morphism $\Sigma S \cyl(i) \to \Sigma^2 Y \vee \Sigma^2 C(i)$ is an $h$-equivalence.
 Recall that $\cyl(i)$ is homotopy finite (since it is $h$-equivalent to $D$).  Since $\Sigma S\cyl(i)$ is $h$-equivalent to $\Sigma^2 \cyl(i)$, and
 suspension preserves finiteness, it follows that $\Sigma^2 Y \vee \Sigma^2 C(i)$ is homotopy finite. This proves weak cofinality, and we are done.
\end{proof}

\begin{definition}
 Let $D \in \cR^G(W,\fZ)$, and let $\alpha \colon D \to D$ be a controlled map. The \emph{mapping telescope} $\tel(\alpha)$ of $\alpha$ is the controlled $G$-CW-complex relative $W$
 \begin{equation*}
  \cyl(\alpha) \cup_D \cyl(\alpha) \cup_D \dots
 \end{equation*}
 obtained by taking countably many copies of the mapping cylinder of $\alpha$ and gluing the back and front end of each consecutive pair of cylinders.
\end{definition}

Note that $\tel(\alpha)$ does not need to be a retractive space. However, in certain cases it can be equipped with a retraction, and can then be used to replace dominated spaces by ``nicer'' ones.

\begin{proposition}\label{prop:mapping-telescope}
 Let $Y,D \in \cR^G(W,\fZ)$. Suppose we have maps $i \colon Y \to D$ and $d \colon D \to Y$ such that $d \circ i$ is (controlled) homotopic to $\id_Y$.
 
 Then the canonical map $j \colon Y \xrightarrow{i} D \rightarrowtail \tel(i \circ d)$ is a controlled homotopy equivalence.
 If $d$ is a morphism, $\tel(i \circ d)$ admits a retraction such that there exists an $h$-equivalence $\tel(i \circ d) \xrightarrow{\sim} Y$ which is a homotopy inverse to $j$.
\end{proposition}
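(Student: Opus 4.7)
The plan is to follow the classical strategy for splitting homotopy idempotents via mapping telescopes, adapted to the controlled setting. Writing $e := i \circ d \colon D \to D$, the hypothesis $d i \simeq_\fZ \id_Y$ implies $e \circ e = i(di)d \simeq_\fZ id = e$, so $e$ is a homotopy idempotent, and $Y$ will play the role of its ``image'' via $i$ and $d$.

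First, the candidate inverse $r \colon \tel(e) \to Y$ is constructed as follows. Fix a controlled homotopy $H \colon Y \halftimes [0,1] \to Y$ from $\id_Y$ to $di$. Set $r|_{D_n} := d$ on each copy of $D$, and on the $n$-th cylinder $\cyl_n = D \halftimes [0,1]$ set $r|_{\cyl_n}(d', t) := H(d(d'), t)$. At $t=0$ this equals $d$ (agreeing with $r|_{D_n}$), and at $t=1$ it equals $did = de$ (agreeing with $r|_{D_{n+1}} \circ e$ under the back identification $(d',1) \sim e(d') \in D_{n+1}$). Since $H$ is a single controlled homotopy used in parallel across cylinders, $r$ will be $\fZ$-controlled with bounds independent of $n$. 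The composition $r \circ j$ equals $d \circ i$, which is $\fZ$-controlled homotopic to $\id_Y$ directly via $H$.

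The main step will be showing $j \circ r \simeq_\fZ \id_{\tel(e)}$. Observe that $jr|_{D_n} = e \colon D \to D_0 \hookrightarrow \tel(e)$, while $\id|_{D_n}$ is the inclusion. Using the canonical deformation retractions of cylinders onto their backs, the inclusion $D_n \hookrightarrow \tel(e)$ is controlled homotopic to $e^k \colon D \to D_{n+k}$ for any $k \geq 0$, and $jr|_{D_n}$ is controlled homotopic to $e^{m+1} \colon D \to D_m$ for any $m \geq 0$. Choosing $m = n+1$ and $k = 1$ makes both endpoints land in $D_{n+1}$, and $e^{n+2} \simeq e$ as maps $D \to D$ can be realized by $n+1$ applications of the basic homotopy $e^2 \simeq_\fZ e$ (obtained from $i H d$). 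The crucial point is that these $n+1$ iterations are distributed across different cylinders of $\tel(e)$: each iteration occupies its own cylinder, so the total control bound remains that of a single application of $H$ rather than $(n+1)$ stacked copies. Assembling these local homotopies compatibly on overlapping cells yields the desired global $\fZ$-controlled homotopy.

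Finally, for the retraction claim, when $d$ is a morphism the homotopy $H$ can be chosen under $W$, which makes $r$ into a morphism; then $r_Y \circ r$ provides a retraction of $\tel(e)$ onto $W$. With this retractive structure, $j$ and $r$ become mutually inverse morphisms up to controlled homotopy through morphisms, and in particular mutually inverse $h$-equivalences. The hard part will be the global control of the homotopy $j r \simeq \id$ in the third step: a naive iteration of $H$ would accumulate control linearly in the depth $n$ in the telescope, but the telescope's wealth of cylinders is precisely what allows the iterations to be distributed so that the bounds remain uniform.
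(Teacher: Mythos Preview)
Your construction of $r$ and the verification $r \circ j = d \circ i \simeq_\fZ \id_Y$ coincide with the paper's map $T(d)$. The divergence is in the harder direction $j \circ r \simeq_\fZ \id_{\tel(e)}$, and there your sketch has a gap.

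You propose to slide $jr|_{D_n} = e \colon D \to D_0$ forward by $n{+}1$ cylinders to reach $e^{n+2} \colon D \to D_{n+1}$. For each fixed $n$ this is indeed a controlled homotopy, but its control is that of $e^{n+1}$: at the $k$-th intermediate stage the image of a cell $c$ of $D_n$ lies in $\langle e^{k+1}(c)\rangle \subset D_k$, whose labels sit in a $(k{+}1)$-fold composite of the control condition for $e$. Since $n$ ranges over all of $\NN$, these homotopies do not assemble into a single $\fZ$-controlled map $\tel(e) \halftimes [0,1] \to \tel(e)$. Your ``distribution across cylinders'' remark targets only the subsequent homotopy $e^{n+2} \simeq e$, not this preliminary slide. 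The implementation that actually works interleaves the two: slide one cylinder, apply $e^2 \simeq e$ once, repeat---so the running image never leaves $\langle e(c)\rangle \cup \langle e^2(c)\rangle$ and control stays uniform. But then the number of stages depends on $n$, and gluing these variable-length homotopies continuously across the cylinders $\cyl_n$ is a separate, somewhat delicate point that you do not address.

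The paper sidesteps both difficulties by following Ferry--Ranicki and passing through $Y \halftimes [0,\infty[$; this is the reason it records the observation $Y \simeq Y \halftimes [0,\infty[$. One builds \emph{level-preserving} maps between $Y \halftimes [0,\infty[$ and $\tel(e)$, restricting to $i$ and $d$ at each integer level. The composite on $\tel(e)$ then sends $D_n$ to $D_n$ (not to $D_0$) via $e$, so the homotopy to the identity requires only one forward slide and one application of $e^2 \simeq e$ at each level---uniform in $n$ and straightforwardly compatible across cylinders.
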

\begin{proof}
 The proof of Proposition 1.4 in \cite{Ferry-Ranicki;wall-finiteness} works also in our setting. Note that we have an $h$-equivalence $Y \simeq Y \halftimes [0,\infty[$ because the control map disregards the cylinder coordinate.
 
 The homotopy $d \circ i \simeq \id_Y$ induces a map $\overline{d} \colon \cyl(i \circ d) \to Y$ which restricts to $d$ on the front and back. Hence, countably many copies of $\overline{d}$ glue to a controlled map $T(d) \colon \tel(i \circ d) \to Y$. Since $T(d) \circ j = d \circ i \simeq \id_Y$, the map $T(d)$ is homotopy inverse to $j$. If $d$ is a morphism, we can define a retraction $r \colon \tel(i \circ d) \to W$ by composing $T(d)$ with the retraction of $Y$. This makes $D \rightarrowtail \tel(i \circ d)$ into a morphism, and $T(d)$ becomes an $h$-equivalence.
\end{proof}

\begin{corollary}\label{cor:comparison-finite-dimensional}
 Let $\cR^G_{fd,\dim < \infty}(W,\fZ) \subset \cR^G_{fd}(W,\fZ)$ denote the full Waldhausen subcategory of finite-dimensional objects.
 Then the inclusion functor induces a weak equivalence
 \begin{equation*}
  hS_\bullet\cR^G_{fd,\dim < \infty}(W,\fZ) \xrightarrow{\sim} hS_\bullet\cR^G_{fd}(W,\fZ).
 \end{equation*}
\end{corollary}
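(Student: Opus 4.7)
My plan is to apply Waldhausen's Approximation Theorem to the inclusion $\cR^G_{fd,\dim < \infty}(W,\fZ) \subset \cR^G_{fd}(W,\fZ)$, with respect to the $h$-equivalences. Both categories carry the cylinder functor $\cyl_W(-)$ and inherit the Saturation and Cylinder Axioms (Lemma~\ref{lemma:cylinder-functor}); $\cR^G_{fd,\dim < \infty}(W,\fZ)$ is clearly a Waldhausen subcategory, since the dimension of a pushout along a cofibration is bounded by the maximum of the dimensions of the outer terms, and pushouts preserve finite domination by Lemma~\ref{lem_finitelydominatedpushouts}. The first Approximation Property is immediate because the weak equivalences in both categories are the $h$-equivalences in $\cR^G(W,\fZ)$. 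The content lies in the second Approximation Property.

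To verify it, I start with a morphism $f \colon A \to Y$ where $A$ is finite-dimensional and finitely dominated and $Y$ is finitely dominated, and produce a factorization $A \rightarrowtail A' \xrightarrow{\sim} Y$ with $A'$ finite-dimensional and finitely dominated. Pick a domination $(D, d, i)$ of $Y$ with $D$ finite, $d \colon D \to Y$ a morphism and $i \colon Y \to D$ a controlled map such that $d \circ i \simeq \id_Y$. Apply Proposition~\ref{prop:mapping-telescope} to replace $Y$ by $\tilde Y := \tel(i \circ d)$: since $d$ is a morphism, $\tilde Y$ carries a retraction making $T(d) \colon \tilde Y \to Y$ an $h$-equivalence with homotopy inverse $j \colon Y \to \tilde Y$ given by $Y \xrightarrow{i} D \rightarrowtail \tilde Y$. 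Because $\cyl(i \circ d)$ has dimension $\dim D + 1$ and the telescope is built by gluing countably many copies along their front/back inclusions, $\tilde Y$ is itself finite-dimensional; and since $\tilde Y \simeq Y$ in the $h$-sense, the finite dominator $(D, j \circ d, i \circ T(d))$ shows $\tilde Y$ is finitely dominated.

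Now apply the mapping cylinder argument (Lemma~\ref{lem_mappingcylinderargument}) with $\overline g := j$ and $g := T(d)$: the object $Q := \cyl(j \circ f)$ fits into a commutative diagram
\begin{equation*}
\begin{tikzpicture}
  \matrix (m) [matrix of math nodes, column sep=3em, row sep=2em, text depth=.5em, text height=1em]
  { A & Y \\ Q & \tilde Y \\};
  \path[>->]
    (m-1-1) edge (m-2-1)
    (m-2-2) edge node[below]{$\sim$} (m-2-1);
  \path[->]
    (m-1-1) edge node[above]{$f$} (m-1-2)
    (m-2-2) edge node[right]{$T(d)$} (m-1-2)
    (m-2-1) edge node[above left]{$q$} (m-1-2);
\end{tikzpicture}
\end{equation*}
with $A \rightarrowtail Q$ a cofibration and $q \colon Q \to Y$ a morphism of retractive spaces; by the final clause of Lemma~\ref{lem_mappingcylinderargument}, $q$ is an $h$-equivalence because $T(d)$ is. The underlying complex $Q = \cyl(j \circ f)$ has dimension at most $\max(\dim A + 1, \dim \tilde Y) < \infty$, and $Q$ is finitely dominated since it is $h$-equivalent to the finitely dominated $Y$ (with dominator transported exactly as above). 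Thus $A' := Q$ lies in $\cR^G_{fd,\dim<\infty}(W,\fZ)$ and provides the desired factorization of $f$.

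The only delicate point is ensuring that both $\tilde Y$ and $Q$ genuinely live in our category: controlledness of $\tilde Y$ follows because it is built by pushouts from the controlled objects $D$ and $Y$ along controlled (self-)maps, and controlledness of $Q$ follows from that of $\tilde Y$ together with controlledness of $j \circ f$; the main obstacle a referee might push back on is verifying that finite domination is transported along an $h$-equivalence between objects where one of the two intertwining maps need not respect retractions, but this transport is handled exactly as in the saturation argument inside the proof of Proposition~\ref{prop_cofinality}. With the Approximation Property established, Waldhausen's theorem delivers the claimed weak equivalence on $hS_\bullet$.
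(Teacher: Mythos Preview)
Your proof is correct and follows essentially the same approach as the paper's: apply the Approximation Theorem, use Proposition~\ref{prop:mapping-telescope} to replace $Y$ by the finite-dimensional telescope $\tilde Y = \tel(i \circ d)$, and then invoke the mapping cylinder argument (Lemma~\ref{lem_mappingcylinderargument}) to produce the required factorization. One very minor slip: the domination you write for $\tilde Y$ uses $j \circ d$ as the morphism $D \to \tilde Y$, but $j$ factors through the non-morphism $i$; the clean choice is the front inclusion $\iota \colon D \rightarrowtail \tilde Y$ (a morphism by Proposition~\ref{prop:mapping-telescope}) together with the controlled map $i \circ T(d)$, since $\iota \circ i \circ T(d) = j \circ T(d) \simeq \id_{\tilde Y}$.
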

\begin{proof}
 This is another application of the Approximation Theorem, using Proposition \ref{prop:mapping-telescope} and the mapping cylinder argument \ref{lem_mappingcylinderargument}.
\end{proof}

\subsection{Comparing different notions of weak equivalences}
Let $\fZ$ be a coarse structure, and let $A \subset Z$ be a $G$-invariant subspace.
We would like to compare the $K$-theory spaces of $\cR^G_f(W,\fZ)$ with respect to the $h$- and $h^A$-equivalences.
Unfortunately, the standard procedure to obtain homotopy fiber sequences relating these does not apply
in our situation since the Fibration Theorem requires one subcategory of weak equivalences
to satisfy the Extension Axiom.
We present a solution to this problem which has also been employed by Weiss \cite[towards the end of the proof of Proposition 8.3]{Weiss2002}. For the sake of completeness, we record its validity for any suitable Waldhausen category.

Let $(\cC,co\cC,w\cC)$ be a small Waldhausen category which satisfies the Saturation Axiom and
possesses a cylinder functor which satisfies the Cylinder Axiom with respect to $w\cC$.

\begin{definition}\label{def_stableequivalences}
 We call a morphism $f$ in $\cC$ an \emph{equivalence after $n$-fold suspension} if $\Sigma^n f$ lies in $w\cC$. We say that $f$ is a \emph{stable equivalence} if there is some $n \in \NN$ such that $f$ is an equivalence after $n$-fold suspension. Denote the class of equivalences after $n$-fold suspension by $w_{\Sigma,n}\cC$, and the class of stable equivalences by $w_\Sigma \cC$.
\end{definition}

\begin{lemma}\label{lem_ktheoryofstableequivalences}
 Let $n \geq 0$.
 \begin{enumerate}
   \item The collections $w_{\Sigma,n}\cC$ and $w_\Sigma\cC$ are classes of weak equivalences which satisfy the Saturation Axiom. The cylinder functor satisfies the Cylinder Axiom with respect to both classes.
   Moreover, $w_\Sigma\cC$ satisfies the Extension Axiom.
   \item The natural map $wS_\bullet\cC \to w_\Sigma S_\bullet\cC$ is a weak equivalence.
 \end{enumerate}
 \end{lemma}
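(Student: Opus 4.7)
My plan is to leverage the exactness of the suspension functor $\Sigma$, invoke Lemma~\ref{lem_puppesequence} to establish the Extension Axiom, and combine this with the fact that $\Sigma$ induces a self-equivalence on $K$-theory for the second part.

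For the axioms of a class of weak equivalences on $w_{\Sigma,n}\cC$: since $\Sigma^n$ is exact, it preserves identities, composition, and pushouts along cofibrations. Hence $w_{\Sigma,n}\cC$ contains all isomorphisms, is closed under composition, and satisfies the Gluing Lemma (which reduces via $\Sigma^n$ to the Gluing Lemma for $w\cC$). The Saturation Axiom for $w_{\Sigma,n}\cC$ follows similarly from Saturation of $w\cC$. Since the cylinder functor is exact, the Cylinder Axiom for $w_{\Sigma,n}\cC$ also reduces to that for $w\cC$. Because $\Sigma$ preserves $w\cC$, one has $w_{\Sigma,n}\cC \subset w_{\Sigma,n+1}\cC$, so the class $w_\Sigma\cC = \bigcup_n w_{\Sigma,n}\cC$ is a filtered union and inherits all these properties.

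The Extension Axiom for $w_\Sigma\cC$ is the conceptual heart of the lemma and follows directly from Lemma~\ref{lem_puppesequence}. Given a morphism between cofiber sequences with corner maps $a$ and $c$ in $w_\Sigma\cC$, choose $N$ large enough that $\Sigma^N a$ and $\Sigma^N c$ both lie in $w\cC$. Applying Lemma~\ref{lem_puppesequence} to the $N$-fold suspended diagram then shows that $\Sigma^{N+1} b$ lies in $w\cC$, so $b \in w_{\Sigma,N+1}\cC \subset w_\Sigma\cC$.

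For the second part, I would first show that the inclusion $wS_\bullet\cC \to w_{\Sigma,n}S_\bullet\cC$ is a weak equivalence for each fixed $n$, then take a filtered colimit. The suspension functor restricts to an exact functor $\Sigma \colon (\cC, w_{\Sigma,n}\cC) \to (\cC, w_{\Sigma,n-1}\cC)$, and both composites of $\Sigma$ with the inclusion functor agree with $\Sigma$ on the respective sides. Since $\Sigma$ induces a self-equivalence on $K$-theory (by the standard argument using the cofiber sequence $\id \to C(\id) \to \Sigma$, which applies for any class of weak equivalences compatible with the cylinder functor, cf.~\cite[Proposition~1.6.2]{Waldhausen1985}), a two-out-of-three argument combined with induction on $n$ shows that each inclusion $wS_\bullet\cC \to w_{\Sigma,n}S_\bullet\cC$ is a weak equivalence. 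Finally, since $w_\Sigma S_\bullet\cC$ is the filtered colimit of the $w_{\Sigma,n} S_\bullet\cC$ (computed degreewise from the filtered union of morphism classes), and filtered colimits preserve weak equivalences of simplicial sets, we obtain the desired weak equivalence $wS_\bullet\cC \xrightarrow{\sim} w_\Sigma S_\bullet\cC$. The main obstacle is verifying the Extension Axiom — everything else reduces formally to exactness of $\Sigma$ or to established facts about $w\cC$; it is the use of Lemma~\ref{lem_puppesequence} which lets the one-step ``defect'' in the suspension index be absorbed by passing from $w_{\Sigma,N}$ to $w_{\Sigma,N+1}$.
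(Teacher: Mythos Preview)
Your proof is correct. Part~(1) matches the paper's argument exactly: the routine axioms reduce to exactness of $\Sigma^n$, and the Extension Axiom follows from Lemma~\ref{lem_puppesequence} after suspending enough times.

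For part~(2) you take a genuinely different route. The paper applies Waldhausen's Fibration Theorem to $w\cC \subset w_\Sigma\cC$ (which is now available because part~(1) supplies the Extension Axiom), reducing to the contractibility of $wS_\bullet\cC^{w_\Sigma}$; it then writes $\cC^{w_\Sigma} = \bigcup_n \cC^{w_{\Sigma,n}}$ and observes that $\Sigma^n$ is a self-equivalence on $K(\cC^{w_{\Sigma,n}})$ that factors through the contractible $K(\cC^w)$. Your argument instead shows directly that each inclusion $wS_\bullet\cC \to w_{\Sigma,n}S_\bullet\cC$ is a weak equivalence, using that $\Sigma$ and the inclusion are mutually inverse up to the self-equivalence $\Sigma$ on either side, and then passes to the filtered colimit. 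Your approach is more elementary in that it avoids the Fibration Theorem entirely (and in particular does not rely on the Extension Axiom to prove part~(2)); the paper's approach, on the other hand, fits the standard ``fiber term is contractible'' pattern and makes the role of the Extension Axiom in the Modified Fibration Theorem more transparent. One small quibble: what you call ``two-out-of-three'' is really the observation that if both composites $\Sigma \circ \mathrm{inc}$ and $\mathrm{inc} \circ \Sigma$ are equivalences then so is $\mathrm{inc}$; this is correct but is not literally the two-out-of-three property.
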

 \begin{proof}
 Almost everything of the first part of the lemma is straightforward; the only exception is the validity of the Extension Axiom for $w_\Sigma \cC$.
   
 Assume that we have a commutative diagram of exact sequences
 \begin{equation*}
   \begin{tikzpicture}
   \matrix (m) [matrix of math nodes, column sep=2em, row sep=2em, text depth=.5em, text height=1em]
     {A & B & C \\ A' & B' & C' \\};
   \path[>->]
     (m-1-1) edge (m-1-2)
     (m-2-1) edge (m-2-2);
   \path[->>] 
     (m-1-2) edge (m-1-3)
     (m-2-2) edge (m-2-3);
   \path[->]
     (m-1-1) edge node[left]{$a$} (m-2-1)
     (m-1-2) edge node[left]{$b$} (m-2-2)
     (m-1-3) edge node[left]{$c$} (m-2-3);
   \end{tikzpicture}
 \end{equation*}
 in which $a$ and $c$ are weak equivalences after $n$-fold suspension. Suspend the diagram $n$ times to
 obtain a diagram of the same shape in which the left and right arrows are weak equivalences.
 Then it follows from Lemma \ref{lem_puppesequence} that $b$ is a weak equivalence after $(n+1)$-fold suspension.
 
 For the second part, we can apply the Fibration Theorem to the inclusion $w\cC \subset w_\Sigma\cC$
 since we have just shown that $w_\Sigma\cC$ satisfies the Saturation and Extension Axioms, and that the Cylinder Axiom holds as well.
 Therefore, it suffices to show that $wS_\bullet\cC^{w_\Sigma}$ is contractible.
 Observe that $\cC^{w_\Sigma}$ is the union of the ascending sequence
 $\cC^w \subset \cC^{w_{\Sigma,1}} \subset \cC^{w_{\Sigma,2}} \subset \dots$.
 Since $K$-theory commutes with directed colimits, it is enough to show that each
 $\cC^{w_{\Sigma,n}}$ has contractible $K$-theory.
 
 By the Additivity Theorem, the exact endofunctor $\Sigma^n \colon \cC^{w_{\Sigma,n}} \to \cC^{w_{\Sigma,n}}$
 induces a self-homotopy equivalence in $K$-theory. Furthermore, it factors over $\cC^w$.
 Since $wS_\bullet\cC^w$ is contractible, the claim follows.
\end{proof}

\begin{proposition}[Modified Fibration Theorem]\label{prop_modifiedfibrationtheorem}
 Let $\cC$ be a category with cofibrations. Let $v\cC \subset w\cC$ be two subcategories of
 weak equivalences. Suppose that $\cC$ has a cylinder functor which satisfies the Cylinder Axiom with respect to $v\cC$
 (hence also with respect to $w\cC$). Assume that $v\cC$ and $w\cC$ satisfy the Saturation Axiom.
  
 Then the canonical inclusion functors induce a homotopy pullback square
 \begin{equation*}
   \commsquare{vS_\bullet\cC^w}{wS_\bullet\cC^w \simeq *}{vS_\bullet\cC}{wS_\bullet\cC}{}{}{}{}
 \end{equation*}
 in which the corner on the top right is canonically contractible.
 \end{proposition}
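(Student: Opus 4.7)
The plan is to reduce the statement to Waldhausen's Fibration Theorem, which does not apply directly because $w\cC$ need not satisfy the Extension Axiom. The trick is to enlarge $w\cC$ to the class $w_\Sigma\cC$ of stable equivalences from Definition~\ref{def_stableequivalences}. By Lemma~\ref{lem_ktheoryofstableequivalences}, $w_\Sigma\cC$ satisfies Saturation, Extension, and the Cylinder Axiom, and the inclusion $wS_\bullet\cC \to w_\Sigma S_\bullet\cC$ is a weak equivalence.

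First I would apply Waldhausen's Fibration Theorem to the pair $v\cC \subset w_\Sigma\cC$. This is valid since $v\cC$ satisfies Saturation and the Cylinder Axiom by assumption, while $w_\Sigma\cC$ additionally satisfies the Extension Axiom required by the theorem. The output is a homotopy pullback square with corners $vS_\bullet\cC^{w_\Sigma}$, $w_\Sigma S_\bullet\cC^{w_\Sigma}$, $vS_\bullet\cC$ and $w_\Sigma S_\bullet\cC$, in which the corner $w_\Sigma S_\bullet\cC^{w_\Sigma}$ is canonically contractible. The latter is the case $v\cC = w\cC = w_\Sigma\cC$ of the same Fibration Theorem, in which the fiber of the identity $w_\Sigma S_\bullet\cC \to w_\Sigma S_\bullet\cC$ coincides up to homotopy with $w_\Sigma S_\bullet\cC^{w_\Sigma}$.

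To identify this square with the one in the statement, I would show that the inclusion $\cC^w \hookrightarrow \cC^{w_\Sigma}$ induces weak equivalences on both $vS_\bullet$ and $w_\Sigma S_\bullet$. Combined with Lemma~\ref{lem_ktheoryofstableequivalences} applied along the bottom and right of the square, and to the category $\cC^w$ itself (whose hypotheses are inherited, noting that the Cylinder Axiom together with Saturation forces the mapping cylinder of a morphism between $w$-acyclic objects to remain $w$-acyclic), this provides the desired identification, upgrading the Fibration Theorem's square to the one in the statement. In particular, the top-right corner identifies as $wS_\bullet\cC^w \simeq w_\Sigma S_\bullet\cC^w \simeq w_\Sigma S_\bullet\cC^{w_\Sigma} \simeq *$, which is canonically contractible.

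The hardest step is the claim that $\cC^w \hookrightarrow \cC^{w_\Sigma}$ is a $K$-theory equivalence. I would filter $\cC^{w_\Sigma} = \bigcup_n \cC^{w_{\Sigma,n}}$ using the classes of $n$-fold stable equivalences, and prove that each inclusion $i_n \colon \cC^{w_{\Sigma,n}} \hookrightarrow \cC^{w_{\Sigma,n+1}}$ is a $K$-theory equivalence. The key observation is that suspension restricts to a functor $\Sigma' \colon \cC^{w_{\Sigma,n+1}} \to \cC^{w_{\Sigma,n}}$ (since $\Sigma^{n+1} A$ being $w$-acyclic is the same as $\Sigma^n(\Sigma A)$ being $w$-acyclic), and composing $\Sigma'$ in either order with $i_n$ recovers the suspension endofunctor on the respective category, which is a self-equivalence on $K$-theory by the general suspension result (a consequence of Additivity, cf.\ the proof of Lemma~\ref{lem_ktheoryofstableequivalences}). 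Hence each $i_n$ is a $K$-theory equivalence, and the claim follows by passage to the directed colimit along $\cC^w = \cC^{w_{\Sigma,0}} \subset \cC^{w_{\Sigma,1}} \subset \cdots$, using that $K$-theory commutes with directed colimits of Waldhausen categories.
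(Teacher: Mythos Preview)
Your proposal is correct and follows essentially the same route as the paper: replace $w\cC$ by $w_\Sigma\cC$, apply Waldhausen's Fibration Theorem to $v\cC \subset w_\Sigma\cC$, and then check that the comparison map between the two squares is a weak equivalence at each corner, the key point being that $vS_\bullet\cC^w \to vS_\bullet\cC^{w_\Sigma}$ is an equivalence via the filtration $\cC^{w_\Sigma} = \bigcup_n \cC^{w_{\Sigma,n}}$ with suspension providing a homotopy inverse to each inclusion. The only minor difference is cosmetic: the paper dispatches the top-right corners by noting directly that both $wS_\bullet\cC^w$ and $w_\Sigma S_\bullet\cC^{w_\Sigma}$ are contractible (since $* \to A$ is a weak equivalence for every object in these subcategories), whereas you route through the chain $wS_\bullet\cC^w \simeq w_\Sigma S_\bullet\cC^w \simeq w_\Sigma S_\bullet\cC^{w_\Sigma} \simeq *$, which is fine but slightly more work than necessary.
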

\begin{proof}
 The square that we are claiming to be a homotopy pullback comes with a transformation
 to the square
 \begin{equation}\label{diagram:fibration:theorem}
   \commsquare{vS_\bullet\mathcal{C}^{w_\Sigma}}{w_\Sigma S_\bullet\mathcal{C}^{w_\Sigma} \simeq *}{vS_\bullet\mathcal{C}}{w_\Sigma S_\bullet\mathcal{C}}{}{}{}{}
 \end{equation}
 This transformation is the identity on the lower left corner, and is induced by the canonical inclusion functors on the other three corners.
 As $w_\Sigma\cC$ satisfies the Extension Axiom by Lemma \ref{lem_ktheoryofstableequivalences}, the square \ref{diagram:fibration:theorem} is a homotopy pullback by the Fibration Theorem.
 The entries on the top right corners of both squares are contractible.
 The map between the lower right corners is a weak equivalence by Lemma \ref{lem_ktheoryofstableequivalences}.
 So all we have to check is that the canonical map $vS_\bullet\mathcal{C}^w \to vS_\bullet\mathcal{C}^{w_\Sigma}$ is a weak equivalence.
  
 Just as in the proof of Lemma \ref{lem_ktheoryofstableequivalences}, we can write $\cC^{w_\Sigma}$
 as a directed union $\cC^{w_\Sigma} = \bigcup_n \cC^{w_{\Sigma,n}}$. In the direct limit system
 \begin{equation*}
   \cC^w \hookrightarrow \cC^{w_{\Sigma,1}} \hookrightarrow \cC^{w_{\Sigma,2}} \hookrightarrow \dots,
 \end{equation*}
 each arrow induces an equivalence in $K$-theory (the suspension functor provides a homotopy inverse),
 and the claim follows from this.
\end{proof}

We can now begin to study the $K$-theory of categories of controlled retractive spaces.
For technical reasons, we will need to impose certain intermediate finiteness conditions
on the objects as long as we are dealing with connective $K$-theory.
This phenomenon is well-known in the linear setting, see \cite{CP1997}.

Let $F \leq K_0(\cR^G_{fd}(W,\fZ),h)$ be a subgroup. Denote by $\cR^G_{fd,F}(W,\fZ)$
the full subcategory of all those objects whose $K_0$-class lies in $F$.
We think of these objects as being subject to an intermediate finiteness condition.
Note that these objects can be equivalently characterized as those complexes
whose $K_0$-class lies in the kernel of the projection homomorphism $K_0(\cR^G_{fd}(W,\fZ),h) \to K_0(\cR^G_{fd}(W,\fZ),h)/F$.
It is now a consequence of Thomason's cofinality theorem \cite[Cofinality Theorem 1.10.1]{TT1990}
that there is a homotopy fiber sequence
\begin{equation*}
 h S_\bullet \cR^G_{fd,F}(W,\fZ) \to h S_\bullet \cR^G_{fd}(W,\fZ) \to N_\bullet(K_0(\cR^G_{fd}(W,\fZ),h)/F).
\end{equation*}
In particular, the change of finiteness condition only affects $K_0$; there, the induced map is a monomorphism
which can be identified with the inclusion $F \hookrightarrow K_0(\cR^G_{fd}(W,\fZ),h)$.
A typical choice for $F$ is $F := K_0(\cR_f^G(W,\fZ),h)$, which we regard as a subgroup of $K_0(\cR^G_{fd}(W,\fZ),h)$ by virtue of Proposition \ref{prop_cofinality}.

Let $\fZ = (Z,\fC,\fS)$ be a coarse structure and $A$ be a $G$-invariant subset of $Z$.
Let $\fS\langle A \rangle$ be the collection of all sets of the form $A^C \cap S$, where $S$ is an element of $\fS$ and $C \in \fC$, see Definition \ref{def:thickenings:cofinal:subcomplexes}.
Define a new coarse structure $\fZ\langle A \rangle := (Z, \fC, \fS\langle A \rangle)$.

Recall from Definition \ref{def_controlledhomotopyequivalences} that $A$ gives rise to a class of weak equivalences $h^A\cR^G_f(W,\fZ)$.

\begin{theorem}\label{thm_connectivefibresequence}
 Let $\fZ$ be a coarse structure and let $A \subset Z$ be a $G$-invariant subset.
  
 Set $K := K_0(\cR^G_f(W,\fZ),h) \leq K_0(\cR^G_{fd}(W,\fZ),h)$. Let $F \leq K_0(\cR^G_{fd}(W,\fZ\langle A\rangle),h)$ be the preimage of $K$
 under the natural homomorphism $K_0(\cR^G_{fd}(W,\fZ\langle A\rangle),h) \to K_0(\cR^G_{fd}(W,\fZ),h)$.
 Then
 \begin{equation}\label{eq:connectivefibresequence:1}
   hS_\bullet\cR^G_{fd,F}(W,\fZ\langle A \rangle) \to hS_\bullet\cR^G_{fd,K}(W,\fZ) \to h^AS_\bullet\cR^G_{fd,K}(W,\fZ)
 \end{equation}
 is a homotopy fiber sequence. Upon realization, there is a homotopy fiber sequence
 \begin{equation}\label{eq:connectivefibresequence:2}
  \abs{hS_\bullet\cR^G_{fd,F}(W,\fZ\langle A \rangle)} \to \abs{hS_\bullet\cR^G_f(W,\fZ)} \to \abs{h^AS_\bullet\cR^G_f(W,\fZ)}
 \end{equation}
 which is weakly equivalent to the former one.
\end{theorem}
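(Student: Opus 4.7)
The plan is to combine the Modified Fibration Theorem~\ref{prop_modifiedfibrationtheorem} with an Approximation-style identification of the fibre, and then to transfer the result from the finitely dominated to the finite setting using the cofinality results of Section~\ref{sec_comparisonthms}.

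First, I would set $\cC := \cR^G_{fd,K}(W,\fZ)$ and apply Proposition~\ref{prop_modifiedfibrationtheorem} to the inclusion $h\cC \subset h^A\cC$. Both classes of weak equivalences satisfy the Saturation Axiom, and the cylinder functor on $\cR^G(W,\fZ)$ restricts to one on $\cC$ satisfying the Cylinder Axiom for both. The theorem yields a homotopy fibre sequence
\begin{equation*}
 hS_\bullet \cC^{h^A} \longrightarrow hS_\bullet \cC \longrightarrow h^A S_\bullet \cC,
\end{equation*}
in which $\cC^{h^A}$ is the full Waldhausen subcategory of those $Y \in \cC$ for which $\ast \to Y$ is an $h^A$-equivalence; by Saturation this is equivalent to $Y$ admitting a cofinal subcomplex $Y' \subset Y$ away from $A$ which is $\fZ$-controlledly contractible.

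Next, identify $\cC^{h^A}$ with $\cR^G_{fd,F}(W,\fZ\langle A \rangle)$ via the Approximation Theorem. The morphism of coarse structures $\fZ\langle A \rangle \to \fZ$ induced by the identity on $Z$ gives an exact functor $\iota \colon \cR^G_{fd,F}(W,\fZ\langle A \rangle) \to \cC^{h^A}$: every object of the source has support in some $A^C \cap S$ and is therefore $h^A$-trivial (with $Y' = \emptyset$), while the definition of $F$ as the preimage of $K$ ensures the $K_0$-constraint is respected after transport. The first Approximation Property is immediate because $h$-equivalence depends only on the control conditions $\fC$, which coincide in $\fZ$ and $\fZ\langle A \rangle$. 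For the second, given $f \colon X \to Y$ with $X \in \cR^G_{fd,F}(W,\fZ\langle A \rangle)$ and $Y \in \cC^{h^A}$, I would choose a cofinal $h$-contractible subcomplex $Y' \subset Y$ away from $A$ and form $Y'' := Y \cup_{Y'} W$; this is $h$-equivalent to $Y$ by the Gluing Lemma~\ref{prop:gluing-lemma} and has support in a $\fZ$-thickening of $A$, so lies in $\cR^G_{fd,F}(W,\fZ\langle A \rangle)$. The mapping cylinder argument of Lemma~\ref{lem_mappingcylinderargument} applied to $f$ and to the $h$-equivalence $Y \to Y''$ then produces an object $Q \in \cR^G_{fd,F}(W,\fZ\langle A \rangle)$ together with a cofibration $X \rightarrowtail Q$ and an $h$-equivalence $Q \to Y$ factoring $f$. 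This establishes~\eqref{eq:connectivefibresequence:1}. The main technical obstacle is the careful verification that $Q$ really lies in $\cR^G_{fd,F}(W,\fZ\langle A \rangle)$: one must simultaneously track the support condition, the finite-domination condition, and the $K_0$-constraint, using Lemma~\ref{lemma:cofinal-subcomplexes} and the controlled homotopy extension property~\ref{prop_controlledhep}. The $K_0$-bookkeeping is precisely why $F$ is defined as the preimage of $K$.

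Finally, for~\eqref{eq:connectivefibresequence:2}, I would apply the same Modified Fibration strategy directly to $\cR^G_f(W,\fZ)$ to obtain an analogous fibre sequence with middle term $hS_\bullet \cR^G_f(W,\fZ)$ and base $h^AS_\bullet \cR^G_f(W,\fZ)$. The inclusion $\cR^G_f(W,\fZ) \hookrightarrow \cR^G_{fd,K}(W,\fZ)$ (well-defined since $K = K_0(\cR^G_f)$) provides a morphism of fibre sequences whose map on middle terms is a realisation-weak-equivalence by Proposition~\ref{prop_cofinality}(2). A variant of the Step~2 Approximation argument, applied to the inclusion of $h^A$-trivial subcategories and combined with the passage to homotopy finite objects via Proposition~\ref{prop_cofinality}(1), identifies the realised fibres. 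The five lemma applied to the long exact sequences of homotopy groups then yields the desired realisation-weak-equivalence of fibre sequences.
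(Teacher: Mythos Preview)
Your overall architecture matches the paper's: apply the Modified Fibration Theorem, then identify the fibre term via an Approximation argument, then compare the $f$- and $fd$-versions. However, your Approximation step for the identification $\cR^G_{fd,F}(W,\fZ\langle A\rangle)\simeq \cC^{h^A}$ contains a genuine gap.

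You write that $Y\in\cC^{h^A}$ is equivalent to $Y$ admitting a cofinal subcomplex $Y'$ away from $A$ which is ``$\fZ$-controlledly contractible'', and then form $Y'':=Y\cup_{Y'}W$ and invoke the Gluing Lemma to get $Y''\simeq_h Y$. But $h^A$-contractibility of $Y$ only gives a cofinal $Y'$ together with a controlled nullhomotopy of the \emph{inclusion} $Y'\hookrightarrow Y$; the homotopy lands in all of $Y$, and $Y'$ itself need not be $h$-contractible. (For a toy example, take a free $0$-cell labelled far from $A$ joined by a $1$-cell to a $0$-cell on $A$: the object is contractible, hence $h^A$-trivial, but the cofinal subcomplex consisting of the far $0$-cell is $(G/1)_+$, which is not contractible.) Consequently the Gluing Lemma does not yield $Y/Y'\simeq_h Y$, and your construction of $Y''$ does not produce an object of $\cR^G_{fd,F}(W,\fZ\langle A\rangle)$ that is $h$-equivalent to $Y$.

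The paper handles this point quite differently. First it reduces to finite-dimensional objects via Corollary~\ref{cor:comparison-finite-dimensional}. Then, given $Y$ which is $h^A$-trivial, it takes the partial nullhomotopy on a cofinal $Y'$ and uses the CHEP to extend it to a controlled homotopy $\id_Y\simeq p'$ on all of $Y$; the image of $p'$ generates a subcomplex $Y''\subset Y$ supported on a $\fZ$-thickening of $A$. One then has maps $Y\xrightarrow{p}Y''\xrightarrow{j}Y$ with $jp\simeq\id_Y$, and the mapping telescope $\tel(jp)$ (Proposition~\ref{prop:mapping-telescope}) provides an $h$-equivalent replacement supported near $A$; a further finite-domination argument shows $\tel(jp)\in\cR^G_{fd,F}(W,\fZ\langle A\rangle)$. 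Only then does the mapping cylinder argument finish the Approximation Property. Your proposed shortcut bypasses exactly the mechanism that makes this work.

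A secondary point: for sequence~\eqref{eq:connectivefibresequence:2} you invoke Proposition~\ref{prop_cofinality} to compare fibres, but that proposition concerns $\cR^G_f\hookrightarrow\cR^G_{fd}$, not the $h^A$-trivial subcategories. The paper needs a separate Vogell-cofinality argument to show $\cR^G_f(W,\fZ)^{h^A}\hookrightarrow\cR^G_{fd,F'}(W,\fZ)^{h^A}$ is an equivalence, and the subtlety is that the ``complementary'' object produced by cofinality is not a priori $h^A$-trivial and has to be improved. Your sketch (``a variant of the Step~2 Approximation argument'') does not address this.
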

\begin{proof}
 The Modified Fibration Theorem \ref{prop_modifiedfibrationtheorem} applies to our situation, so we have a homotopy fiber sequence
 \begin{equation*}
  hS_\bullet\cR^G_f(W,\fZ)^{h^A} \to hS_\bullet\cR^G_f(W,\fZ) \to h^AS_\bullet\cR^G_f(W,\fZ).
 \end{equation*}
 Define $F' \leq K_0(\cR^G_{fd}(W,\fZ)^{h^A},h)$ to be the preimage of $K_0(\cR^G_f(W,\fZ),h)$
 under the canonical homomorphism $K_0(\cR^G_{fd}(W,\fZ)^{h^A},h) \to K_0(\cR^G_{fd}(W,\fZ),h)$.
 We first prove the following two assertions:
 \begin{enumerate}
  \item\label{item:connectivefibresequence:1} The natural inclusion functor $\cR^G_f(W,\fZ)^{h^A} \to \cR^G_{fd,F'}(W,\fZ)^{h^A}$ induces an equivalence in $K$-theory.
  \item\label{item:connectivefibresequence:2} The natural inclusion functor $\cR^G_{fd,F}(W,\fZ\langle A \rangle) \to \cR^G_{fd,F'}(W,\fZ)^{h^A}$ induces an equivalence in $K$-theory.
 \end{enumerate}
 
 The first claim is proved in a fashion similar to that of Proposition \ref{prop_cofinality}. First of all, we may replace $\cR^G_f(W,\fZ)^{h^A}$
 by the category of homotopy finite objects $\cR^G_{hf}(W,\fZ)^{h^A}$ since the inclusion of the former into the latter induces an equivalence
 in $K$-theory by the Approximation Theorem.
 
 To conclude that the inclusion $\cR^G_{hf}(W,\fZ)^{h^A} \to \cR^G_{fd,F'}(W,\fZ)^{h^A}$
 induces an equivalence as well, we rely on Vogell's cofinality theorem \ref{thm_vogellcofinality} once again.
 As we alreay saw in the proof of Proposition \ref{prop_cofinality}, there are for every object $Y_1 \in \cR^G_{fd,F'}(W,\fZ)^{h^A}$ some
 finitely dominated object $Y_2 \in \cR^G_{fd}(W,\fZ)$ with $[Y_2] \in K_0(\cR^G_f(W,\fZ),h)$, a finite object $Y \in \cR^G_f(W,\fZ)$ and an $h$-equivalence $f \colon Y \xrightarrow{\sim} \Sigma^2 Y_1 \vee Y_2$.
 However, there is no reason for $Y_2$ to be $h^A$-contractible, so we have to improve it.
 
 Since $Y_1$ is $h^A$-contractible, there is some cofinal subcomplex $Y_1' \subset Y_1$ away from $A$ such that
 the inclusion map $j_1 \colon Y_1' \hookrightarrow Y_1$ is nullhomotopic. By Lemma \ref{lemma:cofinal-subcomplexes}, there is a cofinal subcomplex $Y' \subset Y$
 whose image under $f$ is contained in $\Sigma^2Y_1' \vee Y_2$; note that $Y' \subset Y$ is finite. Let $j \colon Y' \to Y$ be the inclusion map.
 Define $f' \colon Y' \to Y_2$ as the composition $Y' \xrightarrow{j} Y \xrightarrow{f} \Sigma^2Y_1 \vee Y_2 \twoheadrightarrow Y_2$.
 Set $Y_3 := C(f')$ and observe that this is a finitely dominated object with $[Y_3] \in K_0(\cR_f^G(W,\fZ),h)$.
 Let $i_2 \colon Y_2 \rightarrowtail \Sigma^2Y_1 \vee Y_2$ be the canonical cofibration.
 Since $j_1$ is nullhomotopic, the map $fj$ is homotopic to $i_2f'$. Hence, there is an $h$-equivalence $C(fj) \xrightarrow{\sim_h} C(i_2f') \simeq_h \Sigma^2Y_1 \vee Y_3$.
 Observe that $C(fj)$ is homotopy finite, so $\Sigma^2Y_1 \vee Y_3$ is homotopy finite, too.
 
 Moreover, the natural map $C(f j) \to C(f)$ is an $h^A$-equivalence
 because $j$ is one. As $C(f)$ is contractible, both $C(f j)$ and $\Sigma^2Y_1 \vee Y_3$ are $h^A$-contractible.
 It follows that $Y_3$ is also $h^A$-contractible. Note $[Y_3] \in F'$. Therefore, Vogell's cofinality theorem applies.
 That is, all higher $K$-theory groups of $\cR^G_{hf}(W,\fZ)^{h^A}$ and $\cR^G_{fd,F'}(W,\fZ)^{h^A}$ coincide,
 and the induced homomorphism $K_0(\cR^G_{hf}(W,\fZ)^{h^A},h) \to K_0(\cR^G_{fd,F'}(W,\fZ)^{h^A},h)$ is injective.
 
 On the level of $K_0$, we have a commutative diagram
 \begin{equation*}
  \commtriangle{K_0(\cR^G_f(W,\fZ)^{h^A},h)}{K_0(\cR^G_{fd}(W,\fZ)^{h^A},h)}{K_0(\cR^G_{fd,F'}(W,\fZ)^{h^A},h)}{}{}{}
 \end{equation*}
 in which the left diagonal arrow is an injection. The right diagonal map
 is an injection by Thomason's cofinality theorem. Since the top horizontal and right diagonal homomorphism
 have the same image, it follows that the left diagonal map is an isomorphism. This shows Claim \ref{item:connectivefibresequence:1}.
 
 Let us now turn to the second claim. We apply the Approximation Theorem.
 By Corollary \ref{cor:comparison-finite-dimensional}, we may assume without loss of generality that all complexes are finite-dimensional.
 Only the second part of the Approximation Property needs checking. Let $Y_0 \in \cR^G_{fd,F,\dim < \infty}(W,\fZ\langle A \rangle)$; then $Y_0$ is $h^A$-contractible.
 Let $f \colon Y_0 \to Y$ be a morphism in $\cR^G_{fd,F',\dim < \infty}(W,\fZ)^{h^A}$.
 
 Let $r \colon Y \to W$ be the structural retraction, $s \colon W \to Y$ be the structural inclusion of $Y$.
 Since $Y$ is $h^A$-contractible, there are a subcomplex $Y' \subset Y$ which is cofinal away from $A$ and a homotopy $h$
 from the inclusion map $Y' \hookrightarrow Y$ to the composition $Y' \xrightarrow{r|_{Y'}} W \xrightarrow{s} Y$.
 By the CHEP, we find an extension of $h$ to a controlled homotopy $H \colon Y \halftimes [0,1] \to Y$ from $\id_Y$ to a controlled map $p' \colon Y \to Y$ which extends $s \circ r|_{Y'}$.
  
 Let $Y''$ be the $G$-subcomplex of $Y$ generated by the image of $p'$. Since $p'$ is controlled and $Y$ is finite-dimensional, $Y''$ is supported on some $\fZ$-thickening of $A$.
 Note that we do not claim that $Y''$ is finitely dominated. Let $j \colon Y'' \hookrightarrow Y$ be the inclusion. Denote by $p$ the map $p'$, regarded as a map $Y \to Y''$. Then $Y \xrightarrow{p} Y'' \xrightarrow{j} Y$ is homotopic to the identity.
 Proposition \ref{prop:mapping-telescope} provides us with an $h$-equivalence $T(j) \colon \tel(jp) \xrightarrow{\sim} Y$.
 In particular, $[\tel(jp)] = [Y] \in K_0(\cR^G_{fd}(W,\fZ),h)$, hence both lie in $K_0(\cR^G_f(W,\fZ),h)$. Observe that $\tel(jp)$ is also supported on a $\fZ$-thickening of $A$.
 
 Pick now a finite domination $Y \xrightarrow{i} D \xrightarrow{d} Y$ of $Y$ in $\cR^G(W,\fZ)$.
 Let $D'$ be the smallest subcomplex of $D$ which contains the image of $i \circ T(j)$.
 Since $i \circ T(j)$ is a controlled map and $\tel(jp)$ is supported on a $\fZ$-thickening of $A$,
 the complex $D'$ is also supported on some $\fZ$-thickening of $A$. The composition of the maps $\tel(jp) \xrightarrow{i \circ T(j)} D'$
 and $d' \colon D' \hookrightarrow D \xrightarrow{d} Y \to \tel(jp)$ is homotopic to the identity. Redefining the retraction of $D'$ to be $d'$ composed with the retraction of $\tel(jp)$, the latter map becomes a morphism.
 Hence, $\tel(jp) \in \cR^G_{fd}(W,\fZ\langle A \rangle)$.
 
 Another application of the mapping cylinder argument \ref{lem_mappingcylinderargument} to $Y_0 \xrightarrow{f} Y \xleftarrow{T(j)} \tel(jp)$ yields the Approximation Property, and hence the second assertion.
 
 There is a map of homotopy fiber sequences
 \begin{equation*}
  \twocommsquare{hS_\bullet\cR^G_f(W,\fZ)^{h^A}}{hS_\bullet\cR^G_f(W,\fZ)}{h^AS_\bullet\cR^G_f(W,\fZ)}
   {hS_\bullet\cR^G_{fd,K}(W,\fZ)^{h^A}}{hS_\bullet\cR^G_{fd,K}(W,\fZ)}{h^AS_\bullet\cR^G_{fd,K}(W,\fZ)}
 \end{equation*}
 The middle vertical map is a weak equivalence by Thomason cofinality. Observe that $\cR^G_{fd,F'}(W,\fZ)^{h^A} = \cR^G_{fd,K}(W,\fZ)^{h^A}$. Hence, assertion \eqref{item:connectivefibresequence:1} implies that the left vertical map is a weak equivalence. Therefore, the right vertical map is a weak equivalence, too. Composing the weak equivalence $hS_\bullet\cR^G_{fd,F}(W,\fZ\langle A \rangle) \xrightarrow{\sim} hS_\bullet\cR^G_{fd,F'}(W,\fZ)^{h^A}$ from assertion \eqref{item:connectivefibresequence:2} with the inclusion of the homotopy fiber yields sequence \eqref{eq:connectivefibresequence:1}.
 
 After taking realizations, we can invert the weak equivalence of assertion \eqref{item:connectivefibresequence:1} to obtain sequence \eqref{eq:connectivefibresequence:2}.
\end{proof}

\begin{definition}[{cf.~\cite[Definition 4.1]{BFJR2004}}]\label{def_propercoarsestructure}
 A coarse structure $\fZ$ is called \emph{$G$-proper with respect to $A$}
 if for every $C \in \fC$, every $S \in \fS$ there are $S' \in \fS$, $C' \in \fC$ and a $G$-equivariant function
 $c \colon A^C \cap S \to A \cap S'$ such that
 \begin{enumerate}
  \item $\{ (c(z),z) \mid z \in A^C \cap S \} \subset C'$.
  \item for every set $B \subset A^C \cap S$ which is locally finite in $Z$, the image $c(B)$ is locally finite in $Z$ and $c^{-1}(x) \cap B$ is finite for all $x \in c(B)$.
 \end{enumerate}
\end{definition}

\begin{proposition}\label{prop:thickenings:restrictions}
 Let $A \subset Z$ be a closed, $G$-invariant subset. Let $\fZ$ be a coarse structure which is $G$-proper with respect to $A$.
 Recall the definition of the coarse structure $\fZ \cap A$ from Example \ref{def_restrictedcoarsestructure}.
 Let $F \leq K_0(\cR^G_{fd}(W,\fZ \cap A),h)$ be the preimage of $K_0(\cR^G_f(W,\fZ),h)$
 under the natural homomorphism $K_0(\cR^G_{fd}(W,\fZ \cap A),h) \to K_0(\cR^G_{fd}(W,\fZ),h)$, and define $F' \leq K_0(\cR^G_{fd}(W,\fZ\langle A \rangle),h)$ analogously.
 
 Then the exact inclusion functor
 \begin{equation*}
  \cR^G_{fd,F}(W,\fZ \cap A) \hookrightarrow \cR^G_{fd,F'}(W, \fZ\langle A \rangle)
 \end{equation*}
 is an equivalence of Waldhausen categories.
\end{proposition}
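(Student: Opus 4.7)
The plan is to show that the inclusion functor is fully faithful and essentially surjective, and to check that it and its quasi-inverse preserve the Waldhausen structure. Full faithfulness is immediate: if $(Y_1,\kappa_1)$ and $(Y_2,\kappa_2)$ lie in $\cR^G_{fd,F}(W,\fZ\cap A)$, their labels take values in $A$, so any $\fZ\langle A\rangle$-controlled map between them is automatically controlled by elements of $\fC \Cap \{A\times A\}$, i.e.\ lives already in $\cR^G_{fd,F}(W,\fZ\cap A)$.

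The content lies in essential surjectivity. Given $(Y,\kappa)\in\cR^G_{fd,F'}(W,\fZ\langle A\rangle)$, the $k$-cells of $Y$ are labelled in some set $A^{C_k}\cap S_k$. Using the $G$-properness hypothesis (Definition~\ref{def_propercoarsestructure}), choose for each $k$ a $G$-equivariant map $c_k\colon A^{C_k}\cap S_k \to A\cap S_k'$ moving points within some $C_k'\in\fC$. Define $(Y',\kappa')$ to have the same underlying $G$-CW-complex as $Y$, but with $\kappa'(e):=c_k(\kappa(e))$ for $e\in\cells_k Y$. Condition (2) on $c_k$ guarantees that the local finiteness property of the labels is preserved. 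The control axiom~\ref{cont:item:composition} applied to the triple $(\kappa'(e'),\kappa(e'),\kappa(e),\kappa'(e))$ shows that for any two cells $e,e'$ with $e'\subset\gen{e}$, the pair $(\kappa'(e'),\kappa'(e))$ lies in $C'_{k'}\circ C_k\circ (C'_k)^{-1}$ for some appropriate control conditions, which lies in a member of $\fC$ after taking finite unions. The supports now lie in $\fS\Cap\{A\}$, so $(Y',\kappa')$ satisfies the object control conditions for $\fZ\cap A$. By the same composition argument, the identity on the underlying CW-complex defines mutually inverse $\fZ\langle A\rangle$-controlled morphisms between $Y$ and the image of $Y'$ in $\cR^G(W,\fZ\langle A\rangle)$; in particular this is an isomorphism in $\cR^G(W,\fZ\langle A\rangle)$.

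For finite domination of $Y'$, pick any finite domination $Y\xrightarrow{i}D\xrightarrow{d}Y$ in $\cR^G(W,\fZ\langle A\rangle)$, apply the analogous relabelling procedure to $D$ to obtain $D'$ (finite because of condition (2) of $G$-properness), and transport $i$, $d$, and the homotopy $di\simeq\id_Y$ across the identification $Y\cong Y'$ of the previous step. The $K_0$-condition transfers: under the isomorphism $[Y]=[Y']$ in $K_0(\cR^G_{fd}(W,\fZ),h)$, so $[Y']\in F$ if and only if $[Y]\in F'$ since both $F$ and $F'$ are defined as the preimage of the same subgroup $K_0(\cR^G_f(W,\fZ),h)$.

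Finally, the inclusion is clearly exact, and both categories inherit their cofibrations and $h$-equivalences from $\cR^G(W,\fZ)$; since the relabelling does not change the underlying CW-complex or morphism, it preserves cofibrations and all flavours of weak equivalence on the nose. Hence the categorical equivalence is an equivalence of Waldhausen categories. The main technical obstacle is the bookkeeping in the second paragraph: one must coherently combine the maps $c_k$ chosen dimension-by-dimension so that the resulting relabelling of $Y$ is $\fZ\cap A$-controlled, which is taken care of by~\ref{cont:item:composition} and~\ref{cont:item:union} together with the control condition inherent to $c_k$.
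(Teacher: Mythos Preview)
Your proof is correct and follows essentially the same approach as the paper's: both establish essential surjectivity by relabelling an object $(Y,\kappa)$ via the $G$-proper retractions $c_k\colon A^{C_k}\cap S_k\to A\cap S_k'$ and observing that the identity map becomes a controlled isomorphism between the old and new labellings. You supply more detail than the paper---explicitly checking full faithfulness, spelling out the composition argument for control of the relabelled object, and transferring the finite domination by relabelling $D$ as well---whereas the paper compresses these into the terse line ``the modification we make to the control map $\kappa$ also preserves finiteness, and hence finite dominations.''
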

\begin{proof}
 It suffices to show that the inclusion functor is essentially surjective. So let $Y \in \cR^G_{fd,F'}(W,\fZ\langle A \rangle)$.
 For each $k \in \NN$, pick $S_k \in \fS$ such that $\kappa(\cells_k Y) \subset A^{C_k} \cap S_k$.
 Since $\fZ$ is $G$-proper with respect to $A$, there is a $G$-equivariant function $c_k \colon A^{C_k} \cap S_k \to A \cap S_k'$ as in Definition \ref{def_propercoarsestructure}.
 The collection $\{ c_k \circ \kappa|_{\cells_k Y} \}_k$ defines a $G$-equivariant function $\kappa_A \colon \cells Y \to A$
 such that $\kappa(\cells_k Y) \subset A \cap S_k'$. By construction, the identity map is a controlled
 isomorphism between $(Y,\kappa)$ and $(Y,\kappa_A)$, where the latter complex is now supported on $A$.
 The modification we make to the control map $\kappa$ also preserves finiteness, and hence finite dominations.
 Note that $[(Y,\kappa)] = [(Y,\kappa_A)] \in K_0(\cR^G_{fd}(W,\fZ),h)$, i.e., $[(Y,\kappa_A)] \in F$. This finishes the proof.
\end{proof}

\subsection{The coarse Mayer--Vietoris theorem}
The main application of the homotopy fiber sequence established in the previous subsection is the excision result we prove next.
Let $\fZ = (Z,\fC,\fS)$ be a coarse structure.

\begin{definition}[{\cite[Proposition 4.3]{BFJR2004}}]\label{def_coarselyexcisive}
 A pair $(A,B)$ of $G$-invariant subspaces in $Z$ is called \emph{coarsely excisive} if for all $C \in \fC$
 there is $C' \in \fC$ such that $A^C \cap B^C \subset (A \cap B)^{C'}$.
   
 A \emph{coarsely excisive triple} is a coarse structure $\fZ$ together with two closed, $G$-invariant subspaces
 $A_1$, $A_2 \subset Z$ such that $A_1 \cup A_2 = Z$ and the pair $(A_1, A_2)$ is coarsely excisive.
\end{definition}

We require a little more notation. For a closed, $G$-invariant subspace $A \subset Z$ we define $\fZ|_A := (Z,\fC,\fS \Cap A)$. Observe that $\cR^G(W,\fZ|_A) \cong \cR^G(W,\fZ \cap A)$.

\begin{lemma}\label{lem_equivalencesandexcision}
 Suppose that $(A,B)$ is a coarsely excisive pair. 
 Then
 \begin{equation*}
   h^{A \cap B}S_\bullet\cR^G_f(W,\fZ|_A) = h^BS_\bullet\cR^G_f(W,\fZ|_A).
 \end{equation*}
\end{lemma}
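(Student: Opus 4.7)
The plan is to reduce the equality of morphism classes to the following combinatorial statement: for every object $(Y,\kappa) \in \cR^G_f(W,\fZ|_A)$ and every $G$-invariant subcomplex $Y' \subset Y$, the subcomplex $Y'$ is cofinal in $Y$ away from $B$ if and only if it is cofinal away from $A \cap B$. Once this equivalence is in place, inspection of Definitions~\ref{def_partiallydefinedmaps} and~\ref{def_controlledhomotopyequivalences} gives the lemma at once: partially defined inverses, their compositions, and the controlled homotopies witnessing them refer only to the coarse structure $\fZ|_A$ (which is the same for both notions) and to cofinality of subcomplexes of objects (and of their cylinders $Y \halftimes [0,1]$) in $\cR^G_f(W,\fZ|_A)$. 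Since the relative cylinder inherits cell labels in $A$ from $Y$ via the projection, no new difficulty appears there.

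One direction of the combinatorial equivalence is immediate from the inclusion $A \cap B \subset B$: for every $C \in \fC$ one has $(A \cap B)^C \subset B^C$, hence $Z \setminus B^C \subset Z \setminus (A \cap B)^C$, so any subcomplex that is cofinal away from $A \cap B$ is also cofinal away from $B$.

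For the converse, the point is to use simultaneously the coarse excision hypothesis on $(A,B)$ and the support condition on objects of $\cR^G_f(W,\fZ|_A)$, namely that the image of $\cells_k Y$ under $\kappa$ always lies in some $A \cap S_k$ with $S_k \in \fS$. Suppose $Y' \subset Y$ is cofinal away from $B$, witnessed at level $k$ by some $C_k \in \fC$, i.e.\ $\kappa^{-1}(Z \setminus B^{C_k}) \cap \cells_k Y \subset \cells_k Y'$. Applying Definition~\ref{def_coarselyexcisive} to $C_k$ yields $C_k' \in \fC$ with $A^{C_k} \cap B^{C_k} \subset (A \cap B)^{C_k'}$. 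For any $e \in \cells_k Y$ with $\kappa(e) \notin (A \cap B)^{C_k'}$, we have $\kappa(e) \in A \subset A^{C_k}$ (using that $C_k$ contains the diagonal); if $\kappa(e)$ also lay in $B^{C_k}$, then $\kappa(e) \in A^{C_k} \cap B^{C_k} \subset (A \cap B)^{C_k'}$, a contradiction. Hence $\kappa(e) \notin B^{C_k}$, and the $B$-cofinality of $Y'$ places $e$ in $\cells_k Y'$. Thus $Y'$ is cofinal away from $A \cap B$ with witness $C_k'$.

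I do not expect any substantial obstacle: the content is essentially the definition of a coarsely excisive pair combined with the support condition in $\fZ|_A$. The only point that merits care is to check that the passage between the two notions of cofinality is compatible with all the auxiliary data (partial maps on cofinal subcomplexes, partial compositions via pullback cofinal subcomplexes, and homotopies restricted along cofinal subcomplexes by Lemma~\ref{lemma:restricting-controlled-homotopies}); since every step in these constructions stays inside $\cR^G_f(W,\fZ|_A)$, the combinatorial equivalence above transports each datum verbatim from one setting to the other.
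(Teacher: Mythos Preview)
Your proof is correct and follows essentially the same approach as the paper: both reduce to the equivalence of cofinality away from $B$ and away from $A\cap B$ for subcomplexes of objects supported on $A$, and both establish the nontrivial direction by combining the coarse excision inclusion $A^{C}\cap B^{C}\subset (A\cap B)^{C'}$ with the fact that $\kappa(\cells_k Y)\subset A\subset A^{C}$. The paper phrases the key step as a chain of set equalities (noting $\kappa^{-1}(Z\setminus A^{C})\cap\cells_k Y=\varnothing$), while you argue elementwise by contradiction, but the content is identical.
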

\begin{proof}
 Let $(Y,\kappa)$ be an object in $\cR^G_f(W,\fZ|_A)$. Note that the image of $\kappa$ is contained in $A$.
 It suffices to show that a subcomplex $Y' \subset Y$ is cofinal away from $A \cap B$ if and only if it is cofinal away from $B$.
 Since $A \cap B \subset B$, it is obvious that every subcomplex which is cofinal away from $A \cap B$
 is also cofinal away from $B$.
  
 Now suppose that $Y' \subset Y$ is cofinal away from $B$. Let $k \in \NN$.
 Then there is $C \in \fC$ such that $\kappa^{-1}(Z \setminus B^C) \cap \cells_k Y \subset \cells_k Y'$.
 By assumption, we can find $C' \in \fC$ such that $A^C \cap B^C \subset (A \cap B)^{C'}$.
 Then we have
 \begin{equation*}
   \begin{split}
    \kappa^{-1}(Z \setminus (A \cap B)^{C'}) \cap \cells_k Y
    &\subset \kappa^{-1}(Z \setminus (A^C \cap B^C)) \cap \cells_k Y \\
    &= \kappa^{-1}(Z \setminus A^C \cup Z \setminus B^C) \cap \cells_k Y \\
    &= (\kappa^{-1}(Z \setminus A^C) \cap \cells_k Y) \cup (\kappa^{-1}(Z \setminus B^C) \cap \cells_k Y) \\
    &= \kappa^{-1}(Z \setminus B^C) \cap \cells_k Y \\
    &\subset \cells_k Y'.
  \end{split}
 \end{equation*}
This shows that $Y' \subset Y$ is cofinal away from $A \cap B$, and we are done.
\end{proof}

\begin{theorem}[Coarse Mayer--Vietoris, connective version]\label{thm_connectivemayervietoris}
 Let $(\fZ,A_1,A_2)$ be a coarsely excisive triple, and assume that $\fZ$ is $G$-proper with respect to $A_1$, $A_2$ and $A_1 \cap A_2$.
 Let $F$ be the preimage of $K := K_0(\cR^G_f(W,\fZ),h)$ under the canonical homomorphism $K_0(\cR^G_{fd}(W,\fZ \cap A_2),h) \to K_0(\cR^G_{fd}(W,\fZ),h)$,
 and let $F'$ be the preimage of $K' := K_0(\cR^G_f(W,\fZ \cap A_1),h)$ under the canonical homomorphism $K_0(\cR^G_{fd}(W,\fZ \cap (A_1 \cap A_2)),h) \to K_0(\cR^G_{fd}(W,\fZ \cap A_1),h)$.
   
 Then the natural inclusion maps induce a homotopy pullback square
 \begin{equation*}
  \commsquare{hS_\bullet\cR^G_{fd,F'}(W,\fZ \cap (A_1 \cap A_2))}{hS_\bullet\cR^G_{fd,K'}(W,\fZ \cap A_1)}{hS_\bullet\cR^G_{fd,F}(W,\fZ \cap A_2)}{hS_\bullet\cR^G_{fd,K}(W,\fZ)}{}{}{}{}
 \end{equation*}
\end{theorem}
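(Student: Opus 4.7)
The plan is to extend both rows of the claimed square to homotopy fiber sequences using Theorem~\ref{thm_connectivefibresequence} together with Proposition~\ref{prop:thickenings:restrictions}, and then to reduce the problem to showing that the induced map on third terms is a weak equivalence. A map of homotopy fiber sequences produces a homotopy pullback on the leftmost square precisely when the map on rightmost entries is an equivalence, so this reduction suffices.

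Concretely, I would first apply Theorem~\ref{thm_connectivefibresequence} to the pair $(\fZ, A_2)$. Since $\fZ$ is $G$-proper with respect to $A_2$, Proposition~\ref{prop:thickenings:restrictions} identifies $\cR^G_{fd,F}(W, \fZ\langle A_2 \rangle)$ with $\cR^G_{fd,F}(W, \fZ \cap A_2)$, producing the bottom homotopy fiber sequence
\begin{equation*}
 hS_\bullet\cR^G_{fd,F}(W, \fZ \cap A_2) \to hS_\bullet\cR^G_{fd,K}(W, \fZ) \to h^{A_2}S_\bullet\cR^G_{fd,K}(W, \fZ).
\end{equation*}
Applying the same theorem to the coarse structure $\fZ \cap A_1$ (which inherits $G$-properness with respect to $A_1 \cap A_2$) together with the subspace $A_1 \cap A_2$ gives the top sequence
\begin{equation*}
 hS_\bullet\cR^G_{fd,F'}(W, \fZ \cap (A_1 \cap A_2)) \to hS_\bullet\cR^G_{fd,K'}(W, \fZ \cap A_1) \to h^{A_1 \cap A_2}S_\bullet\cR^G_{fd,K'}(W, \fZ \cap A_1),
\end{equation*}
where the identification of $(\fZ \cap A_1)\langle A_1 \cap A_2 \rangle$ with $\fZ \cap (A_1 \cap A_2)$ again uses Proposition~\ref{prop:thickenings:restrictions}. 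The natural inclusion functors map the top sequence to the bottom one and extend the given square; compatibility of the finiteness subgroups $F', F, K', K$ under these inclusions follows from functoriality of $K_0$.

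By Lemma~\ref{lem_equivalencesandexcision}, the coarse excision hypothesis implies $h^{A_1 \cap A_2} = h^{A_2}$ on $\cR^G_f(W, \fZ \cap A_1)$, and the same equality extends to the finitely dominated subcategory. It therefore remains to prove that the inclusion-induced map
\begin{equation*}
 h^{A_2}S_\bullet\cR^G_{fd,K'}(W, \fZ \cap A_1) \to h^{A_2}S_\bullet\cR^G_{fd,K}(W, \fZ)
\end{equation*}
is a weak equivalence. This will be the main obstacle. The heuristic is clear: since $A_1 \cup A_2 = Z$, any cell of an object $Y \in \cR^G_{fd,K}(W, \fZ)$ whose label is far from $A_2$ is automatically supported on $A_1$, so the subcomplex $Y' \subset Y$ generated by such cells is cofinal away from $A_2$ and supported on a thickening of $A_1$, and the inclusion $Y' \hookrightarrow Y$ is an $h^{A_2}$-equivalence. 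The technical core is to replace $Y'$ by a genuinely finitely dominated object whose $K_0$-class lies in $K'$. For this I would mimic the end of the proof of Theorem~\ref{thm_connectivefibresequence}: use the CHEP (Proposition~\ref{prop_controlledhep}) to promote a partial contracting homotopy to a controlled self-map $p' \colon Y \to Y$ homotopic to the identity whose image is supported on a thickening of $A_1$, then apply Proposition~\ref{prop:mapping-telescope} to obtain an $h$-equivalent finitely dominated object supported on a thickening of $A_1$; $G$-properness of $\fZ$ with respect to $A_1$ together with Proposition~\ref{prop:thickenings:restrictions} then moves the support onto $A_1$ itself. Packaging these constructions as the Approximation Property for the inclusion functor with respect to the $h^{A_2}$-equivalences, and verifying reflection of $h^{A_2}$-equivalences (automatic for a full-subcategory inclusion), should finish the argument.
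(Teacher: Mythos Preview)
Your overall strategy is exactly the paper's: extend the square to a map of homotopy fiber sequences via Theorem~\ref{thm_connectivefibresequence} and Proposition~\ref{prop:thickenings:restrictions}, then reduce to showing the induced map on third terms is a weak equivalence, invoking Lemma~\ref{lem_equivalencesandexcision} to identify $h^{A_1\cap A_2}$ with $h^{A_2}$ on objects supported on $A_1$.

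The gap is in your verification of the Approximation Property for the inclusion
\[
 (\cR^G_{fd,K'}(W,\fZ\cap A_1),h^{A_2}) \hookrightarrow (\cR^G_{fd,K}(W,\fZ),h^{A_2}).
\]
You propose to ``promote a partial contracting homotopy'' via the CHEP to obtain a self-map $p'\simeq\id_Y$ whose image lies in a thickening of $A_1$, mimicking the end of the proof of Theorem~\ref{thm_connectivefibresequence}. But in that proof the object $Y$ is $h^A$-\emph{contractible}, which is precisely what supplies the partial homotopy to be extended. Here $Y\in\cR^G_{fd,K}(W,\fZ)$ is an arbitrary finitely dominated object; there is no reason for any cofinal-away-from-$A_2$ subcomplex to be nullhomotopic in $Y$, and hence no homotopy for the CHEP to extend. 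Without that input the telescope construction of Proposition~\ref{prop:mapping-telescope} does not produce an $h$-equivalent object supported near $A_1$.

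The paper avoids this difficulty by first replacing the finitely dominated categories with the \emph{finite} ones: the proof of Theorem~\ref{thm_connectivefibresequence} already establishes that the comparison maps $h^{A}S_\bullet\cR^G_f\to h^{A}S_\bullet\cR^G_{fd,K}$ are weak equivalences, so one may instead show that
\[
 h^{A_2}S_\bullet\cR^G_f(W,\fZ|_{A_1}) \to h^{A_2}S_\bullet\cR^G_f(W,\fZ)
\]
is an equivalence. For finite objects the second part of the Approximation Property is then elementary: given $f\colon Y_1\to Y_2$ with $Y_1$ supported on $A_1$, let $Y_2'\subset Y_2$ be the smallest subcomplex containing $f(Y_1)$ together with all cells labelled in $A_1$. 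This is finite, supported on a $\fZ$-thickening of $A_1$, and cofinal away from $A_2$ since $Z\setminus A_2\subset A_1$; hence $Y_2'\hookrightarrow Y_2$ is an $h^{A_2}$-equivalence, and $G$-properness with respect to $A_1$ lets one relabel $Y_2'$ to an isomorphic object with support in $A_1$. No telescope or CHEP argument is needed.
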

\begin{proof}
 Using Theorem \ref{thm_connectivefibresequence} and Proposition \ref{prop:thickenings:restrictions}, we have a map of homotopy fiber sequences
 \begin{equation*}
  \twocommsquare{hS_\bullet\cR^G_{fd,F'}(W,\fZ \cap (A_1 \cap A_2))}{hS_\bullet\cR^G_{fd,K'}(W,\fZ \cap A_1)}{h^{A_1 \cap A_2}S_\bullet\cR^G_{fd,K'}(W,\fZ \cap A_1)}{hS_\bullet\cR^G_{fd,F}(W,\fZ \cap A_2)}{hS_\bullet\cR^G_{fd,K}(W,\fZ)}{h^{A_2}S_\bullet\cR^G_{fd,K}(W,\fZ)}
 \end{equation*}
 Hence, it suffices to show that the right vertical map is a weak equivalence.
 We may replace $\cR^G_{fd,K'}(W,\fZ \cap A_1)$ and $\cR^G_{fd,K}(W,\fZ)$ by $\cR^G_f(W,\fZ \cap A_1)$ and $\cR^G_f(W,\fZ)$, respectively.
 Using Lemma \ref{lem_equivalencesandexcision}, we can therefore identify the right vertical map as the natural inclusion map
 \begin{equation*}
   h^{A_1 \cap A_2}S_\bullet\cR^G_f(W,\fZ \cap A_1) \cong h^{A_1 \cap A_2}S_\bullet\cR^G_f(W,\fZ|_{A_1}) = h^{A_2}S_\bullet\cR^G_f(W,\fZ|_{A_1}) \to h^{A_2}S_\bullet\cR^G_f(W,\fZ).
 \end{equation*}
 Our claim is that the Approximation Theorem applies to show that this is an equivalence. We need only check the second part of the Approximation Property.
 So let $f \colon Y_1 \to Y_2$ be a morphism in $\cR^G_f(W,\fZ)$ such that $Y_1$ is an object in $\cR^G_f(W,\fZ|_{A_1})$.
 Define $Y_2'$ as the smallest subcomplex of $Y_2$ which contains both the image of $f$ and all cells which are labelled with points in $A_1$.
 Then $Y_2'$ is supported on a $\fZ$-thickening of $A_1$. Since $\kappa^{-1}(Z \setminus A_2) \subset \kappa^{-1}(A_1) \subset \cells Y_2'$, the subcomplex $Y_2' \subset Y_2$ is cofinal away from $A_2$.
 This implies that the inclusion map $Y_2' \hookrightarrow Y_2$ is an $h^{A_2}$-equivalence.
 As in the proof of Proposition \ref{prop:thickenings:restrictions}, $\fZ$ being $G$-proper with respect to $A_1$ implies that $Y_2'$ is isomorphic to an object $Y_3$ with support in $A_1$.
 Then $f$ factors over $Y_3$, which shows the Approximation Property.
\end{proof}

\subsection{A vanishing result}\label{subsec:vanishing}
To conclude this section, we also record a criterion which guarantees the vanishing of the $K$-theory of a category of controlled retractive spaces.

\begin{proposition}[Eilenberg swindles]\label{prop_eilenbergswindles}
 Let $\cC$ be a small Waldhausen category. Let $\vee$ be a functorial coproduct on $\cC$.
 Suppose that there is an exact endofunctor $\swindle$ on $\cC$ and a natural isomorphism
 $\id \vee \swindle \cong \swindle$.
 
 Then there is a contraction $H_{\swindle}$ of $K(\cC)$ which is natural in the following sense:
 Let $\cC_1$ and $\cC_2$ be small Waldhausen categories, equipped with functorial coproducts $\vee_i$, $i=1,2$.
 Let $F \colon \cC_1 \to \cC_2$ be an exact functor which strictly preserves the coproduct, i.e., $F \circ \vee_1 = \vee_2 \circ (F \times F)$.
 Let $\swindle_i$ be exact endofunctors on $\cC_i$ together with natural isomorphisms $\eta_i \colon \id_{\cC_i} \vee \swindle_i \cong \swindle_i$, $i=1,2$,
 such that $\swindle_2 \circ F = F \circ \swindle_1$ and $F \circ \eta_{1,A} = \eta_{2,FA}$. Then
 \begin{equation*}
  H_{\swindle_2} \circ (K(F) \times [0,1]) = K(F) \circ H_{\swindle_1}.
 \end{equation*}
\end{proposition}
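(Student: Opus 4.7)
The plan is to derive a null-homotopy of the identity map of $K(\cC)$ from Waldhausen's Additivity Theorem, applied to a cofibration sequence of exact endofunctors built from $\swindle$ and $\eta$. Concretely, I would consider the split cofibration sequence of exact endofunctors
\begin{equation*}
  \id_\cC \xrightarrow{\iota} \id_\cC \vee \swindle \xrightarrow{\pi} \swindle,
\end{equation*}
where $\iota$ and $\pi$ are the structural inclusion into and projection onto the summands of the functorial coproduct $\vee$. Waldhausen's Additivity Theorem~\cite[Theorem~1.4.2]{Waldhausen1985} then produces a canonical homotopy $\alpha$ of self-maps of $K(\cC)$ from $K(\id \vee \swindle)$ to $K(\id) + K(\swindle)$, where $+$ denotes the H-space addition.

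The natural isomorphism $\eta$ is a natural transformation of exact endofunctors, and so induces a natural homotopy $K(\eta) \colon K(\id \vee \swindle) \simeq K(\swindle)$. Concatenating $\alpha$ with $K(\eta)$ produces a homotopy $K(\id) + K(\swindle) \simeq K(\swindle)$ in the space of self-maps of $K(\cC)$. Since $K(\cC)$ is a grouplike H-space (in fact an infinite loop space), adding the pointwise H-space inverse $-K(\swindle)$ on both sides cancels the right-hand summand and converts the preceding homotopy into a null-homotopy of $K(\id_\cC) = \id_{K(\cC)}$. I take this to be the desired contraction $H_{\swindle}$.

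For the naturality claim, the point is that each ingredient of the construction is strictly functorial under $F$. The hypotheses that $F$ strictly preserves $\vee$, $\swindle$, and the iso $\eta$ imply that $F$ sends the cofibration sequence of exact endofunctors of $\cC_1$ to the analogous one of $\cC_2$ on the nose, and strictly intertwines $\eta_1$ with $\eta_2$. The Additivity homotopy $\alpha$ and the H-space inversion can be produced via canonical simplicial constructions in the $S_\bullet$-framework, hence are strictly natural in the Waldhausen category and in the cofibration sequence of exact functors. The main delicate step will therefore be to single out a strictly natural model for the Additivity homotopy rather than one only natural up to higher coherence; this is standard but needs to be spelled out carefully by working consistently throughout with Waldhausen's $S_\bullet$-construction.
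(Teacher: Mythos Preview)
Your proposal is correct and follows the same overall shape as the paper's argument: use that $K(\id\vee\swindle)$ agrees with $K(\id)+K(\swindle)$ up to homotopy, invoke $\eta$ to identify $K(\id\vee\swindle)$ with $K(\swindle)$, and then cancel $K(\swindle)$ using the grouplike $H$-space structure.

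The one genuine difference is in how the homotopy $K(\id\vee\swindle)\simeq K(\id)+K(\swindle)$ is obtained. You appeal to the Additivity Theorem applied to the split cofibration sequence $\id\rightarrowtail\id\vee\swindle\twoheadrightarrow\swindle$. The paper instead observes that the functorial coproduct $\vee$ induces a second $H$-space structure on $K(\cC)=\Omega\lvert wS_\bullet\cC\rvert$ which satisfies the interchange law with loop concatenation $+$, and then uses the Eckmann--Hilton argument to produce a natural homotopy $\vee\simeq +$. This buys something for the naturality statement: in the paper's argument, every homotopy except the one coming from $\eta$ arises from fixed, once-and-for-all choices of reparametrizations and contractions of $[0,1]$, and is therefore automatically strictly natural with respect to any map induced by an exact functor. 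In your approach you correctly identify the delicate point yourself, namely that one must pin down a strictly natural simplicial model for the Additivity homotopy; this can be done, but it is more work than the Eckmann--Hilton route. Either approach is fine, but the paper's avoids that extra verification.
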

\begin{proof}
 Recall that $K(\cC) = \Omega\abs{wS_\bullet\cC}$. Hence, concatenation of loops defines an $H$-space structure ``$+$'' on $K(\cC)$.
 Subject to the choice of an orientation-preserving homeomorphism $[0,1] \cong [0,2]$, the $H$-space product is naturally homotopy associative.
 Similarly, any choice of orientation-reversing homeomorphism $[0,1] \cong [0,1]$, say $t \mapsto 1-t$, induces a homotopy inverse $\inv$
 such that $\id + \inv$ is nullhomotopic. This nullhomotopy depends on a choice of contraction of $[0,1]$ to the point $0$.
 Fixing, once and for all, suitable homeomorphisms $[0,1] \cong [0,2]$ and $[0,1] \cong [0,1]$ and a contraction of $[0,1]$,
 all these homotopies become natural with respect to maps induced by exact functors.
 
 The functorial coproduct $\vee \colon \cC \times \cC \to \cC$ induces another $H$-space structure ``$\vee$'' on $K(\cC)$.
 Since $+$ and $\vee$ satisfy the interchange law, the Eckmann--Hilton-argument shows that there is a natural homotopy $\vee \simeq +$.
 By abuse of notation, we use in the sequel the same name for functors and the maps they induce on $K$-theory.
 Let $0$ denote the constant functor mapping everything to the zero object.
 Then we have:
 \begin{equation*}
  \begin{split}
   \id
   &\simeq \id + 0 \\
   &\simeq \id + (\swindle + (\inv \circ \swindle)) \\
   &\simeq (\id + \swindle) + (\inv \circ \swindle) \\
   &\simeq (\id \vee \swindle) + (\inv \circ \swindle) \\
   &\simeq \swindle + \inv \circ \swindle \\
   &\simeq 0
  \end{split}
 \end{equation*}
 The fifth homotopy is induced by the natural isomorphism $\eta$. Hence, the concatenation of these homotopies defines a contraction of $K(\cC)$,
 and it is straightforward to check that this contraction is natural in the desired sense.
\end{proof}

\begin{proposition}\label{prop_connectiveeilenbergswindle}
 Let $\fZ$ be a coarse structure and let $A \subset Z$ be a $G$-invariant subset. Suppose that there is
 a sequence of $G$-equivariant functions $(s_n \colon Z \to Z)_{n \in \NN}$ which satisfies the following properties:
 \begin{enumerate}
  \item\label{es.0} $s_0 = \id_Z$.
  \item\label{es.1} For every $C \in \fC$ and $S \in \fS$ there is some $C' \in \fC$ such that 
   \begin{equation*}
    \bigcup_{n \geq 0} (s_n \times s_n)(C \cap (S \times S)) \subset C'.
   \end{equation*}
  \item\label{es.2} For every $S \in \fS$ there is some $S' \in \fS$ such that $\bigcup_n s_n(S) \subset S'$.
  \item\label{es.3} For every $S \in \fS$ and every $B \subset S$ which is locally finite in $Z$, each image $s_n(B)$ is locally finite in $Z$ and $s_n^{-1}(x) \cap B$ is finite for all $x \in s_n(B)$.
   Furthermore, there are for every $z \in Z$ some $n_0$ and an open neighbourhood $U$ of $z$ such that $s_n^{-1}(U) = \varnothing$ for all $n \geq n_0$.
  \item\label{es.4} For every $C \in \fC$ there exists $C' \in \fC$ such that
   \begin{equation*}
    \bigcup_{n \geq 0} s_n(A^C) \subset A^{C'}.
   \end{equation*}
  \item\label{es.5} For every $S \in \fS$ there is some $C \in \fC$ such that
   \begin{equation*}
    \bigcup_{n \geq 0} \{ (s_{n+1}(x),s_n(x)) \mid x \in S \} \subset C.
   \end{equation*}
 \end{enumerate}
 Then there is an exact endofunctor on $(\cR^G_f(W,\fZ), h^A)$ as in Proposition \ref{prop_eilenbergswindles}.
 This swindle is natural in the following sense: Let $\fz \colon \fZ_1 \to \fZ_2$ be a morphism of coarse structures.
 If $(s^i_n \colon Z_i \to Z_i)_n$, $i=1,2$, are as above such that $\fz \circ s^1_n = s^2_n \circ \fz$ for all $n$,
 then the induced exact functor $\cR(\fz)$ satisfies the assumptions of Proposition \ref{prop_eilenbergswindles}.
 
 The same holds with $(\cR^G_{fd}(W,\fZ),h^A)$ instead of $(\cR^G_f(W,\fZ),h^A)$.
\end{proposition}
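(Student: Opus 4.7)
The plan is to define $\swindle \colon \cR^G_f(W,\fZ) \to \cR^G_f(W,\fZ)$ by
$\swindle(Y) := \bigvee_{n \geq 1} s_n^* Y$,
where $s_n^* Y$ denotes the same retractive space $(Y, s_Y, r_Y)$ but with control map $s_n \circ \kappa_Y$, and on morphisms by $\swindle(f) := \bigvee_{n \geq 1} f$. The functorial coproduct is the wedge over $W$. The natural isomorphism $\eta \colon \id \vee \swindle \xrightarrow{\cong} \swindle$ will be the index shift: by condition \ref{es.0}, $s_0 = \id_Z$, so $\id \vee \swindle(Y)$ canonically identifies with $\bigvee_{n \geq 0} s_n^* Y$, and $\eta_Y$ sends the $n$-th summand to the $(n+1)$-th summand of $\swindle(Y)$ via the identity on underlying $G$-CW-complexes.

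First I would verify that $\swindle(Y)$ is a well-defined finite object. Since $Y$ is finite-dimensional, a single $C \in \fC$ and $S \in \fS$ witness the morphism and object control of the attaching data of $Y$ across all skeleta. Conditions \ref{es.1} and \ref{es.2} then provide a single $C' \in \fC$ and $S' \in \fS$ witnessing the same for $\swindle(Y)$. Condition \ref{es.3} is designed precisely to give the local-finiteness clause in Definition \ref{def_finiteness}: in a small enough neighborhood of any $z \in Z$, only finitely many summands contribute any cells, and each contributes only finitely many. Finite dimensionality and the path-component condition on the retraction are inherited from $Y$, and functoriality of $\swindle$ on morphisms is immediate. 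Exactness is formal: wedge over $W$ preserves the zero object and commutes with pushouts along cofibrations.

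Next I would show that $\swindle$ preserves $h^A$-equivalences. The key point is that if $Y' \subset Y$ is cofinal away from $A$, then $\bigvee_{n \geq 1} s_n^* Y' \subset \swindle(Y)$ is cofinal away from $A$. Indeed, given $C \in \fC$ witnessing cofinality of $Y'$ at the $k$-skeleton, condition \ref{es.4} supplies $C' \in \fC$ with $\bigcup_n s_n(A^C) \subset A^{C'}$; a $k$-cell of the $n$-th summand of $\swindle(Y)$ whose $s_n$-label lies outside $A^{C'}$ thus has its original $\kappa_Y$-label outside $A^C$ and so already lies in $Y'$. Applying $\swindle$ componentwise to a partially defined $h^A$-inverse and its controlled homotopies (using that $- \halftimes [0,1]$ commutes with wedges over $W$) then yields the required $h^A$-equivalence data for $\swindle(f)$. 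The shift $\eta_Y$ itself is a controlled isomorphism because the identity $s_n^* Y \to s_{n+1}^* Y$ of underlying complexes is: for a pair $(\kappa(e''), \kappa(e)) \in C$ with $e'' \in \gen{e}$, condition \ref{es.1} gives $(s_n(\kappa(e'')), s_n(\kappa(e))) \in C''$, and condition \ref{es.5} gives $(s_{n+1}(\kappa(e'')), s_n(\kappa(e''))) \in C^*$; composing and using axiom \ref{cont:item:composition} puts $(s_{n+1}(\kappa(e'')), s_n(\kappa(e)))$ in an element of $\fC$, and the inverse is handled symmetrically using axiom \ref{cont:item:symmetric}.

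Naturality along a morphism $\fz \colon \fZ_1 \to \fZ_2$ intertwining $s^1_n$ and $s^2_n$ is immediate, since $\cR(\fz)$ only postcomposes control maps with $\fz$; thus $\swindle_2 \circ \cR(W,\fz) = \cR(W,\fz) \circ \swindle_1$ strictly, and the shift isomorphisms are strictly intertwined. The extension to $\cR^G_{fd}(W,\fZ)$ is formal: applying $\swindle$ to a finite domination $Y \xrightarrow{i} D \xrightarrow{p} Y$ produces a finite domination $\swindle(Y) \xrightarrow{\swindle(i)} \swindle(D) \xrightarrow{\swindle(p)} \swindle(Y)$, and the homotopy $pi \simeq_\fZ \id_Y$ induces $\swindle(p)\swindle(i) \simeq_\fZ \id_{\swindle(Y)}$ by applying $\swindle$ to the cylinder. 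I expect the main technical obstacle to be the simultaneous bookkeeping needed to verify that $\swindle(Y)$ is a finite $\fZ$-controlled object, which requires coordinating conditions \ref{es.1}--\ref{es.3} carefully; by contrast, preservation of $h^A$-equivalences reduces cleanly to condition \ref{es.4} and the construction of the shift isomorphism reduces cleanly to condition \ref{es.5}.
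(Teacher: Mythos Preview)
Your proof is correct and follows essentially the same approach as the paper: the paper defines $S(Y) := \bigvee_{n \geq 0} (Y, s_n \circ \kappa)$ rather than starting the index at $n=1$, but this is a purely cosmetic indexing difference, and the verification that conditions \ref{es.1}--\ref{es.3} give well-definedness and finiteness, condition \ref{es.4} gives preservation of $h^A$-equivalences via cofinality, and condition \ref{es.5} makes the shift a controlled isomorphism, is exactly as you describe.
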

\begin{proof}
 Define a functor $S \colon \cR^G_f(W,\fZ) \to \cR^G_f(W,\fZ)$ as follows.
 Given a controlled retractive space $(Y, \kappa)$ over $W$, consider the infinite coproduct $Y^\infty := \bigvee_{n \geq 0} Y$.
 Then $\cells Y^\infty = \coprod_{n \geq 0} \cells Y$.
 Define a control map $\kappa^\infty \colon \cells Y^\infty \to Z$ by $\kappa^\infty(e) := (s_n \circ \kappa)(e)$ if $e$ is a cell in the $n$-th copy of $Y$.
 Then conditions (\ref{es.1}) and (\ref{es.2}) ensure that $(Y^\infty,\kappa^\infty)$ is a controlled retractive space over $W$.
 If $f \colon Y_1 \to Y_2$ is a controlled morphism, then $\bigvee_{n \geq 0} f \colon Y_1^\infty \to Y_2^\infty$ is again a controlled morphism.
 Moreover, condition (\ref{es.3}) guarantees that $Y^\infty$ is finite if $Y$ is finite.
  
 We define $S(Y) := (Y^\infty,\kappa^\infty)$. We claim that this functor preserves $h^A$-equivalences.
 It suffices to check that for any subcomplex $Y' \subset Y$ which is cofinal away from $A$,
 the subcomplex $S(Y') \subset S(Y)$ is also cofinal away from $A$.
 
 For the purpose of the next paragraph, denote the $n$-th copy of $Y$ by $Y_n$, and use $Y'_n$ in the same way.
 Let $k \in \NN$. Then there is some $C \in \fC$ such that $\kappa^{-1}(Z \setminus A^C) \cap \cells_k Y \subset \cells_k Y'$.
 Let $e \in (\kappa^\infty)^{-1}(Z \setminus s_n(A^C)) \cap \cells_k Y_n$, and let $e'$ be the corresponding 
 $k$-cell in the original copy of $Y$. Since $s_n(\kappa(e')) = \kappa^\infty(e) \notin s_n(A^C)$,
 it follows that $e' \in \cells_k Y'$. Consequently, $e \in \cells_k Y'_n$,
 and we have shown that $(\kappa^\infty)^{-1}(Z \setminus s_n(A^C)) \cap \cells_k Y_n \subset \cells_k Y'_n$.
 Choosing $C' \in \fC$ as in (\ref{es.4}), we have
 \begin{equation*}
   \begin{split}
    (\kappa^\infty)^{-1}(Z \setminus A^{C'}) \cap \cells_k Y^\infty
    &\subset (\kappa^\infty)^{-1}(Z \setminus \bigcup_n s_n(A^C)) \cap \cells_k Y^\infty \\
    &\subset \bigcup_{m \geq 0} \big( (\kappa^\infty)^{-1}(Z \setminus \bigcup_n s_n(A^C)) \cap \cells_k Y_m \big) \\
    &\subset \bigcup_{m \geq 0} \big( (\kappa^\infty)^{-1}(Z \setminus s_m(A^C)) \cap \cells_k Y_m \big) \\
    &\subset \bigcup_{m \geq 0} \cells_k Y'_m = \cells_k (Y')^\infty.
   \end{split}
 \end{equation*}
 So $S(Y') \subset S(Y)$ is also cofinal away from $A$, and it follows that $S$ is an exact functor with respect to the $h^A$-equivalences.
  
 The map $Y \vee Y^\infty \to Y^\infty$ which maps $Y_n$ identically to $Y_{n+1}$ and $Y$ to $Y_0$ is a controlled isomorphism by condition \ref{es.5}.
 It induces a natural isomorphism $\id \vee S \cong S$.
 
 Checking the naturality statement is straightforward.
\end{proof}

\begin{remark}
 Note that the the conditions of Proposition \ref{prop_connectiveeilenbergswindle} hold for $A = \varnothing$ whenever they are satisfied for some $A \subset Z$.
 Hence, the $K$-theory space $\Omega\abs{hS_\bullet\cR^G_f(W,\fZ)}$ is also contractible.
 
 Most of the time, the sequence of maps $(s_n)_n$ is induced by an \emph{infinite shift map}, i.e., 
 a $G$-equivariant function $s \colon Z \to Z$ with the following properties:
 \begin{enumerate}
   \item For every $C \in \fC$ and $S \in \fS$ there is some $C' \in \fC$ such that 
    \begin{equation*}
     \bigcup_n (s \times s)^n(C \cap (S \times S)) \subset C'.
    \end{equation*}
   \item For every $S \in \fS$ there is some $S' \in \fS$ such that $\bigcup_n s^n(S) \subset S'$.
   \item For every $S \in \fS$ and every $B \subset S$ which is locally finite in $Z$, the image $s(B)$ is locally finite in $Z$ and $s^{-1}(x) \cap B$ is finite for all $x \in s(B)$.
    Furthermore, there are for every $z \in Z$ some $n_0$ and an open neighbourhood $U$ of $z$ such that $(s^n)^{-1}(U) = \varnothing$ for all $n \geq n_0$.
   \item For every $C \in \fC$ there exists $C' \in \fC$ such that
    \begin{equation*}
     \bigcup_{n \geq 0} s^n(A^C) \subset A^{C'}.
    \end{equation*}
   \item For every $S \in \fS$ there is some $C \in \fC$ such that
    \begin{equation*}
     \{ (s(x),x) \mid x \in \bigcup_n s^n(S) \} \subset C.
    \end{equation*}
 \end{enumerate}
 In this case, the proposition applies with $s_n := s^n$, and the corresponding naturality statement applies
 whenever we have two infinite shift maps $s_1$ and $s_2$ as well as a morphism of coarse structures $\fz$ such that
 $\fz \circ s_1 = s_2 \circ \fz$.
\end{remark}

\section{Non-connective $A$-theory spectra}\label{sec_nonconnectiveA}
We are now ready to put the results of the previous sections to use. Namely, we define (potentially) non-connective deloopings of the $K$-theory spaces
of controlled retractive spaces. The resulting spectra are insensitive to specific choices of finiteness conditions, and the main results of Section \ref{sec_comparisonthms} simplify accordingly.

For linear $K$-theory, such deloopings have been defined previously by Pedersen--Weibel \cite{PW1985}. Vogell \cite{Vogell1990} adopted this approach to define a non-connective delooping of $A(X)$.

\begin{definition}\label{def:pullback-coarse-structure}
 Suppose that $Z = Z_1 \times Z_2$, and that $(Z_1, \fC, \fS)$ is a coarse structure. Let $p \colon Z \to Z_1$ be the projection map. Then we define a coarse structure
 $(Z, p^*\fC, p^*\fS)$ by setting $p^*\fC := \{ (p \times p)^{-1}(C) \mid C \in \fC \}$ and $p^*\fS := \{ p^{-1}(S) \mid S \in \fS \}$.
\end{definition}

Let $\fZ = (Z, \fC, \fS)$ be a coarse structure. Let $p_n \colon \RR^n \times Z \to \RR^n$ and $p_Z \colon \RR^n \times Z \to Z$ denote the respective projection maps. 
Consider the bounded coarse structure $\fB(\RR^n) = (\RR^n, \fC_{bdd}(\RR^n), \fS_{triv}(\RR^n))$ from Example \ref{example:control-structures}.

\begin{definition}\label{def:delooping-coarse-structure}
 For $n \in \NN$ define the coarse structure $\fZ(n) = (\RR^n \times Z, \fC(n), \fS(n))$ as follows:
 A set $C \subset (\RR^n \times Z)^2$ is in $\fC(n)$ if and only if:
   \begin{enumerate}
   \item $C$ is symmetric, $G$-invariant and contains the diagonal.
   \item There is a $C' \in p_n^*\fC_{bdd}(\RR^n)$ such that $C \subseteq C'$.
   \item For all $K \subset \RR^n$ compact, there is a $C'' \in p_Z^*\fC$ such that
     \begin{equation*}
      C \cap ((K \times Z) \times (K \times Z)) \subset C''.
     \end{equation*}
   \end{enumerate}
 Set $\fS(n) := p_Z^*\fS$.
\end{definition}

Consider for all $n$ also the restricted coarse structures
\begin{equation*}
 \begin{split}
   \fZ(n+1)^+ &:= \fZ(n+1) \cap (\RR^n \times \RR_{\geq 0} \times Z), \\
   \fZ(n+1)^- &:= \fZ(n+1) \cap (\RR^n \times \RR_{\leq 0} \times Z). \\
 \end{split}
\end{equation*}
Note that $\fZ(n+1) \cap (\RR^n \times \{ 0 \} \times Z) = \fZ(n)$.

Let $A \subset Z$ be a $G$-invariant subset. The obvious inclusion maps give rise to a commutative square
\begin{equation}\label{diag:deloopingsquare}
 \commsquare{h^A S_\bullet \cR^G_f(W,\fZ(n))}{h^A S_\bullet \cR^G_f(W,\fZ(n+1)^+)}{h^A S_\bullet \cR^G_f(W,\fZ(n+1)^-)}{h^A S_\bullet \cR^G_f(W,\fZ(n+1))}{}{}{}{}
\end{equation}
Using the results of Section \ref{subsec:vanishing}, the top right and bottom left corners of this square are contractible since they admit infinite shift maps
$(\vec{x},x_{n+1},z) \mapsto (\vec{x},x_{n+1} \pm 1,z)$. This provides us with structure maps for a spectrum
\begin{equation*}
 \KK^{-\infty}(\cR^G_f(W,\fZ),h^A) := \big\{ K(\cR^G_f(W,\fZ(n)), h^A) \big\}_n.
\end{equation*}
These are the algebraic $K$-theory spectra we use for our main results. It follows from Propositions \ref{prop:functoriality:Z}, \ref{prop_eilenbergswindles} and \ref{prop_connectiveeilenbergswindle} that the construction of this spectrum is natural in $\fZ$.

\begin{remark}\label{rem:different-deloopings}
 Definition \ref{def:delooping-coarse-structure} is more involved than one might expect. The coarse structure
 \begin{equation*}
  \fZ[n] := (\RR^n \times Z, p_n^*\fC_{bdd}(\RR^n) \Cap p_Z^*\fC, p_Z^*\fS)
 \end{equation*}
 might appear to be a more intuitive choice. There is a canonical inclusion functor $\cR^G_f(W,\fZ[n]) \to \cR^G_f(W,\fZ(n))$ which induces an isomorphism on homotopy groups in sufficiently high degrees, using $\fZ[0] = \fZ(0)$ and Proposition \ref{prop_connectivitystructuremaps} below. We conjecture that this map is in fact a weak equivalence.
 
 The difference between the coarse structures $\fZ(n)$ and $\fZ[n]$ is analogous to the linear situation (cf.~\cite{PW-homology}): Take for example categories $\cC_X(R)$ of bounded morphisms over a metric space. Then $\fZ[n]$ corresponds to the category $\cC_{\RR^n \times X}(R)$, while $\fZ(n)$ corresponds to $\cC_{\RR^n}(\cC_X(R))$. The inclusion functor $\cC_{\RR^n \times X}(R) \to \cC_{\RR^n}(\cC_X(R))$ always induces an equivalence of algebraic $K$-theory spectra: Apply non-connective algebraic $K$-theory to the inclusion map and prove that both sides are equivalent to the spectrum $\Omega^n\KK^{-\infty}(\cC_X(R))$.
\end{remark}

\begin{proposition}\label{prop_connectivitystructuremaps}
 \
 \begin{enumerate}
  \item The structure maps of the spectrum $\KK^{-\infty}(\cR^G_f(W,\fZ),h)$ induce isomorphisms on $\pi_i$ for $i \geq 1$.
  \item The structure maps of the spectrum $\KK^{-\infty}(\cR^G_f(W,\fZ),h^A)$ induce isomorphisms on $\pi_i$ for $i \geq 2$.
 \end{enumerate} 
\end{proposition}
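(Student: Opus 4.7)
The strategy is to deduce both statements from the coarse Mayer--Vietoris Theorem~\ref{thm_connectivemayervietoris} applied to the coarse structure $\fZ(n+1)$ decomposed along the hyperplane $\RR^n \times \{0\} \times Z$. Set $A_1 := \RR^n \times \RR_{\leq 0} \times Z$ and $A_2 := \RR^n \times \RR_{\geq 0} \times Z$; coarse excisiveness of $(A_1, A_2)$ holds because every $C \in \fC(n+1)$ is bounded in the $\RR^{n+1}$-coordinate, and $\fZ(n+1)$ is $G$-proper with respect to $A_1$, $A_2$, and $A_1 \cap A_2$ via the equivariant retractions $(\vec{x},t,z) \mapsto (\vec{x}, \min(t,0), z)$, $(\vec{x},t,z) \mapsto (\vec{x}, \max(t,0), z)$, and $(\vec{x},t,z) \mapsto (\vec{x}, 0, z)$, respectively. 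Using the canonical identifications $\fZ(n+1) \cap A_1 = \fZ(n+1)^-$, $\fZ(n+1) \cap A_2 = \fZ(n+1)^+$, and $\fZ(n+1) \cap (A_1 \cap A_2) \cong \fZ(n)$, Theorem~\ref{thm_connectivemayervietoris} yields a homotopy pullback square of $K$-theory spaces.

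Two of its corners are contractible: by Proposition~\ref{prop_connectiveeilenbergswindle}, the categories $\cR^G_{fd}(W,\fZ(n+1)^\pm)$ admit Eilenberg swindles via the unidirectional shifts $(\vec{x},t,z) \mapsto (\vec{x}, t \pm 1, z)$, and the vanishing of their $K$-theory propagates to the subcategories with intermediate finiteness conditions by Thomason cofinality. The surviving two corners therefore fit into an equivalence $hS_\bullet \cR^G_{fd}(W,\fZ(n)) \simeq \Omega hS_\bullet \cR^G_{fd,K}(W,\fZ(n+1))$ with $K = K_0(\cR^G_f(W,\fZ(n+1)),h)$. Comparing with $\cR^G_f$ via Proposition~\ref{prop_cofinality}(2) (the inclusion $\cR^G_f \hookrightarrow \cR^G_{fd}$ is an isomorphism on $K_i$ for $i \geq 1$) together with Thomason cofinality (the intermediate finiteness condition $K$ only affects $K_0$) shows that the structure map $K(\cR^G_f(W,\fZ(n)),h) \to \Omega K(\cR^G_f(W,\fZ(n+1)),h)$ is an isomorphism on $\pi_i$ for $i \geq 1$, establishing~(1).

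For~(2), I would invoke Theorem~\ref{thm_connectivefibresequence} to produce, for each $n$, a homotopy fiber sequence
\begin{equation*}
  K(\cR^G_{fd,F(n)}(W,\fZ(n)\langle A'_n \rangle),h) \to K(\cR^G_f(W,\fZ(n)),h) \to K(\cR^G_f(W,\fZ(n)),h^{A'_n}),
\end{equation*}
where $A'_n := \RR^n \times A$ and $F(n)$ is the appropriate preimage. These fiber sequences are compatible with the structure maps as $n$ varies, producing a ladder of long exact sequences in homotopy. Part~(1) supplies that the middle vertical map is an isomorphism on $\pi_i$ for $i \geq 1$, and the same Mayer--Vietoris argument applied to $\fZ\langle A \rangle$ in place of $\fZ$ shows the analogous statement for the left vertical map; the varying intermediate finiteness conditions $F(n)$ are irrelevant in positive degrees by Thomason cofinality. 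A five-lemma applied to the ladder then forces the right-hand structure map to be an isomorphism on $\pi_i$ for $i \geq 2$, as claimed. The main technical obstacle will be to verify the naturality of Theorem~\ref{thm_connectivefibresequence} in the coarse structure sufficient to ensure that the three structure maps genuinely commute with the connecting maps of the fiber sequences, and to re-verify coarse excisiveness and $G$-properness for the modified coarse structure $\fZ\langle A\rangle$.
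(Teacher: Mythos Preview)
Your proposal is correct and follows essentially the same strategy as the paper: coarse Mayer--Vietoris on the $\fZ(n+1)^\pm$ decomposition for part~(1), then the fiber sequence of Theorem~\ref{thm_connectivefibresequence} plus a five-lemma for part~(2). The only notable difference is that the paper inserts an extra step showing $K_0(\cR^G_f(W,\fZ(n+1)),h)=0$ via a Pedersen--Weibel style Euler-characteristic argument, whereas your route bypasses this by invoking the swindle directly on $\cR^G_{fd}(W,\fZ(n+1)^\pm)$ to force the intermediate finiteness subgroups $K',F,F'$ to be vacuous---a small but legitimate streamlining.
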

\begin{proof}
 By Theorem \ref{thm_connectivemayervietoris}, there is a homotopy pullback square
 \begin{equation}\label{diag:deloopingsquare-fd}
  \commsquare{h S_\bullet \cR^G_{fd,F'}(W,\fZ(n))}{h S_\bullet \cR^G_{fd,K'}(W,\fZ(n+1)^+)}{h S_\bullet \cR^G_{fd,F}(W,\fZ(n+1)^-)}{h S_\bullet \cR^G_{fd,K}(W,\fZ(n+1))}{}{}{}{}
 \end{equation}
 There is a transformation from square \eqref{diag:deloopingsquare} to \eqref{diag:deloopingsquare-fd} induced by inclusion functors. By Thomason cofinality, this transformation is a weak equivalence on the top right and bottom right corners. Therefore, $hS_\bullet\cR^G_{fd,K'}(W,\fZ(n+1)^+)$ is weakly contractible. In particular, its $K_0$ is trivial, so $\cR^G_{fd,F'}(W,\fZ(n)) = \cR^G_{fd}(W,\fZ(n))$.
 
 We claim that $K_0(\cR^G_f(W,\fZ(n+1)),h) = 0$. Since we can filter any object by its skeleta, and suspension corresponds to taking inverses in $K_0$, the class of any object in $\cR^G_f(W,\fZ(n+1))$ equals its $K$-theoretic Euler characteristic, i.e., it equals an alternating sum of classes of $0$-dimensional objects. The same argument as in the linear case \cite[Corollary 1.3.1]{PW1985} now shows that the $K_0$-class of every $0$-dimensional object is trivial.
 
 From $K_0(\cR^G_f(W,\fZ(n+1)),h) = 0$ it follows that $\cR^G_{fd,F}(W,\fZ(n+1)^-) = \cR^G_{fd}(W,\fZ(n+1)^-)$. Since $\fZ(n+1)^-$ admits an infinite shift map, $hS_\bullet\cR^G_{fd}(W,\fZ(n+1)^-)$ is weakly contractible by Section \ref{subsec:vanishing}. We already know that $hS_\bullet\cR^G_f(W,\fZ(n+1)^-)$ is weakly contractible, so the transformation from \eqref{diag:deloopingsquare} to \eqref{diag:deloopingsquare-fd} is also a weak equivalence on the bottom left corner.
 
 As the square \eqref{diag:deloopingsquare-fd} is a homotopy pullback in which the bottom left and top right corners are weakly contractible, we get a weak equivalence 
 \begin{equation}\label{eq:delooping-equivalence}
   \abs{h S_\bullet \cR^G_{fd}(W,\fZ(n))} \xrightarrow{\sim} \Omega\abs{h S_\bullet \cR^G_{fd,K}(W,\fZ(n+1))}. 
 \end{equation}
 By Proposition \ref{prop_cofinality}, the map $\abs{hS_\bullet\cR^G_f(W,\fZ(n))} \to \abs{h S_\bullet \cR^G_{fd}(W,\fZ(n))}$ induces an isomorphism on $\pi_i$ for $i \geq 2$. Hence, the structure map $K(\cR^G_f(W,\fZ(n)),h) \to \Omega K(\cR^G_f(W,\fZ(n+1)),h)$ is an isomorphism on $\pi_i$ for $i \geq 1$.
 
 The structure map $K(\cR^G_f(W,\fZ(n)),h^A) \to \Omega K(\cR^G_f(W,\fZ(n+1)),h^A)$ sits in a map of homotopy fiber sequences arising from Theorem \ref{thm_connectivefibresequence}. The second assertion of the proposition follows from the first assertion and a five-lemma argument.
\end{proof}

\begin{remark}\label{remark:nonconnective-finite-fd}
 We can also define a non-connective spectrum $\KK^{-\infty}(\cR^G_{fd}(W,\fZ),h^A)$ using the finitely dominated objects. The natural maps $K(\cR^G_f(W,\fZ(n)),h^A) \to K(\cR^G_{fd}(W,\fZ(n)),h^A)$ are isomorphisms on $\pi_i$ for $i \geq 1$, hence the induced map $\KK^{-\infty}(\cR^G_f(W,\fZ),h^A) \to \KK^{-\infty}(\cR^G_{fd}(W,\fZ),h^A)$ is a stable equivalence of spectra by Proposition \ref{prop_connectivitystructuremaps}.
\end{remark}

For convenience, we record the non-connective versions of the main results of the previous section.

\begin{theorem}\label{thm_fibresequence}
 Let $\fZ$ be a coarse structure and let $A \subset Z$ be a closed, $G$-invariant subset such that $\fZ$ is $G$-proper with respect to $A$.
 Then the inclusion functors induce a homotopy fiber sequence
 \begin{equation*}
  \KK^{-\infty}(\cR^G_f(W,\fZ \cap A),h) \to \KK^{-\infty}(\cR^G_f(W,\fZ), h) \to \KK^{-\infty}(\cR^G_f(W,\fZ),h^A).
 \end{equation*}
\end{theorem}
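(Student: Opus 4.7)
The strategy is to apply Theorem~\ref{thm_connectivefibresequence} together with Proposition~\ref{prop:thickenings:restrictions} at each delooping level and assemble the resulting connective fiber sequences into a fiber sequence of spectra. For each $n \in \NN$, set $A_n := \RR^n \times A \subset \RR^n \times Z$. One first checks directly from Definitions~\ref{def:pullback-coarse-structure}, \ref{def:delooping-coarse-structure} and \ref{def_propercoarsestructure} that $A_n$ is a closed, $G$-invariant subset and that $\fZ(n)$ is $G$-proper with respect to $A_n$: given a retraction $c \colon A^C \cap S \to A \cap S'$ witnessing $G$-properness of $\fZ$ with respect to $A$, the map $\id_{\RR^n} \times c$ witnesses the same property for $\fZ(n)$ and $A_n$, using that the bounded component of $\fC(n)$ in the $\RR^n$-direction is left untouched. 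One similarly unwinds the definitions to obtain a canonical identification of coarse structures $\fZ(n) \cap A_n = (\fZ \cap A)(n)$.

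Applying Theorem~\ref{thm_connectivefibresequence} (in the form \eqref{eq:connectivefibresequence:2}) and Proposition~\ref{prop:thickenings:restrictions} at level $n$ then yields a homotopy fiber sequence
\begin{equation*}
  \abs{hS_\bullet \cR^G_{fd,F_n}(W, (\fZ\cap A)(n))} \to \abs{hS_\bullet \cR^G_f(W, \fZ(n))} \to \abs{h^{A_n} S_\bullet \cR^G_f(W, \fZ(n))},
\end{equation*}
where $F_n$ is the preimage of $K_n := K_0(\cR^G_f(W, \fZ(n)),h)$ under the natural map $K_0(\cR^G_{fd}(W, (\fZ \cap A)(n)),h) \to K_0(\cR^G_{fd}(W, \fZ(n)),h)$. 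All three constructions are functorial in the input coarse structure (Proposition~\ref{prop:functoriality:Z}), and the Eilenberg swindles of Proposition~\ref{prop_connectiveeilenbergswindle} used to define the structure maps via the square~\eqref{diag:deloopingsquare} respect restrictions of support, since translation in the extra $\RR$-direction preserves both $A_n$ and its thickenings. Hence the three spectrum constructions are genuinely compatible, and the level-wise fiber sequences assemble into a fiber sequence of spectra.

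It remains to identify the fiber spectrum with $\KK^{-\infty}(\cR^G_f(W, \fZ \cap A), h)$. Thomason's cofinality theorem implies that the inclusion $\cR^G_{fd,F_n}(W, (\fZ \cap A)(n)) \hookrightarrow \cR^G_{fd}(W, (\fZ \cap A)(n))$ induces an isomorphism on $K_i$ for $i \geq 1$ and an injection on $K_0$. Combined with Proposition~\ref{prop_connectivitystructuremaps}, this shows that the corresponding spectra are stably equivalent: the $K_0$-discrepancy at each level is absorbed by the spectrum structure maps, which are already $\pi_i$-isomorphisms for $i\geq 1$. Finally, Remark~\ref{remark:nonconnective-finite-fd} identifies the resulting spectrum with $\KK^{-\infty}(\cR^G_f(W, \fZ \cap A), h)$, completing the argument. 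The principal obstacles are the naturality of the cofinality and thickening identifications across all levels, and the verification that the structure maps are strictly compatible with the three level-wise fiber sequences; both are essentially bookkeeping exercises but must be handled with care so that the desired fiber sequence of spectra really does arise by applying $\KK^{-\infty}$ to a single diagram of coarse structures.
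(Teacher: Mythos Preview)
Your proposal follows exactly the paper's one-line argument: apply Theorem~\ref{thm_connectivefibresequence} and Proposition~\ref{prop:thickenings:restrictions} at each delooping level and invoke Remark~\ref{remark:nonconnective-finite-fd} to compare the finite and finitely dominated models. You have simply made explicit the bookkeeping the paper suppresses.

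One small caveat: your assertion that $\id_{\RR^n} \times c$ witnesses $G$-properness of $\fZ(n)$ with respect to $A_n$ is not quite right as stated. By Definition~\ref{def:delooping-coarse-structure}(3), a control condition $C \in \fC(n)$ is only required to lie in some $p_Z^*C_K$ over each compact $K \subset \RR^n$, not globally; hence $A_n^C$ need not be contained in $\RR^n \times A^{C_0}$ for any single $C_0 \in \fC$, and a single retraction $c$ coming from properness of $\fZ$ does not cover all of $A_n^C \cap S$. The fix is to tile $\RR^n$ by unit cubes and use a (possibly different) retraction $c_Q$ over each cube $Q$; the graph of the resulting patched map lies in $\fC(n)$ precisely because condition~(3) is tested compact-by-compact, and local finiteness is inherited since over any compact set only finitely many $c_Q$ contribute. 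The paper leaves this verification implicit as well, so this is a refinement rather than a correction of strategy.
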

\begin{proof}
 This is Theorem \ref{thm_connectivefibresequence} together with Proposition \ref{prop:thickenings:restrictions} and Remark \ref{remark:nonconnective-finite-fd}.
\end{proof}

\begin{theorem}[Coarse Mayer--Vietoris Theorem]\label{thm_mayervietoris}
 Let $(\fZ,A_1,A_2)$ be a coarsely excisive triple, and assume that $\fZ$ is $G$-proper with respect to $A_1$, $A_2$ and $A_1 \cap A_2$.
 Then the obvious inclusion maps give rise to a homotopy pullback square of spectra
 \begin{equation*}
  \commsquare{\KK^{-\infty}(\cR^G_f(W,\fZ \cap (A_1 \cap A_2)),h)}{\KK^{-\infty}(\cR^G_f(W,\fZ \cap A_1),h)}{\KK^{-\infty}(\cR^G_f(W,\fZ \cap A_2),h)}{\KK^{-\infty}(\cR^G_f(W,\fZ),h)}{}{}{}{}
 \end{equation*}
\end{theorem}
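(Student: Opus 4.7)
The plan is to deduce Theorem \ref{thm_mayervietoris} from its connective counterpart, Theorem \ref{thm_connectivemayervietoris}, applied levelwise along the delooping construction of Section \ref{sec_nonconnectiveA}. For each $n \geq 0$, the triple $(\fZ(n), \RR^n \times A_1, \RR^n \times A_2)$ is coarsely excisive, and $\fZ(n)$ is $G$-proper with respect to each of $\RR^n \times A_1$, $\RR^n \times A_2$ and $\RR^n \times (A_1 \cap A_2)$. Both statements follow directly from the hypotheses on $\fZ$ together with the structure built into Definition~\ref{def:delooping-coarse-structure}: elements of $\fC(n)$ restricted to compact subsets of $\RR^n$ are dominated by $p_Z^*\fC$, while the coarsely excisive property and $G$-properness are automatic in the $\RR^n$-direction. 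There are also canonical identifications $\fZ(n) \cap (\RR^n \times A_i) \cong (\fZ \cap A_i)(n)$, compatible with the structure maps of the spectra.

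Applying Theorem \ref{thm_connectivemayervietoris} at each level $n$ then produces a homotopy pullback square of connective $K$-theory spaces involving intermediate finiteness subcategories $\cR^G_{fd,F'_n}$, $\cR^G_{fd,K'_n}$, $\cR^G_{fd,F_n}$ and $\cR^G_{fd,K_n}$. These squares are functorial with respect to the delooping structure maps, so they assemble into a homotopy pullback square of pre-spectra whose colimit is a homotopy pullback square of spectra. To identify it with the square claimed in the theorem, I would argue as in the proof of Proposition \ref{prop_connectivitystructuremaps} and Remark \ref{remark:nonconnective-finite-fd}: the inclusions $\cR^G_f(W,\fZ(n)) \hookrightarrow \cR^G_{fd,K_n}(W,\fZ(n)) \hookrightarrow \cR^G_{fd}(W,\fZ(n))$ induce isomorphisms on $\pi_i$ for $i \geq 1$ by Thomason cofinality and Proposition \ref{prop_cofinality}. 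Hence the induced maps of non-connective spectra are stable equivalences, and similarly for the three restricted coarse structures, so the two squares of spectra are stably equivalent.

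The main obstacle is the careful bookkeeping of the intermediate finiteness subgroups $F_n, F'_n, K_n, K'_n$ at each level and the verification that the connective pullback squares are functorial enough in $n$ for the pullback property to propagate to the spectrum level. The required arguments mirror those already used in the proof of Proposition \ref{prop_connectivitystructuremaps}, and no genuinely new ideas are needed beyond the techniques developed throughout Section \ref{sec_comparisonthms}.
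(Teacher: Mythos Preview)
Your proposal is correct and follows essentially the same approach as the paper, which simply cites Theorem~\ref{thm_connectivemayervietoris} together with Remark~\ref{remark:nonconnective-finite-fd}. You have spelled out in more detail the verifications (coarse excisiveness and $G$-properness for $\fZ(n)$, the identification $(\fZ \cap A_i)(n) \cong \fZ(n) \cap (\RR^n \times A_i)$, and the passage from intermediate finiteness conditions back to finite objects) that the paper leaves implicit.
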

\begin{proof}
 This is Theorem \ref{thm_connectivemayervietoris} together with Remark \ref{remark:nonconnective-finite-fd}.
\end{proof}

\begin{theorem}[Eilenberg swindle]\label{thm_eilenbergswindle}
 Let $\fZ$ be a coarse structure and let $A \subset Z$ be a $G$-invariant subset.
 Suppose that there is a sequence of $G$-equivariant functions $(s_n \colon Z \to Z)_n$ as in Proposition \ref{prop_connectiveeilenbergswindle}.
 
 Then $\KK^{-\infty}(\cR^G_f(W,\fZ),h^A)$ is weakly contractible.
\end{theorem}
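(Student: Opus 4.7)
The plan is to promote the hypothesized sequence $(s_n)_n$ on $Z$ to a sequence of self-maps of each delooping control space $\RR^m \times Z$, apply Proposition \ref{prop_connectiveeilenbergswindle} level-wise in $m$, and use Proposition \ref{prop_eilenbergswindles} to conclude that every space in the spectrum is contractible; since all level spaces of a spectrum being contractible forces all stable homotopy groups to vanish, the spectrum is weakly contractible.

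First I would define, for each $m \geq 0$, the $G$-equivariant maps $\tilde{s}_n \colon \RR^m \times Z \to \RR^m \times Z$ by $\tilde{s}_n(v,z) := (v,s_n(z))$, so that $\tilde{s}_n$ acts trivially on the $\RR^m$-coordinate. With this definition, the first clause of $\fC(m)$ (uniform boundedness in the $\RR^m$-direction) is preserved verbatim, while the second clause (a $\fC$-condition on each compact $\RR^m$-slice) reduces slice-wise to the corresponding condition for $(s_n)_n$ on $\fZ$. In this way, conditions (\ref{es.0}), (\ref{es.1}), (\ref{es.2}), and (\ref{es.5}) of Proposition \ref{prop_connectiveeilenbergswindle} for $(\tilde{s}_n)_n$ relative to $\fZ(m)$ and the $G$-invariant subset $\RR^m \times A$ follow by projecting to $Z$ and invoking the corresponding conditions for $(s_n)_n$. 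The local-finiteness clause of (\ref{es.3}) needs a small compactness argument: if $B \subset \RR^m \times Z$ is locally finite, then for any compact $K \subset \RR^m$ the $Z$-projection $\pi_Z(B \cap (K \times Z))$ is locally finite in $Z$ (cover $K \times \{z_0\}$ by finitely many product neighborhoods and take the intersection of the $Z$-factors), after which the hypothesis on $s_n$ applies slice-wise; the eventually-empty-preimage clause transports directly along the trivial $\RR^m$-action.

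The delicate step is condition (\ref{es.4}), which requires a single $C' \in \fC(m)$ with $\bigcup_n \tilde{s}_n((\RR^m \times A)^C) \subset (\RR^m \times A)^{C'}$. The difficulty is that $\fC(m)$-conditions are only specified slice-wise on compact $\RR^m$-subsets, whereas hypothesis (\ref{es.4}) for $(s_n)_n$ is a single statement in $\fC$. I would exploit that clause (2) of $\fC(m)$ supplies a uniform $\RR^m$-displacement bound $R_0$ for $C$: for any witness $(v',a) \in \RR^m \times A$ of $(v,z) \in (\RR^m \times A)^C$ one has $|v - v'| \leq R_0$. Restricting $C$ to the slice over a compact $K \subset \RR^m$ together with its $R_0$-thickening produces $C^Z_K \in \fC$ with $(z,a) \in C^Z_K$, hence $z \in A^{C^Z_K}$. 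Applying hypothesis (\ref{es.4}) for $(s_n)_n$ to $C^Z_K$ yields $\overline{C}^Z_K \in \fC$ with $s_n(z) \in A^{\overline{C}^Z_K}$ for all $n$, witnessed by some $a^* \in A$. Defining $C'$ slice-wise as the set of pairs $((v,z_1),(v,z_2))$ with zero $\RR^m$-displacement and $(z_1,z_2) \in \overline{C}^Z_K$ (for the compact slice given by $K$) yields an element of $\fC(m)$ containing all $((v,s_n(z)),(v,a^*))$ as required.

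With the hypotheses verified at every level, Proposition \ref{prop_connectiveeilenbergswindle} produces an exact swindle endofunctor on $(\cR^G_f(W,\fZ(m)), h^{\RR^m \times A})$ satisfying $\id \vee \swindle \cong \swindle$, so Proposition \ref{prop_eilenbergswindles} contracts the space $K(\cR^G_f(W,\fZ(m)), h^A)$ for every $m$. Consequently all level spaces of $\KK^{-\infty}(\cR^G_f(W,\fZ),h^A)$ are weakly contractible, so all its stable homotopy groups vanish and the spectrum is weakly contractible. The principal obstacle throughout is the verification of (\ref{es.4}); the uniform $\RR^m$-displacement bound built into the definition of $\fC(m)$ is exactly what allows the slice-wise application of the original hypothesis to be aggregated into a single control condition at level $m$.
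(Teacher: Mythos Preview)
Your proposal is correct and follows essentially the same approach as the paper, whose proof is the one-line remark ``This follows from Section \ref{subsec:vanishing}.'' You have correctly identified what that remark leaves implicit: one extends the sequence $(s_n)_n$ to $\RR^m \times Z$ by acting trivially on the $\RR^m$-coordinate, verifies the hypotheses of Proposition \ref{prop_connectiveeilenbergswindle} level-wise for $\fZ(m)$ with respect to $\RR^m \times A$, and then invokes Proposition \ref{prop_eilenbergswindles} to contract each space of the spectrum. Your treatment of condition (\ref{es.4}) is slightly informal---the phrase ``for the compact slice given by $K$'' needs to be made precise by, say, fixing an exhaustion $K_0 \subset K_1 \subset \dots$ of $\RR^m$ by compacts and using $\overline{C}^Z_{K_{j+R_0}}$ on the annulus $K_j \setminus K_{j-1}$, then checking that the resulting $C'$ restricts over any compact to a finite union of members of $\fC$---but the idea is right and the uniform $\RR^m$-bound in clause (2) of $\fC(m)$ is indeed the key input.
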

\begin{proof}
 This follows from Section \ref{subsec:vanishing}.
\end{proof}

\section{The Davis--L\"uck assembly map}\label{sec_assembly}
We can now translate the model of the assembly map given in \cite{BFJR2004} to $A$-theory.
Assume from now on that $G$ is a countable discrete group.

\begin{definition}\label{def_continuouscontrol}
 Let $X$ be a $G$-CW-complex and $M$ a metric space with free, isometric $G$-action.
  
 Define the coarse structure $\JJ(M,X) = (M \times X \times [1,\infty[, \fC(M,X), \fS(M,X))$ as follows.
 Let $p_M$, $p_{M \times X}$, $p_{X \times [1,\infty[}$ and $p_{[1,\infty[}$ denote the projection maps
 from $M \times X \times [1,\infty[$ to the factor indicated by the index of $p$.
 \begin{enumerate}
  \item Set $\fC(M,X) := p_M^*\fB(M) \Cap p_{X \times [1,\infty[}^*\fC_{G\text{cc}}(X)$.
  \item Set $\fS(M,X) := p_{M \times X}^*\fS_{G\text{-cpt}}(M \times X)$.
 \end{enumerate}
 The bounded coarse structure, $G$-compact support condition and $G$-continuous control condition have been defined in Example \ref{example:control-structures}.
\end{definition}

One particular instance of this definition is the case where $M = G$, equipped with a left invariant and proper metric, ``proper'' meaning that every ball of finite radius is finite.
Such metrics exist \cite[Proposition 1.3]{DS2006}; if $G$ is finitely generated, we can pick a word metric. Whenever $d$ and $d'$ are two left invariant, proper metrics on $G$, the identity map $\id \colon (G,d) \to (G,d')$ is a coarse equivalence by \cite[Proposition 1.1]{DS2006}.  
In particular, every $R$-ball with respect to $d$ is contained in some $R'$-ball with respect to $d'$, and vice versa. 
Hence, the bounded control condition on $G$ is independent of the choice of left invariant, proper metric, and we can suppress the metric in our notation.

\begin{definition}
  We abbreviate $\JJ(X) := \JJ(G,X)$.
\end{definition}

When considering $\cR^G_f(W,\JJ(M,X))$, we denote the class of weak equivalences $h^{M \times X \times \{1\}}$
by $h^\infty$. Observe also that for a $G$-invariant subcomplex $A \subset X$, we have $\JJ(M,X) \cap (M \times A \times [1,\infty[) = \JJ(M,A)$.
Finally, we note that $\JJ(M,X)$ is $G$-proper with respect to subspaces of the form $M \times A \times [1,\infty[$ for $A \subset X$ a $G$-invariant subcomplex.

\begin{definition}\label{def_todsequence}
 Let us introduce the following shorthands:
 \begin{enumerate}
  \item $\TT(G,W,X) := \KK^{-\infty}\big(\cR^G_f(W,\JJ(X)\cap (G \times X \times \{1\})), h\big)$,
  \item $\FF(G,W,X) := \KK^{-\infty}\big(\cR^G_f(W, \JJ(X)), h\big)$,
  \item $\DD(G,W,X) := \KK^{-\infty}\big(\cR^G_f(W, \JJ(X)), h^\infty\big)$.
 \end{enumerate}
\end{definition}
As a consequence of Theorem \ref{thm_fibresequence}, these spectra fit into a natural homotopy fibre sequence
\begin{equation}\label{tod-sequence}
 \TT(G,W,X) \to \FF(G,W,X) \to \DD(G,W,X).
\end{equation}

\begin{definition}\label{def_ghomologytheory}
 An \emph{(unreduced) $G$-homology theory} is a functor $\HH$ from the category of
 $G$-CW-complexes to the category of spectra such that the following holds:
 \begin{enumerate}
  \item\label{ghomology.invariance} Every $G$-equivariant homotopy equivalence $f \colon X_1 \xrightarrow{\sim} X_2$
   induces a weak equivalence $\HH(f) \colon \HH(X_1) \to \HH(X_2)$.
  \item\label{ghomology.mayervietoris} Every homotopy pushout square of $G$-CW-complexes induces a homotopy pullback square of spectra upon application of $\HH(-)$.
  \item\label{ghomology.colimits} If $X = \colim_i X_i$ is a directed colimit, the natural map $\hocolim_i \HH(X_i) \to \HH(X)$ is a weak equivalence.
 \end{enumerate}
\end{definition}

\begin{remark}
 Observe that any unreduced $G$-homology theory in the sense of Definition \ref{def_ghomologytheory} automatically respects
 finite coproducts because
 \begin{equation*}
  \commsquare{\varnothing}{X_1}{X_2}{X_1 \coprod X_2}{}{}{}{}
 \end{equation*}
 is a homotopy pushout square. In view of the direct limit axiom \ref{def_ghomologytheory}.\eqref{ghomology.colimits}, we conclude that any unreduced $G$-homology theory
 commutes with arbitrary coproducts.
\end{remark}

\begin{theorem}\label{thm_a-homology}\
 \begin{enumerate}
  \item\label{a-homology.coefficients} The projection $X \to G/G$ induces a weak equivalence
   \begin{equation*}
    \TT(G,W,X) \xrightarrow{\sim} \TT(G,W,G/G)
   \end{equation*}
   for every $G$-CW-complex $X$.
  \item\label{a-homology.homology} The assignment $X \mapsto \DD(G,W,X)$ is an unreduced $G$-equivariant homology theory.
  \item\label{lem_connectingmap} The connecting map $\Omega\DD(G,W,G/G) \to \TT(G,W,G/G)$ is a weak equivalence.
 \end{enumerate}
\end{theorem}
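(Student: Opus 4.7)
\emph{Part \eqref{a-homology.coefficients}.} The essential observation is that the $G$-continuous control condition restricted to the height $1$ slice of $X \times [1,\infty[$ becomes vacuous: any neighbourhood of $(x,\infty)$ in $X \times [1,\infty]$ can be chosen to miss $X \times \{1\}$. Consequently, the coarse structure $\JJ(X) \cap (G \times X \times \{1\})$ has morphism control bounded in $G$ and vacuous in $X$, with $G$-compact support in $G \times X$. The projection $X \to G/G$ induces a morphism of coarse structures $z$; I will show that the induced exact functor $F := \cR^G(W,z)$ is an equivalence of Waldhausen categories by exhibiting an explicit inverse. Fix any basepoint $x_0 \in X$. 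Since every retractive space is a free $G$-CW-complex relative to $W$ (so its cells have trivial stabilisers), the rule $(Y,\kappa) \mapsto (Y,\tilde\kappa)$ with $\tilde\kappa(e) := (\kappa(e), \kappa(e) \cdot x_0)$ defines an exact functor $S$ back. Then $FS = \id$ strictly, and $SF \cong \id$ via the identity map on underlying complexes, which is controlled because morphism control in the $X$-direction is vacuous.

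\emph{Part \eqref{a-homology.homology}.} I verify the three axioms of Definition~\ref{def_ghomologytheory}. The direct limit axiom is immediate: a finite controlled complex has $G$-compact support in $G \times X \times [1,\infty[$, so its labels are carried by a $G$-compact subcomplex of $X$ that is eventually absorbed by any directed colimit. For Mayer--Vietoris, given a $G$-CW-pushout $X = X_1 \cup_{X_0} X_2$, the triple $(\JJ(X), A_1, A_2)$ with $A_i := G \times X_i \times [1,\infty[$ is coarsely excisive: as $t \to \infty$, the $G$-continuous control pins points to their own neighbourhoods in $X$, so thickenings of $A_1$ and $A_2$ meet only near $A_1 \cap A_2$. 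Theorem~\ref{thm_mayervietoris} then yields a Mayer--Vietoris pullback square for $\FF$. Combined with part~\eqref{a-homology.coefficients} (which renders $\TT$ essentially constant in $X$ and hence trivially Mayer--Vietoris), the fibre sequence~\eqref{tod-sequence} transports the pullback to $\DD$. Homotopy invariance then reduces, via mapping-cylinder factorisation, to the claim that the inclusion $Y \hookrightarrow Y \times [0,1]$ induces an equivalence on $\DD$; this follows from the construction of an explicit controlled retraction, using that the $[0,1]$-factor is coarsely trivial with respect to $\JJ$.

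\emph{Part \eqref{lem_connectingmap}.} It suffices to prove that the middle term $\FF(G,W,G/G)$ of~\eqref{tod-sequence} is weakly contractible. The coarse structure $\JJ(G/G)$ is carried by $G \times [1,\infty[$ with morphism control bounded in both coordinates and an essentially trivial support condition. The sequence $s_n(g,t) := (g, t+n)$ satisfies all hypotheses of Proposition~\ref{prop_connectiveeilenbergswindle}: each $s_n$ preserves the support condition, preserves boundedness in both coordinates (shifts are by a fixed amount), consecutive shifts differ by a fixed bounded amount, and for each $(g,t)$ the preimages $s_n^{-1}(U)$ of a small neighbourhood eventually leave $[1,\infty[$. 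Theorem~\ref{thm_eilenbergswindle} then gives $\FF(G,W,G/G) \simeq *$, whence the connecting map $\Omega\DD(G,W,G/G) \to \TT(G,W,G/G)$ is a weak equivalence.

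The principal technical difficulty is the verification of coarse excisiveness in part~\eqref{a-homology.homology}, which requires a careful analysis of how $G$-continuous control interacts with thickenings of the subspaces $A_i$; making controlled homotopies explicit for the homotopy invariance axiom will require a similar geometric argument, in analogy with the corresponding step of \cite{BFJR2004}.
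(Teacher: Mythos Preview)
Your arguments for parts~\eqref{a-homology.coefficients} and~\eqref{lem_connectingmap} coincide with the paper's: it too constructs the inverse functor in~\eqref{a-homology.coefficients} via $e \mapsto (\kappa_{\RR^n}(e),\kappa_G(e),\kappa_G(e)\cdot x)$ for a chosen $x \in X$, and in~\eqref{lem_connectingmap} applies the Eilenberg swindle along the shift $(g,G,t) \mapsto (g,G,t+1)$.

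For part~\eqref{a-homology.homology} the paper likewise reduces the direct-limit axiom to $G$-compact supports and then defers the remaining axioms to \cite[\S5]{BFJR2004}. Your Mayer--Vietoris route---apply Theorem~\ref{thm_mayervietoris} to $\FF$ and transport along the fibre sequence using part~\eqref{a-homology.coefficients}---is a valid elaboration of that strategy. However, your justification of homotopy invariance contains a real misconception: the $[0,1]$-factor is \emph{not} coarsely trivial with respect to $\JJ$. It sits inside the $X$-coordinate and is therefore governed by $G$-continuous control, so the identity on $Y$, viewed as a morphism from $(Y,\kappa)$ to $(i_0\circ p)_*(Y,\kappa)$, is not controlled---continuous control near $((x,s),\infty)$ with $s>0$ forbids pairing with $((x,0),\infty)$ for arbitrarily large $t$. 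The actual argument in \cite{BFJR2004} is more delicate and exploits the interaction between the $[0,1]$-direction and the $[1,\infty[$-direction. Note also that homotopy invariance must be in hand \emph{before} your Mayer--Vietoris argument can treat a general homotopy pushout in which one leg is not already a subcomplex inclusion (and the paper further remarks that coproducts need separate treatment). You are right to flag this step as the principal difficulty; just be aware that the heuristic you record for it does not survive contact with the definition of $\fC_{G\text{cc}}$.
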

\begin{proof}
 For part \eqref{a-homology.coefficients}, consider the functor
 \begin{equation*}
  p \colon \cR^G_f(W,\JJ(X)(n)\cap (G \times X \times \{1\})(n)) \to \cR^G_f(W,\JJ(G/G)\cap (G \times G/G \times \{1\})(n))
 \end{equation*}
 induced by the projection map $X \to G/G$; it is well-defined because of the $G$-compact support condition on $G \times X$. Let $(Y \leftrightarrows W, \kappa)$ be any object from the category on the right hand side.
 Then any choice of a point $x \in X$ induces a control map
 \begin{equation*}
  \tilde{\kappa} \colon \cells Y  \to \RR^n \times G \times X, \quad e \mapsto (\kappa_{\RR^n}(e), \kappa_G(e),\kappa_G(e) \cdot x)
 \end{equation*}
 which turns $Y$ into an object $\tilde{Y}$ of the left hand side. This construction provides an inverse to $p$, showing that $p$ is an exact equivalence of Waldhausen categories.
 
 For the second part of the theorem, observe first that $X \mapsto \cR^G_f(W,\JJ(X))$
 is indeed a functor on $G$-CW-complexes; this follows from Proposition \ref{prop:functoriality:Z} using Lemma 3.3 from \cite{BFJR2004}
 and the $G$-compact support condition.
 Also, due to the $G$-compact support condition, in conjunction with the fact that algebraic $K$-theory commutes with directed colimits,
 we immediately obtain the direct limit axiom (\ref{ghomology.colimits}). Hence, it suffices to consider only cocompact $G$-CW-complexes.
 The remainder of the proof is formally the same as in \cite[\S 5]{BFJR2004}; note that in the proof of property \ref{def_ghomologytheory}.\eqref{ghomology.mayervietoris},
 the special case of a coproduct is missing in \cite{BFJR2004} and has to be treated separately.
 For more details, see also \cite[Section 7.2]{thesis-ullmann}.
 
 For the last part of the theorem, consider $\cR^G_{f}(W,\JJ(G/G))$.
 The map $s \colon G \times G/G \times [1,\infty[$ given by $(g,G,t) \mapsto (g,G,t+1)$ is an infinite shift map, so $\FF(G,W,G/G)$
 is weakly contractible by Theorem \ref{thm_eilenbergswindle}. The claim follows.
\end{proof}

Let $Or(G)$ denote the \emph{orbit category} of $G$, i.e., the category of left $G$-sets $G/H$ and $G$-equivariant maps between them.

Let $V$ be a topological space. Then $\cR_f(V)$, the category of finite retractive spaces over
$V$, is isomorphic to the category $\cR^{\{1\}}_f(V,\fT(*))$, where $\fT(*)$ is the trivial coarse structure over a point from Example \ref{example:control-structures}. The results of Section \ref{sec_nonconnectiveA}
provide us with a spectrum
\begin{equation*}
 \Aa^{-\infty}(V) := \KK^{-\infty}(\cR^{\{1\}}_f(V,\fT(*)),h)
\end{equation*}
which is a (potentially non-connective) delooping of $A(V)$. We call this the \emph{non-connective algebraic $K$-theory spectrum of $V$}.
Given any $G$-space $W$, we may therefore define an $Or(G)$-spectrum $\Aa^{-\infty}_W$ by setting
\begin{equation*}
 \Aa^{-\infty}_W(G/H) := \Aa^{-\infty}(W^{op} \times_G G/H) \cong \Aa^{-\infty}(H \backslash W),
\end{equation*}
where $W^{op}$ denotes the space $W$ equipped with the right action of $G$ induced by the original left action via $w \cdot g := g^{-1}w$.

\begin{theorem}\label{thm_coefficients}
 Let $W$ be a free $G$-CW-complex. Then there is a zig-zag of equivalences of $Or(G)$-spectra
 \begin{equation*}
  \Omega\DD(G,W,-) \simeq \Aa^{-\infty}_W(-).
 \end{equation*}
\end{theorem}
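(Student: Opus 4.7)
The strategy is to combine the homotopy fiber sequence \eqref{tod-sequence} with a change-of-groups identification of $\TT(G,W,G/H)$. First I would show that $\FF(G,W,X) \simeq \ast$ for every $G$-CW-complex $X$. The $G$-equivariant shift $s(g,x,t) = (g,x,t+1)$ on $G \times X \times [1,\infty[$ is an infinite shift map satisfying the hypotheses of Proposition~\ref{prop_connectiveeilenbergswindle}: iterates only increase the $t$-coordinate, so bounded-$G$ control and $G$-compact support are preserved, and the shift is compatible with $G$-continuous control on $X \times [1,\infty[$ (images accumulate at $t = \infty$). Theorem~\ref{thm_eilenbergswindle} then gives $\FF(G,W,X) \simeq \ast$, and naturality in $X$ is immediate since $s$ commutes with pushforward along $G$-equivariant maps. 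Combined with \eqref{tod-sequence}, this yields a natural equivalence of $Or(G)$-spectra $\Omega\DD(G,W,-) \simeq \TT(G,W,-)$.

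Next, I would identify $\TT(G,W,G/H) \simeq \Aa^{-\infty}(H\backslash W)$ naturally in $G/H \in Or(G)$. Objects of $\cR^G_f(W, \JJ(G/H)\cap(G\times G/H\times\{1\}))$ are $G$-equivariant finite retractive spaces $Y$ over $W$ equipped with $G$-equivariant control $\kappa = (\kappa_1,\kappa_2)\colon \cells Y \to G\times G/H$ (bounded in $G$, $G$-compactly supported); restriction to $t = 1$ collapses the $G$-continuous-control condition to mere $G$-invariance of control sets. Via the $G$-equivariant change of coordinates $\phi(g,kH) = (g, g^{-1}kH)$, the $G/H$-component of $\kappa$ is transformed into a $G$-invariant function $\tilde\kappa_2 \colon \cells Y \to G/H$. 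The full Waldhausen subcategory $\cC_{eH}$ of objects with $\tilde\kappa_2 \equiv eH$ is identified, upon restriction of the $G$-action along $H \leq G$, with the category of $H$-equivariant finite retractive spaces over $W$ with bounded $H$-control. Since the $H$-action on $W$ inherited from the free $G$-action is itself free, the $H$-orbit projection $Y \mapsto H\backslash Y$ is a Waldhausen equivalence with $\cR_f(H\backslash W)$, whose $K$-theory is $\Aa^{-\infty}(H\backslash W)$.

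The main obstacle is verifying that the inclusion $\cC_{eH} \hookrightarrow \cR^G_f(W, \JJ(G/H)\cap(G\times G/H\times\{1\}))$ induces an equivalence on $K$-theory. I would apply the Approximation Theorem, exploiting the discreteness of $G/H$ and the boundedness of the $G$-component of control: any $(Y,\kappa)$ can be approximated by one whose $G$-orbits of cells are concentrated at a single $\tilde\kappa_2$-value, since attaching maps between cells of differing $\tilde\kappa_2$-values can be controlled-homotoped away via Proposition~\ref{prop_controlledhep} (the discrete fibers of $\tilde\kappa_2$ cannot be linked by bounded-$G$ attaching data in a way that survives approximation), and the $G$-action then translates each concentration point to $eH$. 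Naturality in $G/H \in Or(G)$ then follows because a $G$-map $G/H \to G/K$, determined by some $g_0 \in G$ with $g_0^{-1}Hg_0 \subseteq K$, induces under these identifications the orbit projection $H\backslash W \to K\backslash W$ determined by $g_0$, matching the functoriality of $\Aa^{-\infty}_W$.
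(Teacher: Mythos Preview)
Your first step contains a genuine error that cannot be repaired. You claim that the shift $s(g,x,t) = (g,x,t+1)$ defines an infinite shift map on $\JJ(G/H)$ for every $H \leq G$, so that $\FF(G,W,G/H) \simeq *$. This is false unless $H = G$. The problem is condition~(\ref{es.1}) of Proposition~\ref{prop_connectiveeilenbergswindle}: take any $C \in \fC(G,G/H)$ containing a pair $((g,\gamma H,1),(g',\gamma' H,1))$ with $\gamma H \neq \gamma' H$ (such pairs are allowed, since $G$-continuous control imposes no constraint at $t = 1$). Then $(s^n \times s^n)(C)$ contains $((g,\gamma H, n+1),(g',\gamma' H,n+1))$, and the union $\bigcup_n (s^n \times s^n)(C)$ therefore connects distinct cosets $\gamma H \neq \gamma' H$ at arbitrarily large $t$. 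This violates $G$-continuous control at $(\gamma H, \infty)$, so the union lies in no element of $\fC(G,G/H)$. The shift swindle only goes through when $G/H$ is a point, which is precisely the case $H = G$ established in Theorem~\ref{thm_a-homology}(\ref{lem_connectingmap}).

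This failure is not merely technical. By Theorem~\ref{thm_a-homology}(\ref{a-homology.coefficients}), the projection $G/H \to G/G$ induces a weak equivalence $\TT(G,W,G/H) \simeq \TT(G,W,G/G) \simeq \Aa^{-\infty}(G\backslash W)$, independently of $H$. Since the theorem you are proving asserts $\Omega\DD(G,W,G/H) \simeq \Aa^{-\infty}(H\backslash W)$, the connecting map $\Omega\DD(G,W,G/H) \to \TT(G,W,G/H)$ is (after the identification) the map $\Aa^{-\infty}(H\backslash W) \to \Aa^{-\infty}(G\backslash W)$ induced by the quotient, which is certainly not an equivalence for general $H$. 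Equivalently, $\FF(G,W,G/H)$ is genuinely nontrivial when $H \neq G$. The paper avoids $\TT$ entirely here: it first replaces $\JJ(G/H)$ by the discrete-control variant $\JJ^{dis}(G/H)$ (where the $G/H$-coordinate is forced to be constant along control pairs), proves $\DD' \simeq \DD$ via the Approximation Theorem, and then identifies $\Omega\DD'(G,W,G/H)$ with the $K$-theory of the $t=1$ slice \emph{with discrete control}. That last category---unlike the one underlying $\TT$---really does decompose along $G/H$ and yields $\Aa^{-\infty}(H\backslash W)$.
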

\smallskip

With the exception of Corollary \ref{cor_identificationassemblymap} below, the proof of this theorem occupies the rest of this section.

Consider $\DD(G,W,G/H)$ for some $G/H \in Or(G)$. Define a coarse structure $\JJ^{dis}(G/H) = (G \times G/H \times [1,\infty[, \fC^{dis}(G,G/H), \fS(G,G/H))$, where $\fC^{dis}(G,G/H)$ is the collection of all $C \in \fC(G,G/H)$, such that $\gamma H = \gamma' H$ for all $((g,\gamma H, t),(g',\gamma' H, t')) \in C$.

\begin{lemma}\label{lemma:discrete-control}
 For all $n$, the natural inclusion functor $\cR^G_f(W,\JJ^{dis}(G/H)(n)) \hookrightarrow \cR^G_f(W,\JJ(G/H)(n))$ induces an equivalence in $K$-theory with respect to the $h^\infty$-equivalences.
\end{lemma}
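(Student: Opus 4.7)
The plan is to verify the Approximation Property of Waldhausen's Approximation Theorem for the inclusion functor $i \colon \cR^G_f(W,\JJ^{dis}(G/H)(n)) \to \cR^G_f(W,\JJ(G/H)(n))$ with respect to the $h^\infty$-equivalences; Saturation, the cylinder functor and the Cylinder Axiom on both sides are available from the previous sections. The essential observation is that, because $G/H$ is discrete, the $G$-continuous control condition is extremely restrictive: for every $C \in \fC(G,G/H)$ there is a threshold $T(C)$ such that any pair $((g,\gamma H,t),(g',\gamma' H,t')) \in C$ with both $t,t' \geq T(C)$ automatically satisfies $\gamma H = \gamma' H$. Consequently, given a finite object $(Y,\kappa) \in \cR^G_f(W,\JJ(G/H)(n))$ whose control is witnessed by $C$ with $t$-difference bound $R$, the subcomplex $Y^{(>T)}$ generated by the cells of height larger than $T(C) + R$ is $\JJ^{dis}$-controlled, and the inclusion $Y^{(>T)} \hookrightarrow Y$ is an $h^\infty$-equivalence because the complementary cells are supported in a $\JJ$-thickening of $G \times G/H \times \{1\}$.

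For the first half of the Approximation Property, let $f \colon Y_1 \to Y_2$ be a morphism in the source that happens to be an $h^\infty$-equivalence in the target, witnessed by a partially defined $\JJ$-controlled inverse $(Y_2',\bar f)$ and two partially defined $\JJ$-controlled homotopies. Restricting $Y_2'$ to the cofinal subcomplex $Y_2' \cap Y_2^{(>T)}$ for $T$ sufficiently large keeps it cofinal away from infinity, and applying the above observation to $\bar f$ turns the restriction into a $\JJ^{dis}$-controlled map. The two homotopies are handled by the same restriction, so $f$ is already an $h^\infty$-equivalence in the source.

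For the second half, let $f \colon Y_1 \to Y_2$ be a morphism with $Y_1$ in the source and $Y_2$ in the target. Choose $T$ large enough that $Y_2^{(>T)}$ is $\JJ^{dis}$-controlled and that the restricted map $f \colon f^{-1}(Y_2^{(>T)}) \to Y_2^{(>T)}$ is $\JJ^{dis}$-controlled; this last point uses that a typical pair has its target in the large-$t$ region, which forces $\gamma H$-matching by continuous control. Then form the pushout
\begin{equation*}
  Y_2'' := Y_1 \cup_{f^{-1}(Y_2^{(>T)})} Y_2^{(>T)}
\end{equation*}
in $\cR^G(W,\JJ^{dis}(G/H)(n))$, which is finite by Lemma~\ref{lem_controlledpushouts}. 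The induced map $g \colon Y_2'' \to Y_2$ satisfies $g \circ (Y_1 \rightarrowtail Y_2'') = f$ by the universal property. Because cells of $Y_1 \setminus f^{-1}(Y_2^{(>T)})$ have bounded height (cells of $Y_1$ at height above $T + 2R$ are sent by $f$ into $Y_2^{(>T)}$), the $Y_2^{(>T)}$-summand is cofinal in $Y_2''$ away from infinity, and its inclusion provides a partially defined inverse exhibiting $g$ as an $h^\infty$-equivalence.

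The hardest part of the argument is verifying that $Y_2''$ really is $\JJ^{dis}$-controlled along the attaching relations that cross the glueing locus: a cell $e$ in the $Y_1$-part that attaches in $Y_1$ to some $\tilde e \in f^{-1}(Y_2^{(>T)})$ is now attached in $Y_2''$ to $e' := f(\tilde e)$ in the $Y_2^{(>T)}$-part. The resulting pair $(\kappa_{Y_2}(e'),\kappa_{Y_1}(e))$ decomposes as the composition of $(\kappa_{Y_1}(\tilde e),\kappa_{Y_1}(e)) \in \fC^{dis}(G,G/H)$ with $(\kappa_{Y_2}(f(\tilde e)),\kappa_{Y_1}(\tilde e)) \in \fC(G,G/H)$; since $f(\tilde e)$ lies in the large-$t$ region, continuous control forces $\gamma H$-matching in the second pair, and the composition therefore lies in $\fC^{dis}(G,G/H)$. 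Waldhausen's Approximation Theorem now applies to both halves, yielding the claimed equivalence.
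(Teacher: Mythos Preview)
Your approach via Waldhausen's Approximation Theorem is exactly the paper's, and the skeleton of both halves---restrict the partially defined inverse and homotopies to a suitable cofinal subcomplex for part one, build a pushout for the factorisation in part two---matches the paper's argument closely. However, there is a genuine gap in your key observation when $n \geq 1$.

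You assert that for $C \in \fC(G,G/H)$ there is a single threshold $T(C)$ above which the $G/H$-coordinate is preserved; this is correct, and suffices for $n=0$. But you then apply it to a control condition witnessing a finite object in $\cR^G_f(W,\JJ(G/H)(n))$, and such a condition lies in $\fC(n)$, not in $\fC(G,G/H)$. By Definition~\ref{def:delooping-coarse-structure}, an element of $\fC(n)$ is only required to restrict, over each compact $K \subset \RR^n$, to the pullback of some $C_K'' \in \fC(G,G/H)$; both the $t$-difference bound and the continuous-control threshold coming from $C_K''$ may grow with $K$. Consequently there need not be any global threshold $T$ for which the subcomplex $Y^{(>T)}$ of cells with $t$-coordinate larger than $T$ is $\JJ^{dis}$-controlled: at large $\RR^n$-coordinate the control over $G/H \times [1,\infty[$ can be arbitrarily loose, so attaching relations among high-$t$ cells may still cross $G/H$-components. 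The same objection applies to your claimed uniform $t$-difference bound $R$.

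The fix, which is what the paper does, is to let the threshold depend on the $\RR^n$-coordinate. The paper notes explicitly that for each closed ball $B_R \subset \RR^n$ there is a threshold $t_0 = t_0(R)$, and then uses bounded control over $\RR^n$ (condition (2) in Definition~\ref{def:delooping-coarse-structure}) together with bounded control over $[1,\infty[$ to produce a cofinal subcomplex away from $\RR^n \times G \times G/H \times \{1\}$ on which the morphism satisfies a control condition in $\fC^{dis}$; the relevant thickening lies in $\fC(n)$ precisely because the $t$-extent is allowed to grow with $\lvert x \rvert$. Once you replace your constant $T$ by such a varying threshold, the rest of your argument goes through essentially unchanged; in particular your pushout $Y_1 \cup_{f^*Y_2'} Y_2'$ in the second half is exactly the paper's construction.
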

\begin{proof}
 Let $f \colon Y_1 \to Y_2$ be an arbitrary morphism in $\cR^G_f(W,\JJ(G/H)(n))$. Let $C$ be a control condition witnessing that $f$ is a controlled map. For each closed ball $B_R \subset \RR^n$, $C \cap (B_R \times G \times G/H \times [1,\infty[)^2$ satisfies the continuous control condition. Therefore, there is some $t_0 > 1$ such that $((x,g,\gamma H,t),(x',g',\gamma' H, t')) \in C$ implies $\gamma H = \gamma' H$ whenever $x,x' \in B_R$ and $t,t' \geq t_0$. Since we require bounded control over $\RR^n$, and since the $G$-continuous control condition includes bounded control over $[1,\infty[$, there exists some cofinal subcomplex $Y_1' \subset Y_1$ away from $\RR^n \times G \times G/H \times \{1\}$ such that $f|_{Y_1'}$ satisfies a control condition in $\fC^{dis}(G,G/H)$.
 
 We want to prove the Approximation Property. For the first part, let $f \colon Y_1 \to Y_2$ be a morphism in $\cR^G_f(W,\JJ^{dis}(G/H)(n))$ which is an $h^\infty$-equivalence in $\cR^G_f(W,\JJ(G/H)(n))$. If $g$ is an $h^\infty$-inverse to $f$, we can restrict it to a suitable cofinal subcomplex such that its restriction satisfies a control condition in $\fC^{dis}(G,G/H)$ by the previous paragraph. The same works for homotopies. This shows the first part of the Approximation Property.
 
 For the second part, let $f \colon Y_1 \to Y_2$ be a morphism in $\cR^G_f(W,\JJ(G/H))$, where $Y_1$ is an object in $\cR^G_f(W,\JJ^{dis}(G/H))$. Again by the first paragraph, there is some cofinal subcomplex $Y_2' \subset Y_2$ which satisfies a control condition in $\fC^{dis}(G,G/H)$. Then there exists a cofinal subcomplex $Y_1' \subset Y_1$ such that $f|_{Y_1'}$ maps into $Y_2'$. Let $Y$ be the pushout of $Y_1 \leftarrowtail Y_1' \xrightarrow{f|_{Y_1'}} Y_2'$. Then $Y$ is an object in $\cR^G_f(W,\JJ^{dis}(G/H)(n))$ and the canonical morphism $Y_2' \rightarrowtail Y$ is an $h^\infty$-equivalence. Hence, the morphism $Y \to Y_2$ induced by the universal property of the pushout is also an $h^\infty$-equivalence by Saturation. This proves the second part of the Approximation Property. 
\end{proof}

Defining $\DD'(G,W,G/H) := \KK^{-\infty}(\cR^G_f(W,\JJ^{dis}(G/H)),h^\infty)$, Lemma \ref{lemma:discrete-control} states that the natural map $\DD'(G,W,G/H) \to \DD(G,W,G/H)$ is a weak equivalence.
Observe that, by considering $\JJ^{dis}(G/H)$, we have effectively eliminated the $G$-continuous control condition. It has been replaced by bounded control over $[1,\infty[$ together with discrete control over $G/H$.

\begin{lemma}\label{lemma:coefficients-deloop}
 For all $n \geq 1$, there is a zig-zag of exact functors
 \begin{equation*}
  (\cR^G_f(W,\JJ^{dis}(G/H)(n-1)),h^\infty) \to \dots \leftarrow (\cR^G_f(W, (\JJ^{dis}(G/H) \cap (G \times G/H \times \{1\}))(n),h)
 \end{equation*}
 which induces equivalences in $K$-theory and which is natural in $G/H$.
\end{lemma}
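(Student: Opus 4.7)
Set $\fZ := \JJ^{dis}(G/H)$ and $A := G \times G/H \times \{1\}$. The overall strategy is to combine Theorem~\ref{thm_fibresequence} with the Eilenberg swindle of Theorem~\ref{thm_eilenbergswindle} and the delooping structure of the non-connective $K$-theory spectrum. First I verify that $\fZ$ is $G$-proper with respect to $A$ via the retraction $(g,\gamma H, t) \mapsto (g, \gamma H, 1)$, and that the shift $s(g,\gamma H, t) := (g,\gamma H, t+1)$ satisfies the hypotheses of Proposition~\ref{prop_connectiveeilenbergswindle} (the $[1,\infty[$-direction is unconstrained by the $G$-compact support condition and bounded control is preserved under translation). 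Consequently the spectrum $\KK^{-\infty}(\cR^G_f(W,\fZ),h)$ is weakly contractible.

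Theorem~\ref{thm_fibresequence} then produces a stable equivalence
\[
\KK^{-\infty}(\cR^G_f(W, \fZ \cap A),h) \xrightarrow{\sim} \Omega\, \KK^{-\infty}(\cR^G_f(W, \fZ), h^\infty).
\]
Taking the $n$-th level and using that $\fZ \cap A$ is the coarse structure $\JJ^{dis}(G/H)\cap(G\times G/H\times\{1\})$ (the continuous control condition becomes vacuous at the slice $t=1$), this identifies $K(\cR^G_f(W,(\fZ\cap A)(n)),h)$ with $\Omega K(\cR^G_f(W,\fZ(n)),h^\infty)$ in sufficiently high degrees. Invoking Proposition~\ref{prop_connectivitystructuremaps} (equivalently, applying the delooping square~\eqref{diag:deloopingsquare} with $A = G \times G/H \times \{1\}$) yields a space-level weak equivalence $K(\cR^G_f(W,\fZ(n-1)),h^\infty) \xrightarrow{\sim} \Omega K(\cR^G_f(W,\fZ(n)),h^\infty)$; concatenating gives the $K$-theory equivalence asserted by the lemma.

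To realize this chain as a literal zig-zag of exact functors, I chain together the intermediate Waldhausen categories appearing in the proofs of the cited theorems. For the fibration piece, I pass through
\[
\cR^G_f(W,(\fZ\cap A)(n)) \hookrightarrow \cR^G_{fd,F}(W,(\fZ\cap A)(n)) \xleftarrow{\cong} \cR^G_{fd,F}(W,\fZ(n)\langle A\rangle) \hookrightarrow \cR^G_{fd,F'}(W,\fZ(n))^{h^\infty} \hookleftarrow \cR^G_f(W,\fZ(n))^{h^\infty},
\]
where the middle isomorphism is Proposition~\ref{prop:thickenings:restrictions}, the last two inclusions are $K$-equivalences by the two assertions established inside the proof of Theorem~\ref{thm_connectivefibresequence}, and the final passage to $\Omega K(\cR^G_f(W,\fZ(n)),h^\infty)$ is provided by Proposition~\ref{prop_modifiedfibrationtheorem}. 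For the spectrum structure map, the delooping pullback square~\eqref{diag:deloopingsquare} is realized by an analogous zig-zag through $\cR^G_{fd,K}(W,\fZ(n))$. Naturality in $G/H$ is built into the construction: each of $\fZ$, $\fZ\cap A$, $\fZ\langle A\rangle$, the swindle $s$, and the delooping are manifestly functorial under $G$-equivariant maps $G/H\to G/H'$, so Proposition~\ref{prop:functoriality:Z} applies throughout.

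The main obstacle will be controlling the $K_0$-conditions $F$, $F'$, $K$ imposed on the finitely dominated intermediate categories and verifying that they match up correctly under the excision identification of Proposition~\ref{prop:thickenings:restrictions} and after absorbing the infinite shift swindle. Once this bookkeeping is done, the remaining verifications are formal: one checks that every intermediate functor commutes strictly with induction along a morphism $G/H \to G/H'$, so that the whole zig-zag is natural in the orbit category.
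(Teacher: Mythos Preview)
Your overall strategy — combine the Eilenberg swindle on the $[1,\infty[$-direction with Theorem~\ref{thm_fibresequence} to identify $\KK^{-\infty}(\cR^G_f(W,\fZ\cap A),h)$ with $\Omega\KK^{-\infty}(\cR^G_f(W,\fZ),h^\infty)$, and then use the spectrum structure map — does establish the correct stable equivalence of spectra. But the lemma asks for something sharper: a zig-zag of \emph{exact functors} between Waldhausen categories at each fixed level. Your two chains of functors do not meet in a common Waldhausen category; they only meet in the loop space $\Omega K(\cR^G_f(W,\fZ(n)),h^\infty)$. On one side you end at $(\cR^G_f(W,\fZ(n))^{h^\infty},h)$, which is identified with the loop space via the fibration theorem plus a contraction of the middle term; on the other side you start at $(\cR^G_f(W,\fZ(n-1)),h^\infty)$, which is identified with the same loop space via the delooping square plus contractions of the off-diagonal corners. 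These are two \emph{different} nullhomotopies of two different contractible categories, and you have not produced an exact functor relating the two endpoints — nor checked that the resulting maps into the loop space agree. The sentence ``the delooping pullback square is realized by an analogous zig-zag through $\cR^G_{fd,K}(W,\fZ(n))$'' does not work: that category has the $K$-theory of the base, not of the loop space.

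The paper's proof avoids this loop-space junction entirely by exploiting the very reason $\JJ^{dis}$ was introduced. After Lemma~\ref{lemma:discrete-control}, the control over the $[1,\infty[$-coordinate is merely bounded, so the isometry $[1,\infty[\ \cong\ [0,\infty[$ induces a literal \emph{isomorphism} of Waldhausen categories
\[
 \cR^G_f(W,\JJ^{dis}(G/H)(n-1)) \;\cong\; \cR^G_f\bigl(W,(\JJ^{dis}(G/H)\cap A)[1]^+(n-1)\bigr),
\]
identifying the $[1,\infty[$-direction with one half of an $\RR$-delooping of $\JJ^{dis}(G/H)\cap A$. Under this isomorphism $h^\infty$ becomes $h^0$, and the rest of the zig-zag is assembled from the excision step in the proof of Theorem~\ref{thm_connectivemayervietoris} together with a comparison of the coarse structures $[1]$ and $(1)$ from Remark~\ref{rem:different-deloopings}. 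Your argument never uses this structural feature of $\JJ^{dis}$; indeed, it would apply verbatim with $\JJ$ in place of $\JJ^{dis}$, where it reproduces the spectrum-level statement of Theorem~\ref{thm_a-homology}\eqref{lem_connectingmap} but cannot yield a zig-zag of functors. The missing idea is precisely the recognition that the $[1,\infty[$-factor \emph{is} a delooping half-line once continuous control has been traded for bounded control.
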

\begin{proof}
 Recall the temporary notation $\fZ[n]$ we introduced in Remark \ref{rem:different-deloopings}. We only need to use
 \begin{equation*}
  \fZ[1] := (\RR \times Z, p_\RR^*\fC_{bdd}(\RR) \Cap p_Z^*\fC, p_Z^*\fS).
 \end{equation*}
 In analogy to the delooping construction we discussed in Section \ref{sec_nonconnectiveA}, we also use coarse structures $\fZ[1]^+$ and $\fZ[1]^-$.
 
 For the purpose of this proof, define
 \begin{equation*}
  \JJ^{dis}(G/H)_1 := \JJ^{dis}(G/H) \cap (G \times G/H \times \{1\}).
 \end{equation*}
 The underlying space of the coarse structure $\JJ^{dis}(G/H)(n-1)$ is $\RR^{n-1} \times G \times G/H \times [1,\infty[$. The obvious isometry $[1,\infty[ \cong [0,\infty[$ induces a homeomorphism $\RR^{n-1} \times G \times G/H \times [1,\infty[ \cong \RR^{n-1} \times [0,\infty[ \times G \times G/H$. This homeomorphism gives rise to an isomorphism of Waldhausen categories
 \begin{equation}\label{eq:coefficients-deloop-1}
 \cR^G_f(W,\JJ^{dis}(G/H)(n-1)) \cong \cR^G_f(W,\JJ^{dis}(G/H)_1[1]^+(n-1)).
 \end{equation}
 Under this isomorphism, the class of $h^\infty$-equivalences corresponds to the homotopy equivalences $h^0$ away from $\RR^{n-1} \times \{0\} \times G \times G/H \times \{1\}$. As in the proof of Theorem \ref{thm_connectivemayervietoris}, we obtain a weak equivalence
 \begin{equation}\label{eq:coefficients-deloop-2}
  h^0S_\bullet\cR^G_f(W,\JJ^{dis}(G/H)_1[1]^+(n-1)) \xrightarrow{\sim} h^-S_\bullet\cR^G_f(W,\JJ^{dis}(G/H)_1[1](n-1)),
 \end{equation}
 where in the second term $h^-$ refers to the class of homotopy equivalences away from $\RR^{n-1} \times \RR_{\leq 0} \times G \times G/H \times \{1\}$.
 
 There is a natural, exact inclusion functor $\cR^G_f(W,\JJ^{dis}(G/H)_1[1](n-1)) \hookrightarrow \cR^G_f(W,\JJ^{dis}(G/H)_1(1)(n-1))$. Analogous to~\eqref{eq:delooping-equivalence} in the proof of Proposition \ref{prop_connectivitystructuremaps}, there are weak equivalences
 \begin{equation*}
  \begin{split}
   \abs{hS_\bullet\cR^G_{fd}(W,\JJ^{dis}(G/H)_1(n-1))} &\xrightarrow{\sim} \Omega\abs{hS_\bullet\cR^G_f(W,\JJ^{dis}(G/H)_1[1](n-1))}, \\
   \abs{hS_\bullet\cR^G_{fd}(W,\JJ^{dis}(G/H)_1(n-1))} &\xrightarrow{\sim} \Omega\abs{hS_\bullet\cR^G_f(W,\JJ^{dis}(G/H)_1(1)(n-1))}.
  \end{split}
 \end{equation*}
 Since the inclusion maps
 \begin{equation*}
  \begin{split}
   hS_\bullet\cR^G_f(W,\JJ^{dis}(G/H)_1[1](n-1)) &\to h^-S_\bullet\cR^G_f(W,\JJ^{dis}(G/H)_1[1](n-1)), \\
   hS_\bullet\cR^G_f(W,\JJ^{dis}(G/H)_1(1)(n-1)) &\to h^-S_\bullet\cR^G_f(W,\JJ^{dis}(G/H)_1(1)(n-1)) \\
  \end{split}
 \end{equation*}
 are weak equivalences, too, we conclude that the map
 \begin{equation}\label{eq:coefficients-deloop-3}
  h^-S_\bullet\cR^G_f(W,\JJ^{dis}(G/H)_1[1](n-1)) \to h^-S_\bullet\cR^G_f(W,\JJ^{dis}(G/H)_1(1)(n-1))
 \end{equation}
 is also a weak equivalence. There is another exact inclusion functor
 \begin{equation}\label{eq:coefficients-deloop-4}
  \cR^G_f(W,\JJ^{dis}(G/H)_1(n)) \hookrightarrow \cR^G_f(W,\JJ^{dis}(G/H)_1(1)(n-1))
 \end{equation}
 which induces an equivalence on $K$-theory with respect to the $h^-$-equivalences for similar reasons.
 The desired zig-zag is then formed by the equivalences arising from \eqref{eq:coefficients-deloop-1}, \eqref{eq:coefficients-deloop-2}, \eqref{eq:coefficients-deloop-3} and \eqref{eq:coefficients-deloop-4}.
\end{proof}

Since there is a weak equivalence from the shifted spectrum $\{ \DD'(G,W,G/H)_{n-1} \}_n$, where we set $\DD'(G,W,G/H)_{-1} = *$, to $\Omega\DD'(G,W,G/H)$, Lemma \ref{lemma:coefficients-deloop} provides us with a zig-zag of natural weak equivalences
\begin{equation}\label{eq:coefficients-deloop-zigzag}
 \Omega\DD'(G,W,G/H) \simeq \KK^{-\infty}(\cR^G_f(W,\JJ^{dis}(G/H) \cap (G \times G/H \times \{1\})),h).
\end{equation}
In order to prove Theorem \ref{thm_coefficients}, it is therefore sufficient to identify the latter $Or(G)$-spectrum.

\begin{lemma}\label{lemma:coefficients-induction-quotient}
 There is a zig-zag of weak equivalence of $Or(G)$-spectra
 \begin{equation*}
  \KK^{-\infty}(\cR^G_f(W,\JJ^{dis}(-) \cap (G \times - \times \{1\})),h) \simeq \Aa^{-\infty}_W(-).
 \end{equation*}
\end{lemma}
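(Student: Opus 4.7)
The plan is to construct, naturally in $G/H \in Or(G)$, a zig-zag of exact equivalences of Waldhausen categories between $\cR^G_f(W, \JJ^{dis}(G/H) \cap (G \times G/H \times \{1\}))$ and $\cR_f(H \backslash W)$. The same argument applies level-wise to the deloopings $\fZ(n)$ underlying $\KK^{-\infty}$ and is compatible with the structure maps, so applying $\KK^{-\infty}$ yields the required zig-zag of $Or(G)$-spectra.

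First, I would observe that because control is \emph{discrete} in the $G/H$-factor, the attaching maps of any object $(Y, \kappa)$ preserve the second coordinate of the label, so $Y$ decomposes (relative to $W$) as a disjoint union over $\gamma H \in G/H$ of $H$-subcomplexes $Y_{\gamma H}$ with cells labeled over $G \times \{\gamma H\}$. Since $G$ acts transitively on $G/H$ and freely on $G \times G/H$, every $G$-orbit of cells of $Y$ has a representative in $Y_H$ which is unique up to $H$-action; hence $G$-orbits of cells in $Y$ biject with $H$-orbits of cells in $Y_H$. The restriction functor $Y \mapsto Y_H$ (with $\kappa$ projected to its $G$-coordinate) then identifies the source category with a category of finite $H$-equivariant retractive spaces over $W$ (viewed as an $H$-space) whose cells carry labels in $G$, subject to bounded control on $G$ and support using only finitely many $H$-orbits. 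Its inverse is $G$-equivariant induction $Z' \mapsto W \cup_{G \times_H W} (G \times_H Z')$ along the collapse $[g, w] \mapsto g \cdot w$, with $G$-equivariant label extended from the given labels on $Z' \subset G \times_H Z'$.

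Next, I would show that the labels in this intermediate category are essentially irrelevant: the forgetful functor to the category of finite $H$-free retractive spaces over $W$ is an exact equivalence. Indeed, any $H$-equivariant map between finite $H$-free retractive spaces is automatically bounded controlled with respect to arbitrary labels, since the set of label pairs $(\kappa_2(e'), \kappa_1(e))$ arising from the map is $H$-invariant and contained in finitely many $H$-orbits, hence of bounded $G$-diameter; similarly, two labels on the same underlying object are identified by the bounded controlled identity. Since $W$ is $G$-free and therefore $H$-free, passing to the $H$-quotient finally produces the desired equivalence with $\cR_f(H \backslash W)$, inverse to pullback along $W \to H \backslash W$. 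By definition of $\Aa^{-\infty}_W$, the non-connective $K$-theory of $\cR_f(H \backslash W)$ is $\Aa^{-\infty}(H \backslash W) = \Aa^{-\infty}_W(G/H)$.

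The remaining issue is naturality in $Or(G)$. A $G$-equivariant map $f \colon G/H_1 \to G/H_2$ is determined by some $\gamma \in G$ with $\gamma^{-1} H_1 \gamma \subseteq H_2$ sending $H_1 \mapsto \gamma H_2$. Following $\gamma$ through the three identifications recovers the continuous map $H_1 \backslash W \to H_2 \backslash W$, $[w] \mapsto [\gamma^{-1} w]$, which is precisely the map inducing $\Aa^{-\infty}_W(f)$. The main technical subtlety will be to ensure that the choices made in the restriction/induction step depend only on the coset $H$ and on $\gamma$ in a sufficiently coherent way; I expect that systematically using the distinguished coset $H \in G/H$ as a basepoint --- so that the relevant functors become natural transformations of $Or(G)$-diagrams of Waldhausen categories --- is enough to complete the argument.
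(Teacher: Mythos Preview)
Your proposal is correct and follows essentially the same approach as the paper. The paper also restricts to the fiber over the distinguished coset $H \in G/H$, takes the $H$-quotient, and constructs the inverse via pullback/induction; the only cosmetic difference is that the paper first passes to the full subcategory $\hat{\cR}$ of objects in which every $G$-orbit of cells has a representative labelled $(1_G,H,1)$ (your ``labels are irrelevant'' step done \emph{before} restricting rather than after), and then defines the quotient functor $Q$ directly on $\hat{\cR}$.
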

\begin{proof}
 Let $\hat{\cR} \subset \cR^G_f(W,(\JJ^{dis}(G/H) \cap (G \times G/H \times \{1\})))$ denote the full Waldhausen subcategory of those objects $(Y,\kappa)$ for which the set of cells $\kappa^{-1}(\{1_G\} \times \{H\} \times \{1\})$ intersects every $G$-orbit of cells.
 
 We claim that the inclusion functor $\hat{\cR} \hookrightarrow \cR^G_f(W,(\JJ^{dis}(G/H) \cap (G \times G/H \times \{1\})))$ is an exact equivalence. What we need to show is that every object is isomorphic to some object in $\hat{\cR}$. Let $(Y,\kappa) \in \cR^G_f(W,(\JJ^{dis}(G/H) \cap (G \times G/H \times \{1\})))$. Due to the $G$-compact support condition, we can find a set of representatives $R$ for the $G$-cells of $Y$ such that $\kappa(R) \subset F_1 \times F_2 \times \{1\}$ for some finite sets $F_1 \subset G$, $F_2 \subset G/H$. Multiplying by appropriate group elements, we can assume without loss of generality that $\kappa(R) \subset F \times \{ H \} \times \{1\}$ for some finite set $F \subset G$. Let $c \colon F \to \{ 1_G \}$ be the unique function. By requiring $G$-equivariance, $c$ induces a $G$-equivariant function $\kappa_c \colon \cells Y \to G \times G/H \times \{1\}$. Since there are only finitely many equivariant cells in $Y$, the labelled $G$-CW-complex $(Y,\kappa_c)$ satisfies bounded control over $G$. By construction, $(Y,\kappa_c)$ is an object of $\hat{\cR}$. The identity map on $Y$ defines an isomorphism $(Y,\kappa) \cong (Y,\kappa_c)$. This proves that the inclusion functor is an equivalence.

 Next, we define an exact functor $Q \colon \hat{\cR} \to \cR_f(W^{op} \times_G G/H, \fT(*))$. Let $(Y,\kappa) \in \hat{\cR}$. Define $Y_H \subset Y$ to be the $H$-invariant subcomplex given by the set of cells $\kappa^{-1}(H \times \{H\} \times \{1\})$. Then $H \backslash Y_H$ is naturally a retractive space over $H \backslash \res_H^G W \cong W^{op} \times_G G/H$. Set $Q(Y) := H \backslash Y_H$.
 
 We claim that this functor is also an equivalence of Waldhausen categories. The following argument is similar to \cite[Lemma 2.1.3]{Waldhausen1985}. 
 
 Let $(X, \kappa) \in \cR_f(W^{op} \times_G G/H, \fT(*))$. Let $\pi \colon W \times G/H \to W^{op} \times_G G/H$ denote the $G$-equivariant map sending $(w,gH)$ to $(g^{-1}w,H)$. By pulling back along $\pi$, we obtain a retractive space $\tilde{X}$ relative $W \times G/H$. Define $\Phi(X)$ as the pushout
 \begin{equation*}
  \begin{tikzpicture}
  \matrix (m) [matrix of math nodes, column sep=3.5em, row sep=2em, text depth=.5em, text height=1em]
   {W \times G/H & \tilde{X} \\
    W            & \Phi(X)   \\};
  \path[->]
   (m-1-1) edge node[above]{} (m-1-2) edge (m-2-1)
   (m-1-2) edge (m-2-2)
   (m-2-1) edge node[above]{$\Phi(s)$} (m-2-2);
  \end{tikzpicture}
 \end{equation*}
 The retraction of $\tilde{X}$ induces a retraction $\Phi(r)$ on $\Phi(X)$. Note that there is a canonical bijection $\cells \Phi(X) \xrightarrow{\sim} \cells \tilde{X}$.
 The projection map $\tilde{X} \to W \times G/H \to G/H$ induces a $G$-equivariant function $\tilde{\kappa} \colon \cells \tilde{X} \to G/H$ with the property that, if $e, e'$ are cells in $\tilde{X}$ such that $e' \subset \langle e \rangle$, then $\tilde{\kappa}(e) = \tilde{\kappa}(e')$.
 
 Choose a set of representatives $S$ for the $G$-orbits of cells in $\Phi(X)$ such that $\tilde{\kappa}(e) = H$ for all $e \in S$. Define the $G$-equivariant function
 \begin{equation*}
  \Phi(\kappa) \colon \cells \Phi(X) \to G \times G/H \times \{1\}
 \end{equation*}
 by $\Phi(\kappa)(e) := (1_G,H,1)$ for all $e \in S$ and extending $G$-equivariantly. This turns $\Phi(X)$ into an object $(\Phi(X),\Phi(\kappa)) \in \hat{\cR}$.  As $W$ is a free $G$-CW complex, $\Phi(Q(Y))$ is canonically isomorphic to $Y$.  Since $Q(\Phi(X))$ is canonically isomorphic to $X$, this shows that $Q$ is essentially surjective and fully faithful. This finishes the proof of the lemma.
\end{proof}

Combining Lemmas \ref{lemma:discrete-control} and \ref{lemma:coefficients-induction-quotient} with the zig-zag \eqref{eq:coefficients-deloop-zigzag}, we obtain the zig-zag of weak equivalences of $Or(G)$-spectra
\begin{equation*}
 \begin{split}
  \Omega\DD(G,W,-)
  &\simeq \Omega\DD(G,W,-) \\
  &\simeq \KK^{-\infty}(\cR^G_f(W,\JJ^{dis}(-) \cap (G \times - \times \{1\})),h) \\
  &\simeq \Aa^{-\infty}_W(-)
 \end{split}
\end{equation*}
whose existence we claimed in Theorem \ref{thm_coefficients}.

As explained in \cite{DL1998}, any $Or(G)$-spectrum $\EE$ gives rise to a $G$-homology theory
$\HH^G(-;\EE)$. By considering the map induced by the projection $X \to G/G$, one obtains
for every $G$-CW-complex $X$ a \emph{Davis--L\"uck assembly map}
\begin{equation*}
 \alpha_X \colon \HH^G(X;\EE) \to \EE(G/G).
\end{equation*}
The upshot of our discussion is that we have constructed a model for the assembly map
associated to the $Or(G)$-spectrum $\Aa^{-\infty}_W$:

\begin{corollary}\label{cor_identificationassemblymap}
 Let $W$ be a free $G$-CW-complex. Then the following holds:
 \begin{enumerate}
  \item The connecting map $\Omega\DD(G,W,X) \to \TT(G,W,X)$ is equivalent
  to the equivariant $A$-theory assembly map
  \begin{equation*}
   \alpha_{X,W} \colon \HH^G(X;\Aa^{-\infty}_W) \to \Aa^{-\infty}_W(G/G) \simeq \Aa^{-\infty}(G \backslash W).
  \end{equation*}
  \item The assembly map $\alpha_{X,W}$ is a weak equivalence if and only if $\FF(G,W,X)$ is weakly contractible.
 \end{enumerate}
\end{corollary}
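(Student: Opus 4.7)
The plan is to identify $\Omega\DD(G,W,-)$ with the Davis--L\"uck $G$-homology theory $\HH^G(-;\Aa^{-\infty}_W)$ and then exploit naturality of the fibre sequence \eqref{tod-sequence} together with Theorem \ref{thm_a-homology} to match its connecting map with the assembly map.

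For the first step, Theorem \ref{thm_a-homology}.(\ref{a-homology.homology}) says $X \mapsto \DD(G,W,X)$ is an unreduced $G$-homology theory, and hence so is $X \mapsto \Omega\DD(G,W,X)$. Its restriction to the orbit category $Or(G)$ is zig-zag equivalent to $\Aa^{-\infty}_W$ by Theorem \ref{thm_coefficients}. A standard uniqueness argument for $G$-homology theories on $G$-CW-complexes (induction over equivariant cells using the Mayer--Vietoris and direct limit axioms, together with the fact that both theories agree on orbits) then yields a natural zig-zag of weak equivalences $\HH^G(X;\Aa^{-\infty}_W) \simeq \Omega\DD(G,W,X)$. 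Under this identification, the Davis--L\"uck assembly map $\alpha_{X,W}$, induced by $X \to G/G$, corresponds to the natural map $\Omega\DD(G,W,X) \to \Omega\DD(G,W,G/G)$ coming from the same projection.

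For the second step, applying the natural fibre sequence of Theorem \ref{thm_fibresequence} to $X \to G/G$ yields a commutative square whose top and bottom rows are the connecting maps $\Omega\DD(G,W,X) \to \TT(G,W,X)$ and $\Omega\DD(G,W,G/G) \to \TT(G,W,G/G)$ respectively, and whose vertical arrows are induced by the projection $X \to G/G$. The right vertical map is a weak equivalence by Theorem \ref{thm_a-homology}.(\ref{a-homology.coefficients}) and the bottom horizontal map is a weak equivalence by Theorem \ref{thm_a-homology}.(\ref{lem_connectingmap}). Commutativity therefore shows that the top connecting map is equivalent to the left vertical map, which has already been identified with $\alpha_{X,W}$; this proves part~(1). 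Part~(2) is then immediate: in any homotopy fibre sequence $F \to E \to B$, the connecting map $\Omega B \to F$ is a weak equivalence precisely when $E$ is weakly contractible.

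The delicate point is the uniqueness statement in the first step: one must verify that a natural transformation of $G$-homology theories on $G$-CW-complexes which induces equivalences on orbits is already a weak equivalence, and the zig-zag structure of the equivalence in Theorem \ref{thm_coefficients} forces one to build the comparison zig-zag carefully. Once this is done, the rest of the argument is pure bookkeeping with naturality of the fibre sequence.
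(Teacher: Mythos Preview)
Your proposal is correct and follows essentially the same route as the paper. The paper's proof simply cites \cite[Section~6.2]{BFJR2004} for part~(1) and the fibre sequence \eqref{tod-sequence} for part~(2); what you have written is precisely an unpacking of that cited argument: identify $\Omega\DD(G,W,-)$ with $\HH^G(-;\Aa^{-\infty}_W)$ via Theorem~\ref{thm_coefficients} and a cell-induction uniqueness argument, then use naturality of the connecting map along $X\to G/G$ together with Theorem~\ref{thm_a-homology}(\ref{a-homology.coefficients}) and~(\ref{lem_connectingmap}). You also correctly flag the only genuinely nontrivial point, namely promoting the zig-zag of $Or(G)$-spectra from Theorem~\ref{thm_coefficients} to a zig-zag of natural transformations of $G$-homology theories; this is exactly what the argument in \cite[Section~6.2]{BFJR2004} supplies.
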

\begin{proof}
 The first claim follows from Theorem \ref{thm_coefficients} by the argument given in \cite[Section 6.2]{BFJR2004}.
 The second part of the corollary is then evident from the homotopy fiber sequence
 $\TT(G,W,X) \to \FF(G,W,X) \to \DD(G,W,X)$.
\end{proof}

\section{The isomorphism conjecture for Dress--Farrell--Hsiang groups}\label{sec_outline}
Recall (e.g.~from \cite[Conjecture 113]{LR2005}) the statement of the isomorphism conjecture for $A$-theory:

\begin{conjecture}[$A$-theoretic \fic]\label{conj_fic}
 Let $\cF$ be a family of groups and let $G$ be a countable discrete group. Then for every free $G$-CW-complex $W$ the assembly map
 \begin{equation*}
  \alpha_{\cF,W} \colon \HH^G(E_\cF G;\Aa^{-\infty}_W) \to \HH^G(G/G;\Aa^{-\infty}_W) \cong \Aa^{-\infty}(G \backslash W)
 \end{equation*}
 is a weak equivalence, where $E_\cF G$ is the classifying space of $G$ for the family $\cF$.
\end{conjecture}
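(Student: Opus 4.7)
The conjecture as stated is a genuinely open problem, so any realistic plan must add a hypothesis on $G$; given the direction of the paper, my target is to prove it when $(G,S)$ is a Dress--Farrell--Hsiang group with respect to $\cF$, following the template of \cite{BL2012b} transplanted to the controlled retractive spaces developed in Sections \ref{sec_controlledretractivespaces}--\ref{sec_nonconnectiveA}.

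The first reduction is Corollary \ref{cor_identificationassemblymap}: $\alpha_{\cF,W}$ is a weak equivalence if and only if the obstruction spectrum $\FF(G,W,E_\cF G)$ is weakly contractible. Because of the $G$-compact support condition built into $\JJ(-)$, every $K$-theory class is supported on a cocompact $G$-subcomplex $X \subseteq E_\cF G$, so it suffices to show that every class $\xi \in \pi_n\FF(G,W,X)$ for such $X$ dies after pushing into $\FF(G,W,E_\cF G)$. It would then be enough to exhibit a representative of $\xi$ that is arbitrarily ``squeezed'' in the continuous-control direction, because the Squeezing Theorem \ref{thm_squeezing} will assert that sufficiently squeezed classes are null. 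The DFH hypothesis is engineered precisely to produce such squeezed representatives: for each $\epsilon>0$ it delivers a finite quotient $\pi\colon G\twoheadrightarrow F$ together with $\epsilon$-displaced, $\overline D$-equivariant maps $\phi_D\colon G\to E_D$ into low-dimensional simplicial complexes whose isotropy lies in $\cF$, one for each Dress subgroup $D\leq F$.

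The core of the argument is then transfer followed by induction: one assembles the $\phi_D$ into a transfer map from $\FF(G,W,X)$ to a product over the Dress subgroups $D \leq F$ of the corresponding $\FF$-spectra built from $\overline D$, and then inverts it, up to the squeezed error, via an $A$-theoretic analogue of Swan's Induction Theorem (Theorem \ref{thm_inductionswana}), using the $A$-theoretic Swan group action on $\KK^{-\infty}(\cR^G_f(W,\JJ(X)))$ from Section \ref{sec_swangroup}. This should be packaged as Corollary \ref{cor:uniformtransfer:deloop} and exhibits $\xi$ as the image of $\epsilon$-controlled classes under a map whose image lands in $\FF(G,W,E_\cF G)$; feeding these into Theorem \ref{thm_squeezing} kills $\xi$. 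I expect the main obstacle to lie in the construction of the transfer and the verification of its compatibility with the Swan-group action, rather than in the Squeezing Theorem: translating the linear construction (which uses fibred modules indexed by the preimages $\overline D$) to categories of equivariant retractive spaces demands controlled fibrations whose fibres carry representation-theoretic data, and verifying additivity of the resulting assembly against Swan induction while remaining compatible with the non-connective delooping of Section \ref{sec_nonconnectiveA} is the technical heart of the argument.
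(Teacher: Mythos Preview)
Your plan is essentially the paper's strategy, with the correct ingredients (Corollary~\ref{cor_identificationassemblymap}, the DFH hypothesis, the Swan-group-based transfer of Corollary~\ref{cor:uniformtransfer:deloop}, and the Squeezing Theorem~\ref{thm_squeezing}) and the same template from \cite{BL2012b}.

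A few points where your architecture is garbled relative to the paper. The transfer $\transfer$ and the contracting maps $\phi_D$ are \emph{separate} steps, not assembled together: the transfer is built purely from the $\swana(G)$-action and lands in $\cR^G_f(W,\JJ((T_k\times G)_k,E_\cF G)(n))$ with $T_k=\coprod_{D\in\cD_k} G/\overline D$, so everything stays $G$-equivariant and the $\overline D$'s appear only as labels on a disjoint union, not as structure groups of smaller obstruction spectra. Swan induction (Theorem~\ref{thm_inductionswana}) is used to \emph{construct} $\transfer$ so that each projection $P_j\circ\transfer$ is the identity on $K$-groups; there is no subsequent ``inversion''. The maps $\phi_D$ then enter via Lemma~\ref{lem_contractingmaps} to push the transferred class into $\cR^G_f(W,\JJ((E_k\times G)_k,E_\cF G)(n))$, and Theorem~\ref{thm_squeezing} says this has the same $K$-theory as the eventually-trivial sequences, whence a short diagram chase kills the original class. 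Finally, your expectation about where the difficulty lies is inverted: once the $\swana(G)$-action is in place the transfer of Section~\ref{sec_transfer} is quite direct, whereas the Squeezing Theorem (induction on simplex dimension plus an excision argument, Section~\ref{sec_squeezing}) is the more substantial piece.
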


Whenever Conjecture \ref{conj_fic} holds for some group $G$, we say that $G$ \emph{satisfies the $A$-theoretic \fic\ with respect to $\cF$}.
For the special case that $\cF = \cV\cC yc$ is the family of virtually cyclic groups, we will also say that $G$ \emph{satisfies the $A$-theoretic \ffjc}.

Due to Corollary \ref{cor_identificationassemblymap}, the \fic\ is equivalent to the weak contractibility of the spectra $\FF(G,W,E_\cF G)$
introduced in the previous section. Thus, the $A$-theoretic isomorphism conjecture becomes accessible via the methods employed in \cite{BLR2008, BL2012a, BL2012b}
for the algebraic $K$-theory and $L$-theory of group rings. Our goal is to establish an analog of the main result of \cite{BL2012b}.

Let us recall the definition of Dress--Farrell--Hsiang groups.

\begin{definition}\label{def_dressgroup}
 Let $D$ be a finite group. We call $D$ a \emph{Dress group} if there are primes $p$ and $q$
 and subgroups $P \unlhd C \unlhd D$ such that $P$ is a $p$-group, $C/P$ is cyclic and
 $D/C$ is a $q$-group.
\end{definition}

Recall the definition of the \emph{$\ell^1$-metric} on a simplicial complex. If $X$ is a simplicial complex and $\xi = \sum_x \xi_x \cdot x$, $\eta = \sum_x \eta_x \cdot x$ are points in $X$, this metric is given by $d^{\ell^1}(\xi,\eta) = \sum_x \abs{\xi_x - \eta_x}$. All simplicial complexes we consider are equipped with this metric.

We call a generating set $S$ of a group $G$ \emph{symmetric} if $s \in S$ implies $s^{-1} \in S$.

\begin{definition}\label{def_dfhgroup}
 Let $G$ be a group and $S$ be a symmetric, finite generating set of $G$. Let $\cF$ be a family of subgroups of $G$.
 
 Call $(G,S)$ a \emph{Dress--Farrell--Hsiang group with respect to $\cF$} if there exists $N \in \NN$
 such that for every $\epsilon > 0$ there is an epimorphism $\pi \colon G \twoheadrightarrow F$ onto a finite group $F$
 such that the following holds: For every Dress group $D \leq F$, there are
 a $\overline{D} := \pi^{-1}(D)$-simplicial complex $E_D$ of dimension at most $N$ whose isotropy groups lie in $\cF$, and
 a $\overline{D}$-equivariant map $\phi_D \colon G \to E_D$ such that $d^{\ell^1}(\phi_D(g),\phi_D(g')) \leq \epsilon$
 whenever $g^{-1}g' \in S$.
\end{definition}

A slightly stricter version of this definition appeared previously in \cite[Definition 3.1]{W-TransferReducibilityOfFHGroups}.
The notion of Dress--Farrell--Hsiang groups generalizes that of Farrell--Hsiang groups from \cite[Definition 1.1]{BL2012b} and \cite[Definition 2.14]{BFL2014}.
For examples, we refer to Section \ref{sec_applications} and \cite{W-TransferReducibilityOfFHGroups}.

\begin{theorem}\label{thm_fhmethod}
 Let $G$ be a discrete group. Suppose that there are a symmetric, finite generating set
 $S \subset G$ and a family of subgroups $\cF$ of $G$ such that $(G,S)$ is a Dress--Farrell--Hsiang group with respect to $\cF$.
 Then $G$ satisfies the \fic{} in $A$-theory \ref{conj_fic} with respect to $\cF$, i.e., the assembly map
 \begin{equation*}
  \HH^G(E_\cF G;\Aa^{-\infty}_W) \to \Aa^{-\infty}(G \backslash W)
 \end{equation*}
 is a weak equivalence for every free $G$-CW-complex $W$.
\end{theorem}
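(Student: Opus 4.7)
By Corollary \ref{cor_identificationassemblymap}, it suffices to prove that the spectrum $\FF(G,W,E_\cF G)$ is weakly contractible for every free $G$-CW-complex $W$. Fix $n \in \ZZ$ and a class $\alpha \in \pi_n \FF(G,W,E_\cF G)$; the goal is to show $\alpha = 0$. Let $N$ be the uniform dimension bound supplied by the Dress--Farrell--Hsiang structure on $(G,S)$. The Squeezing Theorem~\ref{thm_squeezing} will provide a threshold $\epsilon_0 = \epsilon_0(N,n,\alpha) > 0$ below which appropriately-controlled classes are forced to vanish. Choose $\epsilon < \epsilon_0$ and apply the DFH hypothesis to obtain an epimorphism $\pi \colon G \twoheadrightarrow F$ together with $\overline{D}$-equivariant, $\epsilon$-Lipschitz (along $S$) maps $\phi_D \colon G \to E_D$ for each Dress subgroup $D \leq F$, where $\overline{D} := \pi^{-1}(D)$.

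The next step is to invoke the $A$-theoretic analogue of Swan's induction theorem (Theorem~\ref{thm_inductionswana}) to write the unit in $\swana(F)$ as a finite sum $1 = \sum_D \operatorname{ind}_D^F x_D$, with $D$ ranging over the Dress subgroups of $F$ and $x_D \in \swana(D)$. Since $\swana(F)$ acts on $\pi_n \FF(G,W,E_\cF G)$ via $\pi$, this supplies the decomposition $\alpha = \sum_D (\pi^* \operatorname{ind}_D^F x_D) \cdot \alpha$, reducing the problem to showing that each summand vanishes individually.

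For a fixed Dress subgroup $D$, the uniform transfer of Corollary~\ref{cor:uniformtransfer:deloop}, constructed from the map $\phi_D$ together with a choice of $A$-theoretic Swan module representing $x_D$, will lift $(\pi^* \operatorname{ind}_D^F x_D) \cdot \alpha$ to a class $\tilde{\alpha}_D$ in the $K$-theory of a category of $\overline{D}$-equivariant controlled retractive spaces that carries extra geometric control over $E_D$. The $\epsilon$-Lipschitz property of $\phi_D$ then ensures that the control of $\tilde{\alpha}_D$ along $E_D$ is bounded by a constant multiple of $\epsilon$. Because $\epsilon < \epsilon_0$ and the complexes $E_D$ have uniformly bounded dimension $N$, the Squeezing Theorem forces $\tilde{\alpha}_D = 0$. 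Pushing back down to $\pi_n \FF(G,W,E_\cF G)$ and summing over $D$ yields $\alpha = 0$, which proves the claim.

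The hardest part of the argument is the construction and the formal properties of the uniform transfer recorded in Corollary~\ref{cor:uniformtransfer:deloop}. One needs a transfer map that simultaneously (i) genuinely refines the Swan-module action $(\pi^* \operatorname{ind}_D^F x_D) \cdot \alpha$, so that projecting the transferred class recovers the summand, and (ii) records geometric data over $E_D$ whose scale is uniformly governed by the Lipschitz constant of $\phi_D$, so that the Squeezing Theorem can be applied. Establishing this compatibility between the Swan-group action (Section~\ref{sec_swangroup}) and the transfer map (Section~\ref{sec_transfer}) in the $A$-theoretic setting is the technical heart of the proof: cells of a controlled retractive space do not assemble into a free module, so the purely $K$-theoretic manipulations of \cite{BL2012b} must be replaced by genuinely geometric mapping-cylinder and control-theoretic arguments in the spirit of the earlier sections.
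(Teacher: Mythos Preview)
Your outline mischaracterizes both of the key inputs it cites, and as written the argument does not go through.

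First, Theorem~\ref{thm_squeezing} is not a threshold statement of the form ``there exists $\epsilon_0$ such that $\epsilon$-controlled classes vanish for $\epsilon<\epsilon_0$.'' It asserts that the inclusion of the \emph{finite} product $\finprod \cR^G_f(W,\JJ(E_k\times G,E_\cF G))$ into the category $\cR^G_f(W,\JJ((E_k\times G)_k,E_\cF G))$ built from the entire sequence $(E_k)_k$ is a $K$-theory equivalence. The paper therefore does not pick a single $\epsilon$; instead it applies the Dress--Farrell--Hsiang hypothesis for every $k\geq 1$ with $\epsilon=1/k$, producing for each $k$ a finite quotient $\pi_k\colon G\twoheadrightarrow F_k$ and spaces $E_k=\coprod_{D\in\cD_k} G\times_{\overline D}E_D$ carrying the metric $k\cdot d^{\ell^1}$. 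The squeezing theorem is then applied to this whole sequence at once.

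Second, the transfer of Corollary~\ref{cor:uniformtransfer:deloop} is \emph{not} built from the maps $\phi_D$, and it does not land in a category of $\overline D$-equivariant objects. It is a $G$-equivariant functor $\transfer\colon \cR^G_f(W,\JJ(X)(n))\to \cR^G_f(W,\JJ((T_k\times G)_k,X)(n))$, with $T_k=\coprod_{D\in\cD_k} G/\overline D$, constructed purely from the Swan-group action and the decomposition of $1_{F_k}$; it satisfies $P_j\circ\transfer=\id$ on $K$-groups for every $j$. The almost-equivariant maps $\phi_D$ enter separately, via Lemma~\ref{lem_contractingmaps}, to build a functor $((\phi_k)_k)_*$ from the $T_k$-category to the $E_k$-category with $Q_j\circ((\phi_k)_k)_*=P_j$.

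The actual diagram chase is then: given $\alpha$ in $K_n$ of the obstruction category, the class $((\phi_k)_k)_*\circ\transfer(\alpha)$ lives in $K_n$ of the $E_k$-category, which by Theorem~\ref{thm_squeezing} agrees with $K_n$ of the finite product; hence $Q_j$ kills it for all sufficiently large $j$. But $Q_j\circ((\phi_k)_k)_*\circ\transfer=P_j\circ\transfer$ induces the identity, so $\alpha=0$. Your ``one $D$, one $\epsilon$'' picture has no analogue of this ``large $j$'' step, and the results you invoke do not supply the $\epsilon_0$-threshold you would need to salvage it.
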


Choosing $W$ to be the universal cover of a given connected CW-complex whose fundamental group is $G$, Theorem \ref{thm_fhmethod} implies Theorem \ref{intro_fhmethod}.

Before we can turn to the proof of Theorem \ref{thm_fhmethod}, we need to extend the definition of the obstruction category $\cR^G_f(W,\JJ(X))$. Recall the definition of the coarse structure $\JJ(M,X) = (M \times X \times [1,\infty[,\fC(M,X),\fS(M,X))$ from Definition \ref{def_continuouscontrol}.
Let $(M_k)_{k \in \NN}$ be a sequence of metric spaces with a free, isometric $G$-action, and suppose that $X$ is a $G$-CW-complex.
Equip $G$ with a proper, left invariant metric. Then we define a coarse structure 
\begin{equation*}
 \JJ((M_k)_k,X) = \big(\coprod_{k \in \NN} M_k \times X \times [1,\infty[, \fC((M_k)_k,X), \fS((M_k)_k,X)\big) 
\end{equation*}
as follows:
\begin{enumerate}
 \item A set $C$ lies in $\fC((M_k)_k,X)$ if it is of the form $C = \coprod_k C_k$ with $C_k \in \fC(M_k,X)$,
  and it additionally satisfies the following \emph{uniform metric control condition}: There is some $R > 0$
  such that for all pairs $((m,x,t),(m',x',t')) \in C$ we have $d(m,m') < R$ (i.e., the bound does not depend on $k$).
 \item A set $S$ lies in $\fS((M_k)_k,X)$ if it is of the form $S = \coprod_k S_k$ with $S_k \in \fS(M_k,X)$.
\end{enumerate}
We consider the Waldhausen category $\cR^G_f(W,\JJ((M_k)_k,X))$. Note that this
is a subcategory of $\prod_{k \in \NN} \cR^G_f(W,\JJ(M_k,X))$ in a natural way, and that we will therefore
typically write objects and morphisms as sequences $(Y_k)_k$ and $(f_k)_k$.
Moreover, the category
\begin{equation*}
 \finprod \cR^G_f(W,\JJ(M_k,X)) := \colim_l \prod_{k = 1}^l \cR^G_f(W,\JJ(M_k,X))
\end{equation*}
of eventually trivial sequences is a Waldhausen subcategory of $\cR^G_f(W,\JJ((M_k)_k,X))$.
With this additional notation at our disposal, the proof of Theorem \ref{thm_fhmethod} proceeds as follows.

Suppose that $(G,S)$ is a Dress--Farrell--Hsiang group with respect to $\cF$. Pick $N$ be as in Definition \ref{def_dfhgroup} so that for every $k \geq 1$, there are a finite group $F_k$, an epimorphism $\pi_k \colon G \twoheadrightarrow F_k$ and
a family of maps $(\phi_D \colon G \to E_D)_{D \in \cD_k}$ such that
\begin{enumerate}
 \item the space $E_D$ is a $\overline{D} := \pi_k^{-1}(D)$-simplicial complex of dimension at most $N$ whose stabilizers lie in $\cF$,
 \item the map $\phi_D$ is $\overline{D}$-equivariant and $d^{\ell^1}(\phi_D(g),\phi_D(g')) \leq \frac{1}{k}$
  whenever $g^{-1}g' \in S$, 
\end{enumerate}
where $\cD_k$ denotes the family of Dress subgroups of $F_k$.
Then the proof is organized around a sequence of diagrams, indexed over $j \in \NN$, of the following form (details follow below):
\begin{equation*}
 \begin{tikzpicture}
  \matrix (m) [matrix of math nodes, column sep=.5em, row sep=2em, text depth=.5em, text height=1em]
  { & & \finprod \cR^G_f(W,\JJ(E_k \times G,E_\cF G)) \\
   \cR^G_f(W,\JJ((T_k \times G)_k,E_\cF G)) & & \cR^G_f(W,\JJ((E_k \times G)_k,E_\cF G)) \\
   & \cR^G_f(W,\JJ(G,E_\cF G)) & \\};
  \path[->]
  (m-2-1) edge node[above]{$((\phi_k)_k)_*$} (m-2-3) edge node[above right]{$P_j$} (m-3-2)
  (m-2-3) edge node[above left]{$Q_j$} (m-3-2)
  (m-1-3) edge (m-2-3);
 \draw[dashed, ->] (m-3-2) to[out=180,in=285] node[below]{$\transfer$} (m-2-1);
 \end{tikzpicture}
\end{equation*}
Define $T_k := \coprod_{D \in \cD_k} G / \overline{D}$, equipped with the discrete metric which assigns distance $\infty$ to any two points which are not equal. Define $E_k := \coprod_{D \in \cD_k} G \times_{\overline{D}} E_D$, equipped with the diagonal $G$-action. We consider the metric $k \cdot d^{\ell^1}$ on $E_k$. Equip $G$ with the word metric given by $S$. The products $T_k \times G$ and $E_k \times G$ become metric spaces by summing up the metrics on the two factors.

Define $P_j$ to be the projection functor which takes the inclusion into
the full product category, projects onto the $j$-th component, and then applies the functor induced by
the projection $T_j \times G \times E_\cF G \times [1,\infty[ \to G \times E_\cF G \times [1,\infty[$.
Define $Q_j$ analogously, and let the unlabelled arrow be the canonical inclusion functor.

To show that the $K$-theory of the obstruction category $\cR^G_f(W,\JJ(G,E_\cF G))$ is trivial,
the following input is required:
\begin{enumerate}
 \item\label{proof_almostequivariantmaps} There is a sequence of $G$-equivariant maps
$\phi_k \colon T_k \times G \to E_k \times G$ inducing the functor $((\phi_k)_k)_*$ such that $Q_j \circ ((\phi_k)_k)_* = P_j$ for all $j$. This is Lemma \ref{lem_contractingmaps} below.
 \item\label{proof_transfer} For each $n$, there is a \emph{transfer functor}
  \begin{equation*}
   \transfer \colon \cR^G_f(W,\JJ(G,E_\cF G)(n)) \to \cR^G_f(W,\JJ((T_k \times G)_k,E_\cF G)(n))
  \end{equation*}
  (the dashed arrow in the above diagram) such that $P_j \circ \transfer$ induces the identity in $K$-theory for all $j$;
  in fact, there is even an appropriate map on non-connective $K$-theory, but we do not need to know that.
  This is covered in Section \ref{sec_transfer}, see Corollary \ref{cor:uniformtransfer:deloop} in particular.
 \item\label{proof_squeezing} The canonical inclusion functor
  \begin{equation*}
   \finprod \cR^G_f(W,\JJ(E_k \times G,E_\cF G)) \to \cR^G_f(W,\JJ((E_k \times G)_k,E_\cF G))
  \end{equation*}
  induces a weak equivalence in non-connective $K$-theory. This follows from Theorem \ref{thm_squeezing} below.
\end{enumerate}
Using the fact that $K_{-n}(\cR^G_f(W,\JJ(X))) \cong K_1(\cR^G_f(W,\JJ(X)(n+1))$ (Proposition \ref{prop_connectivitystructuremaps}),
a diagram chase shows that $K_n(\cR^G_f(W,\JJ(X))) = 0$ for all $n \in \ZZ$ under these assumptions (see \cite[Section 4]{BL2012b}).
We remark that the only part of the proof which uses the presence of the classifying space $E_\cF G$
is (\ref{proof_squeezing}); the other two parts still work if we replace $E_\cF G$ with an arbitrary $G$-CW-complex.

\begin{lemma}[{cf.\ \cite[Section 7]{BL2012b}}]\label{lem_contractingmaps}
 Let $X$ be a $G$-CW-complex. For each $D$, the $\overline{D}$-equivariant map $\phi_D$ gives rise to a $G$-equivariant map
 \begin{equation*}
  \begin{split}
   \tilde{\phi_D} \colon G / \overline{D} \times G &\to G \times_{\overline{D}} E_D \\
   (\gamma\overline{D},g) &\mapsto (\gamma, \phi_D(\gamma^{-1}g)).
  \end{split}
 \end{equation*}
 Then the equivariant maps
 \begin{equation*}
  \begin{split}
   \phi_k \colon T_k \times G \times X \times [1,\infty[ &\to E_k \times G \times X \times [1,\infty[ \\
   (\gamma\overline{D},g,x,t) &\mapsto (\tilde{\phi_D}(\gamma\overline{D},g),g,x,t)
  \end{split}
 \end{equation*}
 induce an exact functor
 \begin{equation*}
  ((\phi_k)_k)_* \colon \cR^G_f(W,\JJ((T_k \times G)_k,X)) \to \cR^G_f(W,\JJ((E_k \times G)_k,X))
 \end{equation*}
 such that $Q_j \circ ((\phi_k)_k)_* = P_j$ for all $j$.
\end{lemma}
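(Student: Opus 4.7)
The plan is to unwind the definitions and check, in order, that each map $\phi_k$ descends to a morphism between the appropriate coarse structures, that the collection $(\phi_k)_k$ respects the \emph{uniform} metric control condition across the family, and finally that the projection relation $Q_j \circ ((\phi_k)_k)_* = P_j$ holds by inspection on control labels.

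First, I would verify by a short calculation that $\tilde{\phi}_D$ is well-defined (if $\gamma$ is replaced by $\gamma d$ with $d \in \overline{D}$, the $\overline{D}$-equivariance of $\phi_D$ exchanges $(\gamma d, \phi_D(d^{-1}\gamma^{-1}g))$ with $(\gamma, \phi_D(\gamma^{-1}g))$ in $G \times_{\overline{D}} E_D$) and $G$-equivariant, so each $\phi_k$ is a well-defined $G$-equivariant map. Next, to produce the functor, I would adapt Proposition \ref{prop:functoriality:Z} to the family-indexed coarse structure $\JJ((M_k)_k,X)$: defining $((\phi_k)_k)_*$ on objects by relabeling cells via $(\phi_k \circ \kappa_k)_k$ and as the identity on underlying maps, it suffices to check the three conditions of Definition \ref{def:morphism-coarse-structure}, uniformly in $k$, for the map $\coprod_k \phi_k$. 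Preservation of support is straightforward: a $G$-compact subset of $T_k \times G \times X$ is contained in $G \cdot K_0$ with $K_0$ having finite projections to the discrete factor $T_k$ and to $G$, so $\phi_k(K_0)$ lies in a compact subset of $E_k \times G \times X$. Local finiteness likewise follows because, for $x$ in the image, the $G$ and $X \times [1,\infty[$ components of preimages inside a $G$-compact set are constrained to finite choices, and the $T_k$ component is then pinned down by the first component of $\tilde{\phi}_D$.

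The crucial step is the uniform metric control, which is precisely where the scaling factor $k$ on $E_k$ enters. Let $C$ be a member of $\fC((T_k \times G)_k, X)$ with uniform bound $R$ on the $T_k \times G$-projections. For a pair $((\gamma\overline{D}, g, x, t), (\gamma'\overline{D}', g', x', t')) \in C_k$, the discrete metric on $T_k$ forces $\gamma\overline{D} = \gamma'\overline{D}'$ (in particular $D = D'$), while $d_G(g, g') \leq R$. After applying $\phi_k$, the $G$-component is unchanged and the $E_k$-component lies in the single fiber over $\gamma \in G$, where the metric is $k \cdot d^{\ell^1}$. Writing $g^{-1}g'$ as a word of length $\leq R$ in $S$, the key almost-equivariance bound $d^{\ell^1}(\phi_D(h), \phi_D(hs)) \leq 1/k$ for $s \in S$ together with the triangle inequality and $\overline{D}$-equivariance gives $k \cdot d^{\ell^1}(\phi_D(\gamma^{-1}g), \phi_D(\gamma^{-1}g')) \leq R$; this bound is independent of $k$, which is exactly the uniform metric control we need. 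The continuous-control factor over $X \times [1,\infty[$ is preserved because $\phi_k$ is the identity in these coordinates. Finiteness of objects is inherited because $\phi_k$ is injective on each $T_k \times \{g\} \times X \times [1,\infty[$-slice, and the local-finiteness check above implies that local-finiteness of control maps is preserved.

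Once these conditions are established, the proof of exactness follows the template of Proposition \ref{prop:functoriality:Z}: cofibrations (inclusions of subcomplexes) and pushouts are manifestly preserved, and controlled homotopies are transported by the same relabeling, so $((\phi_k)_k)_*$ is exact. For the identity $Q_j \circ ((\phi_k)_k)_* = P_j$, the comparison is transparent: starting from $(Y_k)_k$ with labels $(\kappa_k)_k$, the composite first relabels the $j$-th component by $\phi_j \circ \kappa_j$ and then projects along $E_j \times G \times X \times [1,\infty[ \twoheadrightarrow G \times X \times [1,\infty[$; since $\phi_j$ is defined to be the identity on the $G \times X \times [1,\infty[$ coordinates, this composite is exactly the projection of $\kappa_j$ to $G \times X \times [1,\infty[$, which is the definition of $P_j$. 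I expect the only real obstacle to be bookkeeping around the uniform bound in the third paragraph; everything else amounts to carefully unpacking the definitions.
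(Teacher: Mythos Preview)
Your proposal is correct and follows essentially the same approach as the paper. The paper's proof is briefer: it observes that once the functor is well-defined it automatically satisfies $Q_j \circ ((\phi_k)_k)_* = P_j$, and then devotes itself entirely to the uniform metric control computation (arriving at the same bound $2R$ via the same chain of steps: the discrete metric on $T_k$ forces equal cosets, the word metric bounds the length of $g^{-1}g'$, and the $1/k$-almost-equivariance of $\phi_D$ cancels the scaling factor on $E_k$); your additional verifications of well-definedness, support, local finiteness, and exactness are routine and the paper simply takes them for granted.
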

\begin{proof}
 If the given maps induce a well-defined functor, this functor has the required property.
 So we have to check that composing with the maps $\phi_k$ preserves the uniform metric control condition.
 Let $k$ be arbitrary. Suppose that $d_{T_k \times G}((\gamma\overline{D},g),(\gamma'\overline{D'},g')) < R$.
 Then $d_G(g,g') < R$ and $d_{T_k}(\gamma\overline{D},\gamma'\overline{D'}) < R$, which implies that
 $\gamma\overline{D} = \gamma'\overline{D'}$. Hence, $\overline{D} = \overline{D'}$ and there is some
 $\delta \in \overline{D}$ such that $\gamma' = \gamma\delta$. Moreover, we can find
 $m < R$ and $s_1, \dots, s_m \in S$ such that $g' = gs_1 \dots s_m$. 
 It follows that
 \begin{equation*}
  \begin{split}
   d_{G \times_{\overline{D}} E_D}^{\ell^1}&((\gamma,\phi_D(\gamma^{-1}g)),(\gamma',\phi_D(\gamma'^{-1}g')) \\
   &= d_{G \times_{\overline{D}} E_D}^{\ell^1}((\gamma,\phi_D(\gamma^{-1}g)),(\gamma\delta,\phi_D(\delta^{-1}\gamma^{-1}g')) \\
   &= d_{G \times_{\overline{D}} E_D}^{\ell^1}((\gamma,\phi_D(\gamma^{-1}g)),(\gamma,\phi_D(\gamma^{-1}g')) \\
   &= d_{E_D}^{\ell^1}(\phi_D(\gamma^{-1}g),\phi_D(\gamma^{-1}g')) \\
   &\leq \frac{m}{k} < \frac{R}{k}
  \end{split}
 \end{equation*}
 due to the $S$-equivariance of $\phi_D$ up to $\frac{1}{k}$. We conclude that
 \begin{equation*}
  \begin{split}
   d_{E_k \times G}&((\tilde{\phi_D}(\gamma\overline{D},g),g),(\tilde{\phi_D}(\gamma'\overline{D'},g'),g') \\
   &= d_{E_k \times G}((\tilde{\phi_D}(\gamma\overline{D},g),g),(\tilde{\phi_D}(\gamma\overline{D},g'),g') \\
   &< d_G(g,g') + k \cdot \frac{R}{k} \\
   &< 2R,
  \end{split}  
 \end{equation*}
 so uniform metric control is preserved.
\end{proof}

\section{The $A$-theoretic Swan group}\label{sec_swangroup}
In the linear setting, the transfer functors mentioned in the previous section
are defined via the action of the \emph{Swan group} of $G$ on the $K$-theory of the obstruction category.
This group arises as the Grothendieck group of the category of integral, finite-rank $G$-representations,
and the action is induced by tensoring such a representation with geometric modules. See \cite[Section 5]{BL2012b}.

To establish the existence of a transfer functor, we will need a non-linear analog of this action.
For this purpose, recall the notion of \emph{biexact functor} from \cite[p.342]{Waldhausen1985}:
If $\cC_1$, $\cC_2$ and $\cC_3$ are Waldhausen categories, a \emph{biexact functor} is a functor
\begin{equation*}
 \wedge \colon \cC_1 \times \cC_2 \to \cC_3
\end{equation*}
with the following properties:
\begin{enumerate}
 \item[(E1)] The functor is \emph{exact in the first variable}, i.e., for all $A_2 \in \cC_2$
  the functor $- \wedge A_2 \colon \cC_1 \to \cC_3$ is exact.
 \item[(E2)] The functor is \emph{exact in the second variable}, i.e., for all $A_1 \in \cC_1$
  the functor $A_1 \wedge - \colon \cC_2 \to \cC_3$ is exact.
 \item[(TC)] The functor satisfies the ``more technical condition'' that for every pair of cofibrations
  $A_1 \rightarrowtail A_1'$ in $\cC_1$ and $A_2 \rightarrowtail A_2'$ in $\cC_2$, the canonical morphism
  $(A_1 \wedge A_2') \cup_{A_1 \wedge A_2} (A_1' \wedge A_2) \to A_1' \wedge A_2'$ is a cofibration in $\cC_3$.
\end{enumerate}
As explained in {\it loc.\ cit.}, such a functor induces pairings on homotopy groups
\begin{equation*}
 K_i(\cC_1) \times K_j(\cC_2) \to K_{i+j}(\cC_3)
\end{equation*}
for all $i,j \in \NN$.

Define $\rep(G)$ to be the category of pointed (right) $G$-CW-complexes whose underlying CW-complex is finite; the morphisms of this category are those maps which are pointed, equivariant and cellular.
This category can be equipped with a Waldhausen structure in which the cofibrations are the morphisms
isomorphic to a cellular inclusion, and the weak equivalences are the morphisms which are homotopy equivalences
\emph{in the non-equivariant sense}. We denote the subcategory of these weak equivalences by $h\rep(G)$.

\begin{definition}\label{def_swangroup}
 Define the \emph{$A$-theoretic Swan group} $\swana(G)$ to be
 \begin{equation*}
  \swana(G) := K_0(\rep(G),h).
 \end{equation*}
\end{definition}

Explicitly, $\swana(G)$ is generated by $h$-equivalence classes of objects in $\rep(G)$,
subject to the condition that $[D_0] + [D_2] = [D_1]$ whenever there is a cofibration sequence
$D_0 \rightarrowtail D_1 \twoheadrightarrow D_2$ in $\rep(G)$. As $-[D] = [\Sigma D]$, every element $s \in \swana(G)$ can be written as $s = [D]$ for some object $D \in \rep(G)$.

We can extend the abelian group structure
on $\swana(G)$ to a ring structure using the smash product. The proof of the following proposition amounts to a number of well-known facts about the smash product of CW-complexes.

\begin{proposition}\label{prop_swana-ring}
 The functor
 \begin{equation*}
  \wedge \colon \rep(G) \times \rep(G) \to \rep(G), \quad (D,D') \mapsto D \wedge D'
 \end{equation*}
 is biexact. The functors $- \wedge S^0$ and $S^0 \wedge -$ are naturally equivalent to the identity functor,
 and the square
 \begin{equation*}
  \commsquare{\rep(G) \times \rep(G) \times \rep(G)}{\rep(G) \times \rep(G)}{\rep(G) \times \rep(G)}{\rep(G)}{\id \times \wedge}{\wedge \times \id}{\wedge}{\wedge}
 \end{equation*}
 commutes up to natural isomorphism.
 \qed
\end{proposition}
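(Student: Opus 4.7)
The plan is to verify each of the three claims separately, all by reducing to well-known facts about smash products of pointed CW-complexes and then checking that these facts remain valid in the pointed, equivariant, cellular, finite setting of $\rep(G)$.

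First I would check that $\wedge$ is well-defined as a functor $\rep(G) \times \rep(G) \to \rep(G)$. For $D, D' \in \rep(G)$, the smash product $D \wedge D' = (D \times D')/(D \vee D')$ carries the diagonal right $G$-action, which is well-defined because both $D \times D'$ and $D \vee D'$ are $G$-invariant subsets of $D \times D'$. The cellular structure is standard: the non-basepoint cells of $D \wedge D'$ are exactly the products $e \times e'$ of non-basepoint cells of $D$ and $D'$. Hence $D \wedge D'$ is again a finite, pointed $G$-CW-complex, and pointed equivariant cellular maps in either variable induce pointed equivariant cellular maps after smashing.

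Next I would verify biexactness. By the evident symmetry $D \wedge D' \cong D' \wedge D$ it suffices to verify (E2), i.e.\ exactness of $A_1 \wedge -$ for fixed $A_1 \in \rep(G)$. The functor sends the basepoint object to itself; it sends cellular inclusions $A_2 \rightarrowtail A_2'$ to cellular inclusions $A_1 \wedge A_2 \rightarrowtail A_1 \wedge A_2'$ (the image is a $G$-CW-subcomplex on the cell set described above); and it commutes with pushouts along cofibrations, because on the level of cells one has the set-theoretic identity $(\cells A_1 \times \cells(B \cup_A C)) = (\cells A_1 \times \cells B) \cup_{\cells A_1 \times \cells A} (\cells A_1 \times \cells C)$ after throwing away basepoint cells. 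Finally, $A_1 \wedge -$ preserves non-equivariant homotopy equivalences: every object of $\rep(G)$ is non-equivariantly a pointed CW-complex and hence well-pointed, so the standard argument that smashing with a well-pointed space preserves ordinary homotopy equivalences applies (note that equivariance plays no role here since our weak equivalences are purely non-equivariant). Condition (TC) is the pushout-product axiom for cellular inclusions and reduces to the combinatorial statement that, for pairs of cellular inclusions $A_1 \subset A_1'$ and $A_2 \subset A_2'$, the evident map
\begin{equation*}
 (A_1 \wedge A_2') \cup_{A_1 \wedge A_2} (A_1' \wedge A_2) \to A_1' \wedge A_2'
\end{equation*}
is the inclusion of the $G$-subcomplex whose cells are all products $e \times e'$ with $e \in \cells A_1'$, $e' \in \cells A_2'$, and $(e,e') \in (\cells A_1 \times \cells A_2') \cup (\cells A_1' \times \cells A_2)$; this is clearly a cellular inclusion.

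For the unit, the canonical homeomorphism $D \wedge S^0 \cong D$ (sending $(d,+)$ to $d$) is equivariant, cellular and natural in $D$, giving the natural isomorphism $- \wedge S^0 \cong \id$, and similarly for $S^0 \wedge -$. For associativity, I would invoke the standard natural homeomorphism $(D \wedge D') \wedge D'' \cong D \wedge (D' \wedge D'')$; the only verification needed is that this homeomorphism is equivariant with respect to the diagonal $G$-actions and cellular, both of which are immediate from the description of cells of an iterated smash product as iterated products of non-basepoint cells. I do not expect any genuine obstacle here: everything reduces to bookkeeping with cells and diagonal actions, and the only mildly delicate point is the verification of (TC), which is resolved by the cellular description above.
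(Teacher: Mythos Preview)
Your proposal is correct and is precisely the kind of verification the paper has in mind: the paper does not give a proof at all, merely remarking that the proposition ``amounts to a number of well-known facts about the smash product of CW-complexes'' and closing with a \qed. Your write-up supplies exactly those well-known facts, in the same spirit as the more detailed argument the paper later gives for the analogous Proposition~\ref{prop_atheory-swanamodule}.
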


Thus, $\swana(G)$ becomes a ring under the product $[D] \cdot [D'] := [D \wedge D']$.
The main point about $\swana(G)$ is that it admits an action on $A$-theory.
Suppose that $(Y \leftrightarrows W, \kappa)$ is a labelled $G$-CW-complex and retractive space relative $W$;
let $s \colon W \to Y$ be the structural inclusion and $r \colon Y \to W$ be the structural retraction.
Then we can form the pushout
\begin{equation*}
 \commsquare{(* \times Y) \cup (D \times W)}{W}{D \times Y}{D \wedge_W Y}{r\cup \pr_W}{\inc \cup (\id \times s)}{D \wedge_W s}{}
\end{equation*}
to obtain a $G$-CW-complex under $W$. We equip the product $D \times Y$ with the diagonal action
$g \cdot (d,y) := (d g^{-1}, gy)$. By the universal property of the pushout, every map of retractive spaces $f \colon Y_1 \to Y_2$ induces a map $D \wedge_W f \colon D \wedge_W Y_1 \to D \wedge_W Y_2$. In particular, we can equip $D \wedge_W Y$ with a structural retraction $D \wedge_W r \colon D \wedge_W Y \to D \wedge_W W \cong W$.
Regarding $D$ as a $G$-CW-complex relative to the basepoint, we let $\cells D$ denote the set of relative cells in $D$.
Then $D \wedge_W Y$ becomes a labelled $G$-CW-complex via the control map
\begin{equation*}
 D \wedge_W \kappa \colon \cells (D \wedge_W Y) \cong \cells D \times \cells Y \xrightarrow{\pr} \cells Y \xrightarrow{\kappa} Z.
\end{equation*}
Our goal is to show that this pairing defines a biexact functor. To do this, we need the controlled version
of a well-known statement about homotopy equivalences of free $G$-CW-complexes.

Let $\fZ$ be a coarse structure and consider the category of controlled retractive spaces $\cR^G(W,\fZ)$.
Observe that we have a notion of control for non-equivariant maps between labelled $G$-CW-complexes.

\begin{lemma}\label{lem_easycontrolledwhitehead}
 Let $Y_1$ and $Y_2$ be objects in $\cR^G(W,\fZ)$.
 Suppose $f \colon Y_1 \to Y_2$ is a morphism in $\cR^G(W,\fZ)$ such that there are a non-equivariant
 controlled map $\overline{g} \colon Y_2 \to Y_1$ as well as non-equivariant controlled homotopies
 $\overline{H} \colon \id_{Y_1} \simeq_\fZ \overline{g}f$ and $\overline{K} \colon \id_{Y_2} \simeq_\fZ f\overline{g}$.
 
 Then $f$ is an $h$-equivalence, i.e., there are a $G$-equivariant, controlled inverse $g$ and $G$-equivariant, controlled homotopies $gf \simeq \id_{Y_1}$ and $fg \simeq \id_{Y_2}$.
\end{lemma}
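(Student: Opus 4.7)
The statement is a controlled equivariant Whitehead theorem for free $G$-CW-complexes: we must upgrade a non-equivariant controlled homotopy equivalence to an equivariant one. The strategy is to mimic the classical obstruction-theoretic proof of the equivariant Whitehead theorem, running the induction over the relative equivariant skeleta while using the controlled homotopy extension property (Proposition~\ref{prop_controlledhep}) to preserve control at each stage. Freeness of the $G$-action on $Y_i$ relative to $W$ is essential here: every equivariant relative cell is of the form $G \times D^n$, so equivariance is achieved simply by choosing one orbit representative per cell and extending via the $G$-action, with no fixed-point obstructions.

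The first step is to reduce to the case that $f$ is a cofibration. Factor $f$ as $Y_1 \rightarrowtail \cyl_W(f) \twoheadrightarrow Y_2$. The back projection is already a $G$-equivariant controlled $h$-equivalence (as noted before Lemma~\ref{lemma:cylinder-functor}). Composing $\overline{g}$, $\overline{H}$, $\overline{K}$ with the canonical non-equivariant controlled retraction $\cyl_W(f) \to Y_2$ yields non-equivariant controlled data witnessing that the front inclusion $Y_1 \hookrightarrow \cyl_W(f)$ is a non-equivariant controlled homotopy equivalence. Hence we may assume from the outset that $f \colon Y_1 \hookrightarrow Y_2$ is the inclusion of a $G$-invariant subcomplex.

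Next, construct a $G$-equivariant controlled strong deformation retraction of $Y_2$ onto $Y_1$ by induction over $\skel{Y_2}{n} \cup Y_1$. Set $r := \id_{Y_1}$ and $H := $ constant homotopy on $Y_1$. Assuming $r$ and $H$ have been built on $\skel{Y_2}{n-1} \cup Y_1$, pick for each $G$-orbit of relative $n$-cells a single representative $e$ with characteristic map $\chi_e \colon (D^n, \partial D^n) \to (Y_2, \skel{Y_2}{n-1})$. On $\partial D^n$, $r \circ \chi_e$ and $H \circ (\chi_e \halftimes \id_{[0,1]})$ are already defined; on the interior of $D^n$ the non-equivariant maps $\overline{g} \circ \chi_e$ and $\overline{H}$ (suitably concatenated with the already constructed homotopy on the boundary) furnish a non-equivariant extension into $Y_1$. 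Applying the CHEP for the pair $(D^n, \partial D^n) \times [0,1]$ to the standard deformation retract of $D^n \times [0,1]$ onto $D^n \times \{0\} \cup \partial D^n \times [0,1]$ converts these compatible boundary and interior data into a controlled extension of $r$ and $H$ over this single cell. Now extend $G$-equivariantly to the full $G$-orbit $G \times e$ by $g \cdot x \mapsto g \cdot r(x)$ (this is well-defined because the $G$-action is free on $Y_2 \setminus W$), and do this independently on each orbit. The resulting $r$ is $G$-equivariant, cellular, $\fZ$-controlled (with control condition a bounded composition of the ones from $\overline{g}$, $\overline{H}$ and the attaching maps), and satisfies $r \circ f = \id_{Y_1}$ and $f \circ r \simeq_\fZ \id_{Y_2}$ via a $G$-equivariant controlled homotopy $H$. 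Setting $g := r$ proves the lemma.

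The main technical obstacle is the bookkeeping of control conditions across the induction: each skeletal step introduces a new composition of elements of $\fC$ (from CHEP, from $\overline{g}$, from $\overline{H}$, and from the attaching maps), and we need a single control condition $C_n \in \fC$ governing $r$ and $H$ on the $n$-skeleton. Closure of $\fC$ under finite unions and composition (conditions~\ref{cont:item:union} and~\ref{cont:item:composition} of Definition~\ref{def_coarsestructure}) handles this dimension by dimension, exactly as in the proof of Proposition~\ref{prop_controlledhep}; the final map is controlled because on each fixed skeleton only finitely many such compositions occur.
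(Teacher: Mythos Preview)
Your approach is correct and matches what the paper intends: the paper's own proof is the single sentence ``The proof works as in the uncontrolled case, cf.~\cite[Proposition II.2.7]{tomDieck-transformationgroups},'' and your cell-by-cell obstruction-theoretic construction (reduce to an inclusion via the mapping cylinder, then build an equivariant strong deformation retraction skeleton by skeleton using freeness of the relative cells) is exactly how that classical argument runs, with Proposition~\ref{prop_controlledhep} replacing the ordinary HEP to keep everything $\fZ$-controlled. The one place to be slightly more explicit is the ``suitably concatenated'' step: after extending $H$ over a cell via CHEP, the time-$1$ map need not land in $Y_1$, and it is the non-equivariant controlled deformation retraction (obtained from $\overline g$, $\overline H$, $\overline K$) that pushes it into $Y_1$ rel boundary---but you have identified this and the skeleton-wise control bookkeeping correctly.
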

\begin{proof}
 The proof works as in the uncontrolled case, cf.~\cite[Proposition II.2.7]{tomDieck-transformationgroups}.
\end{proof}

\begin{proposition}\label{prop_atheory-swanamodule}
 The smash product $\wedge_W$ over $W$ induces a biexact functor
 \begin{equation*}
  \begin{split}
   \wedge_W \colon \rep(G) \times \cR^G(W,\fZ) &\to \cR^G(W,\fZ) \\
   (D,(Y \leftrightarrows W, \kappa)) &\mapsto (D \wedge_W Y \leftrightarrows W, D \wedge_W \kappa)
  \end{split}  
 \end{equation*}
 which preserves the property of being finite.
 
 The functor $S^0 \wedge_W -$ is naturally equivalent to the identity functor,
 and the diagram
 \begin{equation*}
  \commsquare{\rep(G) \times \rep(G) \times\cR^G(W,\fZ)}{\rep(G) \times \cR^G(W,\fZ)}{\rep(G) \times \cR^G(W,\fZ)}{\cR^G(W,\fZ)}{\wedge \times \id}{\id \times \wedge_W}{\wedge_W}{\wedge_W}
 \end{equation*}
 commutes up to natural isomorphism.
\end{proposition}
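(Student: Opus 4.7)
The plan is to verify the four claims (well-definedness, preservation of finiteness, biexactness, and the unit/associativity statements) by pinning down the principle that the control map of $D \wedge_W Y$ only sees the $Y$-factor. Concretely, cells of $D \wedge_W Y$ outside $W$ correspond bijectively to pairs $(e_D, e_Y)$ with $e_D$ a relative cell of $D$ (not the basepoint) and $e_Y \in \cells Y$ not lying in $W$, and the control map is $D \wedge_W \kappa \colon (e_D, e_Y) \mapsto \kappa(e_Y)$. Equivariance of this control map, freeness of the $G$-action outside $W$, and existence of a structural retraction $D \wedge_W r$ are immediate. For $\fZ$-controlledness, the attaching sphere of $(e_D, e_Y)$ only meets cells of the form $(e_D', e_Y')$ with $e_D' \subset \gen{e_D}$ and $e_Y' \subset \gen{e_Y}$; since the $Y$-labels of such cells are already $\fC$-close to $\kappa(e_Y)$, we can reuse the control conditions witnessing that $Y$ is $\fZ$-controlled and combine them across the finitely many cell dimensions of $D$ using (C3) and (C4). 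The support and finiteness conditions of Definition \ref{def_finiteness} transfer from $Y$ to $D \wedge_W Y$ by the same bookkeeping, using that $D$ has finitely many cells.

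For biexactness, preservation of the zero object, cofibrations, and pushouts in each variable is formal because $\wedge_W$ commutes with colimits in each variable, and the ``more technical condition'' (TC) reduces to the observation that a smash product of cellular inclusions is again a cellular inclusion. The one subtle point is preservation of weak equivalences in the first variable: a morphism $f \colon D \to D'$ in $h\rep(G)$ is only a \emph{non-equivariant} homotopy equivalence. Cellular approximation yields a pointed cellular but not necessarily equivariant inverse $g \colon D' \to D$ together with non-equivariant cellular homotopies; smashing these with $\id_Y$ produces non-equivariant cellular data on $D \wedge_W Y$ and $D' \wedge_W Y$, all of which are $\fZ$-controlled because the labels depend only on the $Y$-coordinate, so one may use the diagonal control condition. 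Lemma \ref{lem_easycontrolledwhitehead} then upgrades $f \wedge_W \id_Y$ to a $G$-equivariant controlled homotopy equivalence, which is the desired $h$-equivalence. Exactness in the second variable is easier and bypasses Lemma \ref{lem_easycontrolledwhitehead}: any $G$-equivariant controlled inverse of a morphism $f \colon Y_1 \to Y_2$ smashes with $\id_D$ to give controlled $G$-equivariant data, using the natural identification $D \wedge_W (Y \halftimes [0,1]) \cong (D \wedge_W Y) \halftimes [0,1]$ to translate the homotopies.

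The unit and associativity isomorphisms come from the standard corresponding isomorphisms for smash products of pointed spaces: the map $Y \to S^0 \wedge_W Y$, $y \mapsto (+, y)$, is a cellular, $G$-equivariant, label-preserving isomorphism of retractive spaces, natural in $Y$, while the usual associator $(D \wedge D') \wedge_W Y \cong D \wedge_W (D' \wedge_W Y)$ identifies a triple cell $(e_D, e_{D'}, e_Y)$ on either side with the common label $\kappa(e_Y)$, so it is again a controlled isomorphism, natural in all three arguments. The main obstacle in this proof is not any single step but rather the bookkeeping: one must simultaneously track the $G$-action, the cellular structure, the retraction, and the $\fZ$-control through every smash-product construction. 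Once one isolates the fact that $\wedge_W$ interacts with control only through the $Y$-factor, every verification reduces to a routine check.
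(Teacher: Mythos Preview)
Your proposal is correct and follows essentially the same approach as the paper: both arguments hinge on the observation that the control map of $D \wedge_W Y$ depends only on the $Y$-factor, reduce the colimit-related parts of biexactness to formal properties of pushouts and cellular inclusions, and invoke Lemma~\ref{lem_easycontrolledwhitehead} to upgrade the non-equivariant homotopy inverse in the first variable to a genuine $h$-equivalence. Your treatment is slightly more explicit about cellular approximation of the non-equivariant inverse and about the identification $D \wedge_W (Y \halftimes [0,1]) \cong (D \wedge_W Y) \halftimes [0,1]$, but these are just elaborations of steps the paper leaves implicit.
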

\begin{proof}
 Observe that $D \wedge_W Y$ contains only free $G$-cells because $Y$ is assumed to be free (relative $W$).
 Every cell $(e_D,e_Y)$ of $D \wedge_W Y$ is labelled by the same point in $Z$ as $e_Y$, so it is immediate that
 the support and control conditions are preserved by $\wedge_W$. Moreover, for any subset $A \subset Z$ we have
 $(D \wedge_W \kappa)^{-1}(A) = \cells D \times \kappa^{-1}(A)$; since $D$ is finite, $D \wedge_W Y$ lies in $\cR^G_f(W,\fZ)$  whenever $Y$ is an object in $\cR^G_f(W,\fZ)$.
 
 Let us turn to exactness in the first variable. Fix $Y \in \cR^G(W,\fZ)$. We have $* \wedge_W Y = W$.
 Let $i \colon D \rightarrowtail D'$ be a cofibration in $\rep(G)$. Then $i \wedge_W Y \colon D \wedge_W Y \to D' \wedge_W Y$ is also a cofibration, because the same holds for cellular inclusions.
 
 Suppose that $D$ is the pushout of $D_2 \leftarrow D_0 \rightarrowtail D_1$ in $\rep(G)$.
 Then $D \times Y$ is also the pushout of $(D_2 \leftarrow D_0 \rightarrowtail D_1) \times Y$,
 and similarly for $(* \times Y) \cup (D \times W)$. Since pushouts commute with each other, we see that $D \wedge_W Y$  is also the pushout of $(D_2 \wedge_W Y) \leftarrow (D_0 \wedge_W Y) \rightarrowtail (D_1 \wedge_W Y)$.

 The interesting part of the argument is to show that $- \wedge_W Y$ preserves $h$-equivalences.
 Suppose that $\delta \colon D \xrightarrow{\sim} D'$ is a weak equivalence, i.e., there is a
 non-equivariant map $\overline{\delta} \colon D' \to D$ such that $\delta\overline{\delta}$
 and $\overline{\delta}\delta$ are (non-equivariantly) homotopic to the identity map.
 Taking smash products with $\id_Y$ and the constant homotopy on $Y$, we observe
 that $\delta \wedge_W Y$ is a morphism in $\cR^G(W,\fZ)$ which is an $h$-equivalence in $\cR(W,\fZ)$,
 i.e., upon forgetting all $G$-actions. By Lemma \ref{lem_easycontrolledwhitehead}, $\delta \wedge_W Y$ is an $h$-equivalence in $\cR^G(W,\fZ)$.
  
 Exactness in the second variable is similar, but easier. To show condition (TC), one has to show that
 for $D' \in \rep(G)$, $Y' \in \cR^G(W,\fZ)$ and subcomplexes $D \subset D'$, $Y \subset Y'$,
 the complex $(D \wedge_W Y') \cup (D' \wedge_W Y)$ is naturally a subcomplex of $D' \wedge_W Y'$,
 which is the case.
 
 Finally, $S^0 \wedge_W Y \cong Y$, and associativity of the pairing follows again from the fact
 that pushouts commute with each other.
\end{proof}

As explained at the beginning of this section, the biexact functor $- \wedge_W -$ from Proposition \ref{prop_atheory-swanamodule} turns $K_i(\cR^G_f(W,\fZ),h)$ and $K_i(\cR^G_{fd}(W,\fZ),h)$ into $\swana(G)$-modules for all $i \in \ZZ$ (using that $K_{-i}(\cR^G_f(W,\fZ),h) := K_1(\cR^G_f(W,\fZ(i+1),h)$, $i \geq 0$).

\begin{remark}\label{rem_bivarianta}
 Let us digress for a moment to outline the connection between the pairing induced by the biexact functor
 $\wedge_W$ and bivariant $A$-theory \cite[Section 4]{Williams1998} (see also \cite[Section 3]{RS2014}).
 For the purpose of this remark, we relax the definition of retractive spaces to allow for spaces which are not CW-complexes.

 Let $p \colon V_1 \to V_2$ be a fibration.
 Then the category $\cR(p)$ consists of those retractive spaces $(Y,r,s)$ over $V_1$ such that the composition $p \circ r$ is a fibration,
 and for every $v \in V_2$ the (homotopy) fiber $\cF_v(p \circ r)$ of $p \circ r$ at $v$ is finitely dominated in $\cR(\cF_v(p))$.
 Note that $\cR(V \to *)$ is simply the category of (finitely dominated) retractive spaces over $V$.
 
 For two composable fibrations $p \colon V_1 \to V_2$ and $q \colon V_2 \to V_3$, there is defined an exact functor
 \begin{equation*}
  \cR(q) \times \cR(p) \to \cR(q \circ p)
 \end{equation*}
 given on objects by first pulling back along $p$, then taking the external smash product and finally pulling back once more along the diagonal map $\Delta \colon V_1 \to V_1 \times V_1$.
 
 Let $W$ be a free $G$-CW-complex (or more generally, a principal $G$-bundle). Let $V := * \times_G W = G \backslash W$ denote the quotient.
 Taking quotients with respect to the $G$-action defines a functor $\cR^G_{fd}(W) \to \cR_{fd}(V) = \cR(V \to *)$ which induces an equivalence in $K$--theory \cite[Lemma~2.1.3]{Waldhausen1985}.
 
 Moreover, there exists an exact functor $F \colon \rep(G) \to \cR(V \xrightarrow{\id} V)$ sending $D$ to $D \times_G W$; since $D$ comes equipped with a base point, there is an induced section to the canonical retraction map $D \times_G W \to * \times_G W = V$.
 
 Combining these functors, we obtain a diagram, commutative up to natural isomorphism,
 \begin{equation*}
  \begin{tikzpicture}
   \matrix (m) [matrix of math nodes, column sep=2em, row sep=2em, text depth=.5em, text height=1em]
   {\rep(G) \times \cR^G(W)                                            & \cR^G(W)         \\
    \cR(\id_{V}) \times \cR(* \times_G W \to *) & \cR(V \to *) \\};
   \path[->]
   (m-1-1) edge node[above]{$\wedge_W$} (m-1-2) edge (m-2-1)
   (m-1-2) edge (m-2-2)
   (m-2-1) edge (m-2-2);
  \end{tikzpicture}
 \end{equation*}
 which relates the action of $\swana(G)$ on $A$-theory to the bivariant theory.
 
 In fact, Malkiewich and Merling have shown that the functor $F$ induces an equivalence in $K$-theory for $W = EG$ \cite[Proposition~3.7]{MM-EquivariantA}; they have the standing assumption that the group $G$ is finite, but this specific part of the argument works for arbitrary discrete groups.
 Hence, the action of $\swana(G)$ on $A(BG)$ coincides with the one of the ``upside-down-$A$-theory" of $BG$.
 
 This ends the digression.
\end{remark}

We need to consider functoriality of $\swana$ in $G$ to some extent. For any group homomorphism $\phi \colon H \to G$, restriction defines an exact functor
\begin{equation*}
 \res_\phi \colon \rep(G) \to \rep(H), \quad D \mapsto \res_\phi D.
\end{equation*}
If $H$ is a subgroup of $G$ and $[G \colon H] < \infty$, we can also define an exact induction functor
\begin{equation*}
 \ind_H^G \colon \rep(H) \to \rep(G), \quad D \mapsto D \wedge_H (G_+).
\end{equation*}
Note that this functor does not preserve the unit object $S^0$; in fact, $\ind_H^G S^0 = (H \backslash G)_+$.

Let us also consider the case of $A$-theory. Abbreviate the category $\cR^G(W,\fB(G)(n))$ by $\cR^G(W,n)$.
There, we have for an arbitrary subgroup $H \leq G$ an induction functor
\begin{equation*}
 \ind_H^G \colon \cR^H(\res_G^H W,n) \to \cR^G(W,n), \quad (Y,\kappa) \mapsto (\ind_H^G Y, \ind_H^G \kappa),
\end{equation*}
where $\ind_H^G Y$ is defined as the pushout
\begin{equation*}
 \begin{tikzpicture}
  \matrix (m) [matrix of math nodes, column sep=5em, row sep=2em, text depth=.5em, text height=1em]
  {G \times_H (\res_G^H W) & W \\
   G \times_H Y            & \ind_H^G Y \\};
  \path[->]
  (m-1-1) edge node[above]{$(g,w) \mapsto g \cdot w$} (m-1-2) edge node[left]{$G \times_H s$} (m-2-1)
  (m-1-2) edge node[right]{$\ind_H^G s$} (m-2-2)
  (m-2-1) edge (m-2-2);
 \end{tikzpicture}
\end{equation*}
and the control map $\ind_H^G \kappa$ is given by
\begin{equation*}
 \begin{split}
  \ind_H^G \kappa \colon \cells (\ind_H^G Y) = G \times_H (\cells Y) &\to G \times_H (\RR^n \times H) \cong \RR^n \times G \\
  (g,e) &\mapsto (\kappa(e),g).
 \end{split}
\end{equation*}
Suppose $[G \colon H] < \infty$. Choose a normalized set-theoretic section $\sigma$ to the projection map $G \to H \backslash G$. Then $\sigma$ induces an $H$-equivariant map $p_\sigma \colon G \to H, g \mapsto g \sigma(Hg)^{-1}$ which defines a morphism of coarse structures $\fB(G)(n) \to \fB(H)(n)$; we retain bounded control because $H \backslash G$ is finite. Then define an exact functor
\begin{equation*}
 \res_G^H \colon \cR^G(W,n) \to \cR^H(\res_G^H W,n), \quad (Y,\kappa) \mapsto (\res_G^H Y, (\RR^n \times  p_\sigma) \circ \kappa).
\end{equation*}
Note that this functor also preserves finiteness. The functor $\res^H_G$ depends on the choice of $\sigma$, but a different choice of $\sigma$ yields a naturally isomorphic functor. Hence, we will suppress $\sigma$ in what follows.

The restriction and induction functors are related in the expected way:

\begin{lemma}[Frobenius Reciprocity]\label{lem_frobeniusreciprocity}
 Let $G$ be a group and let $H \leq G$ be a subgroup of finite index. Then we have for all $s \in \swana(H)$, $t \in \swana(G)$ and $a \in K_i(\cR^G_f(W,n))$
 \begin{equation*}
  \begin{split}
   \ind_H^G(s) \cdot t &= \ind_H^G(s \cdot \res_G^H t), \\
   \ind_H^G(s) \cdot a &= \ind_H^G(s \cdot \res_G^H a).
  \end{split}
 \end{equation*}
 More precisely, there are natural equivalences
 \begin{equation*}
  \wedge \circ (\ind_H^G \times \id) \xrightarrow{\sim} \ind_H^G \circ \wedge \circ (\id \times \res_G^H) \colon \rep(H) \times \rep(G) \to \rep(G)
 \end{equation*}
 and
 \begin{equation*}
  \wedge_W \circ (\ind_H^G \times \id) \xrightarrow{\sim} \ind_H^G \circ \wedge_W \circ (\id \times \res_G^H) \colon \rep(H) \times \cR^G(W,\fZ) \to \cR^G(W,\fZ).
 \end{equation*}
\end{lemma}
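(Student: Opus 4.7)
The statement is a natural lifting of the classical Frobenius reciprocity for smash products of pointed $(G,H)$-spaces, and I would prove both equivalences in parallel, since the underlying formulas are the same.

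First I would write down the natural isomorphism for the first claim explicitly, namely
\[
\Phi_{D,E} \colon (D \wedge_H G_+) \wedge E \xrightarrow{\sim} (D \wedge \res_G^H E) \wedge_H G_+, \qquad [d,g] \wedge e \longmapsto [d \wedge eg^{-1},\, g],
\]
with inverse $[d \wedge e',\,g] \mapsto [d,g] \wedge e'g$. The routine checks are: well-definedness on the $H$-orbit quotient (using $\res_G^H$ on $E$ to absorb the $H$-action), preservation of the basepoint, $G$-equivariance under the diagonal right action, and naturality in both variables. This is just a careful left/right action bookkeeping exercise.

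Next, for the second equivalence, I would extend this formula fiberwise from pointed $G$-CW-complexes to retractive ones. Given $Y \in \cR^G(W,\fZ)$, define
\[
\Phi'_{D,Y} \colon (D \wedge_H G_+) \wedge_W Y \xrightarrow{\sim} \ind_H^G \bigl(D \wedge_{\res_G^H W} \res_G^H Y\bigr),
\]
by the analogous formula away from $W$ and by the identity on $W$. Then I would verify that $\Phi'_{D,Y}$ is $G$-equivariant, respects the structural retractions to $W$, and is cellular and bijective on the sets of cells; all of these follow by direct inspection of the pushout defining $D \wedge_W Y$ and the pushout defining $\ind_H^G$.

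The main technical obstacle is checking that $\Phi'_{D,Y}$ is actually a controlled isomorphism in $\cR^G(W,\fZ)$. For this I would trace through the control maps: by the definition of $D \wedge_W \kappa$, the left-hand control map sends a cell $[d,g] \wedge e$ to $\kappa_Y(e)$, while after induction, the right-hand control map sends the image cell to $g \cdot \kappa_Y(g^{-1}e)$; these agree by $G$-equivariance of $\kappa_Y$. Since the inverse is given by the inverse formula, the same check applies in reverse, so $\Phi'_{D,Y}$ is a controlled isomorphism. Given the natural isomorphism of biexact functors, the claimed identities on $K$-theory follow formally from the standard fact that biexact functors induce pairings on $K$-theory compatible with natural isomorphisms. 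The subtlest point in the whole argument is ensuring that the coarse structure one puts on $\res_G^H Y$ and then re-imposes after $\ind_H^G$ is identical to the one on the left-hand side; this is the reason the identity-on-cells comparison via $G$-equivariance is essential.
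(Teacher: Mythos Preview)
Your approach is essentially the paper's: both arguments simply write down the explicit $G$-equivariant homeomorphisms, the paper giving $((d,g),d') \mapsto ((d,d'g^{-1}),g)$ and $((d,g),y) \mapsto (g^{-1},(d,gy))$ without further commentary. Your additional verification of control compatibility is more than the paper provides, but contains one inaccuracy worth noting. In the paper's framework the functors $\ind_H^G$ and $\res_G^H$ on controlled retractive spaces are only defined for $\cR^G(W,n) = \cR^G(W,\fB(G)(n))$, and there $\res_G^H$ alters the $G$-coordinate of the control map via the projection $p_\sigma \colon G \to H$ attached to a chosen section $\sigma$ of $G \to H\backslash G$. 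If you trace the composite $\ind_H^G \circ \wedge_{\res W} \circ (\id \times \res_G^H)$ on a cell, the resulting control value differs from $\kappa_Y(e)$ in the $G$-coordinate by right multiplication by an element of the finite set $\sigma(H\backslash G)$; so the two control maps do not agree on the nose by $G$-equivariance of $\kappa_Y$ as you claim. The discrepancy is uniformly bounded since $[G:H] < \infty$, hence the identity map is still a controlled isomorphism in $\fB(G)(n)$ and your conclusion stands, but the justification needs this extra observation.
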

\begin{proof}
 Let $D \in \rep(H)$ and $D' \in \rep(G)$. Then the first equivalence is implemented by the $G$-equivariant homeomorphism
 \begin{equation*}
  \begin{split}
   (D \wedge_H G_+) \wedge D' &\xrightarrow{\cong} (D \wedge \res_G^H D') \wedge_H G_+ \\
   ((d,g),d') & \mapsto ((d,d'g^{-1}),g).
  \end{split}
 \end{equation*}
 For $D \in \rep(H)$ and $Y \in \cR^G(W,\fZ)$, the $G$-equivariant homeomorphism
 \begin{equation*}
  \begin{split}
   (D \wedge_H G_+) \wedge_W Y &\xrightarrow{\cong} \ind_H^G (D \wedge_{\res_G^H W} \res_G^H Y) \\
   ((d,g),y) &\mapsto (g^{-1},(d,gy))
  \end{split}
 \end{equation*}
 yields the second equivalence.
\end{proof}

\begin{theorem}\label{thm_inductionswana}
 Let $G$ be a finite group. The homomorphism
 \begin{equation*}
  \sum_{H} \ind_H^G \colon \bigoplus_{H \leq G \text{ Dress}} \swana(H) \to \swana(G)
 \end{equation*}
 is a surjection.
\end{theorem}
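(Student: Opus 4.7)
The plan is to reduce Theorem~\ref{thm_inductionswana} to the classical Dress induction theorem for the Burnside ring by exhibiting a suitable ring homomorphism and using Frobenius reciprocity.

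First, I would observe that the image
\[
 I := \sum_{H \leq G \text{ Dress}} \ind_H^G\bigl(\swana(H)\bigr) \;\subseteq\; \swana(G)
\]
is an ideal of $\swana(G)$. Indeed, for any Dress subgroup $H \leq G$, any $s \in \swana(H)$, and any $t \in \swana(G)$, Frobenius reciprocity (Lemma~\ref{lem_frobeniusreciprocity}) gives
\[
 \ind_H^G(s) \cdot t \;=\; \ind_H^G\bigl(s \cdot \res_G^H t\bigr) \;\in\; I.
\]
Consequently, to prove that $I = \swana(G)$ it suffices to show that the unit $[S^0]$ of $\swana(G)$ lies in $I$, since then $\swana(G) = [S^0] \cdot \swana(G) \subseteq I$.

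Second, I would relate $\swana(G)$ to the Burnside ring $A(G)$. Sending a finite $G$-set $S$ to the pointed $G$-space $S_+$ (i.e.\ $S$ with a disjoint basepoint added) defines an exact functor from the category of finite $G$-sets to $\rep(G)$. This functor sends disjoint union to wedge and Cartesian product to smash, since $S_+ \wedge T_+ \cong (S \times T)_+$ as pointed $G$-spaces. Hence it induces a (unital) ring homomorphism
\[
 \phi \colon A(G) \longrightarrow \swana(G), \qquad [S] \longmapsto [S_+],
\]
and this homomorphism commutes with induction in the sense that $\phi_G \circ \ind_H^G = \ind_H^G \circ \phi_H$ (this follows from the equivariant homeomorphism $(\ind_H^G S)_+ \cong \ind_H^G(S_+) = S_+ \wedge_H G_+$, which is an instance of the compatibility in Lemma~\ref{lem_frobeniusreciprocity}).

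Third, I would invoke the classical Dress induction theorem, which states that for a finite group $G$ the induction map
\[
 \sum_H \ind_H^G \colon \bigoplus_{H \leq G \text{ Dress}} A(H) \longrightarrow A(G)
\]
is surjective; in particular, $1 \in A(G)$ lies in the image. Writing $1 = \sum_H \ind_H^G(x_H)$ with $x_H \in A(H)$ and applying $\phi$ yields
\[
 [S^0] = \phi(1) = \sum_H \ind_H^G\bigl(\phi(x_H)\bigr) \;\in\; I,
\]
which combined with the first step finishes the proof.

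I do not anticipate a genuine obstacle: the main point is setting up the ring homomorphism $\phi$ carefully (in particular checking that smashing respects the multiplicative structure and that $\phi$ intertwines induction) so that the classical Dress induction theorem can be imported. The only mild subtlety lies in verifying the induction compatibility $\ind_H^G(S_+) \cong (\ind_H^G S)_+$, which however is immediate from the definitions of the two induction functors.
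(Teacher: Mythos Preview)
Your argument is correct, but it takes a genuinely different route from the paper's own proof. Both proofs share the first reduction: by Frobenius reciprocity the image is an ideal, so it suffices to show that $[S^0]$ lies in the image. From there the two arguments diverge.

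You import the result from the Burnside ring via the ring homomorphism $A(G) \to \swana(G)$, $[S] \mapsto [S_+]$, and then cite Dress's induction theorem for $A(G)$. The paper instead argues directly inside $\swana(G)$: using the skeletal filtration, every class $[D]$ equals its equivariant Euler characteristic, i.e.\ an integer combination of classes $[(G/H)_+] = \ind_H^G[S^0]$. If $G$ is not a Dress group, Oliver's theorem provides a finite contractible $G$-CW-complex $D$ without global fixed point; since weak equivalences in $\rep(G)$ are non-equivariant homotopy equivalences, $[D_+] = [S^0]$, and all cells of $D$ have proper isotropy. Hence $[S^0]$ is a sum of elements induced from proper subgroups, and the claim follows by induction on $\lvert G\rvert$. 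The two approaches are of comparable depth---Dress's defect-basis theorem and Oliver's fixed-point theorem are closely intertwined---but what they buy is different: your argument is shorter and more algebraic, while the paper's argument is more self-contained within the $A$-theoretic framework and makes the geometric content (the contractible $G$-complex) explicit; this is exploited immediately afterwards in Corollary~\ref{cor:variant-swan-induction} and Remark~\ref{rem_inductionswana-improvement}.
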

\begin{proof}
 By Frobenius Reciprocity \ref{lem_frobeniusreciprocity}, it suffices to show that $1_G = [S^0]$ lies in the image of the homomorphism.
 Since we can filter $D \in \rep(G)$ by its skeleta and suspension of objects corresponds to taking
 inverses in $K_0$, the class of $D$ equals its (equivariant) Euler characteristic. Hence, if $[S^0] = [D_+]$ for some finite $G$-CW-complex $D$ which has no $G$-fixed-point, then $[S^0]$ is a sum of elements which are induced from proper subgroups. 
 
 If $G$ is not a Dress group, then $G$ acts on a finite, contractible CW-complex $D$ without $G$-fixed points by a theorem of Oliver \cite{Oliver1975}. Then $[D_+] = [S^0]$ in $\swana(G)$. The claim follows by induction.
\end{proof}

\begin{remark}\label{rem_inductionswana-improvement}
 For the sake of completeness, note that Oliver's theorem \cite{Oliver1975} even says that a finite group acts without a global fixed-point on a finite, contractible CW-complex if and only if the group is not Dress.

 One can do slightly better than the induction argument in the proof of Theorem \ref{thm_inductionswana}.
 As shown in \cite[Corollary 2.10]{W-TransferReducibilityOfFHGroups}, Oliver's theorem implies the existence of
 a finite, contractible $G$-CW-complex all whose stabilizers are Dress groups.
\end{remark}

\begin{corollary}\label{cor_inductiona}
 Let $G$ be a finite group, and let $W$ be a $G$-CW-complex. Then the homomorphism
 \begin{equation*}
  \sum_H \ind_H^G \colon \bigoplus_{H \leq G \text{ Dress}} K_i(\cR^H_f(\res_G^H W,n)) \to K_i(\cR^G_f(W,n))
 \end{equation*}
 is surjective for all $i \in \ZZ$.
\end{corollary}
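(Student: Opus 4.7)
The plan is a standard Swan-module argument: we exploit the $\swana(G)$-action on $K_*(\cR^G_f(W,n))$ established in Section~\ref{sec_swangroup}, together with the induction theorem for $\swana$ (Theorem~\ref{thm_inductionswana}) and Frobenius reciprocity (Lemma~\ref{lem_frobeniusreciprocity}).

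Fix $i \in \ZZ$ and $a \in K_i(\cR^G_f(W,n))$. By Theorem~\ref{thm_inductionswana}, the unit $1_G = [S^0] \in \swana(G)$ may be written as a finite sum
\begin{equation*}
 1_G = \sum_{H} \ind_H^G(s_H),
\end{equation*}
where the sum runs over Dress subgroups $H \leq G$ and $s_H \in \swana(H)$. Since the biexact pairing $\wedge_W$ of Proposition~\ref{prop_atheory-swanamodule} makes $K_i(\cR^G_f(W,n))$ into a $\swana(G)$-module with $S^0$ acting as the identity, we obtain
\begin{equation*}
 a = 1_G \cdot a = \sum_{H} \ind_H^G(s_H) \cdot a.
\end{equation*}
Frobenius reciprocity (Lemma~\ref{lem_frobeniusreciprocity}) then gives $\ind_H^G(s_H) \cdot a = \ind_H^G(s_H \cdot \res_G^H a)$, where $\res_G^H a \in K_i(\cR^H_f(\res_G^H W,n))$. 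Therefore $a$ lies in the image of $\sum_H \ind_H^G$, proving surjectivity.

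For $i \geq 0$ the argument is immediate from the pairing of homotopy groups induced by the biexact functor. For $i < 0$, we use that the $\swana(G)$-module structure exists on the entire non-connective spectrum: the biexact pairing $\wedge_W$ commutes with the delooping construction of Section~\ref{sec_nonconnectiveA} (the functor $D \wedge_W -$ preserves the coarse structures $\fZ(n)$ and the infinite shift maps used to form deloopings, since it is the identity on control maps). Hence the isomorphisms $K_{-j}(\cR^G_f(W,n)) \cong K_1(\cR^G_f(W,n+j+1))$ of Proposition~\ref{prop_connectivitystructuremaps} are $\swana(G)$-linear, and the same unit-decomposition argument applies. The natural isomorphisms in Lemma~\ref{lem_frobeniusreciprocity} are likewise compatible with the delooping, so Frobenius reciprocity persists in negative degrees.

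The main point to verify is that no step of this formal argument requires more than naturality with respect to $n$: one has to be slightly careful that induction and restriction as defined on $\cR^H(\res_G^H W,n)$ and $\cR^G(W,n)$ commute (up to natural isomorphism) with the structure maps of the non-connective $K$-theory spectra. Since these structure maps are induced by coarse structure morphisms fixing the $G$-direction, and since both $\ind_H^G$ and $\res_G^H$ act via maps on the $G$-coordinate alone, this compatibility is straightforward. Given this, surjectivity holds for all $i \in \ZZ$.
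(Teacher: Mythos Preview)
Your argument is correct and is precisely the approach the paper takes: the paper's proof reads ``Immediate from Frobenius Reciprocity \ref{lem_frobeniusreciprocity} and Theorem \ref{thm_inductionswana},'' and you have simply unpacked this by writing $1_G = \sum_H \ind_H^G(s_H)$ and applying $\ind_H^G(s_H)\cdot a = \ind_H^G(s_H \cdot \res_G^H a)$. Your additional remarks on negative degrees merely spell out what the paper already notes after Proposition~\ref{prop_atheory-swanamodule}, namely that the $\swana(G)$-module structure extends to all $i \in \ZZ$ via the identification $K_{-i}(\cR^G_f(W,\fZ),h) = K_1(\cR^G_f(W,\fZ(i+1)),h)$.
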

\begin{proof}
 Immediate from Frobenius Reciprocity \ref{lem_frobeniusreciprocity} and Theorem \ref{thm_inductionswana}.
\end{proof}

We will also be able to describe the kernel of the surjection in Corollary \ref{cor_inductiona} once we have shown Theorem \ref{thm_fhmethod} (see Theorem \ref{thm_FJCFiniteGroups}).
As a second application, we obtain a variant of Swan's induction theorem \cite[Cor.~4.2]{Swan1960}.
Recall that the Swan group $\swan(G)$ is the Grothendieck group of integral, finite-rank $G$-representations.

\begin{corollary}\label{cor:variant-swan-induction}
 Let $G$ be a finite group. Then the unit element $1_G = [\ZZ] \in \swan(G)$ can be written as a sum of permutation modules,
 \begin{equation*}
  1_G = \sum_{i = 1}^k n_i \cdot \big[ \ZZ[G/H_i] \big],
 \end{equation*}
 where each $H_i$ is a Dress group and $n_i \in \ZZ$.
\end{corollary}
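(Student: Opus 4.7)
The strategy is to pass to the level of cellular chain complexes of $\ZZ[G]$-modules, using a strengthening of Oliver's theorem rather than working directly in $\swana(G)$. Specifically, I would invoke Remark~\ref{rem_inductionswana-improvement}: for any finite group $G$, there exists a finite, contractible $G$-CW-complex $X$ all of whose cell stabilizers are Dress groups. If $G$ itself is a Dress group one may simply take $X = *$, so the statement of the corollary is trivially true in that case.

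Given such a complex $X$, consider its cellular chain complex $C_*(X;\ZZ)$. In each degree, the chain module decomposes as $C_n(X;\ZZ) \cong \bigoplus_e \ZZ[G/K_e]$, where $e$ ranges over a set of representatives of the $G$-orbits of $n$-cells and $K_e$ denotes the stabilizer of $e$. The class of Dress groups is closed under taking subgroups (a direct verification using the normal series $P \unlhd C \unlhd D$ of Definition~\ref{def_dressgroup}: intersecting with a subgroup $H \leq D$ yields a normal series $P \cap H \unlhd C \cap H \unlhd H$ of the appropriate form), so every $K_e$ is itself a Dress group.

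Contractibility of $X$ implies that $H_*(X;\ZZ)$ is concentrated in degree zero, where it equals $\ZZ$ with trivial $G$-action; in particular the homology is $\ZZ$-torsion-free in every degree. Iterating the short exact sequences $0 \to Z_n \to C_n(X;\ZZ) \to B_{n-1} \to 0$ and $0 \to B_n \to Z_n \to H_n(X;\ZZ) \to 0$ relating cycles, boundaries and homology---all of which consist of finitely generated, $\ZZ$-free $\ZZ[G]$-modules, and hence yield valid relations in $\swan(G)$---a telescoping computation produces
\[ \sum_{n \geq 0} (-1)^n [C_n(X;\ZZ)] = \sum_{n \geq 0} (-1)^n [H_n(X;\ZZ)] = [\ZZ] \in \swan(G). \]
Substituting the orbit-type decomposition of the left-hand side expresses $1_G = [\ZZ]$ as an integral linear combination of permutation modules $\ZZ[G/K_e]$ with each $K_e$ a Dress group, as required.

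The only genuine input is geometric: the existence of a finite contractible $G$-CW-complex with Dress stabilizers, which is the refined form of Oliver's theorem recorded in Remark~\ref{rem_inductionswana-improvement}. The remaining steps are routine bookkeeping with chain complexes. The sole technical subtlety is that everything must take place in the integral Swan group rather than rationally, and this is ensured precisely by the torsion-freeness of $H_*(X;\ZZ)$ coming from contractibility; without that input, the chain-level Euler characteristic and the homology-level Euler characteristic would only agree after inverting the order of torsion.
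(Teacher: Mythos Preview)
Your proof is correct. Both arguments ultimately rest on the same idea---the Euler characteristic of the cellular chain complex of a contractible $G$-CW-complex with controlled isotropy---but they are packaged differently. The paper first proves the analogous statement in $\swana(G)$ by induction (Theorem~\ref{thm_inductionswana}, using Oliver's theorem at each step to pass to proper subgroups) and then defines a linearization ring homomorphism $\swana(G) \to \swan(G)$, $[D] \mapsto \sum_k (-1)^k[\tilde{C}_k(D)]$, which carries $[S^0]$ to $[\ZZ]$ and $[(G/H)_+]$ to $[\ZZ[G/H]]$. You instead invoke the refined version of Oliver's theorem from Remark~\ref{rem_inductionswana-improvement} to obtain in one stroke a single finite contractible $G$-CW-complex with Dress stabilizers, and then compute directly at the chain level. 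Your route is more elementary and avoids the induction, at the cost of using the stronger geometric input; the paper's route highlights the ring map $\swana(G) \to \swan(G)$, which is of independent conceptual interest. One small remark: since the complex from Remark~\ref{rem_inductionswana-improvement} already has Dress stabilizers, your verification that Dress groups are closed under subgroups is not actually needed here (though it is correct).
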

\begin{proof}
 We define a linearization homomorphism
 \begin{equation*}
  \swana(G) \to \swan(G), \quad [D] \mapsto \sum_{k = 0}^\infty (-1)^k [\tilde{C}_k(D)],
 \end{equation*}
 where $\tilde{C}_*$ denotes the reduced cellular chain complex. This is a well-defined ring homomorphism. Then the claim follows from the proof of Theorem \ref{thm_inductionswana}.
\end{proof}

Corollary \ref{cor:variant-swan-induction} differs from Swan's theorem \cite[Cor.~4.2]{Swan1960} in that we obtain a description of $1_G$ in terms of permutation modules instead of arbitrary representations, at the expense of considering a larger family of subgroups.

\section{The transfer functor}\label{sec_transfer}
We proceed to construct the transfer functors $\transfer$ from Section \ref{sec_outline}. This uses the action of $\swana$ induced by ``$\wedge_W$'' on $A$-theory from the previous section. The proof proceeds as in \cite[Section 6]{BL2012b}.

Let $G$ be a countable discrete group. Let $\pi \colon G \to Q$ be a surjective group homomorphism, and let
$H \leq Q$ be a subgroup of finite index. Then we define a biexact functor
\begin{equation*}
 \begin{split}
  T_{\pi,H} \colon \rep(H) \times \cR^G_f(W,\JJ(X)) &\to \cR^G_f(W,\JJ(X)) \\
  (D,(Y,\kappa)) &\mapsto \res_\pi(\ind_H^Q D) \wedge_W (Y,\kappa).
 \end{split}
\end{equation*}
Recall the coarse structure $\JJ(X) = \JJ(G,X)$ from Definition \ref{def_continuouscontrol}. Let $\overline{H} := \pi^{-1}(H)$, and equip $G / \overline{H} \times G$ with the metric
\begin{equation*}
 d_{G,\overline{H}}((\gamma_1\overline{H},g_1),(\gamma_2\overline{H},g_2)) := \begin{cases} d_G(g_1,g_2) & \gamma_1\overline{H} = \gamma_2\overline{H}, \\ \infty & \text{otherwise.} \end{cases}
\end{equation*}
Next, we define another functor
\begin{equation*}
 \hat{T}_{\pi,H} \colon \rep(H) \times \cR^G_f(W,\JJ(G,X)) \to \cR^G_f(W,\JJ(G / \overline{H} \times G,X))
\end{equation*}
which lifts $T_{\pi,H}$ along the projection functor induced by $G / \overline{H} \times G \to G$. To do so, we equip $\res_\pi(\ind_H^Q D) \wedge_W Y$ with a different control map whose definition we give next.

The unique map $\cells D \to H \backslash H$ induces a $Q$-equivariant function $\cells (\ind_H^Q D) = (\cells D) \times_H Q \to H \backslash Q$. Restricting the $Q$-actions along $\pi$, we obtain a $G$-equivariant function $c_D' \colon \cells (\res_\pi \ind_H^Q D) \to \overline{H} \backslash G$. Regarding source and target as left $G$-sets by letting $g$ act via $g^{-1}$ on the right, we obtain a map of left $G$-sets. Moreover, we can identify $\overline{H} \backslash G$ with its left $G$-action with $G/\overline{H}$. Then we regard $c_D'$ as a $G$-equivariant function of left $G$-sets
\begin{equation*}
 c_D \colon \cells (\res_\pi \ind_H^Q D) \to G/\overline{H}.
\end{equation*}
For $D \in \rep(H)$ and $(Y,\kappa) \in \cR^G_f(W,\JJ(G,X))$, define
\begin{equation*}
 \hat{T}_{\pi,H}(Y) := \res_\pi (\ind_H^Q D) \wedge_W Y 
\end{equation*}
and its control map
\begin{equation*}
 \hat{T}_{\pi,H}(\kappa) \colon \cells \hat{T}_{\pi,H}(Y) \cong \cells (\res_\pi \ind_H^Q D) \times \cells Y \xrightarrow{c_D \times \kappa} G / \overline{H} \times G \times X \times [1,\infty[.
\end{equation*}
On morphisms, we set $\hat{T}_{\pi,H}(\delta,f) := \res_\pi(\ind_H^Q \delta) \wedge_W f$.

\begin{lemma}\label{lem_simpletransfer}
 This defines a biexact functor
 \begin{equation*}
  \hat{T}_{\pi,H} \colon \rep(H) \times \cR^G_f(W,\JJ(G,X)) \to \cR^G_f(W,\JJ(G / \overline{H} \times G,X))
 \end{equation*}
 with the following properties:
 \begin{enumerate}
  \item\label{item:simpletransfer:1} If $f$ is a morphism in $\cR^G_f(W,\JJ(G,X))$ which is $R$-controlled over $G$ and $\delta$ is any morphism in $\rep(H)$,
   then $\hat{T}_{\pi,H}(\delta,f)$ is $R$-controlled over $G / \overline{H} \times G$.
  \item\label{item:simpletransfer:2} Let $P \colon \cR^G_f(W,\JJ(G / \overline{H} \times G,X)) \to \cR^G_f(W,\JJ(G,X))$ denote the canonical projection functor.
   Then $P \circ \hat{T}_{\pi,H} = T_{\pi,H}$.
 \end{enumerate}
\end{lemma}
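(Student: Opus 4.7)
The underlying functor $T_{\pi,H}$ is already known to be biexact as a composition of restriction, induction (both of which are exact and compatible with smash products), and the biexact pairing $\wedge_W$ from Proposition~\ref{prop_atheory-swanamodule}. The task is therefore twofold: show that the refinement of the control map via $c_D \times \kappa$ still yields an object of $\cR^G_f(W, \JJ(G/\overline{H} \times G, X))$ and preserves the $R$-control bound on morphisms; and verify that biexactness and property~\eqref{item:simpletransfer:2} are then essentially formal consequences.

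First I would check well-definedness on objects. The $G$-continuous control condition and the $G$-compact support condition on $X \times [1,\infty[$ are inherited from $\kappa$, since $\hat{T}_{\pi,H}(\kappa)$ is the product $c_D \times \kappa$ and the $X \times [1,\infty[$-component is exactly $\kappa$ pre-composed with a projection. For $G/\overline{H} \times G$, bounded control in the metric $d_{G,\overline{H}}$ is the key point, addressed below. Finiteness of $D$ ensures that $\res_\pi\ind_H^Q D$ has only finitely many $G$-orbits of non-equivariant cells, so $c_D$ has $G$-compact image in $G/\overline{H}$; combined with $G$-compactness of $\kappa(\cells_k Y)$ in $G \times X$, we get $G$-compact support in $G/\overline{H} \times G \times X$.

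For property~\eqref{item:simpletransfer:1}, the decisive observation is how $\ind_H^Q\delta = \delta \wedge_H \id_{Q_+}$ acts on cells. A non-equivariant cell $e$ of $\ind_H^Q D$ represents a pair $(e_D, q) \in \cells D \times_H Q$, with $c_D(e) = qH$. Since $\ind_H^Q\delta$ is induced by $\delta$ on the left and the identity on $Q_+$, its image on $e$ lies in $\delta(\langle e_D \rangle) \wedge_H \{q\}$; every non-equivariant cell $e'$ with $\langle (\ind_H^Q\delta)(e) \rangle \cap e' \neq \varnothing$ therefore satisfies $c_{D'}(e') = qH$. Restricting along $\pi$ does not change any of this. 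Now given a morphism $f \colon Y_1 \to Y_2$ that is $R$-controlled over $G$ and any $\delta \colon D \to D'$ in $\rep(H)$, a cell $(e_{D'''}, e_{Y_2})$ in $\langle \hat{T}_{\pi,H}(\delta,f)(e_{D''}, e_{Y_1}) \rangle$ has $c_{D'}(e_{D'''}) = c_D(e_{D''})$ and $d_G(\kappa_G(e_{Y_2}), \kappa_G(e_{Y_1})) \leq R$. In the metric $d_{G,\overline{H}}$ this is exactly $R$-control, and the $G$-continuous control over $X \times [1,\infty[$ is inherited directly from $f$.

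Biexactness of $\hat{T}_{\pi,H}$ follows from biexactness of $T_{\pi,H}$ because $\hat{T}_{\pi,H}$ has the same underlying retractive-space morphisms, only equipped with a finer control map; the pushout, cofibration and $h$-equivalence conditions involve only the underlying morphisms of retractive spaces, and the verification in Proposition~\ref{prop_atheory-swanamodule} carries over verbatim once one knows that the refined control map remains admissible, which is what the first two paragraphs establish. Finally, property~\eqref{item:simpletransfer:2} is immediate by construction: applying $P$ discards the $G/\overline{H}$-coordinate of the labels and leaves the underlying complex $\res_\pi(\ind_H^Q D) \wedge_W Y$ and the control map $\kappa$ in the $G \times X \times [1,\infty[$-coordinates unchanged, which is precisely $T_{\pi,H}(D,Y)$. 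The only place where care is needed is the cell-chasing for $\ind_H^Q\delta$; everything else is bookkeeping.
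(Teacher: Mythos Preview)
Your proof is correct and follows essentially the same approach as the paper: the key observation in both is that $\ind_H^Q\delta$ maps any cell $(e,q) \in \cells D \times_H Q$ into the slice over the same coset $qH$, so $\hat{T}_{\pi,H}(\delta,f)$ is $0$-controlled over $G/\overline{H}$ and inherits the $R$-control over $G$ from $f$. Your write-up is more explicit about the support conditions, biexactness, and property~\eqref{item:simpletransfer:2}, all of which the paper leaves as implicit or ``obvious''.
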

\begin{proof}
 Let $\delta \colon D \to D'$ be an arbitrary morphism in $\rep(H)$. Then the induced morphism $\ind_H^G \delta = \delta \wedge_H Q_+$
 has the property that any cell $(e,q) \in \cells D \times_H Q$ is mapped to $D \wedge_H (Hq)_+ \subset D \wedge_H Q_+$.
 It follows that $\hat{T}_{\pi,H}(\delta,f)$ is $0$-controlled over $G / \overline{H}$. Since the control map of $\hat{T}_{\pi,H}(D,Y)$ is defined as a product, $\hat{T}_{\pi,H}(\delta,f)$ is $R$-controlled if $f$ is $R$-controlled. In particular, $\hat{T}_{\pi,H}$ is well-defined.
   
 The equality $P \circ \hat{T}_{\pi,H} = T_{\pi,H}$ is obvious.
\end{proof}

\begin{proposition}\label{prop_boundedtransfer}
 Let $G$ be a countable discrete group and let $\pi \colon G \to F$ be a surjective group homomorphism onto a finite group $F$.
 Suppose that $G$ is equipped with a proper, left invariant metric. Let $\cD$ denote the family of Dress subgroups of $F$.
 Define $M := \coprod_{H \in \cD} G/\overline{H} \times G$; we equip $G/\overline{H} \times G$
 with the metric in which different summands are infinitely far apart, and where each summand carries the metric $d_{G,\overline{H}}$.
 
 Then there is an exact functor $\transfer_\pi \colon \cR^G_f(W,\JJ(X)) \to \cR^G_f(W,\JJ(M,X))$ with the following properties:
 \begin{enumerate}
  \item If $f$ is a morphism which is $R$-controlled over $G$, then $\transfer_\pi(f)$ is $R$-controlled over $M$.
  \item Let $P \colon \cR^G_f(W,\JJ(M,X)) \to \cR^G_f(W,\JJ(X))$ denote the functor induced by the projection map
   $M \to G$. Then $P \circ \transfer_\pi$ induces the identity map on $K$-groups.
 \end{enumerate}
\end{proposition}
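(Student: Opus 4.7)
The construction follows the strategy of \cite[Section 6]{BL2012b}, with the linear Swan group replaced throughout by the $A$-theoretic Swan group $\swana(F)$ of Section \ref{sec_swangroup}. The plan is to assemble the partial transfers $\hat{T}_{\pi, H}(D_H, -)$ from Lemma \ref{lem_simpletransfer} into a single functor whose projection $P \circ \transfer_\pi$ implements the action of $1_F \in \swana(F)$ (pulled back via $\res_\pi$) on $K$-theory; since this action is the identity, the same will hold for the induced map on $K$-groups.

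The first step is to decompose the unit. By Theorem \ref{thm_inductionswana}, $1_F$ lies in the image of $\bigoplus_{H \in \cD} \swana(H) \to \swana(F)$, so there are Dress subgroups $H_i \leq F$, objects $D_i \in \rep(H_i)$ and integers $c_i$ with $1_F = \sum_i c_i [\ind_{H_i}^F D_i]$. Using the identities $-[E] = [\Sigma E]$ and $[E \vee E'] = [E] + [E']$ in $\swana$, together with the natural isomorphism $\Sigma \ind_H^F(-) \cong \ind_H^F \Sigma(-)$ (both functors are smash with $S^1$), this rewrites as
$$1_F = \sum_{H \in \cD} \bigl[\ind_H^F D_H\bigr] \in \swana(F)$$
for a single $D_H \in \rep(H)$ per Dress subgroup (some $D_H$ possibly being the basepoint $*$). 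Now set
$$\transfer_\pi(Y) := \bigvee_{H \in \cD} \hat{T}_{\pi, H}(D_H, Y),$$
with the induced action on morphisms. Each summand lies in $\cR^G_f(W, \JJ(G/\overline{H} \times G, X))$; since different summands of $M$ are infinitely far apart and $\cD$ is finite, the wedge is a well-defined finite object of $\cR^G_f(W, \JJ(M, X))$. Exactness of $\transfer_\pi$ follows from the biexactness of $\hat{T}_{\pi, H}$, and property (i) from Lemma \ref{lem_simpletransfer}(\ref{item:simpletransfer:1}).

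For property (ii), Lemma \ref{lem_simpletransfer}(\ref{item:simpletransfer:2}) together with the distributivity of $\wedge_W$ over $\vee$ yields
$$P \circ \transfer_\pi(Y) = \bigvee_{H} T_{\pi, H}(D_H, Y) \cong \res_\pi(E) \wedge_W Y,$$
where $E := \bigvee_H \ind_H^F D_H \in \rep(F)$. By step~1, $[E] = 1_F$ in $\swana(F)$, so $[\res_\pi E] = \res_\pi(1_F) = 1_G$ in $\swana(G)$, since $\res_\pi$ is a unital ring homomorphism. By Proposition \ref{prop_atheory-swanamodule}, the biexact pairing $\wedge_W$ turns each $K_i(\cR^G_f(W, \JJ(X)))$ into a $\swana(G)$-module in such a way that the exact functor $E' \wedge_W -$ induces multiplication by $[E']$; hence $\res_\pi(E) \wedge_W -$ and $S^0 \wedge_W - \cong \id$ induce the same self-map of $K$-theory, which is the identity. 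The main obstacle is the bookkeeping in step~1---converting a formal $\ZZ$-combination in $\swana(F)$ into a single wedge of representations per Dress subgroup using only suspension and wedge sum---together with the verification that the Swan-module structure of Section \ref{sec_swangroup} really does govern the induced maps on all higher $K$-groups, and not merely on $K_0$.
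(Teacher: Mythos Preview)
Your proof is correct and follows essentially the same route as the paper's: choose $D_H \in \rep(H)$ with $\sum_{H \in \cD}[\ind_H^F D_H] = 1_F$ via Theorem~\ref{thm_inductionswana}, set $\transfer_\pi := \bigvee_H \hat{T}_{\pi,H}(D_H,-)$, read off property~(1) from Lemma~\ref{lem_simpletransfer}\eqref{item:simpletransfer:1}, and for property~(2) use $P \circ \hat{T}_{\pi,H} = T_{\pi,H}$ together with the $\swana(G)$-module structure and $\res_\pi(1_F) = 1_G$. The only difference is cosmetic: you spell out the suspension/wedge bookkeeping that converts a $\ZZ$-linear combination into one object $D_H$ per Dress subgroup, whereas the paper simply asserts the existence of such a sequence $(D_H)_H$.
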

\begin{proof}
 Using Theorem \ref{thm_inductionswana}, we can find a sequence $(D_H)_{H \in \cD}$ with $D_H \in \rep(H)$ such that
 \begin{equation}\label{eq:boundedtransfer}
  \sum_{H \in \cD} [\ind_H^F D_H] = 1_F \in \swana(F).
 \end{equation}
 Define the transfer by
 \begin{equation*}
  \transfer_\pi(Y,\kappa) := \bigvee_{H \in \cD} \hat{T}_{\pi,H}(D_H,(Y,\kappa)),
 \end{equation*}
 where we regard $\hat{T}_{\pi,H}(D_H,(Y,\kappa))$ as an object over $M \times X \times [1,\infty[$ via the natural inclusion
 $G / \overline{H} \times G \times X \times [1,\infty[ \subset M \times X \times [1,\infty[$. Similarly, we set
 $\transfer_\pi(f) := \bigvee_H \hat{T}_{\pi,H}(\id_{D_H},f)$ for morphisms.
 
 As a consequence of Lemma \ref{lem_simpletransfer}\eqref{item:simpletransfer:1}, this functor preserves $R$-controlled morphisms. Moreover, we have
 \begin{equation*}
  P \circ \transfer_\pi = P \circ \big( \bigvee_{H \in \cD} \hat{T}_{\pi,H}(D_H,-) \big) \cong \bigvee_{H \in \cD} P \circ \hat{T}_{\pi,H}(D_H,-) = \bigvee_{H \in \cD} T_{\pi,H}(D_H,-).
 \end{equation*}
 Using the action of $\swana(G)$ on $K_i(\cR^G_f(W,\JJ(X))$ and the identity \eqref{eq:boundedtransfer}, we conclude that for $a \in K_i(\cR^G_f(W,\JJ(X))$,
 \begin{equation*}
  \begin{split}
   K_i(P \circ \transfer_\pi)(a)
   &= \sum_{H \in \cD} [\res_\pi(\ind_H^F D_H)] \cdot a
   = \Big(\sum_{H \in \cD} [\res_\pi(\ind_H^F D_H)] \Big) \cdot a \\
   &= \res_\pi \Big(\sum_{H \in \cD} [\ind_H^F D_H] \Big) \cdot a
   = \res_\pi(1_F) \cdot a \\
   &= 1_G \cdot a = a,
  \end{split}
 \end{equation*}
 so $P \circ \transfer_\pi$ induces the identity map as claimed.
\end{proof}

\begin{corollary}\label{cor_boundeduniformtransfer}
 Let $G$ be a countable discrete group. For every $k \in \NN$, let $\pi_k \colon G \twoheadrightarrow F_k$ be an epimorphism onto a finite group.
 Let $\cD_k$ be the family of Dress subgroups of $F_k$, and define $T_k := \coprod_{H \in \cD_k} G/\overline{H} \times G$. Recall the definition of the coarse structure $\JJ((T_k)_k,X)$ from Section \ref{sec_outline}.
 
 Then there is an exact functor
 \begin{equation*}
  \transfer \colon \cR^G_f(W,\JJ(X)) \to \cR^G_f(W,\JJ((T_k)_k,X))
 \end{equation*}
 such that each composition $P_k \circ \transfer$ of $\transfer$ with the functor $P_k \colon \cR^G_f(W,\JJ((T_k)_k,X)) \to \cR^G_f(W,\JJ(X))$ from Section \ref{sec_outline} induces the identity on $K$-groups.
\end{corollary}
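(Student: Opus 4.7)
The plan is to assemble the transfer functors from Proposition \ref{prop_boundedtransfer}, one for each epimorphism $\pi_k$, into a single functor valued in the category with the uniform metric control condition. For each $k$, Proposition \ref{prop_boundedtransfer} applied to $\pi_k \colon G \twoheadrightarrow F_k$ yields an exact functor
\[
 \transfer_{\pi_k} \colon \cR^G_f(W,\JJ(X)) \to \cR^G_f(W,\JJ(T_k,X))
\]
such that $R$-controlled morphisms over $G$ are sent to $R$-controlled morphisms over $T_k$ with the same constant $R$, and such that composition with the projection $P \colon \cR^G_f(W,\JJ(T_k,X)) \to \cR^G_f(W,\JJ(X))$ induces the identity on $K$-groups.

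Next, I would define $\transfer$ on objects by $\transfer(Y) := (\transfer_{\pi_k}(Y))_k$ and on morphisms by $\transfer(f) := (\transfer_{\pi_k}(f))_k$, regarded as an object in (and morphism of) the subcategory $\cR^G_f(W,\JJ((T_k)_k,X)) \subset \prod_k \cR^G_f(W,\JJ(T_k,X))$. Each component is already a finite object by Proposition \ref{prop_boundedtransfer}, and local finiteness of the combined control map is automatic because the summands $T_k \times X \times [1,\infty[$ are disjoint open subsets of $\coprod_k T_k \times X \times [1,\infty[$, so a neighbourhood of any control point only sees one component. Exactness of $\transfer$ follows componentwise from exactness of each $\transfer_{\pi_k}$, since cofibrations, pushouts and the zero object in $\cR^G_f(W,\JJ((T_k)_k,X))$ are detected componentwise.

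The critical check is the uniform metric control condition for morphisms. Let $f \colon Y_1 \to Y_2$ be a morphism in $\cR^G_f(W,\JJ(X))$. The control condition witnessing $f$ satisfies bounded control over $G$ by some radius $R$ (independently of $k$, because there is only one $f$). The quantitative statement of Proposition \ref{prop_boundedtransfer} then guarantees that $\transfer_{\pi_k}(f)$ is $R$-controlled over $T_k$ for every $k$, which is precisely the uniform metric control condition defining $\fC((T_k)_k,X)$. The $G$-continuous control and $G$-compact support conditions are preserved componentwise because they are inherited from $f$ via pullback along the projection $T_k \to G$ in the definition of $\hat{T}_{\pi_k,H}$.

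Finally, for each fixed $k$, by construction the composition $P_k \circ \transfer$ agrees (as a functor) with $P \circ \transfer_{\pi_k}$: projecting the sequence to its $k$-th component and then collapsing $T_k \to G$ is the same as applying $\transfer_{\pi_k}$ and then the single projection $P$. Therefore $K_*(P_k \circ \transfer) = K_*(P \circ \transfer_{\pi_k}) = \id$ by Proposition \ref{prop_boundedtransfer}. The only real obstacle is the uniform control, and it is precisely why Proposition \ref{prop_boundedtransfer} was phrased with the sharp quantitative preservation of the constant $R$ rather than merely with the existence of some controlled bound; once that is in hand, the assembly into a sequence-indexed functor is essentially formal.
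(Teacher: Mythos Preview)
Your proposal is correct and follows exactly the paper's approach: define $\transfer := (\transfer_{\pi_k})_k$ and invoke Proposition~\ref{prop_boundedtransfer}. You have simply unpacked the details behind the paper's one-line proof, and you correctly single out the preservation of the control constant $R$ as the reason the sequence lands in the category with uniform metric control.
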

\begin{proof}
 Define $\transfer := (\transfer_{\pi_k})_{k \in \NN}$ and use Proposition \ref{prop_boundedtransfer}.
\end{proof}

\begin{corollary}\label{cor:uniformtransfer:deloop}
 Assume we are in the same situation as in Corollary \ref{cor_boundeduniformtransfer}. For every $n \in \NN$, there is an exact functor
 \begin{equation*}
  \transfer \colon \cR^G_f(W,\JJ(X)(n)) \to \cR^G_f(W,\JJ((T_k)_k,X)(n))
 \end{equation*}
 such that each composition $P_k \circ \transfer$ of $\transfer$ with the functor $P_k \colon \cR^G_f(W,\JJ((T_k)_k,X)(n)) \to \cR^G_f(W,\JJ(X)(n))$ from Section \ref{sec_outline} induces the identity on $K$-groups.
\end{corollary}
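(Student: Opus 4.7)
The plan is to repeat the construction of Proposition~\ref{prop_boundedtransfer} verbatim in the delooped category $\cR^G_f(W,\JJ(X)(n))$. The key enabling fact is that Proposition~\ref{prop_atheory-swanamodule} constructs the biexact Swan module pairing $\wedge_W \colon \rep(G) \times \cR^G(W,\fZ) \to \cR^G(W,\fZ)$ for \emph{any} coarse structure $\fZ$; in particular it applies with $\fZ = \JJ(X)(n)$ and with $\fZ = \JJ(M,X)(n)$. Since the cells of $D \wedge_W Y$ are identified with $\cells D \times \cells Y$ and the associated control map is the composition $\cells D \times \cells Y \to \cells Y \xrightarrow{\kappa} Z$, all spatial coordinates --- in particular the bounded control over the $\RR^n$ direction of $\JJ(X)(n)$ and the $G$-continuous control over $X$ --- are inherited from $\kappa$ without modification. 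Thus $\wedge_W$ restricts to an exact action of $\rep(H)$ on $\cR^G_f(W,\JJ(X)(n))$ through $\res_{\pi_k} \circ \ind_H^{F_k}$.

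Next I would lift this to the refined (discretely controlled) transfer $\hat{T}_{\pi_k,H}$ of Lemma~\ref{lem_simpletransfer}: the $G$-equivariant function $c_D \colon \cells(\res_{\pi_k}\ind_H^{F_k} D) \to G/\overline{H}$ is unchanged by the passage to level $n$, and combining it with $\kappa$ yields a control map into $G/\overline{H} \times \RR^n \times G \times X \times [1,\infty[$ which, after reshuffling the factors, defines an object of $\cR^G_f(W,\JJ(G/\overline{H}\times G,X)(n))$. The argument of Lemma~\ref{lem_simpletransfer}\eqref{item:simpletransfer:1} carries over unchanged: because $\ind_H^{F_k}$ maps each cell $(e,q)$ into $D \wedge_H (Hq)_+$, the induced map is $0$-controlled over $G/\overline{H}$, so every $R$-controlled morphism over $G$ lifts to an $R$-controlled morphism over $G/\overline{H}\times G$, and the $\RR^n$- and $X$-bounds are transported as above. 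Theorem~\ref{thm_inductionswana} supplies, for each $k$, objects $D_{H,k} \in \rep(H)$ (one for each Dress $H \leq F_k$) with $\sum_{H \in \cD_k}[\ind_H^{F_k} D_{H,k}] = 1_{F_k}$ in $\swana(F_k)$. Setting
\begin{equation*}
 \transfer_{\pi_k}(Y,\kappa) := \bigvee_{H \in \cD_k} \hat{T}_{\pi_k,H}(D_{H,k},(Y,\kappa))
\end{equation*}
and $\transfer := (\transfer_{\pi_k})_k$ produces an exact functor whose $R$-control bound over each $T_k$ depends only on the $R$-bound of the input over $G$, hence is uniform in $k$; this is exactly the uniform metric control condition of $\JJ((T_k)_k,X)(n)$.

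Finally, to verify that $P_k \circ \transfer$ induces the identity on $K$-groups, I would compute, as in the proof of Proposition~\ref{prop_boundedtransfer}, that $P_k \circ \transfer \cong \bigvee_{H \in \cD_k} T_{\pi_k,H}(D_{H,k},-)$ and that on $K_\ast(\cR^G_f(W,\JJ(X)(n)))$ this pairing is multiplication by
\begin{equation*}
 \sum_{H \in \cD_k}[\res_{\pi_k}(\ind_H^{F_k} D_{H,k})] = \res_{\pi_k}(1_{F_k}) = 1_G \in \swana(G),
\end{equation*}
which acts as the identity. I expect no genuine obstacle: the smash product $\wedge_W$ never touches the $\RR^n$-, $X$- or $[1,\infty[$-coordinates, so all delooping control conditions are automatically preserved, and the only bookkeeping is to check that condition~(TC) of biexactness for $\wedge_W$ remains valid when $\fS$ is replaced by $\fS(n) = p_Z^*\fS$, which is immediate since support and control in the $\RR^n$-direction are inherited through the projection $\cells D \times \cells Y \to \cells Y$.
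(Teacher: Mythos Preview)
Your proposal is correct and follows exactly the paper's approach: the paper's proof simply observes that the constructions of Lemma~\ref{lem_simpletransfer}, Proposition~\ref{prop_boundedtransfer} and Corollary~\ref{cor_boundeduniformtransfer} generalize to $\cR^G_f(W,\JJ(X)(n))$ because the $\RR^n$-coordinate is never touched by the smash product $\wedge_W$. You have spelled out in detail precisely this point---that the control map $D \wedge_W \kappa$ factors through the projection $\cells D \times \cells Y \to \cells Y$, so all control in the $\RR^n$-direction is inherited unchanged---which is what the paper compresses into a single sentence.
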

\begin{proof}
 The definitions above generalize to $\cR^G_f(W,\JJ(X)(n))$. The statements follow from the case $n=0$ because the $\RR^n$-coordinate remains untouched.
\end{proof}

Corollary \ref{cor:uniformtransfer:deloop} even provides us with a sequence of functors which induces a map on non-connective algebraic $K$-theory spectra. This map splits the map induced by each functor $P_k$ up to homotopy.

\section{The ``squeezing'' theorem}\label{sec_squeezing}
The main result of this section is the following analog of \cite[Theorem 7.2]{BLR2008},
which is the final ingredient for the proof of Theorem \ref{thm_fhmethod}.
We freely use the notation from Section \ref{sec_outline}.

\begin{theorem}[Squeezing Theorem]\label{thm_squeezing}
 Let $G$ be a countable discrete group, and let $\cF$ be a family of subgroups of $G$.
 Let $(E_k)_k$ be a sequence of $G$-simplicial complexes whose isotropy lies in $\cF$.
 Suppose that there is some $N$ such that the dimension of $E_k$ is at most $N$ for all $k$.
 Equip $E_k$ with the metric $k \cdot d^{\ell^1}$. Then the inclusion functor induces a weak equivalence
 \begin{equation*}
  \KK^{-\infty}\Big(\finprod \cR^G_f(W,\JJ(E_k \times G,E_\cF G))\Big) \xrightarrow{\sim} \KK^{-\infty}(\cR^G_f(W,\JJ((E_k \times G)_k,E_\cF G))).
 \end{equation*}
\end{theorem}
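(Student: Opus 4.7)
I would prove the theorem by induction on the uniform dimension bound $N$, using the Coarse Mayer--Vietoris Theorem~\ref{thm_mayervietoris} for the inductive step and the Eilenberg swindle (Theorem~\ref{thm_eilenbergswindle}) for the base case. Abbreviate $\cC(E) := \cR^G_f(W,\JJ((E_k \times G)_k,E_\cF G))$ and $\cC_0(E) := \finprod \cR^G_f(W,\JJ(E_k \times G,E_\cF G))$. The goal is to show the inclusion $\cC_0(E) \hookrightarrow \cC(E)$ induces a weak equivalence on non-connective $K$-theory.

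For the base case $N=0$, each $E_k$ is a discrete $G$-set with isotropy in $\cF$, and the rescaled $\ell^1$-metric assigns distance $k$ to any two distinct points of $E_k$. Hence any uniformly $R$-bounded morphism cannot, for $k > R$, connect cells labelled by distinct points of $E_k$. This decouples every object in the tail $k > k_0$ as a direct sum over orbits of $E_k$. After choosing, for each sufficiently large $k$, an equivariant identification of the orbits of $E_{k-1}$ with those of $E_k$ (possible non-canonically, since both sides have isotropy in the same family $\cF$), one builds a shift map on $\coprod_k E_k \times G \times E_\cF G \times [1,\infty[$ that satisfies the hypotheses of Proposition~\ref{prop_connectiveeilenbergswindle} on the quotient category $\cC(E)/\cC_0(E)$. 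Applying Theorem~\ref{thm_eilenbergswindle} then gives contractibility.

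For the inductive step, take the $G$-invariant decomposition $E_k = A_k \cup B_k$, where $B_k$ is the union of the open $N$-simplices and $A_k := E_k \setminus \{\text{barycenters of $N$-simplices}\}$. Then $A_k$ deformation retracts equivariantly to the $(N-1)$-skeleton, $B_k$ is a disjoint union of open $N$-simplices each contracting to its barycenter, and $A_k \cap B_k$ (each open $N$-simplex punctured at its barycenter) deformation retracts to a complex of dimension $\leq N-1$. Since thickening widths are defined intrinsically in $d^{\ell^1}$ but the coarse structure on $E_k$ is $k \cdot d^{\ell^1}$, the overlap widths in the coarse metric are uniformly bounded in $k$, so $(A_k, B_k)$ forms a coarsely excisive pair in the sense of Definition~\ref{def_coarselyexcisive} uniformly in $k$. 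Applying Theorem~\ref{thm_mayervietoris} to both $\cC(E)$ and $\cC_0(E)$ and comparing the resulting homotopy pullback squares reduces the theorem to its analogues for $A$, $B$, and $A \cap B$. The $A$ and $A \cap B$ cases follow from the inductive hypothesis after passing through the simplicial deformation retractions; the $B$ case reduces to the base case $N=0$ via the deformation retraction onto the $G$-set of barycenters.

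\textbf{The main obstacle} is ensuring that all constants in the Mayer--Vietoris decomposition are uniform in $k$: the coarse excisiveness constants, the metric bounds for the deformation retractions, and the compatibility with the non-connective $K$-theory spectra all require careful bookkeeping of how bounds scale with the $k$-dependent rescaling of the metric. This is consistent because the decomposition is defined intrinsically in $d^{\ell^1}$ and only then rescaled. A secondary subtlety is that the base-case shift depends on a non-canonical identification $E_{k-1} \to E_k$ of $G$-sets; this is sufficient to satisfy Proposition~\ref{prop_connectiveeilenbergswindle} because, for $k$ large, the coarse metric already decouples $E_k$ into its orbits, so the choice only needs to respect the isotropy types. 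Continuous control over $E_\cF G$ does not interact with the $E_k$-decomposition and persists unchanged throughout.
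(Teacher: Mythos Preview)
Your base case argument has a genuine gap. You propose to build an Eilenberg swindle by shifting from $E_{k-1}$ to $E_k$ via ``an equivariant identification of the orbits of $E_{k-1}$ with those of $E_k$'', but no such identification exists in general: the $E_k$ are arbitrary $G$-simplicial complexes with isotropy in $\cF$, and there is no relation whatsoever between the orbit decompositions of $E_{k-1}$ and $E_k$ (different numbers of orbits, different isotropy groups, etc.). More fundamentally, your argument never uses the classifying space $E_\cF G$, yet this is precisely where it is needed. The paper's swindle (Lemma~\ref{lem.squeezingsimplices}, borrowed from the proof of \cite[Proposition~7.4]{BLR2008}) works not by shifting among the $E_k$, but by exploiting that each orbit $G/H_i$ with $H_i \in \cF$ has an $H_i$-fixed point in $E_\cF G$; the swindle pushes along the $E_\cF G \times [1,\infty[$-direction toward that fixed point. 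Without $E_\cF G$ in the picture there is no reason the target $K$-theory should vanish.

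Your inductive step also diverges from the paper and has technical issues: your decomposition $E_k = A_k \cup B_k$ uses non-closed subspaces (open simplices, complements of barycenters), so the hypotheses of Theorem~\ref{thm_mayervietoris} are not literally met, and the necessary $G$-properness is unclear. The paper organizes the argument differently. It first passes to a class $h^{fin}$ of ``eventually-$h$'' equivalences and reduces via the Modified Fibration Theorem to showing that $\KK^{-\infty}(\cB((E_k \times G)_k), h^{fin})$ is contractible. The induction step is then an excision statement (Lemma~\ref{lem.squeezingexcision}) based on the pushout square attaching the top-dimensional simplices, proved via the fiber sequence of Lemma~\ref{lem.squeezingcofibre} together with the Approximation Theorem (not Mayer--Vietoris directly). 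The base case is handled by first using the Approximation Theorem to replace the metric $k \cdot d^{\ell^1}$ on the $0$-dimensional $E_k$ by the discrete metric (distinct points at distance $\infty$), and then invoking the swindle of Lemma~\ref{lem.squeezingsimplices}. The essential idea you are missing is that the contraction comes from $E_\cF G$, not from any relationship among the $E_k$.
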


For the purposes of this section, abbreviate
\begin{equation*}
 \begin{split}
  \cB_{fin}((M_k)_k) &:= \finprod \cR^G_f(W,\JJ(M_k,E_\cF G)), \\
  \cB((M_k)_k) &:= \cR^G_f(W,\JJ((M_k)_k,E_\cF G))
 \end{split}
\end{equation*}
for any sequence $(M_k)_k$ of metric spaces with free, isometric $G$-action (in our case $M_k = E_k \times G$).
Observe that $\cB_{fin}((M_k)_k)$ can be described as the full subcategory of objects in $\cB((M_k)_k)$ with support on a finite disjoint union.

Let $Y = (Y_k)_k$ be an object in $\cB((M_k)_k)$. For $K \in \NN$, define $(Y_k)_{k > K}$ to be the sequence
$(X_k)_k$ with $X_k = *$ for $k \leq K$ and $X_k = Y_k$ for $k > K$.
Define $h^{fin}\cB((M_k)_k)$ to be the category of those morphisms $f = (f_k)_k \colon (Y^1_k)_k \to (Y^2_k)_k$
for which there is some $K > 0$ such that the induced morphism $(f_k)_{k > K} \colon (Y^1_k)_{k > K} \to (Y^2_k)_{k > K}$
is an $h$-equivalence in $\cB((M_k)_k)$. Note that this is a stronger condition than requiring
$f_k$ to be a controlled homotopy equivalence for all $k > K$.
Using the Modified Fibration Theorem \ref{prop_modifiedfibrationtheorem}, there is a homotopy fiber sequence
\begin{equation*}
 hS_\bullet\cB((M_k)_k)^{h^{fin}} \to hS_\bullet\cB((M_k)_k) \to h^{fin}S_\bullet\cB((M_k)_k).
\end{equation*}
It is straightforward to check that this homotopy fiber sequence can be delooped, and that the Approximation Theorem applies
to the inclusion $\cB_{fin}((M_k)_k) \hookrightarrow \cB((M_k)_k)^{h^{fin}}$.
We conclude that there is a homotopy fiber sequence
\begin{equation}\label{squeezing.cofiniteequivalences}
 \KK^{-\infty}\big( \cB_{fin}((M_k)_k),h \big) \to \KK^{-\infty}\big( \cB((M_k)_k),h \big) \to \KK^{-\infty}\big( \cB((M_k)_k),h^{fin} \big).
\end{equation}
Consequently, it suffices to show that $\KK^{-\infty}( \cB((E_k \times G)_k),h^{fin} )$ is weakly contractible in order to prove Theorem \ref{thm_squeezing}.
As in \cite{BLR2008}, the proof is by induction on $N$. We need the following lemma.

\begin{lemma}\label{lem.squeezingsimplices}
 Suppose that $(\Delta_k)_k$ is a sequence of $G$-simplicial complexes of the form
 \begin{equation*}
  \Delta_k = \coprod_{i \in I_k} G/H_i \times \Delta^N
 \end{equation*}
 such that $H_i \in \cF$ for all $i$. Equip $\Delta_k$ with the metric which assigns distance $\infty$ to points in different path components,
 and equals $k \cdot d^{\ell^1}$ for points on the same simplex.
 
 Then $\KK^{-\infty}(\cB_{fin}((\Delta_k \times G)_k),h)$, $\KK^{-\infty}(\cB((\Delta_k \times G)_k),h)$ and $\KK^{-\infty}(\cB((\Delta_k \times G)_k),h^{fin})$ are all weakly contractible.
\end{lemma}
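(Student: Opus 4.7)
The plan is to attack the three contractibility claims simultaneously using the homotopy fiber sequence \eqref{squeezing.cofiniteequivalences}, which reduces the problem to showing any two of the three spectra vanish. I would prove that $\KK^{-\infty}(\cB_{fin}((\Delta_k \times G)_k))$ and $\KK^{-\infty}(\cB((\Delta_k \times G)_k))$ are both weakly contractible by induction on the simplex dimension $N$.

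The base case $N = 0$ is where the hypothesis that isotropy lies in $\cF$ enters. Each component of $\Delta_k$ is then a single orbit $G/H_i$ with $H_i \in \cF$, and because different components are infinitely far apart the uniform bounded metric control forces every object to split as a direct sum along components. Using the induction/restriction equivalence along $H_i \le G$ (in the spirit of Lemma~\ref{lem_frobeniusreciprocity}), one identifies each summand with a category of the form $\cR^{H_i}_f(\res^G_{H_i} W, \JJ(H_i, E_\cF G))$. Since $H_i \in \cF$, the restriction $\res^G_{H_i} E_\cF G$ is an $H_i$-model for $E_{\cF|_{H_i}} H_i$ and is $H_i$-equivariantly contractible (because $H_i \in \cF|_{H_i}$). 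By Corollary~\ref{cor_identificationassemblymap}, the spectrum of interest is the fiber of the assembly map over a point, which is itself an isomorphism under these circumstances, so the $K$-theory vanishes.

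For the inductive step $N \ge 1$, I would split each $N$-simplex as $A \cup B$ with $A = \{x_0 \ge 1/2\}$ and $B = \{x_0 \le 1/2\}$. This pair is coarsely excisive uniformly in $k$, since for any control set $C$ of $\ell^1$-radius $R$ (in the $k$-scaled metric) we have $A^C \cap B^C \subseteq (A \cap B)^{C}$, the scaling not interfering because the same constant works at every $k$. Applying the coarse Mayer--Vietoris Theorem~\ref{thm_mayervietoris} yields a homotopy pullback: $A \cap B$ is an $(N-1)$-simplex handled by induction; $B$ deformation retracts onto the opposite face $F_0 \cong \Delta^{N-1}$ via the obvious normalization, reducing its $K$-theory to the induction hypothesis; and $A$ is the cone on $A \cap B$ with apex $v_0$, deformation retracting to a $0$-simplex and falling under the base case. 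For $\cB_{fin}$ one works $k$ by $k$: $K$-theory commutes with the filtered colimits defining $\cB_{fin}$ so the spectrum splits as $\bigvee_k \KK^{-\infty}(\cR^G_f(W,\JJ(\Delta_k \times G, E_\cF G)))$, and each summand is handled by the single-$k$ induction, in which the deformation retractions move points by $O(k)$ (bounded, hence coarse equivalences at that level).

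The main obstacle will be carrying out the inductive step for $\cB((\Delta_k\times G)_k)$, where the same deformation retractions are not uniformly coarse equivalences across all $k$ simultaneously: their displacement grows with $k$. The cleanest way around this, which I would pursue, is to decouple the two arguments. One proves contractibility of $\cB_{fin}$ directly as above, and then instead of attacking $\cB$ head-on one shows that $\KK^{-\infty}(\cB((\Delta_k \times G)_k), h^{fin})$ vanishes by exploiting the ``germ at $k = \infty$'' picture: in that localization, a fixed uniform bound $R$ forces the $\Delta^N$-coordinate of any morphism to vary by at most $R/k$, which tends to zero, so related cells effectively concentrate onto the $0$-skeleton and one reduces to the argument of the base case. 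Two-out-of-three in the fiber sequence~\eqref{squeezing.cofiniteequivalences} then delivers the remaining contractibility.
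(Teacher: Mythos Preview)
The paper's proof is entirely different and much shorter: it invokes a single Eilenberg swindle (Theorem~\ref{thm_eilenbergswindle}), with the shift maps taken from \cite[proof of Proposition~7.4]{BLR2008}. Since the isotropy of every component $G/H_i \times \Delta^N$ lies in $\cF$, one can choose $H_i$-fixed points in $E_\cF G$ and push the $E_\cF G$-coordinate toward them while shifting $t \mapsto t+1$ in $[1,\infty[$. Crucially, this swindle touches only the $E_\cF G \times [1,\infty[$-direction, so it is automatically compatible with the uniform metric control across all $k$; the $k$-scaled metric on $\Delta_k$ is irrelevant, and no induction on $N$ is needed. All three spectra die at once.

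Your approach has a genuine gap precisely where you anticipate it. The $\cB_{fin}$ argument works $k$-by-$k$ as you say, but your handling of $(\cB, h^{fin})$ is not a proof: observing that the $\Delta^N$-displacement of an $R$-controlled morphism is at most $R/k$ does not produce a functor to, or an equivalence with, any $N=0$ category. The projection $\Delta^N \to \mathrm{pt}$ violates the finiteness clause of Definition~\ref{def:morphism-coarse-structure}, and there is no evident Approximation argument either, because objects (as opposed to morphisms) remain spread over all of $\Delta^N$ regardless of $k$. What you actually need is a uniform-in-$k$ contraction, and the only direction in which one is available is $E_\cF G \times [1,\infty[$; pursuing that leads straight back to the paper's swindle. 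A smaller point: Lemma~\ref{lem_frobeniusreciprocity} concerns the Swan-group pairing, not the categorical induction/restriction equivalence you invoke in the base case; the relevant identification is closer to Lemma~\ref{lemma:coefficients-induction-quotient} together with the argument of Theorem~\ref{thm_a-homology}.
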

\begin{proof}
 It is shown in \cite[proof of Proposition 7.4]{BLR2008} that there is a sequence of maps on the underlying control spaces such that Theorem \ref{thm_eilenbergswindle} applies.
\end{proof}

\begin{corollary}\label{cor:squeezing-induction-start}
 Theorem \ref{thm_squeezing} holds for $N = 0$.
\end{corollary}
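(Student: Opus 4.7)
The starting observation is that the hypotheses of Corollary \ref{cor:squeezing-induction-start} coincide with those of Lemma \ref{lem.squeezingsimplices} \emph{as $G$-sets}, up to a change of metric: when $N=0$, each $E_k$ is a $0$-dimensional $G$-simplicial complex whose isotropy lies in $\cF$, so $E_k = \coprod_{i \in I_k} G/H_i$ with $H_i \in \cF$, matching $\Delta_k$ from Lemma \ref{lem.squeezingsimplices} (since $\Delta^0$ is a point). The only difference is the metric: Theorem \ref{thm_squeezing} uses $k \cdot d^{\ell^1}$, which assigns distance $2k$ to distinct vertices, whereas Lemma \ref{lem.squeezingsimplices} assigns distance $\infty$. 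By the homotopy fiber sequence \eqref{squeezing.cofiniteequivalences}, the task reduces to proving $\KK^{-\infty}(\cB((E_k \times G)_k), h^{fin}) \simeq *$, and I will deduce this by comparing $\cB((E_k \times G)_k)$ with the variant $\cB'((E_k \times G)_k)$ which uses the $\infty$-metric and is covered by Lemma \ref{lem.squeezingsimplices}.

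The key geometric fact is that the uniform metric control condition with bound $R$ forces coincidence of $E_k$-coordinates as soon as $k > R/2$, because the $k \cdot d^{\ell^1}$ distance between distinct vertices is $2k > R$. Hence the inclusion of coarse structures $\JJ'((E_k)_k, E_\cF G) \hookrightarrow \JJ((E_k)_k, E_\cF G)$ differs only on finitely many components for any fixed control bound. Concretely, any object $Y = (Y_k)_k \in \cB$ with uniform bound $R$ admits a canonical decomposition $Y \cong Y_{\leq R/2} \vee Y_{>R/2}$ where $Y_{>R/2}$ already lies in $\cB'$ and $Y_{\leq R/2}$ is supported on only finitely many components, so that the projection $Y \twoheadrightarrow Y_{>R/2}$ is an $h^{fin}$-equivalence. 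The plan is then to organize these decompositions through the filtration $\cB = \colim_R \cB^{\leq R}$ by uniform control bound and use that $K$-theory commutes with filtered colimits to conclude that the inclusion $\cB' \hookrightarrow \cB$ induces a weak equivalence on $\KK^{-\infty}(-, h^{fin})$. Combining with Lemma \ref{lem.squeezingsimplices} then yields $\KK^{-\infty}(\cB, h^{fin}) \simeq *$, which is what we want.

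The main obstacle is making the passage from the object-level decomposition to a $K$-theoretic equivalence fully rigorous. The cutoff $R/2$ depends on the object's control bound and is therefore not literally functorial; cleanly packaging this should proceed either through an Approximation Theorem argument (factoring a morphism $f \colon \iota(Y) \to Y'$ through its truncation $Y'_{>R/2}$, using that $Y'_{>R/2} \hookrightarrow Y'$ is an $h^{fin}$-equivalence by construction) or through a direct cofinality argument using the filtration by control bound. Additionally, everything must be verified at each delooping level $\fZ(n)$ to obtain a statement for non-connective $K$-theory compatible with the structure maps; fortunately the extra $\RR^n$-factor does not interact with the discussion on $E_k$, so the argument goes through uniformly in $n$.
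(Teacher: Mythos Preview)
Your approach is correct and matches the paper's: compare the $k \cdot d^{\ell^1}$-metric with the $\infty$-metric via the inclusion functor $F \colon \cB((\Delta_k \times G)_k) \to \cB((E_k \times G)_k)$, show $\KK^{-\infty}(F)$ is a weak equivalence with respect to $h^{fin}$, and then invoke Lemma~\ref{lem.squeezingsimplices}. The paper commits to the Approximation Theorem route you suggest; one small correction to your sketch of the second part of the Approximation Property: given $f \colon F(Y^1) \to Y^2$ and $K$ large enough that $(Y^2_k)_{k>K}$ and $(f_k)_{k>K}$ are $0$-controlled over $E_k$, the paper splices $Y_k := Y^1_k$ for $k \leq K$ and $Y_k := Y^2_k$ for $k > K$, so that $f$ factors as $Y^1 \to Y \xrightarrow{\sim_{h^{fin}}} Y^2$ with $Y$ in the source category --- your pure truncation $Y'_{>R/2}$ does not give a factorization of $f$, since the low components of $f$ are lost.
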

\begin{proof}
 Since each $E_k$ is $0$-dimensional, it is a disjoint union of transitive $G$-sets, $E_k = \coprod_{i \in I_k} G/H_i$ with $H_i \in \cF$. Define $\Delta_k$ to be the simplicial complex $E_k$, equipped with the metric from Lemma \ref{lem.squeezingsimplices}. There is an exact functor $F \colon \cB((\Delta_k \times G)_k) \to \cB((E_k \times G)_k)$. We claim that this functor induces a weak equivalence
 \begin{equation*}
  \KK^{-\infty}(F) \colon \KK^{-\infty}(\cB((\Delta_k \times G)_k),h^{fin}) \xrightarrow{\sim} \KK^{-\infty}(\cB((E_k \times G)_k,h^{fin}).
 \end{equation*}
 Obviously, $F$ maps $h^{fin}$-equivalences to $h^{fin}$-equivalences. We claim that $F$ satisfies the Approximation Property.
 
 Let $f \colon (Y^1_k)_k \to (Y^2_k)_k$ be a morphism in $\cB((\Delta_k)_k)$ such that $F(f)$ is an $h^{fin}$-equivalence. Since we require uniform metric control, there is some $K > 0$ such that $(f_k)_{k > K}$ is an $h$-equivalence which is $0$-controlled over $(E_k)_k$. We can assume that $(f_k)_{k > K}$ has an inverse which is $0$-controlled over $(E_k)_k$, and that we can find homotopies between the compositions which are $0$-controlled over $(E_k)_k$ as well. Hence, $f$ is also an $h^{fin}$-equivalence in $\cB((\Delta_k \times G)_k)$.
 
 For the second part of the Approximation Property, let $Y^1 = (Y^1_k)_k \in \cB((\Delta_k \times G)_k)$, $Y^2 = (Y^2_k)_k \in \cB((E_k \times G)_k)$, and let $f = (f_k)_k \colon F((Y^1_k)_k) \to (Y^2_k)_k$ be a morphism in $\cB((E_k \times G)_k)$. Then there is some $K > 0$ such that $(Y^2_k)_{k > K}$ and $(f_k)_{k > K}$ are $0$-controlled over $(E_k)_k$. Define $Y = (Y_k)_k$ via $Y_k := Y^1_k$ for $k \leq K$ and $Y_k := Y^2_k$ for $k > K$. Then $f$ factors canonically as $Y^1 \to Y \to Y^2$, where the first morphism is $0$-controlled over $(E_k)_k$ and the latter morphism is an $h^{fin}$-equivalence. Since $Y$ is also $0$-controlled over $(E_k)_k$, this proves the Approximation Property.
 
 Hence, $\KK^{-\infty}(F)$ is a weak equivalence by the Approximation Theorem. The claim follows from Lemma \ref{lem.squeezingsimplices}.
\end{proof}

Suppose now that Theorem \ref{thm_squeezing} holds for $N$, and let $(E_k)_k$ be a sequence of $G$-simplicial complexes of dimension at most $N+1$. Consider for each $k$ the pushout diagram
\begin{equation}\label{squeezing.attachingsimplices}
 \commsquare{\coprod_{i \in I_k^N} G/H_i \times \partial\Delta^{N+1}}{\skel{E_k}{N}}{\coprod_{i \in I_k^N} G/H_i \times \Delta^{N+1}}{E_k}{}{}{}{}
\end{equation}
describing the attachment of the $(N+1)$-simplices of $E_k$.

\begin{lemma}\label{lem.squeezingexcision}
 Let $N \geq 0$. The commutative square of non-connective $K$-theory spectra
 \begin{equation*}
  \commsquare{\KK^{-\infty}(\big(\cB((\coprod_{i \in I_k^N} G/H_i \times \partial\Delta^{N+1} \times G)_k,h^{fin}\big)}{\KK^{-\infty}\big((\skel{E_k}{N} \times G)_k,h^{fin}\big)}{\KK^{-\infty}\big(\cB((\coprod_{i \in I_k^N} G/H_i \times \Delta^{N+1} \times G)_k,h^{fin}\big)}{\KK^{-\infty}\big((E_k \times G)_k,h^{fin}\big)}{}{}{}{}
 \end{equation*}
 induced by diagram (\ref{squeezing.attachingsimplices}) is a homotopy pullback square of spectra.
\end{lemma}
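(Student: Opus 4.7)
The plan is to realise the square of the lemma as an instance of the coarse Mayer--Vietoris Theorem~\ref{thm_mayervietoris}, adapted to sequence-indexed categories and the $h^{fin}$-equivalences. Set $Z := \coprod_k E_k \times G \times E_\cF G \times [1,\infty[$ and choose the closed $G$-invariant subspaces
\begin{equation*}
A_1 := \coprod_k \skel{E_k}{N} \times G \times E_\cF G \times [1,\infty[,
\end{equation*}
with $A_2 \subset Z$ defined analogously using the union of the closed $(N+1)$-cells of $E_k$ in place of $\skel{E_k}{N}$. Then $A_1 \cup A_2 = Z$ and $A_1 \cap A_2$ consists of the boundaries of the $(N+1)$-cells. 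Each intersected coarse structure $\JJ((E_k\times G)_k, E_\cF G) \cap A_j$ maps naturally to the coarse structure at the corresponding corner of the lemma's square via the attaching maps of the pushout~\eqref{squeezing.attachingsimplices}.

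The first step is to verify that $(A_1,A_2)$ is coarsely excisive and that $\JJ((E_k\times G)_k,E_\cF G)$ is $G$-proper with respect to $A_1$, $A_2$, and $A_1 \cap A_2$. Coarse excisiveness follows from $\ell^1$-convexity: a point in the interior of some $(N+1)$-cell within uniform metric distance $R$ of $\skel{E_k}{N}$ lies, at level $k$, within $\ell^1$-distance $R/k$ of the cell boundary, which is contained in $A_1 \cap A_2$. The equivariant coarse retractions witnessing $G$-properness are supplied by closest-point projections onto the respective subcomplexes; these remain uniformly bounded in $k$ because all $E_k$ have dimension at most $N+1$ and carry the scaled $\ell^1$-metric $k \cdot d^{\ell^1}$.

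The main step is to upgrade Theorem~\ref{thm_mayervietoris} to the sequence setting with $h^{fin}$-equivalences. I plan to re-run the proof of Theorem~\ref{thm_connectivemayervietoris} verbatim in this context: its only ingredients are Theorem~\ref{thm_connectivefibresequence}, Lemma~\ref{lem_equivalencesandexcision}, Proposition~\ref{prop:thickenings:restrictions}, and an Approximation Theorem argument built on the closest-point retraction. Each adapts once one observes that ``cofinal away from $A$'' is a levelwise condition, that uniform metric control is preserved by closest-point projection, and that $h^{fin}$-equivalences localise to cofinal tails in $k$. This last observation is also what identifies the support-restricted subcategories with the disjoint-union categories appearing in the lemma: for $k$ large, the uniform metric bound $R$ translates to $\ell^1$-distance $R/k$, which falls below the $\ell^1$-separation between distinct open $(N+1)$-cells and thereby identifies ``support in $A_2$'' with ``support in $\coprod_i G/H_i \times \Delta^{N+1}$''. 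Delooping via Section~\ref{sec_nonconnectiveA} then promotes the resulting homotopy pullback to the non-connective statement.

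The most delicate point of the argument will be this last identification: to show that every object of $\cB((E_k \times G)_k)$ with support in a thickening of $A_2$ is, after discarding finitely many initial levels, $h^{fin}$-equivalent to the pushforward of an object from $\cB((\coprod_i G/H_i \times \Delta^{N+1} \times G)_k)$. This uses both the uniform bound $\dim E_k \leq N+1$ and the $k$-scaling of the $\ell^1$-metric essentially, exactly as in the proof of Corollary~\ref{cor:squeezing-induction-start} where a similar identification is carried out in the zero-dimensional case.
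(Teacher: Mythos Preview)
Your approach is correct in spirit and rests on the same key geometric observation as the paper's proof, but the paper organizes the argument more directly and avoids the overhead of adapting Mayer--Vietoris to the $h^{fin}$ setting.

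The paper does not redo the coarse Mayer--Vietoris machinery. Instead, it applies Lemma~\ref{lem.squeezingcofibre} (stated immediately before) twice, once for the inclusion $\partial\Delta_k \subset \Delta_k$ and once for $\skel{E_k}{N} \subset E_k$, to obtain two homotopy fiber sequences. The attaching maps of diagram~\eqref{squeezing.attachingsimplices} induce a map between these fiber sequences, so the square of the lemma is a homotopy pullback if and only if the induced map on homotopy cofibers
\begin{equation*}
 \KK^{-\infty}(\cB((\Delta_k \times G)_k), h^{\partial,fin}) \to \KK^{-\infty}(\cB((E_k \times G)_k), h^{N,fin})
\end{equation*}
is a weak equivalence. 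This single identification is then proved via the Approximation Theorem, using exactly the $\ell^1$-distance argument you flagged as the delicate point: if a morphism is $R$-controlled, then cells whose $E_k$-coordinate lies at distance greater than $2R$ from the boundary of an $(N{+}1)$-simplex cannot leave that simplex (invoking \cite[Lemma~7.15]{BLR2008}). One then passes to the cofinal subobjects $Y^1(6R)$ and $Y^2(4R)$ supported well away from the boundary to verify both halves of the Approximation Property.

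Your route would eventually unpack to the same picture, since the proof of Theorem~\ref{thm_connectivemayervietoris} itself goes through fiber sequences whose $h^{fin}$ analog is precisely Lemma~\ref{lem.squeezingcofibre}. But it carries extra baggage: you would need separate corner identifications for both $A_2$ and $A_1 \cap A_2$ (each requiring an Approximation-type argument comparing the $E_k$-metric with the infinite-distance metric on $\Delta_k$), whereas the paper handles a single cofiber map. Your checks of coarse excisiveness, $G$-properness, and the closest-point retraction are simply not needed in the paper's formulation.
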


Lemma \ref{lem.squeezingexcision} provides the induction step: The top left and top right corners of the square from Lemma \ref{lem.squeezingexcision} are weakly contractible by the induction hypothesis. The bottom left corner is weakly contractible by Lemma \ref{lem.squeezingsimplices}. Hence, the bottom right corner is also weakly contractible, and Theorem \ref{thm_squeezing} follows.

In the rest of this section, we prove Lemma \ref{lem.squeezingexcision}.

\begin{lemma}\label{lem.squeezingcofibre}
 Let $(M_k)_k$ be a sequence of metric spaces with free, isometric $G$-action, and let $X_k \subset M_k$ be $G$-invariant, closed subspaces.
 
 Define $X := \coprod_k X_k \times G \times E_\cF G \times [1,\infty[$.
 Let $h^X\cB((M_k)_k)$ be the subcategory of controlled homotopy equivalences away from $X$.
 Let $h^{X,fin}\cB((M_k)_k)$ denote the subcategory of those morphisms $f \colon (Y^1_k)_k \to (Y^2_k)_k$ for which
 there is some $K \in \NN$ such that the induced morphism $(f_k)_{k > K} \colon (Y^1_k)_{k > K} \to (Y^2_k)_{k > K}$ is an $h^X$-equivalence.
  
 Then there is a homotopy fiber sequence
 \begin{equation*}
  \KK^{-\infty}(\cB((X_k)_k),h^{fin}) \to \KK^{-\infty}(\cB((M_k)_k),h^{fin}) \to \KK^{-\infty}(\cB((M_k)_k),h^{X,fin}).
 \end{equation*}
\end{lemma}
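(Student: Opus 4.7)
My approach is to adapt the proof of Theorem~\ref{thm_fibresequence} to the eventual (``$\fin$'') setting, combining the Modified Fibration Theorem with the delooping machinery of Section~\ref{sec_nonconnectiveA} and an Approximation-style identification of the fiber.

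First I would apply the Modified Fibration Theorem~\ref{prop_modifiedfibrationtheorem} to $\cB((M_k)_k)$ with the pair of classes of weak equivalences $h^{fin} \subset h^{X,fin}$; Saturation and the Cylinder Axiom are inherited from the corresponding properties of $h$ and $h^X$ on each factor $\cR^G_f(W, \JJ(M_k, E_\cF G))$, because being eventually an equivalence is closed under three-out-of-two and under the componentwise mapping cylinder construction. This produces a connective homotopy fiber sequence whose middle and right terms are $h^{fin}S_\bullet\cB((M_k)_k)$ and $h^{X,fin}S_\bullet\cB((M_k)_k)$ and whose left term is $hS_\bullet\cB((M_k)_k)^{h^{X,fin}}$. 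The delooping via the coarse structures $\fZ(n)$ of Section~\ref{sec_nonconnectiveA} then applies verbatim: the Eilenberg swindles in the $\RR_\pm$-direction act only in the added $\RR^n$-coordinates and so preserve both $h^{fin}$ and $h^{X,fin}$-equivalences. By Proposition~\ref{prop_connectiveeilenbergswindle} one obtains the corresponding non-connective fiber sequence, with left term $\KK^{-\infty}(\cB((M_k)_k)^{h^{X,fin}}, h^{fin})$.

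The essential remaining step is to identify this left term with $\KK^{-\infty}(\cB((X_k)_k), h^{fin})$ via the natural inclusion functor. For this I would promote the combination of Proposition~\ref{prop:thickenings:restrictions} and the Approximation-Theorem step of Theorem~\ref{thm_connectivefibresequence} to the uniform setting. Given an $h^{X,fin}$-contractible $(Y_k)_k$, past some index $K$ each $Y_k$ is $h^X$-contractible in $\cR^G_f(W, \JJ(M_k, E_\cF G))$; componentwise application of the CHEP replaces it by a subcomplex supported on a $\fZ$-thickening of $X$, and the $G$-properness of $\JJ((M_k)_k, E_\cF G)$ with respect to $X$ (which holds in the cases of interest for Lemma~\ref{lem.squeezingexcision}) then permits a $G$-equivariant relabelling which moves the support into $X$ itself. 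Remark~\ref{remark:nonconnective-finite-fd} lets us pass freely between $\cR^G_f$ and $\cR^G_{fd}$, bypassing the $K_0$-bookkeeping of the purely connective case.

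The principal obstacle is ensuring uniformity in $k$: the thickenings, CHEP homotopies, and $G$-properness retractions must all respect the uniform metric control condition built into $\fC((M_k)_k, E_\cF G)$. The key observation that unlocks this is that a single morphism in $\cB((M_k)_k)$ is witnessed by a single control condition with a single metric bound $R$ valid across all components, so the componentwise constructions can be carried out with $k$-independent parameters. Once this uniformity is in place, the usual Approximation Theorem argument yields the desired $K$-theoretic equivalence, completing the identification of the fiber and hence the proof.
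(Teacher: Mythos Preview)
Your approach is correct in outline, but it takes a genuinely different route from the paper's proof, and the paper's route is considerably more economical.

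The paper does not redo the Fibration/Approximation argument in the uniform setting at all. Instead, it assembles a $3 \times 3$ diagram of non-connective $K$-theory spectra whose rows are indexed by the three classes $h$, $h^{fin}$, $h^{X,fin}$ (actually $h$, $h^X$ for the columns and $\cB_{fin}$, $\cB$ for the rows, with the $h^{fin}$ and $h^{X,fin}$ terms appearing as the right column and bottom row). The left and middle columns are homotopy fiber sequences by Theorem~\ref{thm_fibresequence} applied separately to $\cB_{fin}((M_k)_k)$ and $\cB((M_k)_k)$; the top and middle rows are instances of the already-established sequence~\eqref{squeezing.cofiniteequivalences}; the bottom row is a fiber sequence by the same argument as~\eqref{squeezing.cofiniteequivalences} with $h^X$ in place of $h$. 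The right column, which is the desired sequence, then follows from a standard diagram chase.

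What your approach buys is a self-contained argument that does not need the auxiliary sequence~\eqref{squeezing.cofiniteequivalences} for $h^X$-equivalences; what the paper's approach buys is that it avoids entirely the delicate uniformity analysis you flag in your last paragraph. In particular, your identification of the fiber requires handling the first $K$ components (where nothing is assumed $h^X$-contractible) separately from the tail, and you must verify that the CHEP and $G$-properness constructions can be carried out with a single metric bound across all $k$. These steps go through, but they amount to reproving Theorem~\ref{thm_fibresequence} in the product setting, whereas the paper simply quotes it twice and pieces the answers together. Note also that both arguments implicitly require $G$-properness of $\JJ((M_k)_k, E_\cF G)$ with respect to $X$, which is not stated in the lemma but holds in the applications.
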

\begin{proof}
 Consider the commutative diagram
 \begin{equation*}
  \begin{tikzpicture}
   \matrix (m) [matrix of math nodes, column sep=2em, row sep=2em, text depth=.5em, text height=1em]
   {\KK^{-\infty}( \cB_{fin}((X_k)_k, h )   & \KK^{-\infty}( \cB((X_k)_k, h )   & \KK^{-\infty}( \cB((X_k)_k, h^{fin} )   \\
    \KK^{-\infty}( \cB_{fin}((M_k)_k, h )   & \KK^{-\infty}( \cB((M_k)_k, h )   & \KK^{-\infty}( \cB((M_k)_k, h^{fin} )   \\
    \KK^{-\infty}( \cB_{fin}((M_k)_k, h^X ) & \KK^{-\infty}( \cB((M_k)_k, h^X ) & \KK^{-\infty}( \cB((M_k)_k, h^{X,fin} ) \\};
   \path[->]
    (m-1-1) edge (m-1-2) edge (m-2-1)
    (m-1-2) edge (m-1-3) edge (m-2-2)
    (m-1-3) edge (m-2-3)
    (m-2-1) edge (m-2-2) edge (m-3-1)
    (m-2-2) edge (m-2-3) edge (m-3-2)
    (m-2-3) edge (m-3-3)
    (m-3-1) edge (m-3-2)
    (m-3-2) edge (m-3-3);
  \end{tikzpicture}
 \end{equation*}
 in which all maps are induced by the appropriate inclusion functors. The left and middle columns are homotopy fiber sequences by Theorem \ref{thm_fibresequence}.
 The top and middle rows are instances of the homotopy fiber sequence \eqref{squeezing.cofiniteequivalences}.
 By a straightforward modification of the argument for \eqref{squeezing.cofiniteequivalences}, the bottom row is also a homotopy fiber sequence. Hence, the right column is a homotopy fiber sequence as claimed.
\end{proof}

\begin{proof}[Proof of Lemma \ref{lem.squeezingexcision}]
 Let $\Delta_k := \coprod_{i \in I_k^N} G/H_i \times \Delta^{N+1}$ and $\partial\Delta_k := \coprod_{i \in I_k^N} G/H_i \times \partial\Delta^{N+1}$. Equip $\Delta_k$ with the metric from Lemma \ref{lem.squeezingsimplices}. The inclusion of metric spaces $\partial\Delta_k \times G \subset \Delta_k \times G$ gives rise to a class of weak equivalences $h^{\partial,fin}\cB((\Delta_k \times G)_k)$ and a corresponding homotopy fiber sequence as in Lemma \ref{lem.squeezingcofibre}. Similarly, $\skel{E_k}{N} \times G \subset E_k \times G$ gives rise to a class of weak equivalences $h^{N,fin}\cB((E_k \times G)_k)$ and a corresponding homotopy fiber sequence.
 
 Diagram \eqref{squeezing.attachingsimplices} induces a map between these homotopy fiber sequences. To prove the lemma, it suffices to show that the induced map on the homotopy cofibers
 \begin{equation*}
  \KK^{-\infty}(\cB((\Delta_k \times G)_k), h^{\partial,fin}) \to \KK^{-\infty}(\cB((E_k \times G)_k), h^{N,fin})
 \end{equation*}
 is a weak equivalence. Note that this map is induced by an exact functor $F$, namely the one induced by the characteristic maps of the $(N+1)$-simplices.
 The claim is that the Approximation Theorem applies again, but as for Corollary \ref{cor:squeezing-induction-start}, we have to prove both parts of the Approximation Property.
 
 We start with a preliminary observation: Let $(Y^1,\kappa^1)$ and $(Y^2,\kappa^2)$
 be objects in $\cB((E_k \times G)_k)$, and let $f \colon Y^1 \to Y^2$ be a morphism.
 Suppose that $f$ is $R$-controlled, and let $e$ be a cell in $Y^1_k$ such that the
 $E_k$-component $x$ of $\kappa^1(e)$ is a point in an $(N+1)$-simplex $\sigma$.
 Let $e'$ be a cell in $Y^2_k$ with $e' \subset \gen{f(e)}$, and suppose that the $E_k$-component $y$ of $\kappa^2(e')$ does not lie in $\sigma$.
 Then $k d^{\ell^1}(x,y) \leq R$. According to \cite[Lemma 7.15]{BLR2008}, there is a point $z$ on the boundary of $\sigma$ such that $k d^{\ell^1}(x,z) \leq 2R$.
 Hence, if the distance of $x$ to the boundary of $\sigma$ is greater than $2R$, then for every cell $e'$ in $Y^2_k$ with $e' \subset \gen{f(e)}$,
 the $E_k$-component of $\kappa^2(e')$ also lies in $\sigma$.
 
 Let us now turn to the first part of the Approximation Property. Let $f \colon Y^1 \to Y^2$ be a morphism in $\cB((\Delta_k \times G)_k)$ such that $F(f)$ is an $h^{N,fin}$-equivalence.
 Choose $R > 0$ such that $Y^1$, $Y^2$ and $f$ are all $R$-controlled, and such that further $F(f)$ has a (partially defined) homotopy inverse and homotopies which are also $R$-controlled.
 Let $Y^1_k(6R)$ be the subobject of $Y^1_k$ spanned by those cells $e \in \cells Y^1_k$ such that the $\Delta_k$-component of $\kappa^1(e)$ has distance at least $6R$ to $\partial\Delta_k$.
 If $e$ is any cell in $Y^1_k$, the $E_k$-component of $\kappa^1(e)$ has distance at least $5R$ to the boundary of the $(N+1)$-simplex in which it lies;
 to see this, combine the preliminary observation with the fact that $Y^1$ is $R$-controlled.
 Since $Y^1(6R) := (Y^1_k(6R))_k \subset Y^1$ is cofinal away from $\coprod_k \partial\Delta_k \times G \times E_\cF G \times [1,\infty[$, the inclusion is an $h^\partial$-equivalence. In particular, it is an $h^{\partial,fin}$-equivalence.
 We can similarly define a subcomplex $Y^2(4R) \subset Y^2$, and this inclusion is also an $h^{\partial,fin}$-equivalence.
 
 Since $f$ is $R$-controlled, there is an induced morphism $f' \colon Y^1(6R) \to Y^2(4R)$.
 The morphism $F(f')$ is still an $h^{N,fin}$-equivalence; the inverse and homotopies arise by restricting the inverse and homotopies of $F(f)$ to appropriate cofinal subcomplexes.
 Hence, they are still $R$-controlled. It follows that they do not cross the boundaries of simplices, so they lift to $\cB((\Delta_k \times G)_k)$.
 This shows that $f'$ is an $h^{\partial,fin}$-equivalence.
 
 For the second part of the Approximation Property, let $Y^1 \in \cB((\Delta_k \times G)_k)$, $Y^2 \in \cB((E_k \times G)_k)$, and let $f \colon F(Y^1) \to Y^2$ be a morphism in $\cB((E_k \times G)_k)$.
 The argument is similar to the first part. Again, choose $R > 0$ such that $Y_1$, $Y_2$ and $f$ are all $R$-controlled.
 Then $Y^1(6R)$, defined as before, is a subcomplex of $Y^1$ which is cofinal away from $\coprod_k \partial\Delta_k \times G \times E_\cF G \times [1,\infty[$;
 similarly, $Y^2(4R)$ is a subcomplex of $Y^2$ which is cofinal away from $\coprod_k \skel{E_k}{N} \times G \times E_\cF G \times [1,\infty[$.
 Moreover, $Y^2(4R)$ is supported on the interiors of the $(N+1)$-simplices, and if $e$ is a cell in $Y^2(4R)$, the subcomplex $\langle e \rangle$ spanned by $e$ is based on the same simplex as $e$ by the preliminary observation. Since the characteristic maps of the $(N+1)$-simplices are homeomorphisms on the interiors (and restrict to isometries on individual simplices), we can lift $Y^2(4R)$ to an object in $\cB((\Delta_k \times G)_k)$. Now define $Y$ to be the pushout of $Y^1 \leftarrowtail Y^1(6R) \xrightarrow{f|_{Y^1(6R)}} Y^2(4R)$ in $\cB((\Delta_k \times G)_k)$. Since the inclusion $Y^1(6R) \rightarrowtail Y^1$ is an $h^{\partial,fin}$-equivalence,
 the canonical inclusion $Y^2(4R) \rightarrowtail Y$ is also an $h^{\partial,fin}$-equivalence. Since $F$ is exact, $F(Y)$ is the pushout of $F(Y^1) \leftarrowtail F(Y^1(6R)) \to F(Y^2(4R))$.
 Let $g \colon F(Y) \to Y^2$ be the map induced by the universal property of the pushout. Since
 \begin{equation*}
  \commtriangle{Y^2(4R)}{Y^2}{F(Y)}{\sim}{\sim}{g}
 \end{equation*}
 commutes, $g$ is an $h^{N,fin}$-equivalence by the Saturation Axiom. We conclude that
 \begin{equation*}
  \commtriangle{F(Y^1)}{Y^2}{F(Y)}{f}{}{g}
 \end{equation*}
 is the required factorization. So the Approximation Property holds, and we are done. 
\end{proof}

\section{Applications}\label{sec_applications}
To conclude, we turn to some applications of Theorem \ref{thm_fhmethod}.
As an immediate corollary, we obtain Theorem \ref{intro_finitegroups}, which gives a description of the $A$-theory of spaces with finite fundamental group.

\begin{theorem}\label{thm_FJCFiniteGroups}
 Let $V$ be a connected CW-complex with finite fundamental group $G$. Let $\tilde{V}$ be the universal cover of $V$.
 Denote by $\cD$ the family of Dress subgroups of $G$. Then the Davis--L\"uck assembly map
 \begin{equation*}
  \HH^G(E_\cD G; \Aa^{-\infty}_{\tilde{V}}) \to \Aa^{-\infty}(V)
 \end{equation*}
 is a weak equivalence.
\end{theorem}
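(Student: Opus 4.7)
The plan is to deduce Theorem~\ref{thm_FJCFiniteGroups} as a direct consequence of Theorem~\ref{thm_fhmethod}. Thus it suffices to exhibit a finite symmetric generating set $S$ of $G$ (for instance $S := G$ itself) such that $(G,S)$ is a Dress--Farrell--Hsiang group with respect to the family $\cD$ of Dress subgroups. Once this is known, Theorem~\ref{thm_fhmethod} applied to the free $G$-CW-complex $W := \tilde V$ yields that the assembly map
\begin{equation*}
 \HH^G(E_\cD G;\Aa^{-\infty}_{\tilde V}) \to \Aa^{-\infty}(G \backslash \tilde V) \cong \Aa^{-\infty}(V)
\end{equation*}
is a weak equivalence, which is exactly the claim.

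To verify the Dress--Farrell--Hsiang condition, I would take $N := 0$ and, for every $\epsilon > 0$, choose $\pi := \id_G \colon G \twoheadrightarrow G$; note that $G$ is finite, so this is an admissible choice. Then for any Dress subgroup $D \leq G$ we have $\overline D = \pi^{-1}(D) = D \in \cD$, and we let $E_D$ be the one-point $D$-simplicial complex: this is $0$-dimensional, its unique isotropy group is $D$ itself and hence lies in $\cD$. The unique equivariant map $\phi_D \colon G \to E_D$ is constant, so $d^{\ell^1}(\phi_D(g),\phi_D(g')) = 0 \leq \epsilon$ for every pair $g,g' \in G$, in particular whenever $g^{-1}g' \in S$. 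All conditions in Definition~\ref{def_dfhgroup} are therefore satisfied, and the theorem follows. There is no genuine obstacle here; the only ``content'' is the observation that the Dress--Farrell--Hsiang condition trivializes for finite $G$ once one enlarges the family of allowed isotropy groups from $\{1\}$ (as in the classical Farrell--Hsiang setting) to the Dress subgroups, which is precisely the flexibility that Definition~\ref{def_dfhgroup} was designed to provide.
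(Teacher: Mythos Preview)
Your proof is correct and is essentially identical to the paper's own argument: the paper also takes $\pi = \id_G$ and sends each $\phi_D$ to a point, then invokes Theorem~\ref{thm_fhmethod} with $W = \tilde V$. You have simply spelled out a few more details (the choice of $S$, $N=0$, and the verification of the metric condition) that the paper leaves implicit.
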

\begin{proof}
 The group $G$ is Dress--Farrell--Hsiang with respect to $\cD$: For every $\epsilon > 0$,
 choose $\pi = \id_G$ and let $f_D$ be the projection onto a point for all $D \in \cD$.
 Now apply Theorem \ref{thm_fhmethod} with $W = \tilde{V}$.
\end{proof}

Our ultimate goal is the proof of Theorem \ref{intro_poly-z-groups}. Formally, everything we do is very close to the treatment in \cite{BFL2014}. This involves a rather intricate induction process which relies on a number of inheritance properties of the isomorphism conjecture. These will be established along the way. The reader is encouraged to refer to {\it loc.\ cit.} for definitions.

\begin{proposition}[Transitivity Principle]\label{prop_transitivity}
 Let $\cF_0 \subset \cF_1$ be two families of subgroups of $G$.
 Suppose that $G$ satisfies the \fic\ in $A$-theory with respect to $\cF_1$,
 and that every $H \in \cF_1$ satisfies the \fic\ in $A$-theory
 with respect to $\cF_0|_H := \{ H \cap K \mid K \in \cF_0 \}$.
 
 Then $G$ satisfies the \fic\ in $A$-theory with respect to $\cF_0$. 
\end{proposition}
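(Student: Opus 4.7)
The plan is to factor the $\cF_0$-assembly map through the $\cF_1$-assembly. The inclusion $\cF_0 \subset \cF_1$ yields a $G$-map $f \colon E_{\cF_0} G \to E_{\cF_1} G$, unique up to $G$-homotopy, and composing $f$ with the projection to $G/G$ recovers $E_{\cF_0} G \to G/G$. Applying $\HH^G(-; \Aa^{-\infty}_W)$ therefore expresses the $\cF_0$-assembly as
\begin{equation*}
 \alpha_{\cF_0, W} \colon \HH^G(E_{\cF_0} G; \Aa^{-\infty}_W) \xrightarrow{f_*} \HH^G(E_{\cF_1} G; \Aa^{-\infty}_W) \xrightarrow{\alpha_{\cF_1, W}} \Aa^{-\infty}(G \backslash W).
\end{equation*}
The second arrow is a weak equivalence by hypothesis, so it suffices to prove that $f_*$ is a weak equivalence.

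To do this, I would replace $E_{\cF_0} G$ by the $G$-homotopy equivalent model $E_{\cF_0} G \times E_{\cF_1} G$ (whose $H$-fixed sets are contractible for $H \in \cF_0$ and empty otherwise, so that it again represents $E_{\cF_0} G$); under this identification $f$ becomes the projection onto the second factor. Consider the class $\cP$ of $G$-CW-complexes $X$ for which the projection $X \times E_{\cF_0} G \to X$ induces a weak equivalence on $\HH^G(-; \Aa^{-\infty}_W)$. Using the Mayer--Vietoris axiom \eqref{ghomology.mayervietoris} and the colimit axiom \eqref{ghomology.colimits} of Theorem \ref{thm_a-homology}, the standard argument shows that $\cP$ is closed under $G$-CW-pushouts, transfinite compositions and arbitrary $G$-CW-coproducts. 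A cellular induction along $E_{\cF_1} G$ then reduces the claim to showing that $G/H \in \cP$ for every $H \in \cF_1$.

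For such $H$, we have a $G$-homeomorphism $G/H \times E_{\cF_0} G \cong G \times_H \res_G^H E_{\cF_0} G$. Since families are closed under taking subgroups, $\res_G^H E_{\cF_0} G$ is a model for $E_{\cF_0|_H} H$: its $H$-isotropy groups have the form $H \cap G_x$ with $G_x \in \cF_0$, which lie in $\cF_0|_H$, and its fixed-point sets for any $H' \in \cF_0|_H$ coincide with $(E_{\cF_0} G)^{H'}$, which is contractible because $H' \in \cF_0$. Invoking the standard induction structure for Davis--L\"uck $G$-homology theories --- whose coefficient compatibility at the orbit level is precisely the identification $\Aa^{-\infty}_W(G/K) \simeq \Aa^{-\infty}(K \backslash W) \simeq \Aa^{-\infty}_{\res_G^H W}(H/K)$ for $K \leq H$ provided by Theorem \ref{thm_coefficients} --- the projection $G \times_H \res_G^H E_{\cF_0} G \to G/H$ induces the $H$-equivariant assembly map
\begin{equation*}
 \alpha_{\cF_0|_H, \res_G^H W} \colon \HH^H(E_{\cF_0|_H} H; \Aa^{-\infty}_{\res_G^H W}) \to \Aa^{-\infty}(H \backslash \res_G^H W),
\end{equation*}
which is a weak equivalence by the assumption on $H$, using that $\res_G^H W$ is a free $H$-CW-complex since $W$ is a free $G$-CW-complex. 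I expect the induction-structure identification to be the main formal obstacle: while the statement is standard for Davis--L\"uck theories, one should verify that the equivalences involved are sufficiently natural to identify our map on the nose with $\alpha_{\cF_0|_H, \res_G^H W}$.
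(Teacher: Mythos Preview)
Your proposal is correct and follows essentially the same strategy as the paper: reduce to showing that the projection $E_{\cF_0} G \times X \to X$ induces an equivalence for all $X$ with isotropy in $\cF_1$, do a cellular induction to reduce to orbits $G/H$ with $H \in \cF_1$, and then identify the resulting map with the $H$-equivariant assembly for $\cF_0|_H$. The paper handles precisely the ``formal obstacle'' you flag by working directly with $Or(G)$-spectra: rather than invoking a general induction structure, it observes that $W \times_G \ind_H^G(H/L) \cong (\res_G^H W) \times_H H/L$ yields $\Aa^{-\infty}_W \circ \ind_H^G \simeq \Aa^{-\infty}_{\res_G^H W}$ as $Or(H)$-spectra, and combines this with the balanced-product isomorphism $\HH^H(\res_G^H E;\EE\circ\ind_H^G)\cong\HH^G(G/H\times E;\EE)$ (as in \cite[Proposition~157]{LR2005}) to make the identification natural enough.
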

\begin{proof}
 The proof is analogous to the linear case. However, the published proofs (e.g., \cite[Theorem 2.4]{BL2006}, \cite[Theorem 3.3]{BEL2008})
 all rely on the formalism of equivariant homology theories.
 Since we want to avoid a discussion to which extent the homology theories associated to $A$-theory spectra
 form equivariant homology theories, we give a proof using the language of $Or(G)$-spectra.
 
 Let $\EE$ be an arbitrary $Or(G)$-spectrum, and let $E$ be a $G$-CW-complex. Observe that $G/H \times E$
 is naturally $G$-homeomorphic to $\ind_H^G \res^H_G E = G \times_H \res_G^H E$. Induction defines a functor $\ind_H^G \colon Or(H) \to Or(G)$, so we obtain an $Or(H)$-spectrum $\EE \circ \ind_H^G$. The same arguments as in the proof of Proposition 157 of \cite{LR2005} show that there is a natural isomorphism
 \begin{equation*}
  \HH^H(\res_G^H E;\EE \circ \ind_H^G) \cong \HH^G(\ind_H^G \res^H_G E;\EE).
 \end{equation*}
 Now let $\Aa^{-\infty}_W$ be the $Or(G)$-spectrum from Section \ref{sec_assembly} associated to a free $G$-CW-complex $W$.
 Since $W \times_G (\ind_H^G H/L) \cong \res_G^H W \times_H H/L$, we have
 \begin{equation*}
  \HH^G(G/H \times E;\Aa^{-\infty}_W) \cong \HH^H(\res_G^H E;\Aa^{-\infty}_{\res_G^H W}).
 \end{equation*}
 In particular, the map $\HH^G(G/H \times E_{\cF_0} G;\Aa^{-\infty}_W) \to \HH^G(G/H;\Aa^{-\infty}_W)$ induced by the projection map
 is weakly equivalent to the map
 \begin{equation*}
  \HH^H(\res_G^H E_{\cF_0} G;\Aa^{-\infty}_{\res_G^H W}) \to \HH^H(H/H;\Aa^{-\infty}_{\res_G^H W}) = \Aa^{-\infty}(\res_G^H W/H).
 \end{equation*}
 Since $\res_G^H W$ is a free $H$-CW-complex and $\res_G^H E_{\cF_0} G = E_{\cF_0|_H} H$, this map is an equivalence for all $H \in \cF_1$ by assumption.
 It follows that the maps
 \begin{equation*}
  \HH^G \big(\coprod_i E_{\cF_0} G \times G/H_i \times D^n;\Aa^{-\infty}_W \big) \to \HH^G \big(\coprod_i G/H_i \times D^n;\Aa^{-\infty}_W \big)
 \end{equation*}
 are weak equivalences whenever $H_i$ lies in $\cF_1$ for all $i$ because $G/H_i \times D^n$ is $G$-homotopy equivalent to $G/H_i$
 and the homology theory under consideration commutes with coproducts. By an induction along the skeleta,
 it follows that the projection map $E_{\cF_0} G \times X \to X$ induces an equivalence in $\HH^G(-,\Aa^{-\infty}_W)$
 for every finite-dimensional $G$-CW-complex $X$ whose isotropy groups lie in $\cF_1$. Since homology commutes with filtered colimits, the same holds for all $G$-CW-complexes $X$ whose isotropy groups lie in $\cF_1$.
 
 In particular, we can pick $X = E_{\cF_1} G$. Then $E_{\cF_0} G \times E_{\cF_1} G$ is $G$-homotopy equivalent to $E_{\cF_0} G$,
 so we conclude that the $G$-map $E_{\cF_0} G \to E_{\cF_1} G$ (which is unique up to $G$-homotopy) induces a weak equivalence.
 This implies the claim.
\end{proof}

\begin{proposition}\label{prop_fibering}
 Let $\phi \colon K \to G$ be a group homomorphism. Suppose that $G$ satisfies the \fic\
 in $A$-theory with respect to the family $\cF$ of subgroups of $G$.
 
 Then $K$ satisfies the \fic\ in $A$-theory with respect to the family of subgroups
 \begin{equation*}
  \phi^*\cF := \{ \phi^{-1}(H) \mid H \in \cF \}.
 \end{equation*}
\end{proposition}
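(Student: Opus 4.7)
The plan is to decompose $\phi$ into an injection and a surjection and to handle each case via a Shapiro-type identification of equivariant homology theories. By Corollary \ref{cor_identificationassemblymap}, the conclusion is equivalent to the weak contractibility of $\FF(K, W', E_{\phi^*\cF} K)$ for every free $K$-CW-complex $W'$. Writing $\phi = \iota \circ q$ with $q \colon K \twoheadrightarrow K/\ker\phi$ the quotient and $\iota \colon K/\ker\phi \hookrightarrow G$ the inclusion of the image, one has $\phi^*\cF = q^*(\iota^*\cF)$, so it suffices to prove inheritance of the FIC separately under injections and under surjections.

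\emph{Injection case:} Assume $K \leq G$, and given a free $K$-CW-complex $W'$, set $\tilde W := G \times_K W'$, which is a free $G$-CW-complex. I would establish a Shapiro-type natural equivalence of $G$-equivariant homology theories
\[
\HH^G(X; \Aa^{-\infty}_{\tilde W}) \simeq \HH^K(\res^K_G X; \Aa^{-\infty}_{W'}).
\]
Both sides are $G$-homology theories in $X$ (the right one since $\res^K_G$ preserves homotopy equivalences, pushouts and coproducts), and they agree on orbits $G/H$: as a $K$-set $\res^K_G G/H \cong \coprod_{[g] \in K \backslash G / H} K/(K \cap gHg^{-1})$, and this matches the analogous decomposition of $H \backslash \tilde W$ under the bijection $[g] \leftrightarrow [g^{-1}]$, while $A$-theory of a disjoint union of spaces is the corresponding wedge of spectra, since finite retractive spaces are supported on only finitely many components. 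Since $\cF$ is closed under taking subgroups, $\res^K_G E_\cF G$ is a model for $E_{\cF|_K} K$; combining this with the FIC for $G$ with coefficients $\tilde W$ and the Shapiro equivalence above yields the FIC for $K$ with respect to $\iota^*\cF = \cF|_K$.

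\emph{Surjection case:} Assume $\phi \colon K \twoheadrightarrow G$ with kernel $N$, and given a free $K$-CW-complex $W'$, set $\tilde W := N \backslash W'$. Since $N$ acts freely on $W'$, $\tilde W$ is a free $G$-CW-complex with $G \backslash \tilde W = K \backslash W'$. For any $L \in \phi^*\cF$ we have $L \supseteq N$, and the canonical map $L \backslash W' \to \phi(L) \backslash \tilde W = (L/N) \backslash (N \backslash W')$ is a homeomorphism. By comparison on orbits $K/L$ with $L \in \phi^*\cF$, this identification extends to an equivalence of $K$-equivariant homology theories on $K$-CW-complexes with isotropy in $\phi^*\cF$,
\[
\HH^K(X'; \Aa^{-\infty}_{W'}) \simeq \HH^G(\ind_\phi X'; \Aa^{-\infty}_{\tilde W}).
\]
Applied to $X' = E_{\phi^*\cF} K$, the target becomes $\HH^G(\ind_\phi E_{\phi^*\cF} K; \Aa^{-\infty}_{\tilde W})$; since $\ind_\phi E_{\phi^*\cF} K$ has isotropy in $\cF$, the canonical $G$-map to $E_\cF G$ induces an equivalence in $\HH^G(-; \Aa^{-\infty}_{\tilde W})$ by the Transitivity Principle \ref{prop_transitivity}, and the FIC for $G$ with coefficients $\tilde W$ then completes the chain to an equivalence with $\Aa^{-\infty}(K \backslash W')$.

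The main obstacle in both cases will be constructing the natural equivalence of equivariant homology theories as a concrete map of $Or$-spectra compatible with the model of the Davis--L\"uck construction used here, rather than merely matching the two theories on orbits. Once these Shapiro-type comparisons are in place, combining the two cases via the factorization $\phi = \iota \circ q$ yields the proposition in full generality.
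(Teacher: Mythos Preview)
Your approach is essentially correct but more circuitous than the paper's, and contains one misattribution.

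The paper does not factor $\phi$ at all. It proves, for an arbitrary homomorphism $\phi \colon K \to G$, a single Shapiro-type equivalence
\[
 \HH^K(\res_\phi X;\EE) \simeq \HH^G(X;\tilde{\EE}\circ\res_\phi)
\]
for any $Or(K)$-spectrum $\EE$ that extends to $K$-sets and preserves coproducts (this is the formula from \cite{MR2294225}). It then observes that $\Aa^{-\infty}_W \circ \res_\phi \simeq \Aa^{-\infty}_{\ind_\phi W}$ because $W \times_K \res_\phi(G/H) \cong (\ind_\phi W) \times_G G/H$, and that $\res_\phi E_\cF G$ is a model for $E_{\phi^*\cF} K$. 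Your two cases are precisely the specializations of this: for an injection $\ind_\phi W' = G \times_K W'$, and for a surjection with kernel $N$ one has $\ind_\phi W' \cong N \backslash W'$. So your two Shapiro equivalences are instances of the same formula, and the factorization buys nothing.

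In your surjection case, the appeal to the Transitivity Principle is misplaced. That principle compares assembly maps for two nested families and requires the FIC for the groups in the larger family; it does not assert that any $G$-complex with isotropy in $\cF$ has the same homology as $E_\cF G$. What you actually need (and what is true) is that $\ind_\phi E_{\phi^*\cF} K$ is itself a model for $E_\cF G$: every $L \in \phi^*\cF$ contains $N$, so $N$ acts trivially on $E_{\phi^*\cF} K$, and for $H \leq G$ the $H$-fixed set equals $(E_{\phi^*\cF} K)^{\phi^{-1}(H)}$, which is contractible precisely when $H \in \cF$. With this correction your argument goes through, though the paper's uniform treatment is shorter and avoids the case split entirely.
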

\begin{proof}
 Let $\tilde{\EE}$ be a functor from the category of $K$-sets to the category of spectra; let $\EE$ denote
 its restriction to $Or(K)$. It has been shown in the proof of Proposition 4.2 of \cite{MR2294225} that there is for every $G$-CW-complex $X$ a weak equivalence
 \begin{equation}\label{eq:prop-fibering-1}
  \HH^K(\res_\phi X;\EE) \cong \map_G(-,X)_+ \wedge_{Or(G)} \map_K(?,\res_\phi -)_+ \wedge_{Or(K)} \tilde{\EE}(?)
 \end{equation}
 which is natural in $X$.
 Let $G/H \in Or(G)$, and let $K \backslash G / H$ denote the orbit space of $\res_\phi G/H$.
 Subject to a choice of (set-theoretic) section $\sigma \colon K \backslash G /H \to G$ of the obvious projection map,
 there is an isomorphism
 \begin{equation*}
  \begin{split}
   \res_\phi G/H &\cong \coprod_{KgH \in K \backslash G/H} T_\sigma(KgH), \text{ where } \\
   T_\sigma(KgH) &:= K/(K \cap \sigma(KgH)H\sigma(KgH)^{-1}).
  \end{split}
 \end{equation*}
 This isomorphism gives rise to a commutative diagram
 \begin{equation*}
  \commsquare{\big( \bigvee_{KgH \in K \backslash G/H} \map_K(?,T_\sigma(KgH))_+ \big) \wedge_{Or(K)} \EE(?)}{\map_K(?,\res_\phi G/H)_+ \wedge_{Or(K)} \EE(?)}{\bigvee_{KgH \in K \backslash G/H} \EE(T_\sigma(KgH))}{\tilde{\EE}(\res_\phi G/H)}{\cong}{}{}{}
 \end{equation*}
 in which the vertical maps are induced by evaluating $\tilde{\EE}$. The right vertical map is natural in $G/H$.
 The left vertical map is easily seen to be a weak equivalence.
 Whenever $\tilde{\EE}$ commutes with coproducts,, the lower horizontal map is a weak equivalence. In this case, the right vertical map is also a weak equivalence, and we obtain a weak equivalence of $Or(G)$-spectra
 \begin{equation}\label{eq:prop-fibering-2}
  \map_K(?,\res_\phi -)_+ \wedge_{Or(K)} \EE(?) \simeq \tilde{\EE} \circ \res_\phi.
 \end{equation}
 The weak equivalences \eqref{eq:prop-fibering-1} and \eqref{eq:prop-fibering-2} combine to a weak equivalence, natural in $X$,
 \begin{equation*}
  \HH^K(\res_\phi X;\EE) \simeq \HH^G(X;\tilde{\EE} \circ \res_\phi).
 \end{equation*}
 Let $W$ be a free $K$-CW-complex. Since $\Aa^{-\infty}_W$ extends to a functor on all $K$-sets
 and commutes with coproducts (see Lemma \ref{lem_a-coproducts} below), we obtain a natural equivalence
 \begin{equation*}
  \HH^K(\res_\phi E_\cF G;\Aa^{-\infty}_W) \simeq \HH^G(E_\cF G;\Aa^{-\infty}_W \circ \res_\phi).
 \end{equation*}
 Since $\ind_\phi W = W \times_K \res_\phi G$ is a free $G$-CW-complex and $W \times_K \res_\phi G/H \cong \ind_\phi W \times_G G/H$, we have a natural weak equivalence of $Or(G)$-spectra $\Aa^{-\infty}_W \circ \res_\phi \simeq \Aa^{-\infty}_{\ind_\phi W}$. Hence, there is a natural weak equivalence
 \begin{equation*}
  \HH^K(\res_\phi E_\cF G;\Aa^{-\infty}_W) \simeq \HH^G(E_\cF G;\Aa^{-\infty}_{\ind_\phi W}).
 \end{equation*}
 Observe that $\res_\phi E_\cF G = E_{\phi^*\cF} K$. We conclude that the assembly map
 \begin{equation*}
  \HH^K(E_{\phi^*\cF}K;\Aa^{-\infty}_W) \to \HH^K(K/K;\Aa^{-\infty}_W)
 \end{equation*}
 is weakly equivalent to the assembly map
 \begin{equation*}
  \HH^G(E_\cF G;\Aa^{-\infty}_{\ind_\phi W}) \to \HH^G(G/G;\Aa^{-\infty}_{\ind_\phi W}).
 \end{equation*}
 The latter map is assumed to be a weak equivalence, so we are done.
\end{proof}

\begin{lemma}\label{lem_a-coproducts}
 Let $W$ be a CW-complex, and let $W = \coprod_{i \in I} W_i$ be a decomposition of $W$ into subspaces.
 Then the natural map
 \begin{equation*}
  \bigvee_{i \in I} \Aa^{-\infty}(W_i) \to \Aa^{-\infty}(W)
 \end{equation*}
 is a weak equivalence.
\end{lemma}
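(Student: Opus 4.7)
The plan is to establish the lemma by exhibiting a canonical decomposition of the underlying Waldhausen categories level by level in the delooping and then assembling. Fix $n \in \NN$ and consider the coarse structure $\fT(*)(n)$ used to define the $n$-th space of $\Aa^{-\infty}(W)$. Every object $(Y,s,r,\kappa)$ of $\cR^{\{1\}}_f(W,\fT(*)(n))$ comes equipped with a retraction $r \colon Y \to W$. Setting $Y_i := r^{-1}(W_i)$ yields an open-closed sub-CW-complex of $Y$ containing $W_i$, canonically a finite retractive space over $W_i$ once we inherit the control map. By the finiteness condition in Definition~\ref{def_finiteness}, the image of $Y \setminus W$ under $r$ meets only finitely many path components of $W$, hence only finitely many $W_i$; thus $Y_i = W_i$ for all but finitely many $i$, and $Y \cong \bigvee_{i \in I} Y_i$ as retractive spaces over $W$. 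Morphisms preserve retractions and therefore respect the decomposition.

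From this I would deduce an equivalence of Waldhausen categories
\begin{equation*}
 \cR^{\{1\}}_f(W,\fT(*)(n)) \;\simeq\; \bigoplus_{i \in I} \cR^{\{1\}}_f(W_i,\fT(*)(n)),
\end{equation*}
where the right-hand side denotes the full subcategory of $\prod_i \cR^{\{1\}}_f(W_i,\fT(*)(n))$ consisting of eventually trivial tuples, i.e.\ the filtered colimit $\colim_{F \subset I \text{ finite}} \prod_{i \in F} \cR^{\{1\}}_f(W_i,\fT(*)(n))$. Waldhausen $K$-theory commutes with filtered colimits of Waldhausen categories (on the nose at the level of $S_\bullet$-constructions), and by the Additivity Theorem it turns finite products into finite products of $K$-theory spaces, equivalently wedges of spectra. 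Combining these two facts produces a weak equivalence
\begin{equation*}
 K(\cR^{\{1\}}_f(W,\fT(*)(n)),h) \;\simeq\; \bigvee_{i \in I} K(\cR^{\{1\}}_f(W_i,\fT(*)(n)),h).
\end{equation*}

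To promote this to a stable equivalence of non-connective spectra, I would observe that the structure maps of the delooping $\KK^{-\infty}$, as constructed in Section~\ref{sec_nonconnectiveA}, are induced by exact inclusion functors into $\cR^{\{1\}}_f(W,\fT(*)(n+1)^\pm)$ (together with the Eilenberg swindles of Proposition~\ref{prop_connectiveeilenbergswindle}), all of which act only on the $\RR^n$-coordinate and ignore the $W$-factor entirely. Consequently these structure maps commute strictly with the decomposition above, and the equivalences assemble into a map of spectra $\bigvee_{i \in I} \Aa^{-\infty}(W_i) \to \Aa^{-\infty}(W)$ which is a levelwise weak equivalence above the basic delooping degree, hence a stable equivalence.

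The only genuinely non-formal point — and the expected main obstacle — is verifying that $K(-)$ takes the direct sum $\bigoplus_{i \in I}$ to a wedge of spectra. The finite-$I$ case is immediate from the Additivity Theorem, and passage to arbitrary $I$ uses that both sides commute with the filtered colimit indexed by finite subsets of $I$ (on the left this follows because every object involves only finitely many cells and is therefore supported on a finite sub-indexing, which is precisely the content of the decomposition argument above). Everything else in the proof is bookkeeping about how the delooping machine of Section~\ref{sec_nonconnectiveA} behaves on products of coarse structures.
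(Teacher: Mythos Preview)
Your proposal is correct and follows essentially the same approach as the paper: decompose each retractive space via $Y_i := r^{-1}(W_i)$, use the finiteness condition to see that only finitely many $Y_i$ are nontrivial, and identify $\cR_f(W,\fT(*)(n))$ with the filtered colimit $\colim_{J \subset I\text{ finite}} \prod_{i \in J} \cR_f(W_i,\fT(*)(n))$. The paper states this as an isomorphism of categories and concludes the levelwise equivalence directly; your version spells out the passage through $K$-theory (filtered colimits, finite products) and the compatibility with the delooping, which the paper leaves implicit --- note that the finite-product step is actually more elementary than the Additivity Theorem, since $wS_\bullet$ commutes with finite products on the nose.
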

\begin{proof}
 Let $Y$ be a CW-complex relative $W$ together with a retraction $r \colon Y \to W$.
 Then the partition $W = \coprod_i W_i$ induces a partition of $Y$ into subcomplexes $Y = \coprod_i Y_i$,
 where $Y_i := r^{-1}(W_i)$.
 Similarly, every morphism of retractive spaces $f \colon Y^1 \to Y^2$ over $W$
 decomposes into a coproduct $f = \coprod_i f_i$ since $f$ is compatible with the retractions.
  Restricting to finite objects, this shows that there
 is an isomorphism
 \begin{equation*}
  \colim_{J \subset I \text{ finite}} \prod_{i \in J} \cR_f(W_i,\fT(*)(n)) \xrightarrow{\cong} \cR_f(W,\fT(*)(n)).
 \end{equation*}
 Here we have used that the image of the retraction of a finite object intersects only finitely many path components of $W$.
 It follows that the map of spectra $\bigvee_{i \in I} \Aa^{-\infty}(W_i) \to \Aa^{-\infty}(W)$
 is a levelwise equivalence.
\end{proof}

\begin{corollary}\label{cor_subgroups}
 Let $G$ be a discrete group and let $H \leq G$ be a subgroup.
 If $G$ satisfies the \fic\ in $A$-theory with respect to the family $\cF$, then $H$ satisfies the \fic\ in $A$-theory with respect to $\cF|_H$.
\end{corollary}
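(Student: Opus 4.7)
The plan is to deduce Corollary~\ref{cor_subgroups} as an immediate consequence of the fibering principle established in Proposition~\ref{prop_fibering}. Specifically, I would apply that proposition to the inclusion homomorphism $\phi \colon H \hookrightarrow G$.

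First I would observe that for this choice of $\phi$, the pullback family $\phi^*\cF$ agrees with $\cF|_H$: for any $K \in \cF$, the preimage $\phi^{-1}(K)$ is precisely $H \cap K$, so $\phi^*\cF = \{H \cap K \mid K \in \cF\} = \cF|_H$ by definition. Then Proposition~\ref{prop_fibering}, applied with this $\phi$, immediately yields that $H$ satisfies the \fic{} in $A$-theory with respect to $\cF|_H$, which is the statement of the corollary.

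There is no real obstacle here since the work has already been done in Proposition~\ref{prop_fibering}, where the key input was the natural identification of equivariant homology theories under restriction along a group homomorphism together with the fact that $\Aa^{-\infty}_W$ commutes with coproducts (Lemma~\ref{lem_a-coproducts}). The only thing worth verifying explicitly is the matching of families, which is a one-line set-theoretic check. Thus the proof will consist of a single sentence pointing out that Proposition~\ref{prop_fibering} applied to the inclusion $H \hookrightarrow G$ gives the result, together with the identification $\phi^*\cF = \cF|_H$.
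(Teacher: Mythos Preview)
Your proposal is correct and matches the paper's proof exactly: the paper's proof is the single sentence ``Apply Proposition \ref{prop_fibering} to the inclusion $H \hookrightarrow G$.'' Your additional observation that $\phi^*\cF = \cF|_H$ is the only thing to check, and you have verified it correctly.
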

\begin{proof}
 Apply Proposition \ref{prop_fibering} to the inclusion $H \hookrightarrow G$.
\end{proof}

\begin{corollary}\label{cor_epis}
 Let $\pi \colon G \to Q$ be a surjective group homomorphism. Suppose that $Q$ satisfies the
 \ffjc\ in $A$-theory, and that for every virtually cyclic subgroup $V \leq Q$,
 the preimage $\pi^{-1}(V)$ satisfies the \ffjc\ in $A$-theory.
 
 Then $G$ satisfies the \ffjc\ in $A$-theory.
\end{corollary}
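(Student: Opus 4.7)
The plan is to deduce this from the Fibering Proposition~\ref{prop_fibering} and the Transitivity Principle~\ref{prop_transitivity}, using virtually cyclic subgroups as the bridge. First I would apply Proposition~\ref{prop_fibering} to $\pi \colon G \to Q$ with $\cF = \cV\cC yc$ (the family of virtually cyclic subgroups of $Q$), obtaining that $G$ satisfies the $A$-theoretic \fic{} with respect to the family
\begin{equation*}
 \pi^*\cV\cC yc = \{\, \pi^{-1}(V) \mid V \leq Q \text{ virtually cyclic} \,\}
\end{equation*}
(together with its subgroups, under the standard convention that families are closed under passage to subgroups).

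Next I would set up the Transitivity Principle with $\cF_1 := \pi^*\cV\cC yc$ and $\cF_0 := \cV\cC yc(G)$. The inclusion $\cF_0 \subseteq \cF_1$ holds, since for any virtually cyclic $C \leq G$, the image $\pi(C)$ is a quotient of a virtually cyclic group, hence itself virtually cyclic, so $C \leq \pi^{-1}(\pi(C)) \in \pi^*\cV\cC yc$. The hypothesis of Proposition~\ref{prop_transitivity} that $G$ satisfies the \fic{} with respect to $\cF_1$ is provided by the previous paragraph, so it remains to verify that every $H \in \cF_1$ satisfies the \fic{} with respect to $\cF_0|_H$. Writing $H = \pi^{-1}(V)$ for some virtually cyclic $V \leq Q$, I would observe that $\cF_0|_H = \cV\cC yc(G)|_H$ coincides with $\cV\cC yc(H)$: the intersection $H \cap K$ of any virtually cyclic $K \leq G$ with $H$ is virtually cyclic, and conversely every virtually cyclic subgroup of $H$ is one of $G$. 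Hence the assumption that $H = \pi^{-1}(V)$ satisfies the \ffjc{} in $A$-theory is exactly the condition required.

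Applying Proposition~\ref{prop_transitivity} now concludes that $G$ satisfies the \fic{} with respect to $\cV\cC yc(G)$, which is the \ffjc{} for $G$. There is no real obstacle here beyond the (mild) bookkeeping that the families involved are closed under subgroups so that $\cF_0 \subseteq \cF_1$ and the identification $\cV\cC yc(G)|_H = \cV\cC yc(H)$ go through cleanly; all substantive work has already been done in Propositions~\ref{prop_transitivity} and~\ref{prop_fibering}.
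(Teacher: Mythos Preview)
Your proof is correct and follows exactly the approach the paper takes: the paper's proof is the terse one-liner ``the claim is a combination of Proposition~\ref{prop_fibering} and the Transitivity Principle~\ref{prop_transitivity}'', and you have simply spelled out the details of that combination. The only minor point worth noting is that when $\cF_1$ is closed under subgroups you should, strictly speaking, also verify the hypothesis for subgroups $H \lneq \pi^{-1}(V)$, but this follows immediately from Corollary~\ref{cor_subgroups}.
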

\begin{proof}
 Note that $\pi^*\cV\cC yc = \{ \pi^{-1}(V) \mid V \leq Q \text{ virtually cyclic} \}$.
 Hence, the claim is a combination of Proposition \ref{prop_fibering} and the Transitivity Principle \ref{prop_transitivity}.
\end{proof}

\begin{corollary}\label{cor_episwithfinitekernel}
 Let $\pi \colon G \to Q$ be a surjective group homomorphism with finite kernel.
 If $Q$ satisfies the \ffjc\ in $A$-theory, then so does $G$.
\end{corollary}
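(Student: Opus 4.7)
The plan is to deduce the statement directly from Corollary~\ref{cor_epis} applied to $\pi$. The first hypothesis there, that $Q$ satisfies the $A$-theoretic \ffjc, is precisely what we assume. The work therefore reduces to verifying the second hypothesis: that for every virtually cyclic subgroup $V \leq Q$, the preimage $\pi^{-1}(V)$ satisfies the $A$-theoretic \ffjc.

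The key observation is that $\pi^{-1}(V)$ is itself virtually cyclic. To see this, let $K := \ker(\pi)$, which is finite by assumption, and note that $\pi^{-1}(V)$ fits into a short exact sequence $1 \to K \to \pi^{-1}(V) \to V \to 1$. Pick a cyclic subgroup $C \leq V$ of finite index. If $C$ is finite, then $\pi^{-1}(C)$ is finite, and $\pi^{-1}(V)$ is finite, hence virtually cyclic. If $C \cong \ZZ$, consider the conjugation action of $\pi^{-1}(C)$ on $K$; its kernel $N$ has finite index in $\pi^{-1}(C)$ because $\operatorname{Aut}(K)$ is finite, and in $N$ the finite group $K$ is central. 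The image $\pi(N)$ has finite index in $C \cong \ZZ$, so $N$ is a central extension of an infinite cyclic group by $K$; such an extension splits and $N \cong K \times \ZZ$, which contains $\ZZ$ as a subgroup of finite index. Hence $\pi^{-1}(V)$ contains an infinite cyclic subgroup of finite index, so it is virtually cyclic.

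To conclude, I will observe that any virtually cyclic group $H$ trivially satisfies the $A$-theoretic \ffjc: one can take $E_{\cV\cC yc} H$ to be a point, because $H/H$ has isotropy $H \in \cV\cC yc$, and then the assembly map $\HH^H(E_{\cV\cC yc}H;\Aa^{-\infty}_W) \to \Aa^{-\infty}(H\backslash W)$ is the identity. Applying this to $H = \pi^{-1}(V)$ for each virtually cyclic $V \leq Q$ completes the verification of the second hypothesis of Corollary~\ref{cor_epis}, and the corollary then yields the statement.

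There is no real obstacle here; the only non-formal point is the elementary group-theoretic fact that a finite extension of a virtually cyclic group is virtually cyclic, which I sketched above using the centralizer of the finite kernel.
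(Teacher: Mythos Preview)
Your proof is correct and follows exactly the same route as the paper: apply Corollary~\ref{cor_epis} and observe that $\pi^{-1}(V)$ is again virtually cyclic, hence trivially satisfies the \ffjc. The paper states this in one line without justifying the group-theoretic fact; your argument simply spells out the elementary details.
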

\begin{proof}
 If $V \leq Q$ is virtually cyclic, then $\pi^{-1}(V)$ is also virtually cyclic, and thus $G$ satisfies the conjecture by Corollary \ref{cor_epis}.
\end{proof}

The next two statements and their proofs are analogous to \cite[Section 3.2 \& 3.3]{BFL2014}. We only sketch their proofs and refer to {\it loc.~cit.} for details.

\begin{lemma}[{cf.\ \cite[Lem.~3.15]{BFL2014}}]\label{lem_crystallographic}
 Let $\Gamma$ be a crystallographic group of virtual cohomological dimension $2$ which possesses a normal, infinite cyclic subgroup.
 Then $\Gamma$ satisfies the \ffjc\ in $A$-theory.
\end{lemma}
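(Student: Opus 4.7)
The plan is to combine an explicit Dress-Farrell-Hsiang argument for $\Gamma$ with the Transitivity Principle. Since $\Gamma$ is crystallographic of vcd $2$ and admits a normal infinite cyclic subgroup $C$, the quotient $Q := \Gamma/C$ is virtually cyclic: $\Gamma$ contains a normal copy of $\ZZ^2$ of finite index, $C \cap \ZZ^2$ is commensurable with a primitive rank-$1$ sublattice, and $\ZZ^2/(C \cap \ZZ^2) \cong \ZZ$ sits as a finite-index subgroup of $Q$. Write $\pi \colon \Gamma \twoheadrightarrow Q$ for the projection and set $\cF(\Gamma) := \pi^{*}\cV\cC yc(Q) = \{\pi^{-1}(V) \mid V \leq Q \text{ virtually cyclic}\}$; this family contains $\cV\cC yc(\Gamma)$.

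First I would show that $\Gamma$ is a Dress-Farrell-Hsiang group with respect to $\cF(\Gamma)$. The construction follows \cite[Lemma~3.15]{BFL2014}: the epimorphisms $\pi_k \colon \Gamma \twoheadrightarrow F_k$ are obtained by reducing the lattice $\ZZ^2$ modulo $n_k\ZZ^2$, possibly combined with suitable quotients of the finite holonomy group $F$; for each Dress subgroup $D \leq F_k$, one builds a low-dimensional $\overline{D}$-simplicial complex $E_D$ modelled on the $Q$-direction together with a $\overline{D}$-equivariant map $\phi_D \colon \Gamma \to E_D$ whose $\ell^1$-displacement on a fixed generating set shrinks in proportion to the index of the quotient. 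The stabilizers of $E_D$ naturally lie in $\cF(\Gamma)$ because the one-dimensional model only sees the quotient direction $Q$. The refinement from the hyperelementary family used in \cite{BFL2014} to the Dress family required by Definition~\ref{def_dfhgroup} is precisely the content of \cite{W-TransferReducibilityOfFHGroups}. Applying Theorem~\ref{thm_fhmethod} then yields the $A$-theoretic FJC for $\Gamma$ with respect to $\cF(\Gamma)$.

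To upgrade to the FFJC I would invoke the Transitivity Principle (Proposition~\ref{prop_transitivity}): it suffices that each $H \in \cF(\Gamma)$ satisfies the FFJC in $A$-theory. Every such $H = \pi^{-1}(V)$ fits into $1 \to C \to H \to V \to 1$ with $V$ virtually cyclic. If $V$ is finite, $H$ is virtually $\ZZ$ and hence itself virtually cyclic, so the FFJC is automatic. If $V$ is infinite, $H$ has Hirsch length two and is virtually $\ZZ^2$; such groups are $\mathrm{CAT}(0)$, and the $A$-theoretic FFJC for them is supplied by the transfer reducibility argument carried out in the $A$-theory setting in \cite{ELPUW}.

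The main obstacle will be the geometric verification in the second step: while all ingredients appear in \cite{BFL2014} and \cite{W-TransferReducibilityOfFHGroups}, one must check that the finite quotients and equivariant maps constructed there produce stabilizers in our $\cF(\Gamma)$ and satisfy the $\ell^1$-metric contraction of Definition~\ref{def_dfhgroup} in the dimension-$2$ setting. Once that is in place, everything else is formal: Theorem~\ref{thm_fhmethod} feeds the FJC for $\cF(\Gamma)$, the Transitivity Principle reduces to $\cF(\Gamma)$, and the virtually-$\ZZ^2$ case is disposed of by external input.
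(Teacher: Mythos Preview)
Your proposal differs from the paper's argument in one essential structural point. The paper runs an \emph{induction on the order of the holonomy group} $F$ in the extension $1 \to \ZZ^2 \to \Gamma \to F \to 1$: the Dress--Farrell--Hsiang structure coming from \cite[Lemma~5.2 and Proposition~5.3]{W-TransferReducibilityOfFHGroups} produces a family whose members are either virtually cyclic or are again vcd-$2$ crystallographic groups with a normal infinite cyclic subgroup but with \emph{strictly smaller} holonomy; the Transitivity Principle together with the induction hypothesis then finishes the proof. In particular, the paper never needs to know the FFJC for virtually-$\ZZ^2$ groups as an outside input --- that fact is established only afterwards, in Proposition~\ref{prop_vfga}, and Lemma~\ref{lem_crystallographic} is one of its ingredients.

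Your route instead disposes of the members of $\cF(\Gamma)$ of Hirsch length two by invoking the $\mathrm{CAT}(0)$/transfer-reducibility result of \cite{ELPUW}. Logically this is delicate: \cite{ELPUW} is built on the very framework developed in the present paper, so you must check carefully that nothing circular is introduced; and within the paper's internal logic it short-circuits the purpose of the lemma, since once virtually-$\ZZ^2$ groups are available from \cite{ELPUW} one could bypass Lemma~\ref{lem_crystallographic} altogether in the proof of Proposition~\ref{prop_vfga}. There is also a mismatch with the cited sources: the constructions in \cite[Lemma~3.15]{BFL2014} and \cite[\S 5]{W-TransferReducibilityOfFHGroups} are set up to feed the induction on $|F|$, so the simplicial complexes $E_D$ they build have isotropy in the ``smaller holonomy'' family rather than in your $\cF(\Gamma) = \pi^*\cV\cC yc(Q)$; your assertion that ``the stabilizers of $E_D$ naturally lie in $\cF(\Gamma)$'' is therefore not what those references actually give you, and would require a separate argument.
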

\begin{proof}
 Do an induction on the order of the smallest finite group $F$ such that there is
 a short exact sequence $1 \to \ZZ^2 \to \Gamma \to F \to 1$. Then the claim follows
 from \cite[Lemma 5.2 and Proposition 5.3]{W-TransferReducibilityOfFHGroups} in conjunction
 with the induction hypothesis and the Transitivity Principle \ref{prop_transitivity}.
\end{proof}

\begin{proposition}[{cf.\ \cite[Section 3.3]{BFL2014}}]\label{prop_vfga}
 Let $\Gamma$ be a virtually finitely generated abelian group. Then $\Gamma$ satisfies the \ffjc\ in $A$-theory.
\end{proposition}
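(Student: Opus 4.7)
The plan is to follow the strategy of \cite[Section 3.3]{BFL2014} by induction on the Hirsch length $n = h(\Gamma)$. Using Corollary \ref{cor_subgroups}, it suffices to treat crystallographic groups $\Gamma$ fitting into an extension $1 \to A \to \Gamma \to F \to 1$ with $A \cong \ZZ^n$ maximal abelian normal and $F$ finite. The base case $n = 0$ is Theorem \ref{thm_FJCFiniteGroups}, and $n = 1$ is immediate since such $\Gamma$ is virtually cyclic. The case $n = 2$ is covered by Lemma \ref{lem_crystallographic} whenever $\Gamma$ admits a normal infinite cyclic subgroup.

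For the inductive step with $n \geq 3$, I would first seek an $F$-invariant one-dimensional $\QQ$-subspace of $A \otimes \QQ$. If such a subspace exists, its intersection with $A$ is a normal infinite cyclic subgroup $C \lhd \Gamma$, and the quotient $Q := \Gamma/C$ is virtually $\ZZ^{n-1}$ with $n - 1 \geq 2$, hence satisfies the \ffjc{} by the inductive hypothesis. For every virtually cyclic $V \leq Q$, the preimage $\pi^{-1}(V)$ along $\pi \colon \Gamma \twoheadrightarrow Q$ is virtually $\ZZ^2$ and contains $C$ as a normal infinite cyclic subgroup. Embedding $\pi^{-1}(V)$ with finite index into its crystallographic hull and combining Corollary \ref{cor_subgroups} with Lemma \ref{lem_crystallographic}, we conclude that $\pi^{-1}(V)$ satisfies the \ffjc{}. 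An application of Corollary \ref{cor_epis} then completes this case.

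The main obstacle is the remaining situation in which $A \otimes \QQ$ admits no $F$-invariant line (including the $n = 2$ subcase in which $\Gamma$ has no normal infinite cyclic subgroup at all). By Maschke's theorem, I would decompose $A \otimes \QQ$ into irreducible $\QQ F$-summands, pick one $V$ of smallest dimension $d \geq 2$, and set $N := A \cap V$, a normal subgroup of $\Gamma$ of rank $d$. Then $\Gamma/N$ has Hirsch length $n - d$ and satisfies the \ffjc{} by induction. The preimages under $\Gamma \twoheadrightarrow \Gamma/N$ of virtually cyclic subgroups are virtually $\ZZ^{d+1}$-extensions containing a distinguished normal irreducible $F$-invariant $\ZZ^d$, and via the Transitivity Principle \ref{prop_transitivity} it suffices to show that these preimages satisfy the \ffjc{}. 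Their treatment requires a secondary induction analogous to the proof of Lemma \ref{lem_crystallographic} but adapted to higher-rank irreducible actions; this is the technically delicate part of the argument and closely mirrors the reduction carried out in \cite[Section 3.3]{BFL2014}.
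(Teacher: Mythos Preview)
Your overall inductive framework is close to the paper's, but the crucial case where $A \otimes \QQ$ has no $F$-invariant line is not actually proved. Your proposed reduction---passing to $\Gamma/N$ for $N$ the intersection of $A$ with a smallest irreducible summand---does not terminate: when $A \otimes \QQ$ is itself irreducible (so $d = n$), the quotient $\Gamma/N$ is finite and the preimages $\pi^{-1}(V)$ recover finite-index subgroups of $\Gamma$ with the same irreducible holonomy action, so nothing is gained. The ``secondary induction analogous to Lemma~\ref{lem_crystallographic}'' you allude to is precisely the content that remains to be supplied, and it cannot be carried out by pure quotient-and-preimage reduction alone.

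The paper resolves this case differently and in a way that is essential to the whole article: it runs a subinduction on the order of the holonomy group $F$ and then invokes the Dress--Farrell--Hsiang criterion directly. By \cite[Proposition~5.4]{W-TransferReducibilityOfFHGroups}, a crystallographic group without a normal infinite cyclic subgroup is Dress--Farrell--Hsiang with respect to a family of subgroups each of which has either smaller $\vcd$ or the same $\vcd$ but strictly smaller holonomy; Theorem~\ref{thm_fhmethod} together with the Transitivity Principle~\ref{prop_transitivity} then finishes the argument. This is the point at which the main theorem of the paper is actually used in the proof of Proposition~\ref{prop_vfga}; your outline bypasses it and thereby leaves the hardest case open.

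A minor correction: the reduction to the crystallographic case goes via Corollary~\ref{cor_episwithfinitekernel}, not Corollary~\ref{cor_subgroups}. A virtually finitely generated abelian group surjects onto a crystallographic group with finite kernel (rather than embedding into one with finite index), so one uses inheritance under such surjections.
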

\begin{proof}
 We do an induction on the virtual cohomological dimension of $\Gamma$. If $\vcd(\Gamma) \leq 1$, the group $\Gamma$ is
 virtually cyclic and there is nothing to show. So assume $\vcd(\Gamma) \geq 2$.
 Then do a subinduction on the cardinality of the smallest finite group $F$ such that $\Gamma$ admits an
 epimorphism onto $F$ whose kernel is isomorphic to $\ZZ^{\vcd(\Gamma)}$.
 
 Since $\Gamma$ admits a surjection with finite kernel onto a crystallographic group \cite[Lemma 4.2.1]{Quinn2012},
 we may assume by Corollary \ref{cor_episwithfinitekernel} that $\Gamma$ is crystallographic of the same virtual cohomological dimension. Now fix an epimorphism $p \colon \Gamma \twoheadrightarrow F$ onto a finite group $F$ such that the kernel of $p$
 is isomorphic to $\ZZ^{\vcd(\Gamma)}$ and such that the cardinality of $F$ is minimal among all finite groups
 which admit such an epimorphism. By induction and the Transitivity Principle \ref{prop_transitivity},
 it suffices to show that $\Gamma$ satisfies the \fic\ with respect to the family of all virtually finitely generated abelian subgroups of $\Gamma$ whose virtual cohomological dimension is smaller than that of $\Gamma$, or whose virtual cohomological dimension equals $\vcd(\Gamma)$ and which admit an epimorphism onto a finite group whose cardinality is smaller than that of $F$ such that the kernel is isomorphic to $\ZZ^{\vcd(\Gamma)}$.
 
 Suppose that $\Gamma$ possesses a normal, infinite cyclic subgroup $C \unlhd \Gamma$. We want to apply Corollary \ref{cor_epis}.
 Since $\vcd(\Gamma/C) < \vcd(\Gamma)$, the quotient $\Gamma/C$ satisfies the \ffjc. Let $\pi \colon \Gamma \to \Gamma/C$ be the projection. For every virtually cyclic subgroup $V \leq G/C$, the preimage $\pi^{-1}(V)$ has virtual cohomological dimension $2$.
 Again, $\pi^{-1}(V)$ admits a surjection with finite kernel onto a crystallographic group $K$ \cite[Lemma 4.2.1]{Quinn2012}, so we may assume that $\pi^{-1}(V)$ is crystallographic by Corollary \ref{cor_episwithfinitekernel}. Since $\vcd(\pi^{-1}(V)) = 2$ and there exists a normal, infinite cyclic subgroup, it follows from Lemma \ref{lem_crystallographic} that $\pi^{-1}(V)$ satisfies the \ffjc.
 So Corollary \ref{cor_epis} applies.
 
 Suppose that there is no normal, infinite cyclic subgroup in $\Gamma$. Then $\Gamma$ is a Dress--Farrell--Hsiang group with respect
 to a family containing only groups to which the induction hypothesis applies \cite[Proposition 5.4]{W-TransferReducibilityOfFHGroups}.
 The Transitivity Principle \ref{prop_transitivity} implies that $\Gamma$ satisfies the \ffjc.
\end{proof}

Proposition \ref{prop_vfga} is the stepping stone to proving an even slightly stronger version of Theorem \ref{intro_poly-z-groups}, see Theorem \ref{thm_poly-z-wreath} below. Recall that the (unrestricted) wreath product $G_1 \wr G_2$ of a group $G_1$ with another group $G_2$ is the semidirect product $\big( \prod_{G_2} G_1 \big) \rtimes G_2$, where $G_2$ acts on the left factor by left translations.

\begin{definition}
 Let $\cF$ be a family of groups and let $G$ be a discrete group. We say that $G$ \emph{satisfies the \ficw\ in $A$-theory} with respect to $\cF$ if for every finite group $F$, the wreath product $G \wr F$ satisfies the \fic\ in $A$-theory with respect to $\cF$.
 
 If $\cF$ is the family of virtually cyclic groups, we say that $G$ \emph{satisfies the \ffjcw\ in $A$-theory}.
\end{definition}

\begin{corollary}\label{cor_vfga-wreath}
 Every virtually finitely generated abelian group satisfies the \ffjcw\ in $A$-theory.
\end{corollary}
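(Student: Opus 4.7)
The plan is to reduce the statement directly to Proposition~\ref{prop_vfga} by showing that the class of virtually finitely generated abelian groups is closed under wreath products with finite groups. Once this is established, the corollary is immediate: for any virtually finitely generated abelian group $G$ and any finite group $F$, the wreath product $G \wr F$ is itself virtually finitely generated abelian, hence satisfies the \ffjc{} in $A$-theory by Proposition~\ref{prop_vfga}.

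First I would make the closure property explicit. Let $G$ be virtually finitely generated abelian, and pick a finite-index subgroup $A \leq G$ which is finitely generated abelian. The wreath product $G \wr F = \big(\prod_{f \in F} G\big) \rtimes F$ contains $\prod_{f \in F} A$ as a subgroup; this is a finitely generated abelian group (being a finite direct product of such). Its index in $G \wr F$ equals $|F| \cdot [G:A]^{|F|}$, which is finite. Hence $G \wr F$ contains a finitely generated abelian subgroup of finite index, so it is virtually finitely generated abelian.

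Applying Proposition~\ref{prop_vfga} to $G \wr F$ yields that $G \wr F$ satisfies the \ffjc{} in $A$-theory. Since this holds for every finite group $F$, the group $G$ satisfies the \ffjcw{} in $A$-theory by definition.

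There is essentially no obstacle here; the whole content lies in the elementary group-theoretic observation that virtual finite generation and virtual abelianness are both preserved under forming wreath products with finite groups, which we have carried out above. All the heavy lifting has already been done in Proposition~\ref{prop_vfga}.
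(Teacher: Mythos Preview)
Your proof is correct and follows exactly the same approach as the paper: both reduce to Proposition~\ref{prop_vfga} via the observation that the wreath product of a virtually finitely generated abelian group with a finite group is again virtually finitely generated abelian. You simply spell out the closure property in more detail than the paper does.
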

\begin{proof}
 This is an immediate consequence of Proposition \ref{prop_vfga} since the wreath product of a virtually finitely generated abelian group with a finite group is again virtually finitely generated abelian.
\end{proof}

Let us record some additional inheritance properties of the \ficw. The following results have been worked out in \cite{Kuehl2009}, we collect them here for reference and the convenience of the reader.

\begin{lemma}\label{lem_inherit}
 Let $G$, $G_1$, $G_2$ be discrete groups, and let $\cF$ be a family of groups.
 \begin{enumerate}
  \item\label{inherit_subgroups} Let $H \leq G$ be a subgroup. If $G$ satisfies the \ficw\ with respect to $\cF$, then so does $H$.
  \item\label{inherit_supergroups} Let $H \leq G$ be a subgroup of finite index. If $H$ satisfies the \ficw\ with respect to $\cF$, so does $G$.
  \item\label{inherit_products} If $G_1$ and $G_2$ satisfy the \ffjcw, so does $G_1 \times G_2$.
  \item\label{inherit_transitivity} Suppose $G$ satisfies the \ficw\ with respect to $\cF$, and that every subgroup $H \leq G$ which lies in $\cF$
   satisfies the \ffjcw. If $\cF$ is closed under taking quotients, then $G$ satisfies the \ffjcw.
  \item\label{inherit_epis} Let $\pi \colon G \twoheadrightarrow Q$ be a surjective homomorphism.
   Suppose that $Q$ satisfies the \ffjcw, and that for every virtually cyclic subgroup $V \leq Q$ the preimage $\pi^{-1}(V)$ satisfies the \ffjcw.
   Then $G$ satisfies the \ffjcw.
  \item\label{inherit_episwithfinitekernel} Let $\pi \colon G \twoheadrightarrow Q$ be a surjective homomorphism with finite kernel.
   If $Q$ satisfies the \ffjcw, so does $G$.
 \end{enumerate}
\end{lemma}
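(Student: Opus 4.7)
The plan is to systematically reduce each statement to its analog for the ordinary FIC, already established in this section, by exploiting natural embeddings and surjections between wreath products. This follows the blueprint of \cite{Kuehl2009}, in which the wreath-product formulation was introduced precisely to upgrade the fragile inheritance properties of the FIC. Throughout I shall use the standard isomorphism $(A \wr B) \wr F \cong A \wr (B \wr F)$ valid for arbitrary groups $A$ and finite groups $B, F$.

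I will dispatch parts \ref{inherit_subgroups} and \ref{inherit_episwithfinitekernel} first. For \ref{inherit_subgroups}, the natural inclusion $H \wr F \hookrightarrow G \wr F$ allows a direct application of Corollary \ref{cor_subgroups}. For \ref{inherit_episwithfinitekernel}, the induced surjection $\pi \wr \id_F \colon G \wr F \twoheadrightarrow Q \wr F$ has finite kernel $\ker(\pi)^F$, and Corollary \ref{cor_episwithfinitekernel} yields the claim. For \ref{inherit_supergroups}, I plan to use the standard embedding $G \hookrightarrow H \wr S_n$ (with $n = [G : H]$) coming from the left-translation action of $G$ on $G/H$; iterating the wreath product gives $G \wr F \hookrightarrow (H \wr S_n) \wr F \cong H \wr (S_n \wr F)$, and since $S_n \wr F$ is finite, the FICw hypothesis on $H$ furnishes the FIC for $H \wr (S_n \wr F)$, whereupon \ref{inherit_subgroups} concludes.

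For \ref{inherit_products} my plan is to exploit the diagonal embedding $(G_1 \times G_2) \wr F \hookrightarrow (G_1 \wr F) \times (G_2 \wr F)$, which reduces the assertion to the fact that a direct product of two groups satisfying the FFJCw satisfies the FFJC. I will apply Corollary \ref{cor_epis} to the projection onto $G_2 \wr F$, whose kernel is $G_1 \wr F$; the preimage of a virtually cyclic $V \leq G_2 \wr F$ is $(G_1 \wr F) \times V$, and a second application of Corollary \ref{cor_epis} to the first projection reduces matters to groups of the form $W \times V$ with $W \leq G_1 \wr F$ virtually cyclic. Such a $W \times V$ is virtually finitely generated abelian, so Corollary \ref{cor_vfga-wreath} finishes the argument.

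Parts \ref{inherit_transitivity} and \ref{inherit_epis} will both rely on the Transitivity Principle \ref{prop_transitivity} applied inside $G \wr F$, and I expect the main technical obstacle to lie in \ref{inherit_epis}. The strategy there is to apply Corollary \ref{cor_epis} to $\pi \wr \id_F \colon G \wr F \twoheadrightarrow Q \wr F$; the delicate verification is that for every virtually cyclic $V \leq Q \wr F$, the preimage $(\pi \wr \id_F)^{-1}(V)$ satisfies the FFJC. Projecting $V$ to $F$ exhibits this preimage as a finite extension of the subgroup $(\pi^F)^{-1}(V \cap Q^F) \subseteq G^F$; since $V \cap Q^F$ is itself virtually cyclic, its projection to the $f$-th coordinate of $Q^F$ is a virtually cyclic $W_f \leq Q$, and consequently $(\pi^F)^{-1}(V \cap Q^F)$ embeds into $\prod_{f \in F} \pi^{-1}(W_f)$. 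Each $\pi^{-1}(W_f)$ satisfies the FFJCw by hypothesis, so repeated application of \ref{inherit_products} gives FFJCw for the product, and invoking \ref{inherit_subgroups} followed by \ref{inherit_supergroups} completes the verification. Part \ref{inherit_transitivity} will follow a similar but easier pattern, using the closure of $\cF$ under quotients to ensure that the natural promotion of $\cF$ to a family of subgroups of $G \wr F$ still consists of groups known to satisfy the FFJCw.
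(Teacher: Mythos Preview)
Your approach matches the paper's almost step for step: the same embeddings for parts \ref{inherit_subgroups} and \ref{inherit_supergroups}, the same diagonal-embedding-plus-two-projections argument for \ref{inherit_products}, and the same coordinate-projection reduction for \ref{inherit_epis}. Your treatment of \ref{inherit_episwithfinitekernel} is slightly more direct than the paper's (which routes through part \ref{inherit_epis}), but both are valid.

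One correction is needed: the ``standard isomorphism'' $(A \wr B) \wr F \cong A \wr (B \wr F)$ you announce at the outset is false for regular wreath products---a cardinality count shows $\lvert A \wr (B \wr F)\rvert$ is strictly larger whenever $\lvert B\rvert, \lvert F\rvert > 1$. What is true, and what the paper uses (citing \cite[Lemma~1.21]{Kuehl2009}), is that there is a natural \emph{embedding} $(A \wr B) \wr F \hookrightarrow A \wr (B \wr F)$. Since your argument for part \ref{inherit_supergroups} only needs the embedding (to invoke Corollary~\ref{cor_subgroups}), the proof survives once you replace $\cong$ by $\hookrightarrow$ throughout. Your sketch of part \ref{inherit_transitivity} is terse but on target; the missing detail is the same coordinate-projection trick you spell out for \ref{inherit_epis}, applied to $H' := H \cap G^F$ with the quotient-closure of $\cF$ guaranteeing that the projections $H_f \leq G$ lie in $\cF$.
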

\begin{proof}
 Claim \eqref{inherit_subgroups} is a consequence of Corollary \ref{cor_subgroups} since $H \wr F$ is a subgroup of $G \wr F$ for every group $F$.
 
 For claim \eqref{inherit_supergroups}, assume first that $H$ is normal in $G$. Set $F := G/H$.
 Choose a set-theoretic section $s \colon F \to G$ of the projection map $\pi \colon G \twoheadrightarrow F$.
 For $g \in G$ and $f \in F$ define
 \begin{equation*}
  h(g,f) := s(f)^{-1}gs(\pi(g)^{-1}f).
 \end{equation*}
 Then $g \mapsto ((h(g,f))_f,\pi(g))$ defines a monomorphism $G \hookrightarrow H \wr F$.
 Thus, for every finite group $F'$, the wreath product $G \wr F'$ is a subgroup of $(H \wr F) \wr F'$. Since $(H \wr F) \wr F'$ itself embeds into $H \wr (F \wr F')$ \cite[Lemma 1.21]{Kuehl2009}, the claim follows from part \eqref{inherit_subgroups}.
 If $H$ is not normal, part \eqref{inherit_subgroups} allows us to replace $H$ by $\bigcap_{g \in G} gHg^{-1}$.
 
 For part \eqref{inherit_products}, observe that $(G_1 \times G_2) \wr F$ is a subgroup of $(G_1 \wr F) \times (G_2 \wr F) =: \Gamma$.
 By part \eqref{inherit_subgroups}, it suffices to check that the latter group satisfies the \ffjc.
 Consider the projection map $p_1 \colon \Gamma \to G_1 \wr F$. We want to apply Corollary \ref{cor_epis},
 so we need to check that $V \times (G_2 \wr F)$ satisfies the \ffjc\ for every virtually cyclic subgroup $V$ of $G_1 \wr F$.
 This can be done by another application of Corollary \ref{cor_epis}. The target of the projection map $p_2 \colon V \times (G_2 \wr F) \to G_2 \wr F$ satisfies the \ffjc, so the only thing left to verify is that every product $V \times V'$ of virtually cyclic groups
 satisfies the \ffjc. Since the product of two virtually cyclic groups is virtually finitely generated abelian,
 this is true by Proposition \ref{prop_vfga}.
 
 Let us turn to part \eqref{inherit_transitivity}. Consider a wreath product $G \wr F$, where $F$ is finite.
 Our goal is to apply the Transitivity Principle, so we need to check that every subgroup $H \leq G \wr F$
 which lies in $\cF$ satisfies the \ffjc. Let $H$ be such a subgroup. Since $H' := H \cap \big( \prod_F G \big)$ is normal in $H$
 and has finite index, it suffices to show that $H'$ satisfies the \ffjcw\ by part \eqref{inherit_supergroups}.
 Observe that $H' \in \cF$. Let $H_f$ denote the image of $H'$ under the projection map $\big( \prod_F G \big) \to G$ onto the factor indexed by $f \in F$.
 Then $H'$ embeds into $\prod_{f \in F} H_f$. Since $\cF$ is closed unter taking quotients, $H_f$ satisfies the \ffjcw,
 and so does the product $\prod_{f \in F} H_f$ by part \eqref{inherit_products}. Now part \eqref{inherit_subgroups}
 implies that $H'$ satisfies the \ffjcw, and we are done.
 
 For part \eqref{inherit_epis} of the lemma, observe that $\pi$ induces a surjective homomorphism $\pi_F \colon G \wr F \twoheadrightarrow Q \wr F$
 for every finite group $F$. The quotient $Q \wr F$ satisfies the \ffjc\ by assumption. We want to apply Corollary \ref{cor_epis}.
 So let $V \leq Q \wr F$ be virtually cyclic. In order to show that $\pi_F^{-1}(V)$ satisfies the \ffjc,
 it suffices to show that $\tilde{V} := \pi_F^{-1}(V) \cap \big( \prod_F G \big)$ satisfies the \ffjcw.
 Denote by $V_f$ the image of $V \cap \big( \prod_F Q \big)$ under the projection $\prod_F Q  \to Q$ onto the factor indexed by $f \in F$.
 Then $\tilde{V}$ embeds into $\prod_{f \in F} \pi^{-1}(V_f)$. Since $V_f$ is a virtually cyclic subgroup of $Q$,
 the preimage $\pi^{-1}(V_f)$ satisfies the \ffjcw\ by assumption, and hence so does $\prod_{f \in F} \pi^{-1}(V_f)$ by
 part \eqref{inherit_products}. Then $\tilde{V}$ satisfies the \ffjcw\ by part \eqref{inherit_subgroups}.
 
 The last part of the lemma follows from part \eqref{inherit_epis} because the preimage of each virtually cyclic subgroup of $Q$
 is again virtually cyclic (and these satisfy the \ffjcw\ by Corollary \ref{cor_vfga-wreath}.
\end{proof}

In analogy to \cite[Proposition 2.19]{W-FJCSolvableGroups}, we are going to show next that the Dress--Farrell--Hsiang condition \ref{def_dfhgroup} is well-behaved with respect to wreath products with finite groups. Let $G$ be a group, $\cF$ a family of subgroups
and $\Phi$ a finite group. Denote by $\cF^{\wr \Phi}$ the family of subgroups of $G \wr \Phi$ consisting of those
groups which contain a finite-index subgroup of the form $\prod_{\psi \in \Phi} H_\psi$, where each $H_\psi$ lies in $\cF$.

Recall the following construction of the product of simplicial complexes. Let $E_1, \dots, E_k$ be (abstract) ordered simplicial complexes. Then define $E_1 \otimes \dots \otimes E_k$ to be the simplicial complex whose $r$-simplices are ascending chains $(e^0_1,\dots,e^0_k) < \dots < (e^r_1,\dots,e^r_k)$ with respect to the lexicographic ordering such that $\{ e^0_i,\dots,e^r_i \}$ is a simplex in $E_i$ for all $i$. The map $\abs{E_1 \otimes \dots \otimes E_k} \to \abs{E_1} \times \dots \times \abs{E_k}$ induced by the obvious projections $E_1 \otimes \dots \otimes E_k \to E_i$ is a homeomorphism (with respect to the topologies induced by the $\ell^1$-metric).

\begin{proposition}\label{prop_dfhandwreath}
 Let $G$ be a discrete group and let $\cF$ be a family of subgroups. Let $S$ be a finite, symmetric generating set such that
 $(G,S)$ is a Dress--Farrell--Hsiang group with respect to $\cF$.
 
 Then there is for every finite group $\Phi$ a generating set $S^{\wr \Phi}$ of $G \wr \Phi$ such that
 $(G \wr \Phi, S^{\wr\Phi})$ is a Dress--Farrell--Hsiang group with respect to $\cF^{\wr \Phi}$.
\end{proposition}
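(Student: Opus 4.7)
My plan is to adapt the construction used for the linear analog in~\cite[Proposition~2.19]{W-FJCSolvableGroups}. For the generating set I take
\[
 S^{\wr\Phi} := \bigcup_{\psi \in \Phi} \iota_\psi(S) \;\cup\; \Phi,
\]
where $\iota_\psi \colon G \hookrightarrow \prod_\Phi G \leq G \wr \Phi$ is the inclusion as the $\psi$-th coordinate; this is a symmetric finite generating set. I will take $N^{\wr\Phi} := N \cdot |\Phi|$, and for $\epsilon > 0$ I will apply the Dress--Farrell--Hsiang property of $(G,S)$ with the rescaled tolerance $\epsilon / |\Phi|$ to obtain an epimorphism $\pi\colon G \twoheadrightarrow F$ and, for each Dress subgroup $D \leq F$, a $\overline{D}$-simplicial complex $E_D$ of dimension $\leq N$ with isotropy in $\cF$ and a $\overline{D}$-equivariant $(\epsilon/|\Phi|)$-contracting map $\phi_D\colon G \to E_D$. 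The required epimorphism for $G \wr \Phi$ is then $\pi^{\wr\Phi} := \pi \wr \mathrm{id}_\Phi \colon G \wr \Phi \twoheadrightarrow F \wr \Phi$.

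The key structural observation is that for every Dress subgroup $D' \leq F \wr \Phi$, the intersection $D'' := D' \cap \prod_\psi F$ is a normal subgroup of finite index (at most $|\Phi|$) in $D'$; being a subgroup of a Dress group, $D''$ is itself Dress, and hence so is every projection $D'_\psi := \mathrm{pr}_\psi(D'')$ to the factors, as these are quotients of $D''$. Setting $\overline{D'}$, $\overline{D''}$, $\overline{D'_\psi}$ for the respective preimages under $\pi^{\wr\Phi}$ and $\pi$, I build the target complex as $E_{D'} := \overline{D'} \times_{\overline{D''}} E^{D''}$, where $E^{D''} := \bigotimes_{\psi \in \Phi} E_{D'_\psi}$ is the ordered tensor product and $\overline{D''}$ acts via the inclusion $\overline{D''} \hookrightarrow \prod_\psi \overline{D'_\psi}$. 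The dimension is bounded by $N \cdot |\Phi|$, and an isotropy group of $E^{D''}$ is of the form $\overline{D''} \cap \prod_\psi H_\psi$ with $H_\psi \in \cF$, which lies in $\cF^{\wr\Phi}$ by definition; the induced complex has isotropies conjugate to these, hence also in $\cF^{\wr\Phi}$.

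For the map, on $\prod_\psi G$ the factorwise assignment $(g_\psi)_\psi \mapsto \bigotimes_\psi \phi_{D'_\psi}(g_\psi)$ is $\overline{D''}$-equivariant into $E^{D''}$. Using a transversal $\{\phi_i\}$ for the right cosets $\Phi' \backslash \Phi$ (where $\Phi' := D'/D''$), one writes each $g \in G \wr \Phi$ as $g = d \cdot n \cdot \phi_i$ with $d \in \overline{D'}$ and $n \in \prod_\psi G$ (unique up to left multiplication by $\overline{D''}$) and sets
\[
 \phi_{D'}(g) := \bigl[d,\; \bigotimes_\psi \phi_{D'_\psi}(n_\psi)\bigr] \in \overline{D'} \times_{\overline{D''}} E^{D''}.
\]
Well-definedness follows from $\overline{D''}$-equivariance of the inner map, and $\overline{D'}$-equivariance holds by construction. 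A direct computation shows that right multiplication by a generator $\iota_\psi(s)$ preserves the right coset $\phi_i$ and modifies $n$ by right multiplication by $s$ in a single component $\phi_i \psi$; since a single tensor factor changes by an element of $S$, the $\ell^1$-distance in $E^{D''}$ is bounded by $\epsilon / |\Phi|$, and both points lie in the same sheet of $E_{D'}$.

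The main obstacle will be the contracting condition for the generators $\phi \in \Phi \subseteq S^{\wr\Phi}$: right multiplication by $\phi$ can change the coset index $\phi_i$ and hence move $\phi_{D'}(g)$ and $\phi_{D'}(g\phi)$ into different sheets of the induced complex, producing an a~priori $\ell^1$-jump of size $2$. The resolution, following the strategy of the linear analog, is to refine the simplicial model $E_{D'}$ so that the finitely many sheets are glued along ``connecting'' simplices of controlled dimension whose isotropies remain in $\cF^{\wr\Phi}$, and to choose the transversal $\{\phi_i\}$ and the section $\Phi' \hookrightarrow \overline{D'}$ so that right multiplication by an element of $\Phi$ decomposes into a permutation of the tensor factors together with a ``sheet transition'' of unit cost in this refined model; after absorbing another factor depending only on $|\Phi|$ into the initial rescaling of $\epsilon$, both classes of generators contribute $\ell^1$-distance $\leq \epsilon$. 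This completes the verification that $(G \wr \Phi, S^{\wr\Phi})$ is Dress--Farrell--Hsiang with respect to $\cF^{\wr\Phi}$.
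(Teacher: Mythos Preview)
Your construction has a genuine gap at exactly the point you flag yourself: the handling of the generators $\phi \in \Phi$. The induced complex $E_{D'} = \overline{D'} \times_{\overline{D''}} E^{D''}$ is, as a simplicial complex, a disjoint union of $[\overline{D'} : \overline{D''}]$ copies of $E^{D''}$; points in different sheets are at $\ell^1$-distance $\infty$, not $2$. Right multiplication by a generic $\phi \in \Phi$ changes the coset representative $\phi_i$ and hence the sheet, so $d^{\ell^1}(\phi_{D'}(g), \phi_{D'}(g\phi)) = \infty$. Your proposed fix---gluing in ``connecting simplices'' so that sheet transitions have unit cost---is not an argument: you would need to specify these simplices, check that the result is still a $\overline{D'}$-simplicial complex of bounded dimension with isotropy in $\cF^{\wr\Phi}$, and then show that $\phi_{D'}(g)$ and $\phi_{D'}(g\phi)$ are both $\epsilon$-close to a \emph{common} point on such a simplex, uniformly in $g$. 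None of this is automatic, and the last point in particular looks hopeless since $g$ ranges over all of $G \wr \Phi$.

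The paper avoids this problem entirely by a subconjugation trick. One first shows that every Dress subgroup $H \leq F \wr \Phi$ is conjugate, by an explicit element $\kappa \in F \wr \Phi$, into a subgroup of the special ``block'' form
\[
 \hat H = \Big( \prod_{\Phi_H\psi \in \Phi_H \backslash \Phi}\ \prod_{\psi' \in \Phi_H\psi} H_{\Phi_H\psi} \Big) \rtimes \Phi_H,
\]
where $\Phi_H \leq \Phi$ and each $H_{\Phi_H\psi} \leq F$ is Dress. It then suffices to treat such $\hat H$. For these, no induced complex is needed: the target is simply the product $E_{\hat H} := \prod_{\Phi_H\psi} \prod_{\psi'} E_{H_{\Phi_H\psi}}$ (with the ordered-tensor simplicial structure), and the map is
\[
 f_{\hat H}\big((g_\psi)_\psi, \phi\big) := \big( (f_{H_{\Phi_H\psi}}(g_{\psi'}))_{\psi' \in \Phi_H\psi} \big)_{\Phi_H\psi},
\]
which \emph{ignores} the $\Phi$-component of $g$. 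This is $(\pi^{\wr\Phi})^{-1}(\hat H)$-equivariant because $\Phi_H$ acts on $E_{\hat H}$ by permuting the inner indices within each block, and the block structure is constant along $\Phi_H$-orbits. With the generating set $S^{\wr\Phi} = \{((g_\psi)_\psi,\phi) : g_\psi \in S,\ \phi \in \Phi\}$, the condition $g^{-1}g' \in S^{\wr\Phi}$ forces $g_\psi^{-1}g'_\psi \in S$ for every $\psi$, so each factor moves by at most $\epsilon'$ and the product $\ell^1$-metric is controlled by the elementary compactness Lemma~\ref{lem_metricandproducts}. There are no sheets to jump between. The missing idea in your attempt is precisely this reduction to block form, which eliminates the need for an induction along $\overline{D''} \hookrightarrow \overline{D'}$.
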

\begin{proof}
 We start with a preliminary observation. Let $\Phi$ be a finite group, and let $\pi \colon G \twoheadrightarrow F$ be an epimorphism onto some finite group $F$. Then $\pi$ induces a surjective homomorphism $\pi^{\wr \Phi} \colon G \wr \Phi \twoheadrightarrow F \wr \Phi$ given by $((g_\psi)_\psi,\phi) \mapsto ((\pi(g_\psi))_\psi,\phi)$.
 
 Let $H \leq F \wr \Phi$ be a Dress group. Let $\Phi_H$ be the image of $H$ under the canonical projection $F \wr \Phi \twoheadrightarrow \Phi$, and let $H_\xi$ denote the image of $H \cap \big( \prod_\Phi F \big)$ under the map $p_\xi \colon \big( \prod_\Phi F \big) \to F$ given by projection onto the $\xi$-th component.
 Since the class of Dress groups is closed under taking quotients, each $H_\xi$ is a Dress group.
 For each $\phi \in \Phi_H$, pick a preimage $\kappa^\phi = ((\kappa^\phi_\psi)_\psi,\phi) \in H$.
 Choose a section $s \colon \Phi_H \backslash \Phi \to \Phi$ of the obvious projection map such that $s(\Phi_H) = 1$.
 Now define
 \begin{equation*}
  \kappa := \big((\kappa^{s(\Phi_H\psi)\psi^{-1}}_{s(\Phi_H\psi)})_\psi,1 \big) \in F \wr \Phi,
 \end{equation*}
 and let $\hat{H}$ denote the group
 \begin{equation*}
  \Big( \prod_{\Phi_H\psi \in \Phi_H \backslash \Phi} \Big( \prod_{\psi' \in \Phi_H\psi} H_{s(\Phi_H\psi)}  \Big)\Big) \rtimes \Phi_H,
 \end{equation*}
 where $\Phi_H$ acts on the left hand side by permuting the index set of every factor $\prod_{\Phi_H\psi} H_{s(\Phi_H\psi)}$.
 Observe that $\hat{H}$ is naturally a subgroup of $F \wr \Phi$.
 We claim that $\kappa$ subconjugates $H$ into $\hat{H}$.
 
 To see this, compute first for an arbitary element $((\alpha_\psi)_\psi,\phi) \in H$
 \begin{equation*}
  \begin{split}
   \kappa ((\alpha_\psi)_\psi,\phi) \kappa^{-1}
   &= \big( (\kappa^{s(\Phi_H\psi)\psi^{-1}}_{s(\Phi_H\psi)})_\psi,1 \big) ((\alpha_\psi)_\psi,\phi) \big( ([\kappa^{s(\Phi_H\psi)\psi^{-1}}_{s(\Phi_H\psi)}]^{-1})_\psi,1 \big) \\
   &= \big( (\kappa^{s(\Phi_H\psi)\psi^{-1}}_{s(\Phi_H\psi)} \alpha_\psi [\kappa^{s(\Phi_H\phi^{-1}\psi)\psi^{-1}\phi}_{s(\Phi_H\phi^{-1}\psi)}]^{-1})_\psi, \phi \big) \\
   &= \big( (\kappa^{s(\Phi_H\psi)\psi^{-1}}_{s(\Phi_H\psi)} \alpha_\psi [\kappa^{s(\Phi_H\psi)\psi^{-1}\phi}_{s(\Phi_H\psi)}]^{-1})_\psi, \phi \big).
  \end{split}
 \end{equation*}
 In order to show that this element lies in $\hat{H}$, we need to check that for every $\xi \in \Phi$,
 $\kappa^{s(\Phi_H\xi)\xi^{-1}}_{s(\Phi_H\xi)} \alpha_\xi [\kappa^{s(\Phi_H\xi)\xi^{-1}\phi}_{s(\Phi_H\xi)}]^{-1}$
 lies in $H_{s(\Phi_H\xi)}$. Indeed,
 \begin{equation*}
  \begin{split}
   &\kappa^{s(\Phi_H\xi)\xi^{-1}} ((\alpha_\psi)_\psi,\phi) (\kappa^{s(\Phi_H\xi)\xi^{-1}\phi})^{-1} \\
   &\ = \big( (\kappa^{s(\Phi_H\xi)\xi^{-1}}_\psi)_\psi,s(\Phi_H\xi)\xi^{-1} \big) \big((\alpha_\psi)_\psi,\phi\big) \big( ( [\kappa^{s(\Phi_H\xi)\xi^{-1}\phi}_{s(\Phi_H\xi)\xi^{-1}\phi\psi}]^{-1} )_\psi  ,\phi^{-1}\xi s(\Phi_H\xi)^{-1} \big) \\
   &\ = \big( (\kappa^{s(\Phi_H\xi)\xi^{-1}}_\psi \alpha_{\xi s(\Phi_H\xi)^{-1}\psi} [\kappa^{s(\Phi_H\xi)\xi^{-1}\phi}_\psi]^{-1})_\psi, 1 \big).
  \end{split}
 \end{equation*}
 Since this is an element in $H \cap \big( \prod_\Phi F \big)$, we obtain
 \begin{equation*}
  \kappa^{s(\Phi_H\xi)\xi^{-1}}_{s(\Phi_H\xi)} \alpha_\xi [\kappa^{s(\Phi_H\xi)\xi^{-1}\phi}_{s(\Phi_H\xi)}]^{-1}
  = p_{s(\Phi_H\xi)}(\kappa^{s(\Phi_H\xi)\xi^{-1}} ((\alpha_\psi)_\psi,\phi) [\kappa^{s(\Phi_H\xi)\xi^{-1}\phi}]^{-1}) \in H_{s(\Phi_H\xi)}.
 \end{equation*}
 Hence, $\kappa H \kappa^{-1} \subset \hat{H}$.
 
 Since $(G,S)$ is Dress--Farrell--Hsiang, there is some $N$ as in Definition \ref{def_dfhgroup}. Let $\epsilon' > 0$. Let $\pi = \pi_{\epsilon'} \colon G \twoheadrightarrow F$ be some epimorphism satisfying the conditions in Definition \ref{def_dfhgroup}.
 Define $\pi^{\wr \Phi}$ as above. According to our preliminary observation, it suffices to consider subgroups of $F \wr \Phi$ which have the form
 \begin{equation*}
  H = \Big( \prod_{\Phi_H\psi \in \Phi_H \backslash \Phi} \Big( \prod_{\psi' \in \Phi_H\psi} H_{\Phi_H\psi}) \Big) \Big) \rtimes \Phi_H,
 \end{equation*}
 where $\Phi_H$ is some subgroup of $\Phi$ and each $H_{\Phi_H\psi}$ is a Dress subgroup of $F$.
 Define a generating set $S^{\wr \Phi}$ of $G \wr \Phi$ by
 \begin{equation*}
  S^{\wr \Phi} := \{ ((g_\psi)_\psi,\phi) \mid g_\psi \in S \text{ for all } \psi,\ \phi \in \Phi \}.
 \end{equation*}
 For each $\Phi_H\psi \in \Phi_H \backslash \Phi$, choose a $\pi^{-1}(H_{\Phi_H\psi})$-equivariant map
 $f_{\Phi_H\psi} \colon G \to E_{\Phi_H\psi}$ to a $\pi^{-1}(H_{\Phi_H\psi})$-simplicial complex
 of dimension at most $N$ whose stabilizers lie in $\cF$, and such that $d(f_{\Phi_H\psi}(g),f_{\Phi_H\psi}(g')) \leq \epsilon'$
 whenever $g^{-1}g' \in S$. Define
 \begin{equation*}
  \begin{split}
   f_H \colon G \wr \Phi &\to \prod_{\Phi_H\psi \in \Phi_H \backslash \Phi} \ \prod_{\psi' \in \Phi_H\psi} E_{\Phi_H\psi}  =: E_H \\
   ((g_\psi)_\psi,\phi) &\mapsto \big( (f_{\Phi_H\psi}(g_{\psi'})\big)_{\psi' \in \Phi_H\psi} \big)_{\Phi_H\psi \in \Phi_H \backslash \Phi}.
  \end{split}
 \end{equation*}
 We regard $E_H$ as a simplicial complex via the product construction described previously. Let $H$ act on $E_H$ by
 \begin{equation*}
  ((h_{\psi'})_{\psi' \in \Phi_H\psi})_{\Phi_H\psi},\phi) \cdot ((x_{\psi'})_{\psi' \in \Phi_H\psi})_{\Phi_H\psi} := ( ( h_{\psi'} x_{\phi^{-1}\psi'} )_{\psi' \in \Phi_H\psi} )_{\Phi_H\psi}.
 \end{equation*}
 This induces a $(\pi^{\wr \Phi})^{-1}(H)$-action on $E_H$ by restriction, and $f_H$ is $(\pi^{\wr \Phi})^{-1}(H)$-equivariant with respect to this action.
 Observe that the dimension of $E_H$ is bounded by $\abs{\Phi} N$, and that this number only depends on $\Phi$.
 
 Let $x := ((x_{\psi'})_{\psi' \in \Phi_H\psi})_{\Phi_H\psi}$ be a point in $E_H$. Consider the stabilizer $(\pi^{\wr \Phi})^{-1}(H)_x$.
 The intersection $(\pi^{\wr \Phi})^{-1}(H)_x \cap \big( \prod_\Phi G \big)$ is a finite-index subgroup of $(\pi^{\wr \Phi})^{-1}(H)_x$,
 and is equal to $\prod_{\Phi_H\psi \in \Phi_H \backslash \Phi} \prod_{\psi' \in \Phi_H\psi} H_{x_{\psi'}}$. Since each $H_{x_{\psi'}}$ lies in $\cF$, this shows that the stabilizer of $x$ lies in $\cF^{\wr \Phi}$.
 
 What is left to show is that the map $f_H$ has the desired contracting property.
 So let $g = ((g_\psi)_\psi,\phi)$ and $g' = ((g'_\psi)_\psi,\phi')$ be elements in $G \wr \Phi$ such that $g^{-1}g' \in S^{\wr \Phi}$;
 equivalently, $g^{-1}_{\psi}g'_{\psi} \in S$ for all $\psi \in \Phi$.
 For each $\Phi_H\psi \in \Phi_H \backslash \Phi$ and $\psi' \in \Phi_H\psi$, we have
 \begin{equation*}
  d^{\ell^1}_{E_{\Phi_H\psi}}(f_{\Phi_H\psi}(g_{\psi'}),f_{\Phi_H\psi}(g'_{\psi'})) \leq \epsilon'.
 \end{equation*}
 Let $\epsilon > 0$. By Lemma \ref{lem_metricandproducts} below, $d^{\ell^1}_{E_H}(f_H(g),f_H(g')) \leq \epsilon$ as long as $\epsilon'$  was initially chosen to be small enough.
\end{proof}

\begin{lemma}\label{lem_metricandproducts}
 Let $N$, $K \in \NN$. For every $\epsilon > 0$ there is some $\epsilon' > 0$ such that for every sequence $E_1, \dots, E_K$ of (abstract) ordered simplicial complexes, each of which has dimension at most $N$, the following holds:
 
 Let $E := E_1 \otimes \dots \otimes E_K$. For $x \in \abs{E}$, let $(x_1,\dots,x_K)$ denote the image of $x$ under the canonical map $\abs{E} \to \abs{E_1} \times \dots \times \abs{E_K}$. Denote by $d_i$ the $\ell^1$-metric on $\abs{E_i}$, and let $d$ be the $\ell^1$-metric on $\abs{E}$.
 
 Then for all $x,x' \in \abs{E}$, we have $d(x,x') \leq \epsilon$ whenever $d_i(x_i,x'_i) \leq \epsilon'$.
\end{lemma}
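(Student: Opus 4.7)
The content of the lemma is that the homeomorphism $\abs{E} \xrightarrow{\cong} \prod_i \abs{E_i}$ is uniformly continuous with a modulus depending only on $K$ and $N$, and not on the complexes $E_i$. The plan is to realize both metrics as extrinsic $\ell^1$-norms via barycentric coordinates, so that $d(x, x') = \sum_v \abs{\xi^v(x) - \xi^v(x')}$ with $v$ ranging over the vertices $V(E) = V(E_1) \times \dots \times V(E_K)$ of $E$ (and analogously for the $d_i$), and then to bound the inverse of the ``marginalization'' linear map $\RR^{V(E)} \to \bigoplus_i \RR^{V(E_i)}$, $(\xi^v)_v \mapsto \bigl((\sum_{v \colon v_i = w} \xi^v)_w\bigr)_i$, restricted to the image of $\abs{E}$.

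The main step is a per-simplex estimate. If $x, x'$ lie in a common simplex $\tau$ of $E$, corresponding to a chain $c_0 < c_1 < \dots < c_r$ of length $r+1 \leq KN+1$ in $V(E)$, with respective barycentric coordinates $(\lambda_j), (\lambda'_j)$, then the identity $\xi^w_i(x_i) = \sum_{j \colon c_j \text{ has } i\text{-th entry } w} \lambda_j$ expresses the marginal vector as the image of $(\lambda_j)$ under a $0$-$1$ matrix $A_\tau$ whose shape is determined by the combinatorial type of the chain. Because chains of length $\leq KN + 1$ in a product of ordered simplices of dimensions $\leq N$ admit only finitely many combinatorial types, the $\ell^1$-operator norm of a left inverse of $A_\tau$ is bounded by a constant $C_0 = C_0(K, N)$ obtained as the maximum over all types; this yields $d(x, x') = \sum_j \abs{\lambda_j - \lambda'_j} \leq C_0 \sum_i d_i(x_i, x'_i)$ whenever $x$ and $x'$ lie on a common simplex of $E$.

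To reduce the general case to the per-simplex estimate, I would use a staircase construction: set $y^0 = x$, and for $i \geq 1$ let $y^i$ differ from $y^{i-1}$ only in the $i$-th coordinate, which is replaced by $x'_i$. Since only a single coordinate changes at each step, the segment from $y^{i-1}$ to $y^i$ lifts to a piecewise-affine path in $\abs{E}$ whose pieces each lie in a single simplex of $E$; I would show that its total $\ell^1$-length is bounded by a constant $C_1(K, N)$ times $d_i(x_i, x'_i)$, so that $d(y^{i-1}, y^i) \leq C_1(K, N)\, d_i(x_i, x'_i)$. Summing over $i$ and using the triangle inequality then gives $d(x, x') \leq K C_1(K, N) \max_i d_i(x_i, x'_i) \leq K C_1(K, N) \epsilon'$, and $\epsilon' := \epsilon/(K C_1(K, N))$ suffices. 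The main obstacle is maintaining the uniformity of all constants in $K$ and $N$: while bounding the inverse of $A_\tau$ reduces to a finite enumeration of chain types, controlling the number of simplex transitions along a one-coordinate segment requires arguing that such a segment meets only a bounded (in $K, N$) number of simplices within each product cell it traverses, which has to be deduced from the fact that the triangulation of a single product cell $\sigma_1 \times \dots \times \sigma_K$ consists of at most a bounded number of simplices of dimension at most $KN$.
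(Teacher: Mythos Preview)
Your route is genuinely different from the paper's and aims for more (a Lipschitz bound rather than mere uniform continuity). The paper's argument is a one-liner by comparison: since the $\ell^1$-metric is computed from barycentric coordinates alone and is insensitive to the ambient complex, and since any two points of $|E|$ involve at most $2(N+1)$ vertices in each factor, one may replace every $E_i$ by the standard simplex $\Delta^{2N+1}$. The inverse of the canonical projection then reads as a continuous map
\[
 \prod_{i=1}^{K} |\Delta^{2N+1}| \longrightarrow |E| \subset |\Delta^{(2N+2)^K-1}|
\]
from a compact source, hence is uniformly continuous for the sum metric $\sum_i d_i$ on the domain; taking the corresponding $\epsilon''$ and setting $\epsilon'=\epsilon''/K$ finishes. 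What you gain over the paper is an explicit constant; what it costs is the whole combinatorial apparatus of chain types and staircases.

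The staircase step you flag is a real gap as written. Bounding the number of simplices inside a single product cell does not by itself control how many product cells the one-coordinate path meets: the straight segment from $x_i$ to $x'_i$ in $\RR^{V(E_i)}$ need not stay inside $|E_i|$ when the carriers $\sigma_i$ and $\sigma'_i$ differ (two triangles sharing an edge is already a counterexample). You must instead produce a path \emph{inside} $|E_i|$ of length comparable to $d_i(x_i,x'_i)$; for $\epsilon'<2$ the carriers share a nonempty face $\tau$, and pushing the mass on $\sigma_i\setminus\tau$ onto a single vertex of $\tau$ gives a two-segment path of $d_i$-length at most $3\,d_i(x_i,x'_i)$, after which your per-simplex estimate applies on each of the two product cells traversed. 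Alternatively, and more simply, feed the paper's ambient-independence reduction into your argument first: with each $E_i=\Delta^{2N+1}$ there is only one product cell, the inverse map is piecewise linear on a fixed finite polyhedron, and your finiteness-of-chain-types bound yields the Lipschitz constant directly without any staircase.
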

\begin{proof}
 The argument is very similar to the one employed in the proof of \cite[Lemma 5.5]{BLRR2014}.
 Since distances with respect to the $\ell^1$-metric are independent of the ambient complex,
 we may assume that $E_i = \Delta^{2N + 1}$. Consider the composition
 \begin{equation*}
  \prod_{1 \leq i \leq K} \abs{\Delta^{2N + 1}} = \abs{E_1} \times \dots \times \abs{E_K} \xrightarrow{\cong} \abs{E} \subset \abs{\Delta^{(2N+2)^K - 1}}
 \end{equation*}
 of the inverse of the canonical homeomorphism with the inclusion into the full simplex.
 Consider the domain of this map as a metric space by taking the metric $d_\Sigma := \sum_i d_i$ and equip the target with its natural $\ell^1$-metric $d_\Delta$. This map is uniformly continuous since the source is compact; hence, there is some $\epsilon'' > 0$ such that $d_\Delta(x,x') \leq \epsilon$ whenever $d_\Sigma((x_1,\dots,x_K),(x'_1,\dots,x'_K)) \leq \epsilon''$.
 Thus, the claim holds for $\epsilon' := \frac{\epsilon''}{K}$.
\end{proof}

\begin{corollary}\label{cor_dfhandwreath}
 Let $G$ be a discrete group and let $\cF$ be a family of groups such that all groups in $\cF$ satisfy the \ffjcw\ in $A$-theory.
 If there is a finite, symmetric generating set $S$ of $G$ such that $(G,S)$ is a Dress-Farrell--Hsiang group with respect to $\cF$,
 then $G$ satisfies the \ffjcw\ in $A$-theory.
\end{corollary}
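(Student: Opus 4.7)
The plan is to string together the Dress--Farrell--Hsiang machinery developed in Proposition \ref{prop_dfhandwreath} with the inheritance properties from Lemma \ref{lem_inherit}, via the Transitivity Principle \ref{prop_transitivity}. Concretely, fix a finite group $\Phi$; to establish the \ffjcw{} for $G$, I need to show that $G \wr \Phi$ satisfies the $A$-theoretic \ffjc{} with respect to the family $\cV\cC yc$ of virtually cyclic subgroups.

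First I would invoke Proposition \ref{prop_dfhandwreath} to promote the given generating set $S$ to a finite symmetric generating set $S^{\wr\Phi}$ of $G \wr \Phi$ such that $(G \wr \Phi, S^{\wr\Phi})$ is a Dress--Farrell--Hsiang group with respect to $\cF^{\wr \Phi}$. Theorem \ref{thm_fhmethod} then yields that $G \wr \Phi$ satisfies the \fic{} in $A$-theory with respect to $\cF^{\wr \Phi}$. To descend from $\cF^{\wr \Phi}$ to $\cV\cC yc$, I apply the Transitivity Principle \ref{prop_transitivity} with $\cF_0 = \cV\cC yc$ and $\cF_1 = \cF^{\wr \Phi}$; this reduces the claim to showing that every $H \in \cF^{\wr \Phi}$ satisfies the $A$-theoretic \ffjc.

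To verify this last condition, unfold the definition of $\cF^{\wr\Phi}$: any such $H$ contains, as a finite-index subgroup, a product $H_0 = \prod_{\psi \in \Phi} H_\psi$ with each $H_\psi \in \cF$. By hypothesis each $H_\psi$ satisfies the \ffjcw, hence by iterating Lemma \ref{lem_inherit}(\ref{inherit_products}) the finite product $H_0$ satisfies the \ffjcw; then Lemma \ref{lem_inherit}(\ref{inherit_supergroups}) (finite-index overgroups inherit \ffjcw) shows that $H$ itself satisfies the \ffjcw, so in particular the \ffjc. This closes the transitivity argument and establishes the \fic{} for $G \wr \Phi$ with respect to $\cV\cC yc$. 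Since $\Phi$ was arbitrary, $G$ satisfies the \ffjcw{} in $A$-theory.

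The only real content has already been absorbed into the prerequisites: Proposition \ref{prop_dfhandwreath} does the geometric work of combining the contracting maps $\phi_D$ on the various coordinates via the tensor product of ordered simplicial complexes (with metric control maintained thanks to Lemma \ref{lem_metricandproducts}), and Theorem \ref{thm_fhmethod} converts the Dress--Farrell--Hsiang condition into the assembly isomorphism. The remaining proof is a bookkeeping exercise about families, so I do not anticipate any technical obstacle beyond correctly matching the family $\cF^{\wr\Phi}$ with the hypotheses of the Transitivity Principle.
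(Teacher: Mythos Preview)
Your proposal is correct and follows essentially the same route as the paper: apply Proposition \ref{prop_dfhandwreath} to get that $G\wr\Phi$ is Dress--Farrell--Hsiang with respect to $\cF^{\wr\Phi}$, invoke Theorem \ref{thm_fhmethod}, then use Lemma \ref{lem_inherit}\eqref{inherit_products} and \eqref{inherit_supergroups} to verify that every member of $\cF^{\wr\Phi}$ satisfies the \ffjc, and conclude via the Transitivity Principle \ref{prop_transitivity}. Your write-up simply makes explicit the intermediate appeals to Theorem \ref{thm_fhmethod} and the unpacking of $\cF^{\wr\Phi}$ that the paper leaves implicit.
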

\begin{proof}
 Let $\Phi$ be a finite group. By Proposition \ref{prop_dfhandwreath}, the wreath product $G \wr \Phi$ is a Dress--Farrell--Hsiang group
 with respect to $\cF^{\wr \Phi}$, so $G \wr \Phi$ satisfies the \fic\ with respect to $\cF^{\wr \Phi}$.
 Since all groups in $\cF$ satisfy the \ffjcw, parts \eqref{inherit_products} and \eqref{inherit_supergroups}
 of Lemma \ref{lem_inherit} imply that all groups in $\cF^{\wr \Phi}$ satisfy the \ffjc.
 Hence, $G \wr \Phi$ satisfies the \ffjc\ by the Transitivity Principle \ref{prop_transitivity}.
\end{proof}

\begin{theorem}\label{thm_specialaffine}
 Let $\Gamma$ be an irreducible special affine group. Then $\Gamma$ satisfies the \ffjcw\ in $A$-theory.
\end{theorem}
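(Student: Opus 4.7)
The plan is to exhibit $\Gamma$ as a Dress--Farrell--Hsiang group with respect to a family $\cF$ of subgroups for which we already know the \ffjcw{} in $A$-theory, and then conclude via Corollary~\ref{cor_dfhandwreath}. So the task reduces to two independent inputs: (i) a geometric input providing the DFH structure, and (ii) a check that the family arising from that structure consists of groups previously handled in this section.

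For (i), I would invoke the results of~\cite{W-TransferReducibilityOfFHGroups}, which were already cited in the paper as establishing that every Farrell--Hsiang group that arises in the reduction from lattices in almost connected Lie groups to virtually poly-$\ZZ$-groups is in fact Dress--Farrell--Hsiang. In particular, an irreducible special affine group $\Gamma = \ZZ^n \rtimes_\rho C$ with $\rho \colon C \to \mathrm{GL}_n(\ZZ)$ irreducible admits, for every $\epsilon > 0$, an epimorphism $\pi \colon \Gamma \twoheadrightarrow F$ onto a finite group and, for every Dress subgroup $D \leq F$, a $\pi^{-1}(D)$-simplicial complex $E_D$ of bounded dimension together with a $\pi^{-1}(D)$-equivariant map $\phi_D \colon \Gamma \to E_D$ satisfying the $\ell^1$-contraction property of Definition~\ref{def_dfhgroup}. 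The isotropy of $E_D$ lies in a family $\cF$ consisting of virtually finitely generated abelian subgroups of $\Gamma$ (these are precisely the intersections of the translation subgroup $\ZZ^n$ with various conjugates together with finite extensions).

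For (ii), by Corollary~\ref{cor_vfga-wreath} every virtually finitely generated abelian group satisfies the \ffjcw{} in $A$-theory; in particular every member of $\cF$ does. Together with the DFH structure from step~(i) and the generating set $S$ provided by~\cite{W-TransferReducibilityOfFHGroups}, Corollary~\ref{cor_dfhandwreath} applies directly: it upgrades Theorem~\ref{thm_fhmethod} to the wreath product setting precisely when the DFH family consists of groups already known to satisfy the \ffjcw, so it produces the desired conclusion for $\Gamma$.

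The main obstacle is purely citational rather than technical: one has to match the definition of ``irreducible special affine group'' used here with the hypotheses under which~\cite{W-TransferReducibilityOfFHGroups} verifies the Dress--Farrell--Hsiang condition, and make sure the family $\cF$ one extracts is indeed contained in the class of virtually finitely generated abelian groups (as opposed to a larger class that would require a further inductive step). Once this bookkeeping is settled, no new geometric or $K$-theoretic argument is needed: the transfer machinery of Section~\ref{sec_transfer}, the squeezing theorem~\ref{thm_squeezing}, and the Swan group action of Section~\ref{sec_swangroup} are all packaged inside Theorem~\ref{thm_fhmethod} and its wreath product strengthening Corollary~\ref{cor_dfhandwreath}.
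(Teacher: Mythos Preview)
Your proposal is correct and follows essentially the same approach as the paper: cite \cite{W-TransferReducibilityOfFHGroups} (specifically Theorem~6.1 there) to obtain that $\Gamma$ is Dress--Farrell--Hsiang with respect to the family of virtually finitely generated abelian groups, then combine Corollary~\ref{cor_vfga-wreath} with Corollary~\ref{cor_dfhandwreath}. Your ``bookkeeping'' caveat about the family $\cF$ is exactly the content of that citation, so no further argument is required.
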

\begin{proof}
 By \cite[Theorem 6.1]{W-TransferReducibilityOfFHGroups}, $\Gamma$ is a Dress--Farrell--Hsiang group with respect to the family of virtually finitely generated abelian groups.
 Since we have already shown that all virtually finitely generated abelian groups satisfy the \ffjcw\ in Corollary \ref{cor_vfga-wreath},
 the theorem is an immediate consequence of Corollary \ref{cor_dfhandwreath}.
\end{proof}

\begin{theorem}[cf.~{\cite[Section 5]{BFL2014}}]\label{thm_poly-z-wreath}
 Let $G$ be a virtually poly-$\ZZ$-group. Then $G$ satisfies the \ffjcw\ in $A$-theory.
\end{theorem}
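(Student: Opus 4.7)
The plan is to proceed by induction on the Hirsch length $h(G)$, following the strategy laid out in \cite[Section 5]{BFL2014}. For $h(G) = 0$, the group $G$ is finite, hence virtually cyclic, so the \ffjcw{} in $A$-theory holds trivially.

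For the inductive step, assume the result holds for all virtually poly-$\ZZ$-groups of Hirsch length strictly less than $h$, and let $G$ be virtually poly-$\ZZ$ with $h(G) = h \geq 1$. Using Lemma~\ref{lem_inherit}.\eqref{inherit_episwithfinitekernel} to quotient out the maximal finite normal subgroup, and Lemma~\ref{lem_inherit}.\eqref{inherit_supergroups} to pass to a finite-index subgroup, it suffices to treat the case where $G$ is poly-$\ZZ$ with no non-trivial finite normal subgroup. In particular, $G$ is torsion-free.

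If $G$ is virtually abelian, Corollary~\ref{cor_vfga-wreath} already gives the conclusion. Otherwise, invoke the structural analysis of \cite[Section 5]{BFL2014}: after possibly passing to another finite-index subgroup (Lemma~\ref{lem_inherit}.\eqref{inherit_supergroups}), either $G$ is commensurable with an irreducible special affine group---and we are done by Theorem~\ref{thm_specialaffine} combined with parts \eqref{inherit_subgroups} and \eqref{inherit_supergroups} of Lemma~\ref{lem_inherit}---or $G$ fits into a short exact sequence $1 \to N \to G \to Q \to 1$ where $Q$ is virtually cyclic and $N$ is virtually poly-$\ZZ$ of Hirsch length $<h$. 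In the latter case, apply Lemma~\ref{lem_inherit}.\eqref{inherit_epis} to the projection $G \to Q$: the group $Q$ satisfies the \ffjcw{} because it is virtually cyclic, and for every virtually cyclic $V \leq Q$, the preimage $\pi^{-1}(V)$ is an extension of $V$ by $N$, hence virtually poly-$\ZZ$ of Hirsch length $h(N) + h(V) \leq h$; such a preimage either has strictly smaller Hirsch length than $G$ and is covered by the induction hypothesis, or else, being a smaller-complexity group than $G$ in the well-ordering used by \cite{BFL2014}, is covered by a secondary induction.

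The main technical obstacle is this secondary induction: the preimage $\pi^{-1}(V)$ may again have Hirsch length $h$, so the outer induction on $h$ does not close by itself. As in \cite[Section~5]{BFL2014}, one introduces a refined complexity ordering on virtually poly-$\ZZ$-groups---lexicographic in the Hirsch length and in some secondary invariant such as the index of a canonical polycyclic subnormal series or the structure of the Fitting subgroup---and shows that in every application of Lemma~\ref{lem_inherit}.\eqref{inherit_epis} the relevant preimages have strictly smaller complexity. Once this bookkeeping is set up, all the building blocks are in place: the inheritance properties (Lemma~\ref{lem_inherit}), the virtually abelian case (Corollary~\ref{cor_vfga-wreath}), and the irreducible special affine case (Theorem~\ref{thm_specialaffine}). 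The argument of \cite[Section~5]{BFL2014} then goes through \emph{mutatis mutandis} in our $A$-theoretic setting.
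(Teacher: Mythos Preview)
Your proposal and the paper's proof take the same approach: both defer to the argument in \cite[Section~5]{BFL2014}, noting that it goes through verbatim once the $A$-theoretic inheritance properties (Lemma~\ref{lem_inherit}), the virtually abelian case (Corollary~\ref{cor_vfga-wreath}), and the irreducible special affine case (Theorem~\ref{thm_specialaffine}) are in hand. The paper's own proof is in fact a single sentence pointing to page~377 of \cite{BFL2014}; you have simply expanded on what that argument looks like. Your sketch of the induction on Hirsch length and the trichotomy (virtually abelian / special affine / extension with smaller kernel) is a reasonable summary, though the precise bookkeeping for the ``secondary induction'' you mention is handled in \cite{BFL2014} and need not be reproduced here.
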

\begin{proof}
 Repeat the argument on page 377 of \cite{BFL2014}, which relies only on the inheritance properties of the conjecture.
\end{proof}

Theorem \ref{intro_poly-z-groups} from the introduction follows as a special case.

\bibliographystyle{halpha}
\bibliography{ControlledATheory_bibliography}

\end{document}